%
%
%
%
%
%
%
%
%
%

\documentclass[12pt, twoside]{amsart}
\usepackage{amsmath,mathtools}
\usepackage{cancel}
\usepackage{amsmath, amsthm, amssymb, amsfonts, enumerate}
\usepackage[colorlinks=true,linkcolor=blue,citecolor=blue]{hyperref}
\usepackage{autonum}
\usepackage{dsfont}
\usepackage{color}
\usepackage{geometry}
\usepackage{todonotes}
\usepackage{epstopdf}
\usepackage{bbm}
\usepackage{geometry}
\usepackage[utf8]{inputenc}
\usepackage{bm}
\usepackage{amsfonts}
\usepackage{amsfonts}
\usepackage{textcomp}
\usepackage{amssymb}
\usepackage{float}
\usepackage{tikz}
\usepackage{epsfig}
\usepackage{amsmath}
\usepackage[english]{babel}
\usepackage{a4}
\usepackage{enumerate,enumitem}
\usepackage{tcolorbox}
\usepackage{bm} 
\usepackage{mathrsfs}
\usepackage{soul}
\geometry{hmargin=2cm, vmargin=2.45cm}
\newcommand{\stkout}[1]{\ifmmode\text{\sout{\ensuremath{#1}}}\else\sout{#1}\fi}
\usepackage{ulem}
\usepackage{cancel}

\makeatletter
\newsavebox{\@brx}
\newcommand{\llangle}[1][]{\savebox{\@brx}{\(\m@th{#1\langle}\)}%
  \mathopen{\copy\@brx\mkern2mu\kern-0.9\wd\@brx\usebox{\@brx}}}
\newcommand{\rrangle}[1][]{\savebox{\@brx}{\(\m@th{#1\rangle}\)}%
  \mathclose{\copy\@brx\mkern2mu\kern-0.9\wd\@brx\usebox{\@brx}}}
\makeatother


\newtheorem{theorem}{Theorem}[section]

\newtheorem{remark}[theorem]{Remark}

\newtheorem{lemma}[theorem]{Lemma}
\newtheorem{prop}[theorem]{Proposition}
\newtheorem{corollary}[theorem]{Corollary}
\newtheorem{definition}[theorem]{Definition}

\newtheorem{assumption}[theorem]{Assumption}
\newtheorem{notation}[theorem]{Notation}

\DeclareMathOperator*{\argmin}{argmin}

\definecolor{red}{rgb}{1.0,0.0,0.0}

\definecolor{blu}{rgb}{0.0,0.0,1.0}

\definecolor{gre}{rgb}{0.03,0.50,0.03}

\definecolor{darkviolet}{rgb}{0.58, 0.0, 0.83}

\def \eps{\varepsilon}

\usepackage[T1]{fontenc}


\title[Stochastic Optimal Control of Interacting Particle Systems in Hilbert Spaces]{Stochastic Optimal Control of Interacting Particle Systems in Hilbert Spaces and Applications}
\author[F. de Feo]{Filippo de Feo}
\address{Institut f\"{u}r Mathematik, Technische Universit\"{a}t Berlin, Berlin, Germany}
\email{\href{defeo@math.tu-berlin.de}{defeo@math.tu-berlin.de}}

\author[F. Gozzi]{Fausto Gozzi}
\address{Dipartimento di AI, Data and Decision Sciences, LUISS University, Roma, Italy}
\email{\href{fgozzi@luiss.it}{fgozzi@luiss.it}}

\author[A. \'{S}wi\k{e}ch]{Andrzej \'{S}wi\k{e}ch}
\address{School of Mathematics, Georgia Institute of Technology, Atlanta, GA 30332, USA}
\email{\href{swiech@math.gatech.edu}{swiech@math.gatech.edu}}

\author[L. Wessels]{Lukas Wessels}
\address{School of Mathematics, Georgia Institute of Technology, Atlanta, GA 30332, USA}
\email{\href{wessels@gatech.edu}{wessels@gatech.edu}}

\numberwithin{equation}{section}


\setcounter{tocdepth}{1}

\begin{document}

\begin{abstract}
Optimal control of 
 interacting particles governed by stochastic evolution equations in Hilbert spaces is an open {area of research}. Such systems naturally arise  in formulations where each particle is modeled by  stochastic partial differential equations, path-dependent stochastic differential equations (such as stochastic delay differential equations or stochastic Volterra integral equations), or  partially observed stochastic systems.
The purpose of this manuscript is to build the foundations for {a limiting theory as the number of particles tends to infinity}. We prove the convergence of the value functions $u_n$ {of finite particle systems} to a {function} $\mathcal{V}$, {which} is the unique  {$L$}-viscosity solution of the corresponding mean-field Hamilton-Jacobi-Bellman equation {in the space of probability measures}, and we  identify its lift with the value function $U$ of the so-called ``lifted'' limit optimal control problem. {Under suitable additional assumptions,} we show $C^{1,1}$-regularity of $U$, we prove that {$\mathcal{V}$} projects precisely onto the value functions $u_n$, and that optimal (resp. optimal feedback) controls of the particle system correspond to optimal (resp. optimal feedback) controls of the lifted control problem started at the corresponding initial condition. To the best of our knowledge, these are the first results of this kind for stochastic optimal control problems for  interacting particle systems of stochastic evolution equations in Hilbert spaces. We apply the developed theory to problems arising in economics where the  particles  are modeled by stochastic delay differential equations and stochastic partial differential equations.
\end{abstract}
\maketitle
{\bf Mathematics Subject Classification (2020):} 49N80, 49L20, 49L25,  49K27,  49N35, 60H15, 93E20, 34K50.


{\bf Keywords:} interactive particle systems on Hilbert spaces, HJB equations on Wasserstein spaces, mean-field control, viscosity solutions, SPDEs, stochastic delay equations

\hfill\\[-8ex]

\tableofcontents

\section{Introduction}

In this paper, we begin the investigation of stochastic optimal control problems for large interacting particle systems of stochastic evolution equations in Hilbert spaces. 
{More precisely}, in a real, separable Hilbert space $H$, for $n\geq 1$ we consider
$\mathbf{x}(s) = (x_1(s),\dots,x_n(s)) \in H^n$ to be the solution of the system of $H$--valued stochastic differential equations (SDEs)
\begin{equation}\label{state_equation}
	\begin{cases}
		\mathrm{d}x_i(s) = [A x_i(s) +f(x_i(s),\mu_{{\mathbf x}(s)},a_i(s))]\mathrm{d}s + \sigma(x_i(s),\mu_{{\mathbf x}(s)})\mathrm{d}W(s), \quad s \in [t,T]\\
x_i(t) = x_i \in H,
	\end{cases}
\end{equation}
$i=1,\dots,n$, where $A:D(A)\subset H \to H$ is a linear densely defined maximal dissipative operator, $\mu_{\mathbf{x}(s)}:=\frac 1 n \sum_{i=1}^n \delta_{x_i(s)}$ is the empirical measure of $\mathbf{x}(s)$, and $W(s)$ is a cylindrical Wiener process in some Hilbert space $\Xi$ acting as a common noise. The precise conditions on the admissible controls $a_i(\cdot)$ and the coefficient functions will be given in Sections  \ref{sec:control}, \ref{sec:assumptions}. The cost functional is of the form
\begin{equation}\label{cost_functional}
	J_n(t,{\bf x};\mathbf{a}(\cdot))= \mathbb{E}\left [ \int_t^T\frac{1}{n}\sum_{i=1}^n l(x_i(s),\mu_{{\bf x}(s)},a_i(s)) \mathrm{d}s+ \frac{1}{n} \sum_{i=1}^n {\mathcal U}_T(x_i(T),\mu_{{\bf x}(T)}) \right ]
\end{equation}
for $(t,\mathbf{x}) = (t,x_1,\dots,x_n) \in [0,T] \times H^n$.


The study of these types of problems is motivated by control problems where the particles are  modeled by infinite-dimensional dynamics belonging to some of the most prominent and wide-ranging families of stochastic systems, including:\begin{itemize}  \item stochastic partial differential equations (SPDEs),  \item path-dependent stochastic  differential equations (e.g. stochastic delay differential equations (SDDEs) or stochastic Volterra integral equations (SVIEs)), \item partially observed stochastic systems, leading to control of Zakai-type SPDEs.\end{itemize}
These problems are natural generalizations of classical optimal control problems of finite-dimensional interacting particle systems and   find applications across most applied sciences, such as economics, finance, neuroscience, biology, physics, engineering, and many others.

{Following} the dynamic programming approach, we define the value functions $u_n:[0,T]\times H^n \to \mathbb{R}$ by
\begin{equation}\label{finite_dimensional_value_function}
	u_n(t,\mathbf{x}) := \inf_{\mathbf{a}(\cdot)\in \Lambda^n_t} J_n(t,\mathbf{x};\mathbf{a}(\cdot)),
\end{equation}
where $\Lambda^n_t$ is an appropriate set of optimal controls. The value function $u_n$ {is the $B$-continuous viscosity solution of} the Hamilton--Jacobi--Bellman (HJB) equation 
\begin{equation}\label{finite_dimensional_hjb}
\begin{cases}
	\partial_t u_n + \frac12 \text{Tr}(A_n(\mathbf{x},\mu_{\mathbf{x}}) D^2u_n)\\
    \qquad\qquad + \frac{1}{n} \sum_{i=1}^n \left ( \langle Ax_i,nD_{x_i} u_n \rangle + \mathcal{H}(x_i,\mu_{\mathbf{x}},nD_{x_i}u_n) \right ) = 0, \quad(t,\mathbf{x})\in (0,T)\times H^n\\
	u_n(T,\mathbf{x}) = \frac{1}{n} \sum_{i=1}^n \mathcal{U}_T(x_i,\mu_{\mathbf{x}}), \quad \mathbf{x}\in H^n,
\end{cases}
\end{equation}
where $\mu_{\mathbf{x}}:=\frac 1 n \sum_{i=1}^n \delta_{x_i}$ and $A_n(\mathbf{x},\mu_{\mathbf{x}})$ is an $n\times n$-matrix consisting of $n^2$ trace-class operators $(A_n)_{ij}(\mathbf{x},\mu_{\mathbf{x}}):H\to H$, given by
\begin{equation}
    (A_n)_{ij}(\mathbf{x},\mu_{\mathbf{x}}):=\sigma(x_i,\mu_{\mathbf{x}}) \sigma^{\top}(x_j,\mu_{\mathbf{x}}),
\end{equation}
$i,j=1,\dots,n$, and $\mathcal{H}: H \times \mathcal{P}_2(H) \times H \to \mathbb{R}$ denotes the Hamiltonian given by
\begin{equation}\label{hamiltonian}
	\mathcal{H}(x,\mu,p) :=  \inf_{q\in \tilde{\Lambda}} ( \langle f(x,\mu,q) , p\rangle + l(x,\mu,q)).
\end{equation}
Notice that \eqref{finite_dimensional_hjb} is a degenerate partial differential equation (PDE) on the Hilbert space $H^n$ {which contains an unbounded operator $A$ {in the linear first order term}.} Therefore, it must be dealt with {using appropriate viscosity solution techniques {(see \cite[Chapter 3]{fabbri_gozzi_swiech_2017}).}}

Formally, the value functions $u_n$, {when converted to functions of measures,} converge to the value function ${\mathcal{V}}$ of a control problem in the Wasserstein space $\mathcal{P}_2(H)$ which formally solves the HJB equation
\begin{equation}\label{intro:HJB_on_Wasserstein_space}
\begin{cases}
    \partial_t \mathcal{V}(t,\mu) {+} \int_H \langle Ax, \partial_{\mu} \mathcal{V}(t,\mu)(x) \rangle \mu(\mathrm{d}x) + \frac12 \int_{H} \text{Tr} \left [ D_x \partial_{\mu} \mathcal{V}(t,\mu)(x)\sigma(x,\mu)\sigma^{\top}(x,\mu) \right ] \mu(\mathrm{d}x)\\
    \quad + \frac12 \int_{H\times H} \text{Tr} \left [ \partial_{\mu}^2 \mathcal{V}(t,\mu)(x,x^{\prime}) \sigma (x,\mu) \sigma^{\top}(x^{\prime},\mu) \right ] \mu(\mathrm{d}x) \mu(\mathrm{d}x^{\prime}) \\
    \quad {+} \int_{H} \mathcal{H}(x,\mu,\partial_{\mu} \mathcal{V}(t,\mu)(x)) \mu(\mathrm{d}x) =0, \quad (t,\mu)\in (0,T) \times \mathcal{P}_2(H)\\
    \mathcal{V}(T,\mu) = \int_{H} \mathcal{U}_T(x,\mu) \mu(\mathrm{d}x),\quad \mu \in \mathcal{P}_2(H).
\end{cases}
\end{equation}
{However, we will neither study the control problem in the Wasserstein space nor the HJB equation \eqref{intro:HJB_on_Wasserstein_space}.} 
{Instead, we will study the ``lifted'' version of \eqref{intro:HJB_on_Wasserstein_space} on the Hilbert space $E=L^2(\Omega;H)$, where $\Omega=(0,1)$, which is satisfied by the lift $V$ of $\mathcal V$. It has the form}
\begin{equation}\label{intro:liftedHJB}
\begin{cases}
	\partial_t V + \frac12 \text{Tr}(\Sigma(X)(\Sigma(X))^{\ast} D^2V) {+} \llangle \mathcal{A}X,DV \rrangle  {+} \tilde{\mathcal{H}}(X, DV) =0,\quad (t,X)\in (0,T)\times E\\
	V(T,X) = U_T(X), \quad X\in E,
\end{cases}
\end{equation}
where  $\mathcal{A} : \mathcal{D}(\mathcal{A}) \subset E \to E$, $\mathcal{A}X(\omega) := A(X(\omega))$ is an unbounded operator and other terms are suitably defined, see Section \ref{sec:control}. {This lifting procedure allows us to use the well-developed theory of viscosity solutions for unbounded HJB equations in Hilbert spaces, see {\cite[Chapter 3]{fabbri_gozzi_swiech_2017}}.} Moreover, thanks to the structure of \eqref{intro:liftedHJB},  {it can be associated with a} ``lifted'' limit control problem, see \eqref{Lifted_State_Equation}-\eqref{Lifted_Value_Function}, {which enables us to apply control theoretic techniques.}  The lifting procedure was introduced for $H=\mathbb R^d$ in \cite{lions_2007-2011,cardaliaguet_2013}. We refer the reader to \cite{carmona_delarue_2018,gangbo_tudorascu_2019} for further reading. However, we remark that the ``lifted'' HJB \eqref{intro:liftedHJB} is new in the {framework of} a general Hilbert space $H$.

\paragraph{\textbf{Literature: case $H=\mathbb R^d$.}} The study of  {controlled} particle systems when $H={\mathbb R}^d$ and, especially, the problem of convergence of their value functions $u_n$ to a value function ${\mathcal{V}}$ of a control problem in the space of measures have attracted considerable attention in recent years. In particular, convergence results for the case of idiosyncratic noise were obtained in \cite{lacker_2017} (see also \cite{budhiraja_dupuis_fischer_2012,carmona_delarue_2015,feng_kurtz,fischer_livieri_2016,fornasier_solobmrino_2014} for earlier and related results, including large deviations), and for the case of common noise in \cite{djete_possamai_tan_2022}. {For related problems, see also the recent papers \cite{cardaliaguet_jackson_souganidis_2025,delarue_martini_sodini_2025}.} In \cite{djete_2022} so-called extended mean-field control problems, i.e., control problems that involve the law of the control, were considered. Viscosity solution theory was applied to prove convergence in \cite{gangbo_mayorga_swiech_2021,mayorga_swiech_2023,swiech_wessels_2024} (see also \cite{talbi_2024}).
The rate of convergence of the value functions for finite particle control problems has also been studied extensively. The first result in this direction for both idiosyncratic and common noise was obtained in \cite{germain_pham_warin_2022}, where, assuming the existence of a classical solution of the infinite dimensional limit HJB equation, the rate of convergence of order $\mathcal{O}(1/n)$ was proved. A different and simpler argument to obtain this result was pointed out in \mbox{\cite[Section 1.3]{cardaliaguet_daudin_jackson_souganidis_2023}}. A similar method was applied earlier in the context of mean-field games \mbox{\cite{cardaliaguet_delarue_lasry_lions_2019,carmona_delarue_2018_2}}. If the value function ${\mathcal{V}}$ is not smooth, the situation becomes much more complicated. We refer to \cite{Bayraktar_Ekren_Zhang_2025,cardaliaguet_daudin_jackson_souganidis_2023,cardaliaguet_jackson_mimikos-stamatopoulos_souganidis_2023,cardaliaguet_souganidis_2023,cecchin_daudin_jackson_martini_2024,daudin_delarue_jackson_2024,daudin_jackson_seeger_2025} for various results on the rate of convergence of $u_n$ using the notion of viscosity solutions. Rate of convergence results in these papers depend on the regularity of the data and hence of the regularity of value functions, some also depend on the dimension $d$ of the underlying state space.

Moreover, second order HJB equations in spaces of probability measures on finite dimensional sets have been studied quite intensively in recent years. Various results about uniqueness of their solutions can be found in \cite{Bayraktar_Cheung_Ekren_Qiu_Tai_Zhang_2025,Bayraktar_Ekren_Zhang_2025a,Bayraktar_Ekren_He_Zhang_2025,bensoussan_graber_yam_2024,bensoussan_graber_yam_2025,bertucci_lions_2024,burzoni_ignazio_reppen_soner_2020,cardaliaguet_jackson_souganidis_2025,Cheung_Tai_Qiu_2025,cosso_gozzi_kharroubi_pham_rosestolato_2024,cox_kallblad_larsson_svaluto-ferro_2024,daudin_jackson_seeger_2023,daudin_seeger_2024,martini_2023,martini_2024,soner_yan_2024_2,soner_yan_2024,touzi_zhang_zhou_2024,wu_zhang_2020}. 

In particular, the  problem \eqref{state_equation}-\eqref{cost_functional} when $H={\mathbb R}^d$, $A=0$, and with a more specific structure of some coefficients was studied in \cite{mayorga_swiech_2023,swiech_wessels_2024} following the methods of \cite{gangbo_mayorga_swiech_2021}. It was proved there that $u_n(t,\mathbf{x})$ converge locally uniformly to a function $\mathcal{V}(t,\mu_{\mathbf{x}})$ whose ``lift'' is the unique  viscosity solution of {\eqref{intro:liftedHJB}}.
Moreover, it was proved in \cite{swiech_wessels_2024} that if the Fr\'echet derivative in the spatial variable of the lift of {$\mathcal{V}$} is continuous then
$u_n(t,\mathbf{x})=\mathcal{V}(t,\mu_{\mathbf{x}})$. A similar observation has also been made in \cite{liao_meszaros_mou_zhou_2024,cecchin_daudin_jackson_martini_2024} for problems considered there.
We remark that (when $H={\mathbb R}^d$ and $A=0$) it was already noticed in \cite{germain_pham_warin_2022} that in the absence of idiosyncratic noise, if $\mathcal{V}$ is a smooth solution of \eqref{intro:HJB_on_Wasserstein_space} then $u_n(t,\mathbf{x}):=\mathcal{V}(t,\mu_{\mathbf{x}})$ is a solution of \eqref{finite_dimensional_hjb}. 

Finally, we refer the reader to \cite{cardaliaguet_delarue_lasry_lions_2019,carmona_delarue_2018,carmona_delarue_2018_2} for introductions to the theory of mean field games and control.

\paragraph{\textbf{Literature: infinite dimensional case.}} {Compared to the finite dimensional case, there are few results on mean field games, mean field control and optimal control of stochastic interacting particle systems  in infinite dimensional spaces. Some results have been obtained for mean field games in Hilbert spaces in \cite{federico_gozzi_ghilli_2024,federico_gozzi_swiech_2024,liu_firoozi_2024,firoozi_kratsios_yang_2025,fouque_zhang_2018,ricciardi_rosestolato_2024} 
and for mean field control problems in Hilbert spaces in \cite{cosso_gozzi_kharroubi_pham_rosestolato,djehiche_gozzi_zanco_zanella_2022,dumitrescu_oksendal_sulem_2018,gozzi_masiero_rosestolato_2024,spille_stannat_2025,tang_meng_wang_2019}.}  We  refer to \cite{shi_wang_yong_2013} for mean field control of stochastic Volterra integral equations, {\cite{meng_shen_2015} for control of mean-field jump-diffusion systems with delay, \cite{ma_shi_wang_2024} for partially observed stochastic control systems with delay,} and to \cite{buckdahn_li_li_xing_2025,ren_tan_touzi_yang_2023} for path-dependent mean-field control and games.  In particular, we point out that the papers   \cite{fouque_zhang_2018} (in the context of  linear quadratic mean field games with delays in the control), \cite{liu_firoozi_2024} (in the context of a linear quadratic mean field game in a Hilbert space)
seem to be the only ones to study the $n$-player game driven by a stochastic evolution equation  on an infinite dimensional Hilbert space and the convergence to the corresponding mean field limit, while all other papers study directly the limit mean-field problem. 

Finally, we refer to \cite{aurell_carmona_lauriere_2022,caines_huang_2020,Neuman_Tuschmann_2024} for mean-field games and to \cite{cao_lauriere_2025,decrescenzo_fuhrman_kharroubi_pham_2024,decrescenzo_defeo_pham_2025,defeo_mekkaoui,djete_2025,kharroubi_mekkaoui_pham_2025} for mean-field control under non-exchangeability/heterogeneity (or graphon interactions). In particular,  \cite{caines_huang_2020,Neuman_Tuschmann_2024} address the convergence in the graphon mean-field game case  and the very recent papers \cite{cao_lauriere_2025,djete_2025} in the graphon mean-field control case.
These are  infinite dimensional formulations, but very  different from ours, as the goal there is to model heterogeneity among agents and {the equations} do not {contain unbounded operators, which are typical features of  abstract evolution equations {in infinite dimensional spaces}.}   

Among all these papers, the only ones  which deal with  viscosity solutions in spaces of probability measures over infinite dimensional state spaces  seem to be  \cite{cosso_gozzi_kharroubi_pham_rosestolato} and \cite{decrescenzo_fuhrman_kharroubi_pham_2024}, {but the authors there do not provide any uniqueness results. This is indeed a very difficult problem even when  $H=\mathbb R^d$.
}

\paragraph{\textbf{Our theoretical contributions.}}
{It is evident from the literature review that the study of optimal control problems for interacting particle systems governed by stochastic evolution equations in infinite-dimensional Hilbert spaces, i.e. \eqref{state_equation}-\eqref{cost_functional}, and their mean-field limits (including convergence and uniqueness of solutions of the limiting PDE) is an open area of research}. The purpose of this paper is  to build a foundation for such a theory.

{As already explained before, to study our problem we generalize the approach from \cite{gangbo_mayorga_swiech_2021,mayorga_swiech_2023,swiech_wessels_2024}. Instead of dealing with the PDE \eqref{intro:HJB_on_Wasserstein_space} we work directly with the  ``lifted'' HJB \eqref{intro:liftedHJB}. However, here the HJB equations \eqref{finite_dimensional_hjb} (on the Hilbert space $H^n$) and \eqref{intro:liftedHJB} (on $E$) are infinite dimensional. Thus we face significant mathematical  difficulties introduced by such intrinsic infinite dimensionality:} 
\begin{itemize}
    \item we lose local compactness of the state space $H$,
    \item  our abstract SDEs in $H$ and $E$ do not admit 
{strong} solutions (i.e. we   {neither} have $x_i(s) \in \mathcal D(A)$, {nor} $X(s)\in  \mathcal D(\mathcal A)$); hence we need to deal with weaker notions of solutions as typical in infinite dimensional settings,
    \item all HJB equations \eqref{finite_dimensional_hjb}, \eqref{intro:liftedHJB} are intrinsically infinite dimensional and contain unbounded operators.
\end{itemize}
{To circumvent the lack of local compactness and the unboundedness of the operator $A$, we work with a weaker space $H_{-1}\supset H$, and use the theory of the so-called $B$-continuous viscosity solutions for both PDEs \eqref{finite_dimensional_hjb} and  \eqref{intro:liftedHJB}. We also work with mild solutions of the state SDEs.} {While these tools have been applied to address similar problems in Hilbert spaces, see e.g. \cite[Chapter 3]{fabbri_gozzi_swiech_2017}, we would like to emphasize that their application in this new context arising from control of interactive particle systems and its mean-field limit requires careful consideration:} for instance, we need to carefully study the semigroup of linear operators $e^{\mathcal A t}$ generated by $\mathcal A$, deal with the spaces of $H$- and $H_{-1}$-valued square integrable random variables $E= L^2(\Omega;H)$ and $E_{-1}:= L^2(\Omega;H_{-1})$, respectively, the spaces of probability measures on $H$ and $H_{-1}$, and introduce a suitable operator $\mathcal B : E \to E$ satisfying the so called weak $\mathcal B$-condition for $\mathcal A$. Moreover, due to a more general structure of the coefficients of the control problem, various properties of the Hamiltonian that are rather straightforward in the setting of \cite{gangbo_mayorga_swiech_2021,mayorga_swiech_2023,swiech_wessels_2024} require a more careful analysis in our case. For more details, see Section \ref{sec:assumptions}.

 {Despite these difficult technical challenges, we are able to generalize successfully} the corresponding results of \cite{swiech_wessels_2024} (and of the ones in \cite{gangbo_mayorga_swiech_2021,mayorga_swiech_2023} for (i), (ii)) to our infinite dimensional framework. In particular:
 \begin{enumerate}
     \item[(i)] In  Section \ref{sec:convergence}, Theorem \ref{theorem:convergence}, we prove that $u_n(t,\mathbf{x})$ converge locally uniformly to $\mathcal{V}(t,\mu_{\mathbf{x}})$, where $\mathcal{V}$ is the unique $L$-viscosity solution of \eqref{intro:HJB_on_Wasserstein_space}, that is, its ``lift'' $V$ is the unique $\mathcal B$-continuous viscosity solution of \eqref{intro:liftedHJB}. We then identify $V$ with the value function $U$ of the ``lifted'' limit optimal control problem in the Hilbert space $E$, { \eqref{Lifted_State_Equation}--\eqref{Lifted_Value_Function}.}
     \item[(ii)] {Imposing additional assumptions,} in Section \ref{sec:regularity},  we use probabilistic methods to prove that $U(t,\cdot)\in C^{1,1}(E_{-1})$, where $E_{-1}$ is the space of square integrable $H_{-1}$-valued random variables.
     \item[(iii)] Under the assumption that the Fr\'echet derivative in $E_{-1}$ of $V(t,\cdot)$ is continuous, we show in Section \ref{sec:projection}, Theorem \ref{theorem:projection_C11}, that   the value function $V$  projects precisely onto the value functions $u_n$, i.e. $u_n(t,\mathbf{x})={\mathcal{V}}(t,\mu_{\mathbf{x}})$. {Note that the differentiability assumption is in particular satisfied in the setting of (ii).}
     \item[(iv)] In
      Section \ref{sec:lifting_proj}, we  obtain relationships between optimal {controls} and optimal feedback controls for problems \eqref{state_equation}-\eqref{cost_functional} and \eqref{Lifted_State_Equation}--\eqref{Lifted_Value_Function}, {i.e. we prove that under suitable assumptions optimal (resp. optimal feedback) controls of the particle system correspond to optimal (resp. optimal feedback) controls of the ``lifted''infinite dimensional control problem started at the corresponding initial condition.}
 \end{enumerate}
 \paragraph{\textbf{Comparison with the  {state of the art}.}}
The proofs of the above results are much more challenging than the proofs of  the corresponding results in \cite{swiech_wessels_2024} where $H={\mathbb R}^d$, due to the deep mathematical difficulties explained above which require the development of new ad-hoc techniques for these new types of problems.  To the best of our knowledge, these are the first results of this kind for stochastic optimal control problems for interacting particle systems driven by stochastic evolution equations in Hilbert spaces. In particular, regarding (i), this seems to be the first uniqueness result for viscosity solutions of a mean-field PDE over an infinite dimensional state space (that we provide in terms of  uniqueness of $L$-viscosity solutions of  \eqref{intro:HJB_on_Wasserstein_space}, i.e.  in the sense of $\mathcal B$-continuous viscosity solution of the ``lifted'' HJB  \eqref{intro:liftedHJB}).
Compared to the corresponding mean-field game problem in  \cite{liu_firoozi_2024} where particles are modeled by abstract evolution equations on Hilbert spaces (or in  \cite{fouque_zhang_2018} for mean-field games with delays rewritten as abstract evolution equations on Hilbert spaces) in the linear-quadratic setting, we consider  general dynamics and cost functionals. {Regarding (iii) and (iv), we remark that such properties are not easy to obtain even in the case $H={\mathbb R}^d$ and even in that setting there are very few results of this kind.}

\paragraph{\textbf{Applications.}}
{In Section \ref{sec:applications}, we apply the developed theory} to problems arising in economics in the context of optimization for large companies: a (path-dependent) stochastic optimal control problem with delays arising in the context of optimal advertising, inspired by \cite{gozzi_masiero_rosestolato_2024}\footnote{see also \cite{ricciardi_rosestolato_2024} for a mean-field game framework with delays}, and a stochastic optimal control problem with vintage capital, inspired by \cite{cosso_gozzi_kharroubi_pham_rosestolato}, where the state equation is a stochastic partial differential equation. However, we remark  that the range of possible applications is much wider as explained at the beginning of the introduction, {spanning across} most applied sciences.


{To conclude, we believe that the manuscript contains substantial new mathematical insights and techniques that allow to tackle a new class of optimal control problems for interacting particle systems and their associated HJB equations. We hope that the paper will open a new line of research. The range of {potential} models and applications is {extensive}.}

\section{Notation}\label{sec:preliminaries}

Throughout this work, we use the following notation.

\begin{itemize}
    \item $H$ is a real, separable Hilbert space with inner product $\langle \cdot,\cdot \rangle$ and norm $|\cdot|$. For $\mathbf{x}\in H^n$, we denote $|\mathbf{x}|_{H^n} := ( \sum_{i=1}^n |x_i|^2 )^{1/2}$.
    \item Given a strictly positive $B\in S(H)$, we define the space $H_{-1}$ as the completion of $H$ with respect to the norm $|x|_{-1}^2 := \langle Bx,x\rangle$. 
    For more details, see \cite[Section 3.1.1]{fabbri_gozzi_swiech_2017}.  For $\mathbf{x}\in H_{-1}^n$, let $|\mathbf{x}|_{H_{-1}^n} := ( \sum_{i=1}^n |x_i|_{-1}^2 )^{1/2}$.
    \item For $r\in [1,2]$ and $\mathbf{x} = (x_1,\dots,x_n)\in H^n$, define
    \begin{equation}
        |{\bf x}|_{r}:=\frac{1}{n^{\frac{1}{r}}}\left(\sum_{i=1}^n |x_i|^r\right)^{\frac{1}{r}},\qquad\text{and}\qquad |{\bf x}|_{-1,r}:=\frac{1}{n^{\frac{1}{r}}}\left(\sum_{i=1}^n |x_i|_{-1}^r\right)^{\frac{1}{r}}.
    \end{equation}
        \item We set $\Omega = (0,1)$ and we denote $E:= L^2(\Omega;H)$ to be the space of all square-integrable $H$-valued random variables. We denote by $\llangle \cdot,\cdot \rrangle$ and $\|\cdot\|$ the canonical inner product and norm, respectively. Moreover, let $E_{-1} := L^2(\Omega;H_{-1})$. It is endowed with its natural inner product and norm denoted by $\llangle \cdot,\cdot \rrangle_{-1}$ and $\|\cdot\|_{-1}$.
            \item For $r\in [1,2]$, we denote by $\mathcal{P}_r(H)$ the set of all Borel probability measures on $H$ with finite $r$-th moment, i.e., $\mathcal{M}_r(\mu):= \int_{H} |x|^r \mu(\mathrm{d}x) < \infty$. Let $d_r:\mathcal{P}_r(H) \times \mathcal{P}_r(H) \to \mathbb{R}$ denote the Wasserstein distance, i.e.,
\begin{equation}\label{definition_wasserstein_distance}
    \begin{split}
        d_r(\mu,\beta) &= \inf_{\gamma\in \Gamma(\mu,\beta)} \left (\int_{H\times H} |x-y|^r \gamma(\mathrm{d}x,\mathrm{d}y) \right )^{\frac1r}\\
        &= \inf \left \{ \left ( \int_{\Omega} |X(\omega)-Y(\omega)|^r \mathrm{d}\omega \right )^{\frac1r}: \; X,Y\in L^r(\Omega;H), \; X_{\texttt{\#}} \mathcal{L}^1 = \mu, \; Y_{\texttt{\#}} \mathcal{L}^1 = \beta \right \}.
    \end{split}
    \end{equation}
    Here, $\Gamma(\mu,\beta)$ is the set of all probability measures on $H \times H$ with first and second marginals $\mu$ and $\beta$, respectively. The proof of the second equality in \eqref{definition_wasserstein_distance} can be found for instance in \cite[Theorem 3.9]{gangbo_2018}. Note that $d^r_r(\mu,\delta_0) = \mathcal{M}_r(\mu)$. For $\mathbf x \in H^n$, we denote $$\mu_{\mathbf{x}}:=\frac 1 n \sum_{i=1}^n \delta_{x_i}.$$ Then, we have $d_r(\mu_{\mathbf{x}},\mu_{\mathbf{y}}) = \inf_{\sigma} |\mathbf{x} - \mathbf{y}_{\sigma} |_r$, where the infimum is taken over all permutation $\sigma$ of $\{1,\dots,n\}$, and $\mathbf{y}_{\sigma} = (y_{\sigma(1)},\dots, y_{\sigma(n)})$. Let $\mathcal{P}_r(H_{-1})$, $\mathcal{M}_{-1,r}$ and $d_{-1,r}$ be defined analogously with $H$ replaced by $H_{-1}$. {Extensive accounts of the theory of mass transport and Wasserstein spaces in abstract metric spaces can be found in \cite{ambrosio_gigli_savare_2008,villani_2009}.}
    \item We denote by $\mathcal{L}^1$ the Lebesgue measure on $\mathbb{R}$. For $X\in E$, let $X_{\texttt{\#}} \mathcal{L}^1$ be the pushforward measure on $H$.
    \item For $n\in \mathbb{N}$, let $A_i^n = (\frac{i-1}{n},\frac{i}{n}) \subset (0,1) =\Omega$, $i=1,\dots,n$. Define the lift of $\mathbf{x} = (x_1,\dots,x_n)\in H^n$ by
    \begin{equation}\label{E_n}
        X^{\mathbf{x}}_n = \sum_{i=1}^n x_i \mathbf{1}_{A^n_i},
    \end{equation}
   and let $E_n$ denote the subspace of $E$ consisting of random variables of the form \eqref{E_n}. Note that 
   \begin{equation}\label{eq:Xnxpushforw=mux}
       (X^{\mathbf{x}}_n)_{\texttt{\#}} \mathcal{L}^1 = \mu_{\mathbf{x}}.
   \end{equation}
   \item For real, separable Hilbert spaces $\Xi$ and $H$, $L(\Xi;H)$ denotes the space of bounded linear operators from $\Xi$ to $H$ equipped with the norm $\|\cdot\|_{L(\Xi;H)}$. If $\Xi=H$ we will just write $L(H)$. We write $L_2(\Xi,H)$ to denote the space of Hilbert--Schmidt operators in $L(\Xi;H)$ and use $|\cdot|_{L_2(\Xi,H)}$ to denote its norm.
   \item $\Lambda$ is a real, separable Hilbert space with inner product $\langle \cdot,\cdot\rangle_{\Lambda}$ and norm $|\cdot|_{\Lambda}$, and let $\tilde{\Lambda} \subset \Lambda$ be convex. For $\mathbf{q} \in \Lambda^n$, we write $|\mathbf{q}|_{\Lambda^n} := ( \sum_{i=1}^n |q_i|_{\Lambda}^2 )^{1/2}$. We denote $\mathcal{E} := L^2(\Omega;\tilde{\Lambda})\subset L^2(\Omega;\Lambda)$ and use $\llangle \cdot,\cdot \rrangle_{\Lambda}$ and $\|\cdot\|_{\Lambda}$ to denote, respectively, the natural inner product and norm in this space.
\end{itemize}

\section{The Control Problems}\label{sec:control}

Let $\Xi$ be a real, separable Hilbert space. For every $0\leq t <T$, we consider a reference probability space $(\Omega',\mathcal{F},\mathcal{F}^t_s,\mathbb{P},W)$, where $W$ is a cylindrical Wiener process in $\Xi$ (see \cite[Definition 2.7]{fabbri_gozzi_swiech_2017} for the definition of a reference probability space.

\subsection{The Finite Particle System Control Problem}\label{subsec:finite_particle-system}
For the finite particle control problem \eqref{state_equation}-\eqref{cost_functional} we have $f: H \times \mathcal{P}_2(H) \times \Lambda \to H$, $\sigma: H \times \mathcal{P}_2(H) \to L_2(\Xi,H)$, $l: H \times \mathcal{P}_2(H) \times \Lambda \to \mathbb{R}$ and $\mathcal{U}_T:H\times \mathcal{P}_2(H) \to \mathbb{R}$. The precise assumptions on these functions will be given in Section \ref{sec:assumptions}. The control processes 
$\mathbf{a}(\cdot)=(a_1(\cdot),\dots,a_n(\cdot))\in \Lambda^n_t$, where the set of admissible controls $\Lambda^n_t$ consists of all processes $\mathbf{a}(\cdot):[t,T]\times \Omega^{\prime} \to \tilde{\Lambda}^n$ which are $\mathcal{F}^t_s$-progressively measurable, and such that 
\[
\| \mathbf{a}(\cdot) \|_{M^2(t,T;\Lambda^n)}^2 := \mathbb{E} \left[ \int_t^T \sum_{i=1}^n |a_i(s)|_{\Lambda}^{2} \mathrm{d}s \right] < \infty.
\]
We remind the reader that under reasonable assumptions (in particular if Assumptions \ref{Assumption_f_sigma_lipschitz} and \ref{Assumption_running_terminal_cost} are satisfied), the above described control problem does not depend on the choice of a reference probability space, see \cite[Section 2.3.2]{fabbri_gozzi_swiech_2017}.

\subsection{The Lifted Limit Control Problem}\label{subsection_lifted_control_problem}

Recall that $E=L^2(\Omega;H)$ and $\mathcal{E} = L^2(\Omega;\tilde{\Lambda})$, where $\Omega=(0,1)$. We now consider the infinite dimensional SDE
\begin{equation}\label{Lifted_State_Equation}
	\begin{cases}
		\mathrm{d}X(s) = [\mathcal{A}X(s) + F(X(s),a(s))] \mathrm{d}s + \Sigma(X(s)) \mathrm{d}W(s),\quad s\in [t,T]\\
		X(t) = X \in E,
	\end{cases}
\end{equation}
where $\mathcal{A} : \mathcal{D}(\mathcal{A}) \subset E \to E$ is defined as $\mathcal{A}X(\omega) := A(X(\omega))$, $F: E \times \mathcal{E}\to E$ is defined as $F(X,Q)(\omega) := f(X(\omega),X_{\texttt{\#}}\mathcal{L}^1,Q(\omega))$, and $\Sigma : E \to L_2(\Xi , E)$ is defined as $\Sigma(X)(\omega) := \sigma(X(\omega),X_{\texttt{\#}}\mathcal{L}^1)$. We consider the cost functional
\begin{equation}
	J(t,X;a(\cdot)) = \mathbb{E} \left [ \int_t^T L(X(s),a(s)) \mathrm{d}s + U_T(X(T)) \right ],
\end{equation}
where $L: E\times \mathcal{E} \to \mathbb{R}$ and $U_T:E\to\mathbb{R}$ are given by
\begin{equation}
	L(X,Q) := \int_{\Omega} l(X(\omega),X_{\texttt{\#}}\mathcal{L}^1,Q(\omega)) \mathrm{d}\omega, \quad U_T(X) := \int_{\Omega} \mathcal{U}_T(X(\omega),X_{\texttt{\#}}\mathcal{L}^1) \mathrm{d}\omega.
\end{equation}
The set of admissible controls $\Lambda_t$ consists of all processes $a(\cdot):[t,T]\times \Omega^{\prime} \to \mathcal{E}$ which are $\mathcal{F}^t_s$-progressively measurable, and
\[
\| a(\cdot) \|_{M^2(t,T;\mathcal{E})}^2 := \mathbb{E} \left[ \int_t^T \|a(s)\|_{\Lambda}^2 \mathrm{d}s \right] < \infty. 
\]
In this case, the value function $U:[0,T]\times E \to \mathbb{R}$ is defined as
\begin{equation}\label{Lifted_Value_Function}
	U(t,X) := \inf_{a(\cdot)\in \Lambda_t} J(t,X;a(\cdot)),
\end{equation}
and the corresponding ``lifted'' HJB equation is
\begin{equation}\label{lifted_HJB_equation}
\begin{cases}
	\partial_t V + \frac12 \text{Tr}(\Sigma(X)(\Sigma(X))^{\ast} D^2V) {+} \llangle \mathcal{A}X,DV \rrangle  {+} \tilde{\mathcal{H}}(X, DV) =0,\quad (t,X)\in (0,T)\times E\\
	V(T,X) = U_T(X), \quad X\in E,
\end{cases}
\end{equation}
where the lifted Hamiltonian $\tilde{\mathcal{H}}: E\times E \to \mathbb{R}$ is given by
\begin{equation}\label{lifted_hamiltonian_definition}
	\tilde{\mathcal{H}}(X,P) := \int_{\Omega} \mathcal{H}(X(\omega),X_{\texttt{\#}}\mathcal{L}^1,P(\omega)) \mathrm{d}\omega.
\end{equation}

\section{Assumptions and Preliminaries}\label{sec:assumptions}

\begin{assumption}\label{Assumption_A_maximal_dissipative}
    The operator $A:\mathcal{D}(A)\subset H \to H$ is a linear, densely defined, maximal dissipative operator.
\end{assumption}

Under this assumption, $A$ generates a $C_0$-semigroup of contractions $(e^{sA})_{s\geq 0}$ on $H$.

\begin{assumption}\label{Assumption_weak_B_condition}(Weak $B$-condition)
There exists a strictly positive, self-adjoint operator $B\in L(H)$ such that $A^{\ast} B\in L(H)$, and $-A^{\ast} B+c_0 B \geq 0$, for some $c_0\geq 0$.
\end{assumption}

\begin{assumption}\label{Assumption_B_compact}
    The operator $B\in L(H)$ in Assumption \ref{Assumption_weak_B_condition} is compact.
\end{assumption}

\begin{assumption}\label{Assumption_f_sigma_lipschitz}
    Let $r \in [1,2)$. Let $f: H \times \mathcal{P}_2(H) \times \Lambda \to H$ be given by $f(x,\mu,q) = f_1(x,\mu) + f_2(x,\mu,q)$ for some $f_1: H\times \mathcal{P}_2(H) \to H$ and $f_2: H \times \mathcal{P}_2(H) \times \Lambda \to H$. Moreover, let $\sigma: H \times \mathcal{P}_2(H) \to L_2(\Xi,H)$. Let the following conditions be satisfied.
	\begin{enumerate}[label=(\roman*)]
		\item There exists a constant $C\geq 0$ such that
		\begin{align}\label{Lipschitz_f_weak_norm}
			\langle f(x,\mu,q) - f(y,\beta,q), B(x-y) \rangle &\leq C( |x-y|^2_{-1} + d^2_{-1,r}(\mu,\beta))\\
           | f_1(x,\mu) - f_1(y,\beta) |+| f_2(x,\mu,q) - f_2(y,\beta,q) |&\leq C( |x-y| + d_{r}(\mu,\beta))
        \end{align}
        for all $q\in \tilde{\Lambda}$, $x,y\in H$ and $\mu,\beta\in \mathcal{P}_2(H)$. 
        \item There exists a constant $C\geq 0$ such that
        \begin{align}
            |f(x,\mu,q)|_{-1} &\leq C\left ( 1 + |x|_{-1} + \mathcal{M}_{-1,r}^{\frac{1}{r}}(\mu) + |q|_{\Lambda} \right )\\
            |f_2(x,\mu,q)| &\leq C(1+|q|_{\Lambda})
        \end{align}
        for all $x\in H$, $\mu\in \mathcal{P}_2(H)$ and $q\in \tilde{\Lambda}$.  
        \item The function $H\times E \times \Lambda \ni (x,X,q) \mapsto \tilde{f}(x,X,q) := f(x,X_{\texttt{\#}}\mathcal{L}^1,q)$ is Fr\'{e}chet differentiable with Fr\'{e}chet derivative $D \tilde f=(D_x \tilde f,D_X \tilde f, D_q \tilde f) :  H\times E \times \Lambda \to L(H) \times L(E;H)\times L(\Lambda;H)$ and there is a constant $C\geq 0$ such that
        \begin{equation}
            |( D\tilde{f}(x,X,p) - D\tilde{f}(y,Y,q))(x-y,X-Y,p-q)|_{H_{-1}^3}\leq C ( |x-y|_{-1}^2 + \|X-Y\|_{-1}^2 + |p-q|_{\Lambda}^2 )
        \end{equation}
        for all $x,y\in H$, $X,Y \in E$, and $p,q\in \tilde{\Lambda}$.
		\item There exists a constant $C\geq 0$ such that	\begin{equation}\label{Lipschitz_sigma}
			|\sigma(x,\mu) - \sigma(y,\beta)|_{L_2(\Xi,H)} \leq C( |x-y|_{-1} + d_{-1,r}(\mu,\beta)),
		\end{equation}
		for all $x,y\in H$ and $\mu,\beta\in \mathcal{P}_2(H)$. 
        \item The function $H\times E \ni (x,X) \mapsto \tilde{\sigma}(x,X) := \sigma(x,X_{\texttt{\#}}\mathcal{L}^1)$ is Fr\'{e}chet differentiable with Fr\'{e}chet derivative $D \tilde \sigma=(D_x \tilde \sigma,D_X \tilde \sigma) :  H\times E  \to L(H;L_2(\Xi,H)) \times L(E;L_2(\Xi,H))$ and there is a constant $C\geq 0$ such that
        \begin{equation}
        \begin{split}
            &| ( D_x \tilde{\sigma}(x,X) - D_x \tilde{\sigma}(y,Y) ) (x-y) |_{L_2(\Xi,H_{-1})} + | ( D_X \tilde{\sigma}(x,X) - D_X \tilde{\sigma}(y,Y) ) ( X - Y ) |_{L_2(\Xi,H_{-1})}\\
            &\leq C ( |x-y|_{-1}^2 + \|X-Y\|_{-1}^2 )
        \end{split}
        \end{equation}
        for all $x,y\in H$, and $X,Y \in E$.
	\end{enumerate}
\end{assumption}

\begin{assumption}\label{Assumption_running_terminal_cost}
    Let $r\in [1,2)$. Let the running cost $l: H \times \mathcal{P}_2(H) \times \Lambda \to \mathbb{R}$ be given by $l(x,\mu,q) = l_1(x,\mu) + l_2(x,\mu,q)$ for some continuous functions $l_1: H \times \mathcal{P}_2(H) \to \mathbb{R}$ and $l_2: H \times \mathcal{P}_2(H) \times \Lambda \to \mathbb{R}$. Let the terminal cost function $\mathcal{U}_T:H\times \mathcal{P}_2(H) \to \mathbb{R}$. Let the following conditions be satisfied.
	\begin{enumerate}[label=(\roman*)]
		\item There exists a constant $C\geq 0$ such that
		\begin{equation}
			|l_1(x,\mu) - l_1(y,\beta)| + |l_2(x,\mu,q) - l_2(y,\beta,q)| \leq C(|x-y|_{-1}+d_{-1,r}(\mu,\beta))
		\end{equation}
		for all $q \in \tilde{\Lambda}$, $x,y\in H$ and $\mu,\beta\in \mathcal{P}_2(H)$.
        \item There exist constants $C_1\geq 0$ and $C_2,C_3 > 0$ such that
        \begin{equation}
            -C_1 + C_2 |q|_{\Lambda}^2 \leq l_2(x,\mu,q) \leq C_1 + C_3 |q|_{\Lambda}^2
        \end{equation}
        for all $x\in H$, $\mu\in \mathcal{P}_r(H)$.
        \item The function $H\times E \times \Lambda \ni (x,X,q) \mapsto \tilde{l}(x,X,q) := l(x,X_{\texttt{\#}}\mathcal{L}^1,q)$ is Fr\'{e}chet differentiable with Fr\'{e}chet derivative $D \tilde l=(D_x \tilde l,D_X \tilde l, D_q \tilde l) :  H\times E \times \Lambda \to L(H;\mathbb R) \times L(E;\mathbb R)\times L(\Lambda;\mathbb R)$ and there is a constant $C\geq 0$ such that
        \begin{equation}
        \begin{split}
            |(D\tilde{l}(x,X,p) - D\tilde{l}(y,Y,q))(x-y,X-Y,p-q) |\leq C ( |x-y|_{-1}^2 + \|X-Y\|_{-1}^2 + |p-q|_{\Lambda}^2 )
        \end{split}
        \end{equation}
        for all $x,y\in H$, $X,Y \in E$, and $p,q\in \tilde{\Lambda}$.
		\item There exists a constant $C\geq 0$ such that
		\begin{equation}
			|\mathcal{U}_T(x,\mu) - \mathcal{U}_T(y,\beta)| \leq C(|x-y|_{-1} +d_{-1,r}(\mu,\beta)),
		\end{equation}
        for all $x,y\in H$, $\mu,\beta\in \mathcal{P}_2(H)$.
        \item The function $H\times E \ni (x,X) \mapsto \tilde{\mathcal{U}}_T(x,X) := \mathcal{U}_T(x,X_{\texttt{\#}}\mathcal{L}^1)$ is Fr\'echet differentiable with Fr\'echet derivative $D \tilde {\mathcal{U}}_T=(D_x \tilde {\mathcal{U}}_T,D_X \tilde {\mathcal{U}}_T) :  H\times E  \to L(H;\mathbb R) \times L(E;\mathbb R)$, and there is a constant $C\geq 0$ such that
        \begin{equation}
            |(D\tilde{\mathcal{U}}_T(x,X) - D\tilde{\mathcal{U}}_T(y,Y))(x-y,X-Y) | \leq C ( |x-y|_{-1}^2 + \| X-Y\|_{-1}^2 ),
        \end{equation}
        for all $x,y\in H$, $X,Y\in E$.
	\end{enumerate}
\end{assumption}

\begin{assumption}\label{Assumption_running_cost_nu}
    There exist constants $C_1,C_2,\nu\geq 0$ such that the map
    \begin{equation}
        H \times E \times \tilde{\Lambda} \ni (x,X,q) \mapsto \tilde{l}(x,X,q) + C_1|x|_{-1}^2 + C_2\|X\|_{-1}^2 - \nu |q|_{\Lambda}^2
    \end{equation}
    is convex, where $\tilde{l}$ is defined as in Assumption \ref{Assumption_running_terminal_cost}.
\end{assumption}

\begin{assumption}\label{Assumption_Linear_State_Equation}
	\begin{enumerate}[label=(\roman*)]
        \item The function $f: H \times \mathcal{P}_2(H) \times \Lambda \to H$ is such that its lift $F: E \times \mathcal{E} \to E$ (as defined in Subsection \ref{subsection_lifted_control_problem}) is affine linear, and there is a constant $C\geq 0$ such that
        \begin{align}
            \| F(X,P) - F(Y,Q) \| &\leq C \left ( \|X-Y\| + \|P-Q\|_{\Lambda} \right )\\
            \| F(X,P) - F(Y,Q) \|_{-1} &\leq C \left ( \|X-Y\|_{-1} + \|P-Q\|_{\Lambda} \right )
        \end{align}
        for all $X,Y\in E$, $P,Q\in \mathcal{E}$.
		\item The function $\sigma: H \times \mathcal{P}_2(H) \to L_2(\Xi,H)$ is such that its lift $\Sigma: E \to L_2(\Xi,E)$ (as defined in Subsection \ref{subsection_lifted_control_problem}) is affine linear, and there is a constant $C\geq 0$ such that
        \begin{equation}
            \| \Sigma(X) - \Sigma(Y) \|_{L_2(\Xi,E)} \leq C \|X-Y\|_{-1}
        \end{equation}
        for all $X,Y\in E$.
		\item The functions $l: H \times \mathcal{P}_2(H) \times\Lambda \to \mathbb{R}$ and $\mathcal{U}_T : H \times \mathcal{P}_2(H) \to \mathbb{R}$ are such that $\tilde l :H\times E \times \mathcal{E} \to \mathbb{R}$ and $\tilde{\mathcal{U}}_T: H\times E \to \mathbb{R}$ (as defined in Assumption \ref{Assumption_running_terminal_cost}) are convex.
	\end{enumerate}
\end{assumption}

\subsection{Properties of the Hamiltonian \texorpdfstring{$\mathcal{H}$}{H} }\label{subsection_hamiltonian}

Recall the definition of the Hamiltonian $\mathcal{H}$ from equation \eqref{hamiltonian}. We define $\mathcal{H}_n : H^n \times \mathcal{P}_2(H) \times H^n \to \mathbb{R}$ as
\begin{equation}
    \mathcal{H}_n(\mathbf{x},\mu,\mathbf{p}) := \frac1n \sum_{i=1}^n \mathcal{H}(x_i,\mu,n p_i) = \frac1n \inf_{\mathbf{q}\in \tilde{\Lambda}^n} \sum_{i=1}^n \left ( \langle f(x_i,\mu,q_i), np_i \rangle + l(x_i,\mu,q_i) \right ).
\end{equation}
For $m>0$, we define $\mathcal{H}^m_n : H^n\times \mathcal{P}_2(H) \times H^n \to \mathbb{R}$ as
\begin{equation}
    \mathcal{H}^m_n(\mathbf{x},\mu,\mathbf{p}) := \frac1n \inf_{\substack{\mathbf{q} \in \tilde{\Lambda}^n\\ |\mathbf{q}|_{\Lambda^n} \leq m}} \sum_{i=1}^n \left ( \langle f(x_i,\mu,q_i), np_i \rangle + l(x_i,\mu,q_i) \right ).
\end{equation}

\begin{lemma}\label{lemma_hamiltonian_n}
    Let Assumptions \ref{Assumption_f_sigma_lipschitz}(ii) and \ref{Assumption_running_terminal_cost}(ii) be satisfied. Then, for every $\tilde C>0$, there is a constant $K>0$ such that
    \begin{equation}
        \mathcal{H}_n(\mathbf{x},\mu,\mathbf{p}) = \mathcal{H}_n^{K\sqrt{n}}(\mathbf{x},\mu,\mathbf{p})
    \end{equation}
    for all $n\in \mathbb{N}$, $\mu\in\mathcal{P}_2(H)$, $\mathbf{x},\mathbf{p} \in H^n$, $|\mathbf{p}|_{H^n} \leq \tilde C/\sqrt{n}$.
\end{lemma}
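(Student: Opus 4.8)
The plan is to show that the infimum over all of $\tilde\Lambda^n$ in the definition of $\mathcal H_n$ is already realized on the ball $\{|\mathbf q|_{\Lambda^n}\le K\sqrt n\}$, so that it coincides with the infimum defining $\mathcal H_n^{K\sqrt n}$. First I would isolate the genuinely control-dependent part of the objective: writing $f=f_1+f_2$ and $l=l_1+l_2$ as in Assumptions~\ref{Assumption_f_sigma_lipschitz} and~\ref{Assumption_running_terminal_cost}, the sum $\sum_{i=1}^n\big(\langle f_1(x_i,\mu),np_i\rangle+l_1(x_i,\mu)\big)$ does not depend on $\mathbf q$ and factors out of the infimum. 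Hence, setting $g_i(q):=\langle f_2(x_i,\mu,q),np_i\rangle+l_2(x_i,\mu,q)$, it is enough to prove that $\inf_{\mathbf q\in\tilde\Lambda^n}\sum_i g_i(q_i)=\inf_{|\mathbf q|_{\Lambda^n}\le K\sqrt n}\sum_i g_i(q_i)$ for a suitable $K>0$ depending only on $\tilde C$ and the constants $C,C_1,C_2,C_3$ from Assumptions~\ref{Assumption_f_sigma_lipschitz}(ii) and~\ref{Assumption_running_terminal_cost}(ii) (and, in particular, not on $n$, $\mathbf x$, $\mu$). If $\tilde\Lambda=\emptyset$ both infima are $+\infty$ and there is nothing to prove, so we may fix some $\bar q\in\tilde\Lambda$.

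Next come the two elementary estimates. Using $|f_2(x,\mu,q)|\le C(1+|q|_\Lambda)$ from Assumption~\ref{Assumption_f_sigma_lipschitz}(ii) and $l_2(x,\mu,q)\ge -C_1+C_2|q|_\Lambda^2$ from Assumption~\ref{Assumption_running_terminal_cost}(ii), together with Cauchy--Schwarz and the hypothesis $|\mathbf p|_{H^n}\le\tilde C/\sqrt n$ (which yields $\sum_i|p_i|\le\sqrt n\,|\mathbf p|_{H^n}\le\tilde C$ and $\sum_i|q_i|_\Lambda|p_i|\le|\mathbf q|_{\Lambda^n}|\mathbf p|_{H^n}\le\tilde C\,|\mathbf q|_{\Lambda^n}/\sqrt n$), one obtains the coercivity bound
\begin{equation*}
\sum_{i=1}^n g_i(q_i)\ \ge\ C_2\,|\mathbf q|_{\Lambda^n}^2-C\tilde C\sqrt n\,|\mathbf q|_{\Lambda^n}-n\big(C\tilde C+C_1\big)
\end{equation*}
for all $\mathbf q\in\tilde\Lambda^n$. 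On the other hand, using also $l_2(x,\mu,q)\le C_1+C_3|q|_\Lambda^2$, the constant tuple $\bar{\mathbf q}:=(\bar q,\dots,\bar q)$ satisfies
\begin{equation*}
\sum_{i=1}^n g_i(\bar q)\ \le\ n\big(C\tilde C(1+|\bar q|_\Lambda)+C_1+C_3|\bar q|_\Lambda^2\big)=:nC_\ast ,
\end{equation*}
with $C_\ast$ independent of $n$, $\mathbf x$, $\mu$.

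Finally I would solve the resulting scalar quadratic inequality in $t=|\mathbf q|_{\Lambda^n}$: since $C_2>0$, the right-hand side of the coercivity bound exceeds $nC_\ast$ precisely when $t$ is larger than the larger root of $C_2t^2-C\tilde C\sqrt n\,t-n(C\tilde C+C_1+C_\ast)=0$, and this root equals $K_0\sqrt n$ with $K_0=\big(C\tilde C+\sqrt{C^2\tilde C^2+4C_2(C\tilde C+C_1+C_\ast)}\big)/(2C_2)$ independent of $n$. Setting $K:=\max\{K_0,|\bar q|_\Lambda\}$, we have $|\bar{\mathbf q}|_{\Lambda^n}=|\bar q|_\Lambda\sqrt n\le K\sqrt n$, so $\inf_{|\mathbf q|_{\Lambda^n}\le K\sqrt n}\sum_i g_i(q_i)\le\sum_i g_i(\bar q)$, whereas for every $\mathbf q$ with $|\mathbf q|_{\Lambda^n}>K\sqrt n$ we get $\sum_i g_i(q_i)>nC_\ast\ge\sum_i g_i(\bar q)$. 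Hence the unconstrained and the constrained infima coincide; reinstating the $\mathbf q$-independent terms and the factor $1/n$ yields $\mathcal H_n(\mathbf x,\mu,\mathbf p)=\mathcal H_n^{K\sqrt n}(\mathbf x,\mu,\mathbf p)$ on the stated set. There is no genuinely hard step here: the only point requiring attention is the bookkeeping of the powers of $n$, so that the radius comes out as $K\sqrt n$ with $K$ truly independent of $n$ (and of $\mathbf x,\mu$), which is exactly what the two Cauchy--Schwarz estimates $\sum_i|p_i|\le\tilde C$ and $\sum_i|q_i|_\Lambda|p_i|\le\tilde C\,|\mathbf q|_{\Lambda^n}/\sqrt n$ are designed to ensure.
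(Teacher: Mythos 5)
Your proposal is correct and follows essentially the same route as the paper: decompose off the control-independent $f_1,l_1$ terms, use the growth bound on $f_2$ and the quadratic coercivity of $l_2$ together with $|\mathbf p|_{H^n}\le \tilde C/\sqrt n$ to get a lower bound growing like $C_2|\mathbf q|_{\Lambda^n}^2 - C\sqrt n\,|\mathbf q|_{\Lambda^n} - Cn$, and compare with the value at a fixed control to conclude the infimum is attained on $\{|\mathbf q|_{\Lambda^n}\le K\sqrt n\}$. The only cosmetic difference is that the paper absorbs the cross term via Young's inequality and states the comparison of the two bounds directly, while you solve the quadratic explicitly; the content is identical.
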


\begin{proof}
    We have
    \begin{align}
        \mathcal{H}_n(\mathbf{x},\mu,\mathbf{p}) &=  \frac{1}{n} \inf_{\mathbf{q} \in \tilde{\Lambda}^n} \sum_{i=1}^n ( \langle f_1(x_i,\mu) , np_i \rangle + \langle f_2(x_i,\mu,q_i) , n p_i \rangle + l_1(x_i,\mu) + l_2(x_i,\mu,q_i) )\\
        \mathcal{H}^m_n(\mathbf{x},\mu,\mathbf{p}) &=  \frac{1}{n} \inf_{\substack{\mathbf{q} \in \tilde{\Lambda}^n\\ |\mathbf{q}|_{\Lambda^n} \leq m}} \sum_{i=1}^n ( \langle f_1(x_i,\mu) , np_i \rangle + \langle f_2(x_i,\mu,q_i) , n p_i \rangle + l_1(x_i,\mu) + l_2(x_i,\mu,q_i) ).
    \end{align}
    Note that using Assumptions \ref{Assumption_f_sigma_lipschitz}(ii) and \ref{Assumption_running_terminal_cost}(ii), we obtain
    \begin{equation}
        \sum_{i=1}^n ( \langle f_2(x_i,\mu,q_i) , n p_i \rangle + l_2(x_i,\mu,q_i))\geq \sum_{i=1}^n \left ( -C(1+|q_i|_{\Lambda} ) n |p_i| -C_1 + C_2 |q_i|_{\Lambda}^2 \right )\geq -Cn + C|\mathbf{q}|_{\Lambda^n}^2
    \end{equation}
    for all $|\mathbf{p}|_{H^n}\leq \tilde C/\sqrt{n}$. Moreover,
    \begin{equation}
        \inf_{\mathbf{q} \in \tilde{\Lambda}^n} \sum_{i=1}^n ( \langle f_2(x_i,\mu,q_i) , n p_i \rangle + l_2(x_i,\mu,q_i)) \leq C(1+n)
    \end{equation}
    for all $|\mathbf{p}|_{H^n}\leq C/\sqrt{n}$. Thus, $\mathcal{H}_n$ and $\mathcal{H}^m_n$ indeed coincide for $m=K\sqrt{n}$ if $K>0$ is chosen sufficiently large, which concludes the proof.
\end{proof}

Let us introduce $\mathcal{H}^m : H \times \mathcal{P}_2(H) \times H \to \mathbb{R}$ as
\begin{equation}
    \mathcal{H}^m(x,\mu,p) := \inf_{\substack{q \in \tilde{\Lambda}\\ |q|_{\Lambda} \leq m}} \left \{ \langle f(x,\mu,q),p \rangle + l(x,\mu,q) \right \}.
\end{equation}

For $n=1$ in the previous lemma, we obtain the following result.

\begin{corollary}\label{lemma_hamiltonian}
    Let Assumptions \ref{Assumption_f_sigma_lipschitz}(ii) and \ref{Assumption_running_terminal_cost}(ii) be satisfied. Then, for every $C>0$, there is a constant $K>0$ such that
    \begin{equation}
        \mathcal{H}(x,\mu,p) = \mathcal{H}^{K}(x,\mu,p)
    \end{equation}
    for all $\mu\in \mathcal{P}_2(H)$, $x ,p\in H$, $|p|\leq C$.
\end{corollary}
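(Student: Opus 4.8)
The plan is to observe that the statement is nothing but the specialization of Lemma~\ref{lemma_hamiltonian_n} to $n=1$. Indeed, taking $n=1$ in the definitions of $\mathcal{H}_n$ and $\mathcal{H}^m_n$ one has, for $\mathbf{x}=x\in H=H^1$ and $\mathbf{p}=p\in H^1$, the identities $\mathcal{H}_1(\mathbf{x},\mu,\mathbf{p})=\mathcal{H}(x,\mu,p)$ and $\mathcal{H}^m_1(\mathbf{x},\mu,\mathbf{p})=\mathcal{H}^m(x,\mu,p)$, while $|\mathbf{p}|_{H^1}=|p|$ and $K\sqrt{n}=K$. Hence, given $C>0$, applying Lemma~\ref{lemma_hamiltonian_n} with $\tilde C=C$ yields a constant $K>0$ with $\mathcal{H}(x,\mu,p)=\mathcal{H}^{K}(x,\mu,p)$ for all $\mu\in\mathcal{P}_2(H)$ and all $x,p\in H$ with $|p|\le C$, which is precisely the assertion.

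For completeness I would also spell out the short self-contained argument, which is the $n=1$ instance of the computation in the proof of Lemma~\ref{lemma_hamiltonian_n}. Write $f=f_1+f_2$, $l=l_1+l_2$ as in Assumptions~\ref{Assumption_f_sigma_lipschitz} and~\ref{Assumption_running_terminal_cost}. Since the terms $\langle f_1(x,\mu),p\rangle+l_1(x,\mu)$ do not depend on the control, it suffices to treat $\phi(q):=\langle f_2(x,\mu,q),p\rangle+l_2(x,\mu,q)$ and to show that $\inf_{q\in\tilde{\Lambda}}\phi(q)=\inf_{q\in\tilde{\Lambda},\,|q|_{\Lambda}\le K}\phi(q)$ for $K$ large. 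Fix any $\bar q\in\tilde{\Lambda}$. By Assumptions~\ref{Assumption_f_sigma_lipschitz}(ii) and~\ref{Assumption_running_terminal_cost}(ii), for $|p|\le C$ we get the coercive lower bound
\[
\phi(q)\ \ge\ -C(1+|q|_{\Lambda})\,|p|-C_1+C_2|q|_{\Lambda}^2\ \ge\ C_2|q|_{\Lambda}^2-C'|q|_{\Lambda}-C''
\]
with $C',C''$ depending only on $C$ (and the constants in the assumptions), together with the upper bound $\phi(\bar q)\le C(1+|\bar q|_{\Lambda})|p|+C_1+C_3|\bar q|_{\Lambda}^2=:M$, where $M$ depends only on $C$ and $\bar q$ — crucially, not on $x$ or $\mu$. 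Choosing $K\ge|\bar q|_{\Lambda}$ large enough that $C_2K^2-C'K-C''>M$ forces $\phi(q)>M\ge\inf_{|q|_{\Lambda}\le K}\phi(q)$ for every $q$ with $|q|_{\Lambda}>K$, so the two infima coincide.

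The only point needing attention — and it is a minor one — is that $K$ must be chosen uniformly in $(x,\mu)$; this is immediate because both the growth estimate for $f_2$ in Assumption~\ref{Assumption_f_sigma_lipschitz}(ii) and the two-sided estimate for $l_2$ in Assumption~\ref{Assumption_running_terminal_cost}(ii) hold with constants independent of $x$ and $\mu$. There is thus no genuine obstacle: the corollary follows either directly from Lemma~\ref{lemma_hamiltonian_n} or from the one-line coercivity argument above.
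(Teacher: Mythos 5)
Your proposal is correct and matches the paper's own proof, which simply obtains the corollary by taking $n=1$ in Lemma \ref{lemma_hamiltonian_n} (with $\tilde C = C$, so that $|\mathbf{p}|_{H^1}\le \tilde C/\sqrt{1}$ becomes $|p|\le C$ and $K\sqrt{n}=K$). The additional self-contained coercivity argument you spell out is just the $n=1$ instance of the lemma's proof, so there is nothing to add.
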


\begin{lemma}\label{lemma:continuity_H}
    Let Assumptions \ref{Assumption_f_sigma_lipschitz}(i)(ii) and \ref{Assumption_running_terminal_cost}(i)(ii) be satisfied. Then, there is a constant $C\geq 0$ such that
    \begin{align}
        | \mathcal{H}(x,\mu,p) - \mathcal{H}(x,\mu,p') | &\leq C ( 1 + |x| + \mathcal{M}_r^{\frac{1}{r}}(\mu) + |p| + |p'| ) |p-p'| \label{hamiltonian_first_inequality}\\
        | \mathcal{H}(x,\mu,p) - \mathcal{H}(y,\beta,p) | &\leq C (|x-y| + d_r(\mu,\beta) ) (1+|p|).\label{hamiltonian_second_inequality}
    \end{align}
    for all $x,y\in H$, $\mu,\beta\in \mathcal{P}_2(H)$ and $p,p'\in H$.
\end{lemma}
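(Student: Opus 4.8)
The plan is to exploit the fact that, by Corollary \ref{lemma_hamiltonian}, on any bounded set of $p$'s the infimum defining $\mathcal H$ can be taken over a bounded subset $\{|q|_\Lambda\le K\}$ of $\tilde\Lambda$, and then to use the elementary inequality $|\inf_q g_1(q)-\inf_q g_2(q)|\le\sup_q|g_1(q)-g_2(q)|$ together with the Lipschitz/growth bounds on $f$ and $l$. More precisely, to prove \eqref{hamiltonian_first_inequality} I would first note that for fixed $x,\mu$, writing $g_q(p):=\langle f(x,\mu,q),p\rangle+l(x,\mu,q)$, we have, for any admissible $q$ in the relevant bounded index set,
\[
|g_q(p)-g_q(p')|=|\langle f(x,\mu,q),p-p'\rangle|\le |f(x,\mu,q)|\,|p-p'|.
\]
Here one must be slightly careful: Assumption \ref{Assumption_f_sigma_lipschitz}(ii) controls $|f|_{-1}$ and $|f_2|$, not $|f|$ directly, so I would split $f=f_1+f_2$, use $|f_2(x,\mu,q)|\le C(1+|q|_\Lambda)$, and bound $|f_1(x,\mu)|$ by combining the Lipschitz estimate $|f_1(x,\mu)-f_1(0,\delta_0)|\le C(|x|+d_r(\mu,\delta_0))=C(|x|+\mathcal M_r^{1/r}(\mu))$ with a fixed constant $|f_1(0,\delta_0)|$. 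This yields $|f(x,\mu,q)|\le C(1+|x|+\mathcal M_r^{1/r}(\mu)+|q|_\Lambda)$.

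For \eqref{hamiltonian_first_inequality} the index set must be chosen so that it works simultaneously for $p$ and $p'$: apply Corollary \ref{lemma_hamiltonian} with $C:=|p|+|p'|$ (or a bound thereof) to get $K=K(|p|+|p'|)$ such that $\mathcal H(x,\mu,p)=\mathcal H^K(x,\mu,p)$ and $\mathcal H(x,\mu,p')=\mathcal H^K(x,\mu,p')$. Actually, to keep the constant uniform it is cleaner to observe that the optimal $q$ in $\mathcal H(x,\mu,p)$ satisfies $|q|_\Lambda\le C(1+|p|)$ directly from the quadratic lower bound $l_2(x,\mu,q)\ge -C_1+C_2|q|_\Lambda^2$ and the linear-in-$q$ upper estimate $|f_2|\le C(1+|q|_\Lambda)$, exactly as in the proof of Lemma \ref{lemma_hamiltonian_n}: if $|q|_\Lambda$ is too large the objective exceeds the value at $q=0$ (or any fixed admissible point). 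Then over the common bounded index set $\{|q|_\Lambda\le C(1+|p|+|p'|)\}$ one has
\[
|\mathcal H(x,\mu,p)-\mathcal H(x,\mu,p')|\le \sup_{|q|_\Lambda\le C(1+|p|+|p'|)}|f(x,\mu,q)|\,|p-p'|\le C(1+|x|+\mathcal M_r^{1/r}(\mu)+|p|+|p'|)|p-p'|,
\]
which is \eqref{hamiltonian_first_inequality}.

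For \eqref{hamiltonian_second_inequality}, fix $p$ and restrict the infimum to $\{|q|_\Lambda\le C(1+|p|)\}$, valid for both $(x,\mu)$ and $(y,\beta)$ by the same argument. Then
\[
|\mathcal H(x,\mu,p)-\mathcal H(y,\beta,p)|\le \sup_{|q|_\Lambda\le C(1+|p|)}\Big(|\langle f(x,\mu,q)-f(y,\beta,q),p\rangle|+|l(x,\mu,q)-l(y,\beta,q)|\Big).
\]
The second term is bounded by $C(|x-y|_{-1}+d_{-1,r}(\mu,\beta))\le C(|x-y|+d_r(\mu,\beta))$ using Assumption \ref{Assumption_running_terminal_cost}(i) and $|\cdot|_{-1}\le C|\cdot|$, $d_{-1,r}\le Cd_r$. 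For the first term I split $f=f_1+f_2$ and use the second estimate in \eqref{Lipschitz_f_weak_norm}, namely $|f_1(x,\mu)-f_1(y,\beta)|+|f_2(x,\mu,q)-f_2(y,\beta,q)|\le C(|x-y|+d_r(\mu,\beta))$, to get $|\langle f(x,\mu,q)-f(y,\beta,q),p\rangle|\le C(|x-y|+d_r(\mu,\beta))|p|$. Combining and using $|p|\le 1+|p|$ gives \eqref{hamiltonian_second_inequality}.

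The only mildly delicate point — and the one I'd treat as the main obstacle — is making sure the restriction of the infimum to a bounded set of controls is justified uniformly in the remaining variables, i.e. that the bound $|q|_\Lambda\le C(1+|p|)$ on the (near-)optimizers holds with a constant independent of $x,\mu$ (and of $y,\beta$). This follows from the structural split $l=l_1+l_2$ with $l_2$ quadratically coercive in $q$ and $f_2$ at most linear in $q$, together with the $\mu$-uniform upper bound $l_2(x,\mu,q)\le C_1+C_3|q|_\Lambda^2$ in Assumption \ref{Assumption_running_terminal_cost}(ii), exactly the mechanism already used for Lemma \ref{lemma_hamiltonian_n} and Corollary \ref{lemma_hamiltonian}; once that reduction is in place, everything else is the elementary $\sup$-of-differences bound for infima plus the assumed Lipschitz and growth estimates.
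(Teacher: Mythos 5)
Your proof is correct. For \eqref{hamiltonian_second_inequality} it is essentially the paper's own argument: split off the $f_1,l_1$ part, bound the difference of the two infima by the supremum over $q$ of the differences, and use the uniform-in-$q$ Lipschitz estimates (the truncation to $\{|q|_\Lambda\le C(1+|p|)\}$ is harmless but not even needed there, since the estimates of Assumptions \ref{Assumption_f_sigma_lipschitz}(i) and \ref{Assumption_running_terminal_cost}(i) are uniform in $q$). For \eqref{hamiltonian_first_inequality} you take a genuinely different route. The paper first proves the quadratic growth bound $|\mathcal{H}(x,\mu,p)|\le C(1+|x|+\mathcal{M}_r^{\frac1r}(\mu))(1+|p|)+C|p|^2$, then uses concavity of $\mathcal{H}(x,\mu,\cdot)$ to deduce that every element of the superdifferential $D_p^+\mathcal{H}(x,\mu,p)$ is bounded by $C(1+|x|+\mathcal{M}_r^{\frac1r}(\mu)+|p|)$, and finally integrates the a.e.\ derivative of the concave function $\theta\mapsto \mathcal{H}(x,\mu,p'+\theta(p-p'))$ along the segment. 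You instead localize the infimum to $\{|q|_\Lambda\le C(1+|p|+|p'|)\}$, using exactly the coercivity-of-$l_2$ versus linear-growth-of-$f_2$ mechanism of Lemma \ref{lemma_hamiltonian_n}, and then apply the elementary $|\inf-\inf|\le\sup$ bound, exploiting that each $q$-indexed function is affine in $p$ with slope $f(x,\mu,q)$ of linear growth in $|q|_\Lambda$ (your derivation of $|f_1(x,\mu)|\le C(1+|x|+\mathcal{M}_r^{\frac1r}(\mu))$ from the strong Lipschitz estimate against $(0,\delta_0)$ is fine). Your approach is more elementary, avoiding the superdifferential and a.e.-differentiability machinery; the paper's approach avoids the explicit localization of near-minimizers but needs the two-sided quadratic bound. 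One point you handled correctly and which is genuinely needed: Corollary \ref{lemma_hamiltonian} only provides some $K$ for each fixed bound on $|p|$, without quantifying how $K$ depends on that bound, so your direct re-derivation of the bound $|q|_\Lambda\le C(1+|p|)$ on near-minimizers, uniform in $(x,\mu)$ (which requires comparing with a fixed $q_0\in\tilde\Lambda$ and the $\mu$-uniform upper bound on $l_2$), is indeed necessary to get the stated linear dependence of the constant on $|p|+|p'|$.
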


\begin{proof}
    Let us first observe that
    \begin{equation}\label{hamiltonian_quadratic_in_p}
        | \mathcal{H}(x,\mu,p) | \leq C(1+|x|+ \mathcal{M}_r^{\frac{1}{r}}(\mu) ) (1+|p|) + C|p|^2.
    \end{equation}
    for all $x\in H$, $\mu\in \mathcal{P}_2(H)$ and $p\in H$. Indeed, by Assumptions \ref{Assumption_f_sigma_lipschitz}(ii) and \ref{Assumption_running_terminal_cost}(ii), we have
    \begin{equation}
    \begin{split}
        \mathcal{H}(x,\mu,p) &\leq | \langle f_1(x,\mu),p\rangle | + |l_1(x,\mu) | + \inf_{q\in \tilde{\Lambda}} \left \{ | \langle f_2(x,\mu,q),p\rangle | + l_2(x,\mu,q) \right \}\\
        &\leq C(1+|x| + \mathcal{M}_r^{\frac{1}{r}}(\mu) ) (1+|p|) + \inf_{q\in \tilde{\Lambda}} \left \{ C(1 + |q|_{\Lambda} ) |p| + C_1 + C_3 |q|_{\Lambda}^2 \right \}\\
        &\leq C(1+|x|+ \mathcal{M}_r^{\frac{1}{r}}(\mu) ) (1+|p|),
    \end{split}
    \end{equation}
    \begin{equation}
    \begin{split}
        -\mathcal{H}(x,\mu,p) &\leq | \langle f_1(x,\mu),p\rangle | + |l_1(x,\mu)| + \sup_{q\in\tilde{\Lambda}} \left \{ | \langle f_2(x,\mu,q),p\rangle | - l_2(x,\mu,q) \right \}\\
        &\leq C(1+|x|+\mathcal{M}_r^{\frac{1}{r}}(\mu) ) (1+ |p|) + \sup_{q\in\tilde{\Lambda}} \left \{ C(1 + |q|_{\Lambda}) |p| + C_1 -C_2 |q|_{\Lambda}^2 \right \}\\
        &\leq C(1+|x|+ \mathcal{M}_r^{\frac{1}{r}}(\mu) ) (1+|p|) + C|p|^2,
    \end{split}
    \end{equation}
    where we used Young's inequality in the last step. This concludes the proof of \eqref{hamiltonian_quadratic_in_p}.
    
    Now, for $x\in H$, $\mu\in \mathcal{P}_2(H)$, and $p\in H$, since $\mathcal{H}$ is concave in its last variable, we have
    \begin{equation}
        - C|p'|^2 - C(1+|x|+\mathcal{M}_r^{\frac{1}{r}}(\mu))(1+ |p'|) \leq \mathcal{H}(x,\mu,p') \leq \mathcal{H}(x,\mu,p) + \langle \xi , p'-p \rangle,
    \end{equation}
    for all $p'\in H$ and any $\xi = \xi(x,\mu,p) \in D^+_p \mathcal{H}(x,\mu,p)$, where $D^+_p \mathcal{H}$ denotes the superdifferential of $\mathcal{H}$ in the last variable, see e.g. \cite[Definition E.1]{fabbri_gozzi_swiech_2017}. Let $p'=p-(1+|p|) \frac{\xi}{|\xi|}$. Note that $|p'| \leq 2 |p|+1$. Thus, using \eqref{hamiltonian_quadratic_in_p}, we obtain
    \begin{equation}
       (1+ |p|) |\xi(x,\mu,p)| \leq C(1+|x|+ \mathcal{M}_r^{\frac{1}{r}}(\mu) )(1+ |p|) + C|p|^2,
    \end{equation}
    from which we deduce that
    \begin{equation}\label{bound_superdifferential}
        | \xi(x,\mu,p) | \leq C(1+|x|+\mathcal{M}_r^{\frac{1}{r}}(\mu) + |p|),
    \end{equation}
    for all $x\in H$, $\mu\in \mathcal{P}_2(\mu)$, and $p\in H$.
    Now, we consider the function $\Psi: \mathbb{R} \to \mathbb{R}$, $\Psi(\theta) := \mathcal{H}(x,\mu,p'+\theta(p-p'))$. Since $\mathcal{H}$ is concave in its last varaible, $\Psi$ is also concave. Thus, $\Psi$ is Lipschitz continuous on $[0,1]$, hence differentiable almost everywhere. Let $\theta\in (0,1)$ be a point of differentiability. Then, we have
    \begin{equation}
        \Psi(\theta + h) - \Psi(\theta) = \mathcal{H}(x,\mu,p'+(\theta+h)(p-p')) + \mathcal{H}(x,\mu,p'+\theta(p-p')) \leq \langle \xi,h(p-p') \rangle.
    \end{equation}
    Hence, for all $\xi = \xi(x,\mu,p'+\theta(p-p')) \in D_p^+ \mathcal{H}(x,\mu,p'+\theta(p-p'))$, we have
    \begin{equation}
        \langle \xi,p-p'\rangle \leq \lim_{h\uparrow 0} \frac{\Psi(\theta_h)-\Psi(\theta)}{h} = \Psi'(\theta) = \lim_{h\downarrow 0} \frac{\Psi(\theta_h)-\Psi(\theta)}{h} \leq \langle \xi, p-p' \rangle, 
    \end{equation}
    i.e., $|\Psi'(\theta)| \leq |\xi(x,\mu,p'+\theta(p-p'))| |p-p'|$. Therefore,
    \begin{equation}
    \begin{split}
        &| \mathcal{H}(x,\mu,p) - \mathcal{H}(x,\mu,p') | = | \Psi(1) - \Psi(0)| = \left | \int_0^1 \Psi'(\theta) \mathrm{d}\theta \right |\\
        &\leq \int_0^1 |\xi(x,\mu,p'+\theta(p-p'))| |p-p'| \mathrm{d}\theta \leq C (1+|x|+\mathcal{M}_r^{\frac{1}{r}}(\mu) + |p| + |p'| ) |p-p'|,
    \end{split}
    \end{equation}
    which concludes the proof of \eqref{hamiltonian_first_inequality}.

    For the proof of inequality \eqref{hamiltonian_second_inequality}, we observe that
    \begin{equation}\label{hamiltonian_second_inequality_proof}
    \begin{split}
        | \mathcal{H}(x,\mu,p) - \mathcal{H}(y,\beta,p) |&\leq | \langle f_1(x,\mu) - f_1(y,\beta), p \rangle | + | l_1(x,\mu) - l_1(y,\beta) |\\
        &\quad + \left | \inf_{q\in \tilde{\Lambda}} \left \{ \langle f_2(x,\mu,q),p \rangle + l_2(x,\mu,q) \right \} - \inf_{q\in \tilde{\Lambda}} \left \{ \langle f_2(y,\beta,q),p \rangle + l_2(y,\beta,q) \right \} \right |\\
        &\leq C ( |x-y| + d_r(\mu,\beta) ) (1+|p|)\\
        &\quad + \sup_{q\in\tilde{\Lambda}} \left \{ | \langle f_2(x,\mu,q) - f_2(y,\beta,q),p \rangle | + |l_2(x,\mu,q) - l_2(y,\beta,q) | \right \}\\
        &\leq C ( |x-y| + d_r(\mu,\beta) ) (1+|p|).
    \end{split}
    \end{equation}
    where we used Assumptions \ref{Assumption_f_sigma_lipschitz}(i) and \ref{Assumption_running_terminal_cost}(i).
\end{proof}
We need the following representation of the lifted Hamiltonian $\tilde{\mathcal{H}}$ defined in \eqref{lifted_hamiltonian_definition}.
\begin{prop}
    Let Assumptions \ref{Assumption_f_sigma_lipschitz}(i)(ii), \ref{Assumption_running_terminal_cost}(i)(ii) be satisfied. Then
    \begin{equation}\label{eq:aaa1}
        \tilde{\mathcal{H}}(X,P) = \inf_{Q\in \mathcal{E}} \left \{ \llangle F(X,Q), P \rrangle + L(X,Q) \right \}
    \end{equation}
    for all $X,P \in E$.
\end{prop}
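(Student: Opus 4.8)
The plan is to establish the two inequalities in \eqref{eq:aaa1} separately, the easy one by Jensen-type pointwise domination and the hard one by a measurable-selection argument combined with an a priori bound on approximate minimizers. Throughout, write $\mu := X_{\texttt{\#}}\mathcal{L}^1$ and
\[
g(\omega,q) := \langle f(X(\omega),\mu,q),P(\omega)\rangle + l(X(\omega),\mu,q),
\]
so that, by the definitions in Subsection \ref{subsection_lifted_control_problem}, $\llangle F(X,Q),P\rrangle + L(X,Q) = \int_{\Omega} g(\omega,Q(\omega))\,\d\omega$ for every $Q\in\mathcal E$, while $q\mapsto g(\omega,q)$ is continuous on $\tilde\Lambda$ for each $\omega$, $\omega\mapsto g(\omega,q)$ is measurable for each $q$, and $\inf_{q\in\tilde\Lambda} g(\omega,q) = \mathcal H(X(\omega),\mu,P(\omega))$. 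The inequality $\tilde{\mathcal H}(X,P)\le \inf_{Q\in\mathcal E}\{\llangle F(X,Q),P\rrangle + L(X,Q)\}$ is then immediate: for any admissible $Q$ the integrand $g(\omega,Q(\omega))$ dominates $\mathcal H(X(\omega),\mu,P(\omega))$ for a.e.\ $\omega$ by the very definition \eqref{hamiltonian}, and all quantities are integrable thanks to the quadratic bound \eqref{hamiltonian_quadratic_in_p} and the growth assumptions, using $X,P\in E$ and $\mathcal M_r(\mu)=\int_\Omega|X(\omega)|^r\,\d\omega<\infty$; integrating gives the claim.

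For the reverse inequality I would fix $\varepsilon>0$ and construct $Q_\varepsilon\in\mathcal E$ with $\int_\Omega g(\omega,Q_\varepsilon(\omega))\,\d\omega \le \tilde{\mathcal H}(X,P)+\varepsilon$, then let $\varepsilon\downarrow 0$. The preliminary step is an a priori bound on $\varepsilon$-minimizers: splitting $f=f_1+f_2$, $l=l_1+l_2$ and using $|f_2(\cdot,q)|\le C(1+|q|_\Lambda)$ from Assumption \ref{Assumption_f_sigma_lipschitz}(ii) together with $-C_1+C_2|q|_\Lambda^2 \le l_2(\cdot,q)\le C_1+C_3|q|_\Lambda^2$ from Assumption \ref{Assumption_running_terminal_cost}(ii) (the $q$-free part $\langle f_1,P\rangle+l_1$ cancels in the comparison), and applying Young's inequality — this is exactly the localization underlying Corollary \ref{lemma_hamiltonian} — one gets a constant $\hat C>0$, independent of $\omega,q,\varepsilon$, such that
\[
g(\omega,q)\le \mathcal H(X(\omega),\mu,P(\omega))+\varepsilon \quad\Longrightarrow\quad |q|_\Lambda^2 \le \hat C\big(1+|P(\omega)|^2+\varepsilon\big).
\]

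Next comes the selection. Let $\{q_k\}_{k\ge 1}$ be a countable dense subset of $\tilde\Lambda$ (separable as a subset of $\Lambda$) and set
\[
\Omega_k := \Big\{\omega\in\Omega : g(\omega,q_k)\le \mathcal H(X(\omega),\mu,P(\omega))+\varepsilon,\ |q_k|_\Lambda^2 \le 2\hat C\big(1+|P(\omega)|^2+\varepsilon\big)\Big\}.
\]
Each $\Omega_k$ is measurable, because $\omega\mapsto g(\omega,q_k)$ is measurable, $\omega\mapsto\mathcal H(X(\omega),\mu,P(\omega))$ is measurable (as $\mathcal H$ is continuous in $(x,p)$ by Lemma \ref{lemma:continuity_H}), and $\omega\mapsto|P(\omega)|$ is measurable; and $\bigcup_k\Omega_k=\Omega$ up to a null set, since for a.e.\ $\omega$ there is $q^*\in\tilde\Lambda$ with $g(\omega,q^*)<\mathcal H(X(\omega),\mu,P(\omega))+\varepsilon$ — hence $|q^*|_\Lambda^2\le\hat C(1+|P(\omega)|^2+\varepsilon)$ by the a priori bound — and by density together with continuity of $q\mapsto g(\omega,q)$ and of $|\cdot|_\Lambda$ some $q_k$ near $q^*$ lies in $\Omega_k$ (the factor $2$ leaving room for the approximation). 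Disjointifying, $\tilde\Omega_1:=\Omega_1$, $\tilde\Omega_k:=\Omega_k\setminus\bigcup_{j<k}\Omega_j$, define $Q_\varepsilon := \sum_{k\ge1} q_k\,\mathbf{1}_{\tilde\Omega_k}$; this is strongly measurable with values in $\tilde\Lambda$, satisfies $\|Q_\varepsilon\|_\Lambda^2 = \int_\Omega|Q_\varepsilon(\omega)|_\Lambda^2\,\d\omega \le 2\hat C\int_\Omega(1+|P(\omega)|^2+\varepsilon)\,\d\omega<\infty$ because $P\in E$, so $Q_\varepsilon\in\mathcal E$, and $g(\omega,Q_\varepsilon(\omega))\le \mathcal H(X(\omega),\mu,P(\omega))+\varepsilon$ for a.e.\ $\omega$. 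Integrating over $\Omega$ (all terms integrable by \eqref{hamiltonian_quadratic_in_p}, the growth bounds and $X,P\in E$, and recalling $\mathcal L^1(\Omega)=1$) gives $\llangle F(X,Q_\varepsilon),P\rrangle + L(X,Q_\varepsilon)\le \tilde{\mathcal H}(X,P)+\varepsilon$.

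The main obstacle is precisely this interchange of infimum and integral: the delicate point is not measurability of the selection but the requirement that it be square-integrable, which is what the a priori bound on $\varepsilon$-minimizers (the quantitative form of Corollary \ref{lemma_hamiltonian}) is designed to guarantee; the ``countable dense subset $+$ disjointification'' device then makes the measurable selection itself elementary, avoiding any appeal to heavier measurable-selection theorems. One could alternatively invoke a standard selection theorem for Carath\'eodory/normal integrands (e.g.\ from Castaing--Valadier or Ekeland--Temam), but the self-contained route above seems preferable here.
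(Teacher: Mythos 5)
Your overall architecture is sound: the easy inequality by pointwise domination, and the coercivity-based a priori bound on $\varepsilon$-minimizers (which is indeed the quantitative content behind Corollary \ref{lemma_hamiltonian}) is a clean substitute for the paper's truncation onto $\Omega_R=\{\omega:\max(|X(\omega)|,|P(\omega)|)>R\}$, since it delivers square-integrability of the selection directly from $P\in E$. However, one step fails under the stated hypotheses: your density-in-the-control-space argument needs $q\mapsto g(\omega,q)=\langle f(X(\omega),\mu,q),P(\omega)\rangle+l(X(\omega),\mu,q)$ to be continuous on $\tilde\Lambda$, and Assumption \ref{Assumption_f_sigma_lipschitz}(i)(ii) gives no continuity of $f_2$ in the control variable — only Lipschitz continuity in $(x,\mu)$ uniformly in $q$ and the growth bound $|f_2(x,\mu,q)|\le C(1+|q|_\Lambda)$. (For $l_2$ you are fine, since continuity is stated in the preamble of Assumption \ref{Assumption_running_terminal_cost}; note the asymmetry with Assumption \ref{Assumption_f_sigma_lipschitz}, where continuity in $q$ only enters through the differentiability in part (iii), which is not among this proposition's hypotheses.) Without continuity in $q$, the set $\{q\in\tilde\Lambda: g(\omega,q)<\mathcal{H}(X(\omega),\mu,P(\omega))+\varepsilon\}$ need not be open, your countable dense family $\{q_k\}$ may miss it entirely, and then $\bigcup_k\Omega_k$ need not cover $\Omega$: already for $f_2(x,\mu,q)=\phi(q)$ with $\phi$ bounded, Borel and nowhere continuous, $\inf_k g(\omega,q_k)$ can strictly exceed $\inf_{q\in\tilde\Lambda}g(\omega,q)$, so the constructed $Q_\varepsilon$ is not $\varepsilon$-optimal.

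The paper's proof is arranged precisely to avoid any regularity in $q$: after discarding $\Omega_R$ (handled by a fixed control $a_0$ and integrability of $\mathcal H$), it partitions $\bar B_R\times\bar B_R\subset H\times H$ into countably many Borel cells $D_i$ of small diameter, chooses one near-optimal control $a_i$ at a representative point $(x_i,p_i)$ of each cell, and transfers near-optimality across the cell using only the continuity in $(x,p)$ supplied by Assumptions \ref{Assumption_f_sigma_lipschitz}(i), \ref{Assumption_running_terminal_cost}(i) and Lemma \ref{lemma:continuity_H}; the resulting bound $|a_i|_\Lambda\le K$ makes $Q_\varepsilon$ bounded, hence admissible. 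So the repair for your route is either to add continuity of $f_2$ in $q$ (true in the paper's applications and implied by Assumption \ref{Assumption_f_sigma_lipschitz}(iii), but not granted here), or to discretize the $(X(\omega),P(\omega))$-variables rather than the control space, as the paper does; in the latter form your $\varepsilon$-minimizer estimate would still be a nice alternative to the truncation step, as it controls $\|Q_\varepsilon\|_\Lambda$ without forcing the selection to be bounded.
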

\begin{proof}
    The inequality ``$\leq$'' in \eqref{eq:aaa1} is obvious, so we only prove the opposite inequality. Given $R>0$, we denote $\Omega_R = \{ \omega\in \Omega : \max(|X(\omega)|,|P(\omega)|) >R \}$. We choose $a_0\in \tilde\Lambda$ and denote $\mu := X_{\texttt{\#}} \mathcal{L}^1$. For $\varepsilon>0$, let $R=R_{\varepsilon}$ be such that
    \begin{equation}
     \left|\int_{\Omega_R} \left ( \langle f(X(\omega),\mu,a_0), P(\omega) \rangle + L(X(\omega),\mu,a_0) \right ) \mathrm{d}\omega\right|+       \int_{\Omega_R} | \mathcal{H} (X(\omega),X_{\texttt{\#}} \mathcal{L}^1,P(\omega)) | \mathrm{d}\omega < \varepsilon.
    \end{equation}
   Given $\delta >0$, we cover $\bar{B}_R \times \bar{B}_R \subset H\times H$ by countably many non-empty, disjoint Borel sets $D_i \subset \bar{B}_R \times \bar{B}_R$, $i\in\mathbb{N}$, with $\text{diam}(D_i) <\delta$, and we choose $(x_i,p_i) \in D_i$ arbitrary. Due to Lemma \ref{lemma:continuity_H} we may choose $\delta$ sufficiently small such that
    \begin{equation}
      | \mathcal{H}(x,\mu,p) - \mathcal{H}(x_i,\mu,p_i) | < \varepsilon
    \end{equation}
    for all $(x,p)\in D_i$. For $i\in\mathbb{N}$, let $a_i\in \tilde{\Lambda}$ be such that
    \begin{equation}
        \mathcal{H}(x_i,\mu,p_i) > \langle f(x_i,\mu,a_i) ,p_i \rangle + l(x_i,\mu,a_i) - \varepsilon.
    \end{equation}
    Since $|x_i|,|p_i| \leq R$, we have $|a_i|_{\Lambda} \leq K$ for some $K>0$, which only depends on $R$ and $\mu$. By Assumptions \ref{Assumption_f_sigma_lipschitz}(i), \ref{Assumption_running_terminal_cost}(i), we can take $\delta>0$ sufficiently small such that
    \begin{equation}
 | \langle f(x,\mu,a_i),p \rangle + l(x,\mu,a_i) - ( \langle f(x_i,\mu,a_i), p_i \rangle + l(x_i,\mu,a_i) ) | < \varepsilon
    \end{equation}
    for all $(x,p)\in D_i$, $i\in\mathbb{N}$. Let $\Omega^i = \{ \omega \in \Omega:(X(\omega),P(\omega)) \in D_i \}$, $i\in \mathbb{N}$, 
    and set
    \begin{equation}
        Q_{\varepsilon}(\omega) = \sum_{i=1}^{\infty} a_i {\mathbf 1}_{\Omega^i}(\omega) + a_0 {\mathbf 1}_{\Omega_R}(\omega).
    \end{equation}
    Then,
    \begin{equation}
    \begin{split}
        \llangle F(X,Q_{\varepsilon}), P \rrangle + L(X,Q_{\varepsilon}) &= \sum_{i=1}^{\infty} \int_{\Omega^i} \left ( \langle f(X(\omega),\mu,a_i), P(\omega) \rangle + L(X(\omega),\mu,a_i) \right ) \mathrm{d}\omega\\
        &\quad + \int_{\Omega_R} \left ( \langle f(X(\omega),\mu,a_0), P(\omega) \rangle + L(X(\omega),\mu,a_0) \right ) \mathrm{d}\omega\\
        &\leq \sum_{i=1}^{\infty} \int_{\Omega^i} \left ( \langle f(x_i,\mu,a_i) , p_i \rangle + l(x_i,\mu,a_i) \right ) \mathrm{d}\omega + 2\varepsilon \\
        &\leq \sum_{i=1}^{\infty} \int_{\Omega^i} \mathcal{H}(x_i,\mu,p_i) \mathrm{d}\omega + 3 \varepsilon \leq \sum_{i=1}^{\infty} \int_{\Omega^i} \mathcal{H}(X(\omega),\mu,P(\omega)) \mathrm{d}\omega + 4\varepsilon\\
        &\leq \int_{\Omega} \mathcal{H}(X(\omega),\mu,P(\omega)) \mathrm{d}\omega + 5 \varepsilon.
    \end{split}
    \end{equation}
   This yields
    \begin{equation}
        \llangle F(X,Q_{\varepsilon}) , P \rrangle + L(X,Q_{\varepsilon}) \leq \tilde{\mathcal{H}}(X,P) + 5\varepsilon
    \end{equation}
    which concludes the proof.
\end{proof}

\subsection{{The Operators \texorpdfstring{$\mathcal{A}$}{\mathcal A}, \texorpdfstring{$e^{t\mathcal{A}}$}{e^{t\mathcal A}}}, \texorpdfstring{${\mathcal{B}}$}{\mathcal B}.  }\label{subsection_operator_A}

Throughout this subsection, we work under Assumptions \ref{Assumption_A_maximal_dissipative} and \ref{Assumption_weak_B_condition}. Recall that $\mathcal{A}: \mathcal{D}(\mathcal{A}) \subset E\to E$ is defined as $\mathcal{A}(X)(\omega) := A(X(\omega))$ with domain $\mathcal{D}(\mathcal{A}) = L^2(\Omega;\mathcal{D}(A))$, where $A:\mathcal{D}(A)\subset H\to H$ is a maximal dissipative operator.
\begin{lemma}
    The operator $\mathcal{A}: \mathcal{D}(\mathcal{A}) \subset E\to E$ is maximal dissipative.
\end{lemma}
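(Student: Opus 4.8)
The plan is to establish maximal dissipativity in the equivalent $m$-dissipative sense, by checking three things: that $\mathcal{D}(\mathcal{A})$ is dense in $E$, that $\mathcal{A}$ is dissipative, and that $\mathrm{Range}(I-\mathcal{A}) = E$; by the Lumer--Phillips theorem this also yields that $\mathcal{A}$ generates a $C_0$-semigroup of contractions $(e^{t\mathcal{A}})_{t\ge 0}$ on $E$, which is why we may speak of $e^{t\mathcal{A}}$ in this subsection. Dissipativity is immediate: for $X \in \mathcal{D}(\mathcal{A}) = L^2(\Omega;\mathcal{D}(A))$ one has $X(\omega)\in\mathcal{D}(A)$ for a.e.\ $\omega$, hence by dissipativity of $A$ (Assumption~\ref{Assumption_A_maximal_dissipative})
\[
\llangle \mathcal{A}X,X\rrangle = \int_\Omega \langle A X(\omega),X(\omega)\rangle\,\mathrm{d}\omega \le 0.
\]
Density is also routine: $\mathcal{D}(\mathcal{A})$ contains every finite sum $\sum_k x_k\mathbf{1}_{B_k}$ with $x_k\in\mathcal{D}(A)$ and $B_k\subset\Omega$ Borel, and since $\mathcal{D}(A)$ is dense in $H$ and $H$-valued simple functions are dense in $E$, such elements are dense in $E$.

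For the range condition I would exploit that $A$ is $m$-dissipative, so the resolvent $(I-A)^{-1}\in L(H)$ is well defined with $\|(I-A)^{-1}\|_{L(H)}\le 1$ and satisfies $A(I-A)^{-1} = (I-A)^{-1} - I$. Given $Y\in E$, set $X(\omega) := (I-A)^{-1}Y(\omega)$. Since $H$ is separable, $Y$ is strongly measurable, and composition with the continuous map $(I-A)^{-1}$ keeps $X$ strongly measurable; the bound $|X(\omega)|\le |Y(\omega)|$ gives $X\in E$. As $(I-A)^{-1}$ maps $H$ into $\mathcal{D}(A)$, we have $X(\omega)\in\mathcal{D}(A)$ for every $\omega$, and by the resolvent identity $AX(\omega) = (I-A)^{-1}Y(\omega) - Y(\omega)$, which is again strongly measurable and satisfies $|AX(\omega)|\le 2|Y(\omega)|$; hence $AX\in E$ and therefore $X\in L^2(\Omega;\mathcal{D}(A)) = \mathcal{D}(\mathcal{A})$. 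Finally $(I-\mathcal{A})X(\omega) = (I-A)X(\omega) = Y(\omega)$ for a.e.\ $\omega$, so $Y\in\mathrm{Range}(I-\mathcal{A})$. If one wishes to record closedness of $\mathcal{A}$ explicitly it then follows automatically, since dissipativity gives $\|(I-\mathcal{A})X\|\ge\|X\|$ and the range condition makes $(I-\mathcal{A})^{-1}$ a bounded, everywhere-defined, hence closed, operator.

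The only point that needs a little attention is the verification $X\in\mathcal{D}(\mathcal{A})$: membership in $L^2(\Omega;\mathcal{D}(A))$ requires, after equipping $\mathcal{D}(A)$ with the graph norm, that both $X$ and $AX$ belong to $L^2(\Omega;H)$ with $\omega\mapsto AX(\omega)$ strongly measurable, which is precisely why one argues through the resolvent and the identity $A(I-A)^{-1} = (I-A)^{-1}-I$ rather than attempting to invert $A$ pointwise directly. Everything else is a direct transcription of the corresponding facts for $A$ on $H$ to the product structure of $E = L^2(\Omega;H)$; in particular, unwinding the construction of the resolvents shows that the generated semigroup acts pointwise, $(e^{t\mathcal{A}}X)(\omega) = e^{tA}X(\omega)$, which will be convenient later.
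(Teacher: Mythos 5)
Your proposal is correct and follows essentially the same route as the paper: dissipativity is checked directly from the definition, and maximality is obtained by defining $X(\omega):=(I-A)^{-1}Y(\omega)$ pointwise so that $Y=(I-\mathcal{A})X$, i.e. $\mathcal{R}(I-\mathcal{A})=E$. Your additional verifications (strong measurability of $X$ and $AX$, the bound $|AX(\omega)|\le 2|Y(\omega)|$ ensuring $X\in L^2(\Omega;\mathcal{D}(A))$, and density of $\mathcal{D}(\mathcal{A})$) simply make explicit details the paper leaves implicit.
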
 
\begin{proof}
The dissipativity of $\mathcal{A}$ follows directly from its definition and the dissipativity of $A$.
Let us show that $\mathcal{A}$ is maximal dissipative. Since $A$ is maximal dissipative, $\mathcal{R}(I-A)=H$. Hence, for all $y\in H$, there is an $x\in \mathcal{D}(A)$ such that $x-Ax = y$. Now, for given $Y\in E$, let $X\in E$ be defined in the following way: Let $X(\omega)\in H$ be such that $X(\omega) - A(X(\omega)) = Y(\omega)$, i.e., $X(\omega) := (I-A)^{-1}(Y(\omega))$. Then we have
\begin{equation}
    Y(\omega) = (I-A)(X(\omega)) = (I-\mathcal{A})(X)(\omega),
\end{equation}
i.e., $Y=(I-\mathcal{A})(X)$. This shows that $\mathcal{R}(I-\mathcal{A}) = E$, hence $\mathcal{A}$ is maximal dissipative.
\end{proof}

It follows that $\mathcal{A}$ generates a $C_0$-semigroup of contractions on $E$, which we denote by $(e^{s\mathcal{A}})_{s\geq 0}$. Note that $(e^{s\mathcal{A}})_{s\geq 0}$ is a semigroup in $E$, and $(e^{sA})_{s\geq 0}$ is a semigroup in $H$. We have the following relation between the two: 
\begin{lemma}\label{lemma:semigroups_coincide}
    For $X\in E$, we have for almost every $\omega\in\Omega$,
\begin{equation}\label{semigroups_coincide}
    (e^{s\mathcal{A}} X)(\omega) = e^{sA}(X(\omega)).
\end{equation}
\end{lemma}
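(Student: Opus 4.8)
The plan is to prove the stronger statement that the family of bounded operators $(T_s)_{s\ge 0}$ on $E$ defined by $(T_sX)(\omega):=e^{sA}(X(\omega))$ coincides with $(e^{s\mathcal A})_{s\ge 0}$; since two $C_0$-semigroups agreeing on their common generator must be equal, it is enough to check that $(T_s)$ is a $C_0$-semigroup on $E$ with infinitesimal generator $\mathcal A$. First I would verify that $T_s$ is well defined and a contraction: for $X\in E$ the map $\omega\mapsto e^{sA}(X(\omega))$ is strongly measurable (composition of a bounded operator with a measurable map) and $|e^{sA}(X(\omega))|\le|X(\omega)|$ since $(e^{sA})$ is a contraction semigroup on $H$, so $T_sX\in E$ with $\|T_s\|_{L(E)}\le1$. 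The identities $T_0=\mathrm{Id}$ and $T_{s+t}=T_sT_t$ are then immediate from the corresponding identities for $(e^{sA})$ read $\omega$-wise.

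For strong continuity at $0$ I would use dominated convergence: fixing $X\in E$, for a.e.\ $\omega$ one has $|e^{sA}(X(\omega))-X(\omega)|^2\to0$ as $s\downarrow0$ by strong continuity of $(e^{sA})$, while the integrand is dominated by $4|X(\omega)|^2\in L^1(\Omega)$; hence $\|T_sX-X\|^2\to0$ and $(T_s)$ is a $C_0$-semigroup, with some generator $\mathcal A'$. To see that $\mathcal A'=\mathcal A$, take $X\in\mathcal D(\mathcal A)=L^2(\Omega;\mathcal D(A))$. For a.e.\ $\omega$, $X(\omega)\in\mathcal D(A)$, so $\tfrac1s\bigl(e^{sA}X(\omega)-X(\omega)\bigr)=\tfrac1s\int_0^s e^{rA}AX(\omega)\,\d r\to AX(\omega)$ as $s\downarrow0$, and this difference quotient is bounded in $H$ by $|AX(\omega)|$ uniformly in $s$; since $\omega\mapsto AX(\omega)=\mathcal A X(\omega)$ lies in $E$, dominated convergence gives $\tfrac1s(T_sX-X)\to\mathcal A X$ in $E$, so $X\in\mathcal D(\mathcal A')$ and $\mathcal A'X=\mathcal A X$. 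Thus $\mathcal A'$ extends $\mathcal A$. Now $\mathcal A'$, being the generator of a contraction $C_0$-semigroup, is dissipative, so $I-\mathcal A'$ is injective, while $I-\mathcal A$ is surjective onto $E$ by the previous lemma; given $X\in\mathcal D(\mathcal A')$, choosing $Z\in\mathcal D(\mathcal A)$ with $(I-\mathcal A)Z=(I-\mathcal A')X$ forces $(I-\mathcal A')(Z-X)=0$, hence $X=Z\in\mathcal D(\mathcal A)$. Therefore $\mathcal D(\mathcal A')=\mathcal D(\mathcal A)$ and $\mathcal A'=\mathcal A$, so $T_s=e^{s\mathcal A}$ for every $s\ge0$, which is exactly \eqref{semigroups_coincide}.

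The only point requiring genuine care is the passage from $\omega$-wise convergence (supplied by semigroup theory on $H$) to convergence in the $E$-norm, which I handle each time by exhibiting an explicit $L^1(\Omega)$-dominating function — $|X(\omega)|^2$ for strong continuity and $|\mathcal A X(\omega)|^2$ for the generator identity — and invoking dominated convergence; this is where it matters crucially that $X\in\mathcal D(\mathcal A)$ means $\mathcal A X\in E$. As an alternative one could bypass the generator computation and instead use the exponential formula $e^{s\mathcal A}X=\lim_n\bigl(\tfrac ns R(\tfrac ns,\mathcal A)\bigr)^nX$ together with the fact (essentially contained in the proof that $\mathcal A$ is maximal dissipative) that $R(\lambda,\mathcal A)$ acts $\omega$-wise as $R(\lambda,A)$, and then pass to an a.e.\ convergent subsequence; but the semigroup–generator argument above seems cleaner and more robust.
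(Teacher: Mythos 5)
Your proof is correct, but it takes a genuinely different route from the paper. The paper works through Yosida approximations: it first checks that the resolvents $(k\mathcal{I}-\mathcal{A})^{-1}$ act $\omega$-wise as $(kI-A)^{-1}$, deduces the same for the Yosida approximations $\mathcal{A}_{(k)}$, identifies $e^{s\mathcal{A}_{(k)}}$ with the $\omega$-wise exponential via the power series (extracting an a.e.-convergent subsequence), and then passes to the limit $k\to\infty$ with a second subsequence extraction. You instead show directly that the pointwise-lifted family $(T_s X)(\omega)=e^{sA}(X(\omega))$ is a $C_0$-contraction semigroup on $E$ (measurability, contractivity, semigroup law, strong continuity by dominated convergence), identify its generator $\mathcal{A}'$ as an extension of $\mathcal{A}$ on $L^2(\Omega;\mathcal{D}(A))$ via the identity $\tfrac1s(e^{sA}x-x)=\tfrac1s\int_0^s e^{rA}Ax\,\mathrm{d}r$ with the dominating function $|AX(\omega)|$, and then rule out a proper extension using dissipativity of $\mathcal{A}'$ together with surjectivity of $I-\mathcal{A}$ from the preceding maximal-dissipativity lemma; uniqueness of the generated semigroup then gives $T_s=e^{s\mathcal{A}}$. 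All steps are sound: in particular the "no proper dissipative extension of a maximal dissipative operator" argument is correctly executed, and your dominated-convergence passages from $\omega$-wise to $E$-norm convergence are exactly where care is needed and are handled correctly. What each approach buys: yours avoids the two subsequence extractions and yields as a byproduct that $\mathcal{A}$ with domain $L^2(\Omega;\mathcal{D}(A))$ is precisely the generator of $(e^{s\mathcal{A}})$, while the paper's argument stays at the level of resolvent/approximation identities and never needs to verify strong continuity or compute a generator; the resolvent-based alternative you sketch at the end is essentially the paper's proof.
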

\begin{proof}
    Let $A_{(k)}=kA(kI-A)^{-1}$, $k\in\mathbb{N}$, be the Yosida approximation of $A$, and $\mathcal{A}_{(k)}=k\mathcal{A}(k\mathcal{I}-\mathcal{A})^{-1}$, $k\in\mathbb{N}$, be the Yosida approximation of $\mathcal{A}$.\footnote{We use the $A_{(k)}$ and $\mathcal{A}_{(k)}$ for the Yosida approximation to distinguish it from the $n\times n$ matrix $A_n$ that arises in the HJB equation \eqref{finite_dimensional_hjb}.} Here, $I:H\to H$ and $\mathcal{I}:E\to E$ denote the identity in $H$ and $E$, respectively. First, note that for $X\in E$, we have
    \begin{equation}
        (k\mathcal{I}-\mathcal{A})(X)(\omega) = (k\mathcal{I}X - \mathcal{A}X)(\omega) = kX(\omega) - (\mathcal{A}X)(\omega) = kX(\omega) - A(X(\omega)) = (kI-A)(X(\omega)).
    \end{equation}
    Thus,
    \begin{equation}
        (kI-A)(kI-A)^{-1}(X(\omega)) = X(\omega) = ((k\mathcal{I}-\mathcal{A})(k\mathcal{I}-\mathcal{A})^{-1} )(X)(\omega) = (kI-A)( (k\mathcal{I}-\mathcal{A})^{-1} (X)(\omega)).
    \end{equation}
    Applying $(kI-A)^{-1}$ to both sides yields
    \begin{equation}
        (kI-A)^{-1}(X(\omega))=(k\mathcal{I}-\mathcal{A})^{-1}(X)(\omega).
    \end{equation}
    In particular, this shows that 
    \begin{equation}
        A_{(k)}(X(\omega)) = kA (kI-A)^{-1}(X(\omega)) = kA( (k\mathcal{I}-\mathcal{A})^{-1}(X)(\omega)) = k \mathcal{A}((k\mathcal{I}-\mathcal{A})^{-1}(X))(\omega) = \mathcal{A}_{(k)}(X)(\omega).
    \end{equation}
    Since $\sum_{i=1}^N \frac{s^i \mathcal{A}_{(k)}^i(X)}{i!}$ converges to $e^{s\mathcal{A}_{(k)}}(X)$ in $E$, we also have convergence along some subsequence for almost every $\omega\in\Omega$, i.e.,
    \begin{equation}
        e^{sA_{(k)}}(X(\omega)) = \lim_{j\to\infty} \sum_{i=1}^{N_j} \frac{s^i A_{(k)}^i(X(\omega))}{i!} = \lim_{j\to\infty} \sum_{i=1}^{N_j} \frac{s^i \mathcal{A}_{(k)}^i(X)(\omega)}{i!} = e^{s\mathcal{A}_{(k)}}(X)(\omega).
    \end{equation}
    Moreover, since $e^{s\mathcal{A}_{(k)}}$ converges to $e^{s\mathcal{A}}$ in the strong operator topology as $k\to\infty$, we know that along some subsequence $e^{s\mathcal{A}_{(k_j)}}(X)$, we have convergence for almost every $\omega\in\Omega$, i.e.,
    \begin{equation}
        e^{sA}(X(\omega)) = \lim_{j\to\infty} e^{sA_{(k_j)}}(X(\omega)) = \lim_{j\to\infty} \left ( e^{s\mathcal{A}_{(k_j)}}(X)(\omega) \right ) = e^{s\mathcal{A}}(X)(\omega),
    \end{equation}
    which concludes the proof of \eqref{semigroups_coincide}.
\end{proof}
\begin{lemma}
    The operator $\mathcal{B}\in L(E)$, $\mathcal{B}(X)(\omega) := B(X(\omega))$, satisfies the weak $B$ condition for $\mathcal{A}$.
\end{lemma}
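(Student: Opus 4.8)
The plan is to verify, directly from the definitions, the three ingredients of the weak $B$-condition for $\mathcal{A}$: that $\mathcal{B}\in L(E)$ is strictly positive and self-adjoint, that $\mathcal{A}^{\ast}\mathcal{B}\in L(E)$, and that $-\mathcal{A}^{\ast}\mathcal{B}+c_0\mathcal{B}\geq 0$ with the \emph{same} constant $c_0\geq 0$ as in Assumption \ref{Assumption_weak_B_condition}. The guiding idea is that every operator in sight acts pointwise in $\omega$, so each statement reduces to the corresponding statement for $B$ on $H$ by integrating a pointwise relation over $\Omega$.

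First I would record the elementary facts about $\mathcal{B}$. It is bounded, since $\|\mathcal{B}X\|^2=\int_{\Omega}|B(X(\omega))|^2\,\mathrm{d}\omega\leq\|B\|_{L(H)}^2\|X\|^2$; self-adjoint, since $\llangle\mathcal{B}X,Y\rrangle=\int_{\Omega}\langle B(X(\omega)),Y(\omega)\rangle\,\mathrm{d}\omega=\llangle X,\mathcal{B}Y\rrangle$ using self-adjointness of $B$; and strictly positive, since if $X\neq 0$ in $E$ then $X(\omega)\neq 0$ on a set of positive measure, so $\llangle\mathcal{B}X,X\rrangle=\int_{\Omega}\langle B(X(\omega)),X(\omega)\rangle\,\mathrm{d}\omega>0$ because $\langle Bx,x\rangle>0$ for every $x\neq 0$.

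The substantive step is the identification of $\mathcal{A}^{\ast}\mathcal{B}$. Here I would use that $A^{\ast}B\in L(H)$ entails $B(H)\subseteq\mathcal{D}(A^{\ast})$, so that $B(X(\omega))\in\mathcal{D}(A^{\ast})$ for a.e.\ $\omega$ and any $X\in E$. Then, for every $Y\in\mathcal{D}(\mathcal{A})=L^2(\Omega;\mathcal{D}(A))$,
\[
\llangle\mathcal{A}Y,\mathcal{B}X\rrangle=\int_{\Omega}\langle A(Y(\omega)),B(X(\omega))\rangle\,\mathrm{d}\omega=\int_{\Omega}\langle Y(\omega),(A^{\ast}B)(X(\omega))\rangle\,\mathrm{d}\omega=\llangle Y,Z\rrangle,
\]
where $Z(\omega):=(A^{\ast}B)(X(\omega))$ defines an element of $E$ with $\|Z\|\leq\|A^{\ast}B\|_{L(H)}\|X\|$ (measurability of $\omega\mapsto(A^{\ast}B)(X(\omega))$ being automatic since $A^{\ast}B$ is bounded). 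By the definition of the adjoint operator this shows $\mathcal{B}X\in\mathcal{D}(\mathcal{A}^{\ast})$ with $(\mathcal{A}^{\ast}\mathcal{B}X)(\omega)=(A^{\ast}B)(X(\omega))$, hence $\mathcal{A}^{\ast}\mathcal{B}\in L(E)$ with $\|\mathcal{A}^{\ast}\mathcal{B}\|_{L(E)}\leq\|A^{\ast}B\|_{L(H)}$. Note that no full description of $\mathcal{A}^{\ast}$ is needed; if desired, the identity $(\mathcal{A}^{\ast}Y)(\omega)=A^{\ast}(Y(\omega))$ on $L^2(\Omega;\mathcal{D}(A^{\ast}))$ follows by the same maximal-dissipativity argument used above for $\mathcal{A}$, since $A^{\ast}$ is again maximal dissipative.

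Finally, combining the previous step with Assumption \ref{Assumption_weak_B_condition}, for $X\in E$ one has $(-\mathcal{A}^{\ast}\mathcal{B}+c_0\mathcal{B})X(\omega)=(-A^{\ast}B+c_0B)(X(\omega))$, so
\[
\llangle(-\mathcal{A}^{\ast}\mathcal{B}+c_0\mathcal{B})X,X\rrangle=\int_{\Omega}\langle(-A^{\ast}B+c_0B)(X(\omega)),X(\omega)\rangle\,\mathrm{d}\omega\geq 0
\]
since the integrand is nonnegative pointwise. This yields the weak $B$-condition for $\mathcal{A}$ with the same $c_0$, completing the argument. I do not expect a genuine obstacle here — the verification is routine — and the only point requiring a moment's care is extracting the inclusion $B(H)\subseteq\mathcal{D}(A^{\ast})$ (and with it the ``integration by parts'' identity $\langle Ay,Bx\rangle=\langle y,A^{\ast}Bx\rangle$ for $y\in\mathcal{D}(A)$) from the hypothesis $A^{\ast}B\in L(H)$.
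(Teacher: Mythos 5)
Your proposal is correct and follows essentially the same route as the paper, which simply states that all the required properties ($\mathcal{B}$ strictly positive and self-adjoint, $\mathcal{A}^{\ast}\mathcal{B}\in L(E)$, and $-\mathcal{A}^{\ast}\mathcal{B}+c_0\mathcal{B}\geq 0$) follow from straightforward pointwise computations reducing each statement to the corresponding property of $B$ on $H$. Your extra care in deriving $\mathcal{B}X\in\mathcal{D}(\mathcal{A}^{\ast})$ with $(\mathcal{A}^{\ast}\mathcal{B}X)(\omega)=(A^{\ast}B)(X(\omega))$ directly from the definition of the adjoint is a sound way of filling in the detail the paper leaves implicit.
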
 
\begin{proof}
We need to show that $\mathcal{B}$ is strictly positive, self-adjoint, $\mathcal{A}^*\mathcal{B} \in L(E)$ and $-\mathcal{A}^*\mathcal{B} + c_0 \mathcal{B} \geq 0$ for some constant $c_0 \geq 0$. All these properties follow from straightforward computations using the definition of $\mathcal{B}$ and the corresponding properties of $B$. For instance, the strict positivity follows by   $\llangle \mathcal{B}X,X \rrangle = \int_{\Omega} \langle B(X(\omega)), X(\omega) \rangle \mathrm{d}\omega >0.$
\end{proof}

\subsection{Properties of the  coefficients of the lifted problem.}\label{subsec:properties_lifted_coeff} Notice that, under Assumptions \ref{Assumption_f_sigma_lipschitz}, \ref{Assumption_running_terminal_cost}, the functions  $F, \Sigma, L, U_T, \tilde{\mathcal H}$ inherit estimates with respect to $\|\cdot\|, \|\cdot\|_{-1}$ from the corresponding estimates for $f, \sigma, l, \mathcal U_T, \mathcal H$. In particular, we have
\begin{align}
   & \llangle F(X,Q) - F(Y,Q), \mathcal B(X-Y) \rrangle \leq C\|X-Y\|^2_{-1},\\
   &\|\Sigma(X) - \Sigma(Y)\|_{L_2(\Xi,E)} +|L(X,Q) - L(Y,Q) |  \leq C\|X-Y\|_{-1},\\
    &| \tilde{\mathcal{H}}(X,P) - \mathcal{H}(X,P') | \leq C ( 1 + \|X\|  + \|P\| + \|P'\| ) \|P-P'\| ,\\
   &| \tilde{\mathcal{H}}(X,P) - \tilde{\mathcal{H}}(Y,P) | \leq C \|X-Y\|  (1+\|P\|),
\end{align}
for all $X,Y,P,P' \in E$ and $Q \in \mathcal E$.
\subsection{The HJB Equation in the Wasserstein Space} 
We introduce the definition of $L$-viscosity solution of \eqref{intro:HJB_on_Wasserstein_space}.
\begin{definition}
A function $\mathcal{U}:[0,T]\times \mathcal{P}_2(H)\to\mathbb{R}$ is an
$L$-viscosity solution of equation \eqref{intro:HJB_on_Wasserstein_space} if its lift $U:[0,T]\times E\to\mathbb{R}$ defined by $U(t,X):= \mathcal{U}(t,X_{\texttt{\#}}\mathcal{L}^1)$ is a $\mathcal B$-continuous viscosity solution of equation \eqref{lifted_HJB_equation}, see  Definition \ref{def:viscosity_solution_hilbert} (see also \cite[Section 3.3]{fabbri_gozzi_swiech_2017}).
\end{definition}
\section{Convergence of the Value Functions}\label{sec:convergence}
\subsection{Estimates for the Finite Particle System}

Let $n\geq 1$. For $\mathbf{x}=(x_1,\dots,x_n) \in H^n$, and $\mathbf{a}(\cdot) = (a_1(\cdot),\dots,a_n(\cdot))\in \Lambda^n_t$, let $\mathbf{x}(\cdot) = (x_1(\cdot),\dots, x_n(\cdot))$ denote the solution of the equation 
\begin{equation}\label{state_equation_vector}
\begin{cases}
	\mathrm{d}\mathbf{x}(s) = [\mathbf{A} \mathbf{x}(s) +\mathbf{f}(\mathbf{x}(s),\mu_{{\bf x}(s)},\mathbf{a}(s))]\mathrm{d}s + \bm{\sigma}(\mathbf{x}(s),\mu_{{\bf x}(s)})\mathrm{d}W(s), \quad s \in [t,T]\\
    \mathbf{x}(t) = \mathbf{x} \in H^n,
\end{cases}
\end{equation}
where $\mathbf{A}$ denotes the $n\times n$ diagonal matrix with the operator $A$ on its diagonal, $\mathbf{f}(\mathbf{x},\mu_{\mathbf{x}}, \mathbf{a}) = (f(x_1,\mu_{\mathbf{x}},a_1),\dots, f(x_n,\mu_{\mathbf{x}},a_n))$, and $\bm{\sigma}(\mathbf{x},\mu_{\mathbf{x}}) = (\sigma(x_1,\mu_{\mathbf{x}}),\dots, \sigma(x_n,\mu_{\mathbf{x}}))$.

\begin{prop}
    Let Assumptions \ref{Assumption_A_maximal_dissipative} and \ref{Assumption_f_sigma_lipschitz}(i)(ii)(iv) be satisfied. Then equation \eqref{state_equation_vector} has a unique mild solution $\mathbf{x}(\cdot) \in L^2([t,T]\times \Omega'; H^n)$ in the sense of \cite[Definition 1.119]{fabbri_gozzi_swiech_2017}, which is progressively measurable and has continuous trajectories. The components $(x_1(\cdot),\dots,x_n(\cdot))$ of $\mathbf{x}(\cdot)$ are mild solutions of the system of SDEs \eqref{state_equation}. Moreover, if $\mathbf{a}(\cdot)$ is bounded, then
   $ \mathbb{E} \left [ \sup_{s\in [t,T]} |\mathbf{x}(s) |_{H^n}^k\right ] <\infty$ for every $k\geq 1$.
\end{prop}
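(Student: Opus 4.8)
The plan is to reduce the well-posedness of the vectorial equation \eqref{state_equation_vector} to the abstract theory of mild solutions for semilinear SDEs in Hilbert spaces, as developed in \cite[Chapter 1]{fabbri_gozzi_swiech_2017}, applied on the product space $H^n$. First I would observe that $\mathbf{A}$, the diagonal operator acting as $A$ on each component, is maximal dissipative on $H^n$ (Assumption \ref{Assumption_A_maximal_dissipative}) and hence generates a $C_0$-semigroup of contractions $e^{s\mathbf{A}}=\mathrm{diag}(e^{sA},\dots,e^{sA})$. Then I would check that the drift $(\mathbf{x},\mathbf{a})\mapsto \mathbf{f}(\mathbf{x},\mu_{\mathbf{x}},\mathbf{a})$ and diffusion $\mathbf{x}\mapsto\bm{\sigma}(\mathbf{x},\mu_{\mathbf{x}})$ are Lipschitz in $\mathbf{x}$ (uniformly for the diffusion, and with the control entering affinely/additively for the drift) with linear growth: this follows from Assumption \ref{Assumption_f_sigma_lipschitz}(i)(ii)(iv) together with the elementary fact that $\mu_{\mathbf{x}}\mapsto\mu_{\mathbf{y}}$ satisfies $d_r(\mu_{\mathbf{x}},\mu_{\mathbf{y}})\le |\mathbf{x}-\mathbf{y}|_r\le n^{-1/r}|\mathbf{x}-\mathbf{y}|_{H^n}$ (and similarly for $d_{-1,r}$), so the empirical-measure dependence is itself Lipschitz in the $H^n$-norm. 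One point requiring care: the Lipschitz/growth bounds on $f$ in \eqref{Lipschitz_f_weak_norm} and Assumption \ref{Assumption_f_sigma_lipschitz}(ii) are stated with respect to $|\cdot|$ and $|\cdot|_{-1}$ separately for $f_1,f_2$; I would combine the $f_1$ part (Lipschitz in $|\cdot|$) with the $f_2$ part (Lipschitz in $|\cdot|$, linear growth $|q|_\Lambda$) to get a genuine $H^n$-Lipschitz-with-linear-growth drift, treating $\mathbf{a}(\cdot)\in M^2(t,T;\Lambda^n)$ as the (square-integrable, progressively measurable) inhomogeneity.

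With these structural properties in hand, the existence and uniqueness of a mild solution $\mathbf{x}(\cdot)\in L^2([t,T]\times\Omega';H^n)$ that is progressively measurable follows from the standard fixed-point argument in \cite[Theorem 1.127 / Proposition 1.132]{fabbri_gozzi_swiech_2017} (fixed point in the space $\mathcal{H}_p([t,T];H^n)$ using the stochastic convolution estimate for the contraction semigroup $e^{s\mathbf{A}}$), and the continuity of trajectories follows from the factorization method / the corresponding regularity statement there. The identification of the components $(x_1(\cdot),\dots,x_n(\cdot))$ as mild solutions of the individual equations \eqref{state_equation} is then immediate by writing the $i$-th component of the variation-of-constants formula, using Lemma-type diagonality $e^{s\mathbf{A}}\mathbf{x}=(e^{sA}x_1,\dots,e^{sA}x_n)$ and the fact that $\mu_{\mathbf{x}(s)}$ is the common empirical measure.

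For the moment bound, assuming $\mathbf{a}(\cdot)$ is bounded, I would apply the higher-moment estimates for mild solutions, e.g. \cite[Proposition 1.137 / Theorem 1.141]{fabbri_gozzi_swiech_2017}, which give $\mathbb{E}[\sup_{s\in[t,T]}|\mathbf{x}(s)|_{H^n}^k]<\infty$ for all $k\ge1$ from the linear growth of the coefficients, the boundedness of the control, and the contraction property of $e^{s\mathbf{A}}$ (so no $\|e^{s\mathbf{A}}\|$ blow-up enters); the supremum inside the expectation is handled via the Burkholder–Davis–Gundy inequality applied to the stochastic convolution after the usual factorization step, followed by a Gronwall argument. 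The main obstacle, such as it is, is bookkeeping rather than conceptual: one must make sure the mixed-norm assumptions on $f$ (splitting into $f_1,f_2$ and the $|\cdot|$ versus $|\cdot|_{-1}$ estimates) are assembled into exactly the hypotheses that \cite[Chapter 1]{fabbri_gozzi_swiech_2017} requires for well-posedness in the \emph{strong} (i.e. $H$-valued) topology — here only the $|\cdot|$-Lipschitz and linear-growth bounds are used, the $|\cdot|_{-1}$ bounds being reserved for later viscosity-solution estimates — and to verify that the empirical-measure map does not destroy these bounds, which it does not because of the contraction $d_r(\mu_{\mathbf{x}},\mu_{\mathbf{y}})\le n^{-1/r}|\mathbf{x}-\mathbf{y}|_{H^n}$.
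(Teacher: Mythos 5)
Your proposal is correct and takes essentially the same route as the paper, which simply invokes the standard well-posedness theory for semilinear SDEs in Hilbert spaces with Lipschitz/linear-growth coefficients and a contraction semigroup (the paper cites \cite[Theorem 6.5, page 162]{PLChow} for existence and uniqueness, the path continuity of the stochastic convolution, e.g. \cite[Theorem 1.112]{fabbri_gozzi_swiech_2017}, for continuity of trajectories, and \cite[Theorem 1.130]{fabbri_gozzi_swiech_2017} for the moment bound under bounded controls). The only slip is cosmetic: by Jensen's inequality the correct comparison is $|\mathbf{x}-\mathbf{y}|_r \leq n^{-1/2}|\mathbf{x}-\mathbf{y}|_{H^n}$ rather than $n^{-1/r}$, but since $n$ is fixed in this proposition the $n$-dependence of the Lipschitz constant is irrelevant and the argument goes through unchanged.
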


\begin{proof}
See \cite[Theorem 6.5, page 162]{PLChow}. We remark that since $A$ generates a semigroup of contractions, continuity of paths follows from the continuity of paths of the stochastic convolution, see \cite[Theorem 6.2, page 159]{PLChow} or \cite[Theorem 1.112]{fabbri_gozzi_swiech_2017}. For the moment estimate when $\mathbf{a}(\cdot)$ is bounded we refer for instance to \cite[Theorem 1.130]{fabbri_gozzi_swiech_2017}.
\end{proof}

Throughout this work, solutions of SDEs are always understood in the sense of mild solutions, see e.g. \cite[Definition 1.119]{fabbri_gozzi_swiech_2017}. 
{For more on the theory of mild solutions of stochastic differential equations in Hilbert spaces we refer the readers to \cite{daprato_zabczyk_2014,gawarecki_mandrekar_2011}.}

\begin{remark}\label{rem:Itoformulas}
In the paper we will use two versions of It\^o's formula, \cite[Proposition 1.164]{fabbri_gozzi_swiech_2017} and \cite[Proposition 1.166]{fabbri_gozzi_swiech_2017}. We remark that it is clear from the proof of \cite[Proposition 1.164]{fabbri_gozzi_swiech_2017}, together with \cite[Theorem 1.112 and Proposition 1.132]{fabbri_gozzi_swiech_2017}, that if $A$ generates a semigroup of contractions then It\^o's formula of \cite[Proposition 1.164]{fabbri_gozzi_swiech_2017} also holds for $p=2$ there and hence it can be applied in our case. Moreover, even though \cite[Proposition 1.166]{fabbri_gozzi_swiech_2017} is stated for coefficients $b,\sigma$ there which are bounded in the control variable, it is clear from its proof that it can be applied in our case for functions with appropriate growth bounds and stopping times which guarantee that all terms are well defined. We will use it for functions $F(t,x)=|x|^2$ in $H$ and to $F(t,X)=\|X\|^2$ in $E$. We leave the simple proofs of such modifications to the reader.
\end{remark}

\begin{prop}\label{Prop_Finite_A_Priori}
	Let Assumptions \ref{Assumption_A_maximal_dissipative}, \ref{Assumption_weak_B_condition}, and \ref{Assumption_f_sigma_lipschitz}(i)(ii)(iv) be satisfied. Let $\mathbf{x}(\cdot),\mathbf{x}^0(\cdot),\mathbf{x}^1(\cdot)$ be the solutions of equation \eqref{state_equation} with initial conditions $\mathbf{x},\mathbf{x}^0,\mathbf{x}^1\in H^n$, respectively, and control $\mathbf{a}(\cdot)\in \Lambda^n_t$. Then, there is a constant $C\geq 0$, independent of $n\in\mathbb{N}$, such that  
    \begin{align}
        &\mathbb{E} \left [ \sup_{s\in [t,T]} |\mathbf{x}(s) |_{r} \right ] \leq C \left ( 1 + | \mathbf{x}|_{r} + \frac{1}{\sqrt{n}} \| \mathbf{a}(\cdot) \|_{M^2(t,T;\Lambda^n)} \right ),\label{eq:est_|x|r}\\
		&\mathbb{E} \left [ \sup_{s^{\prime} \in [t,s]} |\mathbf{x}(s^{\prime}) - \mathbf{x} |_{-1,r} \right ] \leq \frac{C}{\sqrt{n}} \mathbb{E} \left [ \int_t^s \sum_{i=1}^n |a_i(s^{\prime})|_{\Lambda}^2 \mathrm{d}s^{\prime} \right ]^{\frac12} + C \left (1+ |\mathbf{x}|_{r} \right )(s-t)^{\frac12},\label{eq:est_|xs-x|r}\\
        &\mathbb{E} \left [ \sup_{s\in[t,T]} | \mathbf{x}^1(s) - \mathbf{x}^0(s)|_{-1,r} \right ]\leq C| \mathbf{x}^1 - \mathbf{x}^0|_{-1,r}\label{eq:est_|x1-x0|r},
    \end{align}
    for all $s\in [t,T]$, $\mathbf{x}, \mathbf{x}^1, \mathbf{x}^0 \in H^n$, and $\mathbf{a}(\cdot)\in \Lambda^n_t$.
\end{prop}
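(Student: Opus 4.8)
The plan is to derive all three estimates from the mild-solution formulation, It\^o's formula in the form of \cite[Propositions 1.164, 1.166]{fabbri_gozzi_swiech_2017} (modified as in Remark \ref{rem:Itoformulas}), the Burkholder--Davis--Gundy inequality, and a short-time Gronwall argument iterated over $[t,T]$. The guiding principle, which is what makes this more than a routine a-priori estimate, is that one must work \emph{directly} with the $\tfrac1n$-normalized $\ell^r$-norms $|\cdot|_{r}$, $|\cdot|_{-1,r}$ and with the weak norm $|\cdot|_{-1}$, and never pass through the $\ell^2$-type norms: a single-large-particle or single-large-control configuration shows that such a reduction loses a factor $n^{1/r-1/2}$, which would ruin the claimed $n$-independence. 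This is precisely why the growth and Lipschitz conditions in Assumptions \ref{Assumption_f_sigma_lipschitz}, \ref{Assumption_running_terminal_cost} are phrased with these norms. Throughout, before taking expectations one localizes with the stopping times $\tau_N:=\inf\{s\ge t:\sum_i|x_i(s)|^2\ge N\}$ (resp. with $\sum_i|x_i^1(s)-x_i^0(s)|_{-1}^2\ge N$), which is legitimate because the solutions have continuous paths, and then lets $N\to\infty$ by monotone convergence.

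For \eqref{eq:est_|x|r} and \eqref{eq:est_|x1-x0|r} I would apply It\^o's formula componentwise: to $F(x)=|x|^2$ for \eqref{eq:est_|x|r}, where the term $2\langle Ax_i(s),x_i(s)\rangle\le0$ is discarded (this survives for mild solutions via Yosida approximation, using that $(e^{sA})_{s\ge0}$ is a contraction) and the drift/diffusion contributions are bounded via Assumptions \ref{Assumption_f_sigma_lipschitz}(i)(ii), \ref{Assumption_running_terminal_cost}(ii) (recalling $\mathcal M_r^{1/r}(\mu_{\mathbf x(s)})=|\mathbf x(s)|_r$ and $\mathcal M_{-1,r}^{1/r}(\mu_{\mathbf x(s)})\le C|\mathbf x(s)|_r$); and, with $z_i:=x_i^1-x_i^0$, to $F(z)=\langle Bz,z\rangle=|z|_{-1}^2$ for \eqref{eq:est_|x1-x0|r}, where now $2\langle Az_i(s),Bz_i(s)\rangle\le2c_0|z_i(s)|_{-1}^2$ by the weak $B$-condition (Assumption \ref{Assumption_weak_B_condition}), the drift difference is controlled by \eqref{Lipschitz_f_weak_norm}, the diffusion difference by Assumption \ref{Assumption_f_sigma_lipschitz}(iv), and $d_{-1,r}(\mu_{\mathbf x^1},\mu_{\mathbf x^0})\le|\mathbf x^1-\mathbf x^0|_{-1,r}$. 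In both cases the resulting pathwise inequality is raised to the power $r/2$ (using subadditivity of $u\mapsto u^{r/2}$ and $(\int_t^s\,\cdot\,)^{r/2}\le(s-t)^{r/2}\sup_{[t,s]}(\cdot)^{r/2}$), summed over $i$, and the supremum over $s'\in[t,s]$ is taken; introducing $\Phi(s):=\sum_{i=1}^n\sup_{s'\in[t,s]}|x_i(s')|^r$ (resp. $\sum_i\sup_{s'\in[t,s]}|z_i(s')|_{-1}^r$) one uses $\sup_{[t,s]}|\mathbf x|_r^r\le\tfrac1n\Phi(s)$ and Cauchy--Schwarz/discrete H\"older on the cross terms (e.g. $\sum_i(\sup|x_i|)^{r/2}\le\sqrt n\,\Phi(s)^{1/2}$ and $\sum_i(\mathbb E\!\int_t^s|a_i|^2)^{r/2}\le n^{(2-r)/2}\|\mathbf a\|_{M^2(t,s;\Lambda^n)}^r=n\big(\tfrac1{\sqrt n}\|\mathbf a\|_{M^2(t,s;\Lambda^n)}\big)^r$) so that every power of $n$ cancels exactly, while BDG bounds the martingale part by $C(s-t)^{r/4}(n+\mathbb E[\Phi(s)])$. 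On a short enough interval the terms $C(s-t)^{\alpha}\mathbb E[\Phi(s)]$ are absorbed on the left, giving $\mathbb E[\Phi(s)]\le C\big(\Phi(t)+n+n(\tfrac1{\sqrt n}\|\mathbf a\|_{M^2(t,s;\Lambda^n)})^r\big)$ (resp. $\le C\Phi(t)$); dividing by $n$, taking the $1/r$-th power, and iterating over $[t,T]$ yields \eqref{eq:est_|x|r} and \eqref{eq:est_|x1-x0|r}.

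For \eqref{eq:est_|xs-x|r} I would use the mild representation $x_i(s')-x_i=(e^{(s'-t)A}-I)x_i+\int_t^{s'}e^{(s'-r)A}f(x_i(r),\mu_{\mathbf x(r)},a_i(r))\,\mathrm{d}r+\int_t^{s'}e^{(s'-r)A}\sigma(x_i(r),\mu_{\mathbf x(r)})\,\mathrm{d}W(r)$. The deterministic shift is $O((s'-t)^{1/2})$ in $|\cdot|_{-1}$: for $x\in\mathcal D(A)$, $\tfrac{d}{dh}|(e^{hA}-I)x|_{-1}^2=2\langle B(e^{hA}-I)x,Ae^{hA}x\rangle=2\langle Be^{hA}x,Ae^{hA}x\rangle-2\langle Bx,Ae^{hA}x\rangle$, where the first term is $\le2c_0|e^{hA}x|_{-1}^2$ by the weak $B$-condition and the second equals $-2\langle x,e^{hA^*}(A^*B)x\rangle$, hence is $\le C|x|^2$ since $A^*B\in L(H)$; integrating and using density gives $|(e^{hA}-I)x|_{-1}\le C\sqrt h\,|x|$, so this term contributes $\le C(s'-t)^{1/2}|\mathbf x|_r$ after applying $\tfrac1{n^{1/r}}(\sum_i(\cdot)^r)^{1/r}$. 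The drift term is estimated by Minkowski's integral inequality in $\ell^r$ together with the weak-norm growth of $f$ (Assumption \ref{Assumption_f_sigma_lipschitz}(ii)), yielding $\le C(s'-t)(1+\sup_{r\le s'}|\mathbf x(r)|_{-1,r})+\tfrac{C}{\sqrt n}(s'-t)^{1/2}\|\mathbf a\|_{M^2(t,s';\Lambda^n)}$, with $\sup_{r\le s'}|\mathbf x(r)-\mathbf x|_{-1,r}$ absorbed on the left for short times; and the stochastic convolution $Z_i$, which itself solves a mild linear equation, is handled by the same It\^o-plus-raise-to-$r/2$ machinery as above (using the weak $B$-condition for the $A$-term and Assumption \ref{Assumption_f_sigma_lipschitz}(ii) for $|\sigma|_{L_2}^2$) combined with \eqref{eq:est_|x|r}, giving $\le C(s'-t)^{1/2}(1+|\mathbf x|_r+\tfrac1{\sqrt n}\|\mathbf a\|_{M^2(t,s';\Lambda^n)})$. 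Collecting the three contributions and iterating over $[t,T]$ gives \eqref{eq:est_|xs-x|r}.

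The step I expect to be the main obstacle is not any single estimate but the uniform-in-$n$ bookkeeping: because one cannot bound $|\cdot|_{r}$ by $|\cdot|_{2}$ anywhere, all the nonlinear $(\cdot)^{r/2}$-operations must be organized — via subadditivity, Cauchy--Schwarz, and H\"older between $\ell^r$ and $\ell^2$ — so that the powers of $n$ cancel precisely, and the natural auxiliary quantity is the \emph{sum of per-particle suprema} $\Phi(s)$, which dominates $n\sup_{[t,s]}|\mathbf x|_r^r$ while keeping every error term of the form $\mathrm{const}\cdot n+(s-t)^{\alpha}\Phi(s)$. The remaining care is infinite-dimensional: the semigroup and weak-$B$-condition manipulations (discarding $\langle Ax,x\rangle\le0$, the bound $\langle Az,Bz\rangle\le c_0|z|_{-1}^2$, and the $\sqrt h$-estimate for $(e^{hA}-I)$ in $|\cdot|_{-1}$) must be justified for mild solutions through Yosida approximation as in Remark \ref{rem:Itoformulas}, and the passage from pathwise inequalities to Gronwall requires the localization described above.
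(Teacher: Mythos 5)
Your proposal is correct, and for \eqref{eq:est_|x|r} and \eqref{eq:est_|x1-x0|r} it is essentially the paper's argument: componentwise It\^o's formula applied to $|x_i|^2$ (discarding the dissipative term) and to $|x_i^1-x_i^0|_{-1}^2$ (using the weak $B$-condition and \eqref{Lipschitz_f_weak_norm}, $d_{-1,r}(\mu_{\mathbf{x}^1},\mu_{\mathbf{x}^0})\le|\mathbf{x}^1-\mathbf{x}^0|_{-1,r}$), raising to the power $r/2$, summing the per-particle suprema divided by $n$, BDG for the martingale part, absorption plus Gr\"onwall, and finally the H\"older/Jensen step converting $\frac1n\int\mathbb{E}\sum_i|a_i|^r$ into $\frac1{\sqrt n}\|\mathbf a(\cdot)\|_{M^2}$ — exactly the paper's bookkeeping, with your short-interval absorption replacing the paper's Young-plus-Gr\"onwall variant, an immaterial difference. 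Where you genuinely diverge is \eqref{eq:est_|xs-x|r}: the paper applies It\^o's formula directly to $|x_i(s')-x_i|_{-1}^2$, handling the unbounded term through $\langle A^*B(x_i(s')-x_i),x_i(s')\rangle$ with $A^*B\in L(H)$ and full-norm growth of $f$, so the semigroup shift, drift and stochastic convolution are packaged in a single formula; you instead decompose the mild representation and prove the extra estimate $|(e^{hA}-I)x|_{-1}\le C\sqrt h\,|x|$ from the weak $B$-condition (your derivation of it is correct), then treat the stochastic convolution by a separate It\^o argument in the $|\cdot|_{-1}$-norm. Your route is more modular but needs two additional ingredients you should state explicitly: the quasi-contractivity $|e^{hA}y|_{-1}\le e^{c_0h}|y|_{-1}$ (needed for your Minkowski estimate of the drift convolution; it follows from the same weak-$B$ computation) and the fact that the convolution estimate must be fed back through the already-proved \eqref{eq:est_|x|r} on $[t,s]$, as you indicate. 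Two citation slips to fix: the diffusion bounds you attribute to Assumption \ref{Assumption_running_terminal_cost}(ii) and to Assumption \ref{Assumption_f_sigma_lipschitz}(ii) should both invoke Assumption \ref{Assumption_f_sigma_lipschitz}(iv) (linear growth of $\sigma$ in the weak norms follows from its Lipschitz property there); these are cosmetic and do not affect the argument.
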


\begin{proof}
    Proof of \eqref{eq:est_|x|r}: First, note that
    \begin{equation}\label{estimate_r_norm}
        \mathbb{E} \left [ \sup_{s\in [t,T]} | \mathbf{x}(s)|_{r} \right ] \leq \left ( \frac1n \sum_{i=1}^n \mathbb{E} \left [ \sup_{s\in [t,T]} |x_i(s)|^r \right ] \right )^{\frac1r}.
    \end{equation}
    We are going to estimate the right-hand side. For $m\geq 1$, we denote by $\tau_m$ the minimum of $T$ and the exit time of $\mathbf{x}(s)$ from $\{|\mathbf{x}|\geq m\}$. Using \cite[Proposition 1.166]{fabbri_gozzi_swiech_2017} (see Remark \ref{rem:Itoformulas}), we have for every $s'\in [t,s]$
    \begin{equation}
    \begin{split}
        |x_i(s'\wedge\tau_m) |^2 &\leq |x_i|^2 + 2 \int_t^{s'\wedge\tau_m} | \langle f(x_i(t'),\mu_{\mathbf{x}(t')},a_i(t')), x_i(t') \rangle | \mathrm{d}t'\\
        &\quad + \int_t^{s'\wedge\tau_m} | \sigma(x_i(t'),\mu_{\mathbf{x}(t')}) |_{L_2(\Xi,H)}^2 \mathrm{d}t' + 2 \left | \int_t^{s'} \langle {\mathbf{1}}_{[t,\tau_m]}(t')x_i(t'), \sigma(x_i(t') , \mu_{\mathbf{x}(t')}) \mathrm{d}W(t') \rangle \right |.
    \end{split}
    \end{equation}
    Taking the power $r/2$ on both sides and the supremum over $s'\in [t,s]$, we obtain
    \begin{equation}\label{estimate_linear_growth}
    \begin{split}
        &\sup_{s'\in [t,s]} |x_i(s'\wedge\tau_m) |^r\\
        &\leq C |x_i|^r + C \left ( \int_t^s | \langle f(x_i(s'),\mu_{\mathbf{x}(s')},a_i(s')), x_i(s') \rangle | \mathrm{d}s' \right )^{\frac{r}{2}} \\
        & + C \left ( \int_t^s | \sigma(x_i(s'),\mu_{\mathbf{x}(s')}) |_{L_2(\Xi,H)}^2 \mathrm{d}s' \right )^{\frac{r}{2}} + C \sup_{s'\in [t,s]} \left | \int_t^{s'} \langle {\mathbf{1}}_{[t,\tau_m]}(t')x_i(t'), \sigma(x_i(t') , \mu_{\mathbf{x}(t')}) \mathrm{d}W(t') \rangle \right |^{\frac{r}{2}}.
    \end{split}
    \end{equation}
    For the second term on the right-hand side, by Assumption \ref{Assumption_f_sigma_lipschitz}(ii), we have
    \begin{equation}
    \begin{split}
        &\left ( \int_t^s | \langle f(x_i(s'),\mu_{\mathbf{x}(s')},a_i(s')), x_i(s') \rangle | \mathrm{d}s' \right )^{\frac{r}{2}}\\
        &\leq \left ( \int_t^s | f(x_i(s'),\mu_{\mathbf{x}(s')},a_i(s')) | |x_i(s') | \mathrm{d}s' \right )^{\frac{r}{2}} \\
        &\leq C \sup_{s'\in [t,s]} |x_i(s') |^{\frac{r}{2}} \left ( \int_t^s \left ( 1 + |x_i(s')| + \mathcal{M}_r^{\frac1r}( \mu_{\mathbf{x}(s')}) + | a_i(s') |_{\Lambda} \right ) \mathrm{d}s' \right )^{\frac{r}{2}}\\
        &\leq \varepsilon \sup_{s'\in [t,s]} |x_i(s') |^r + C_{\varepsilon} \int_t^s \left ( 1 + |x_i(s')|^r + \mathcal{M}_r( \mu_{\mathbf{x}(s')}) + | a_i(s') |^r_{\Lambda} \right ) \mathrm{d}s',
    \end{split}
    \end{equation}
    for all $\varepsilon>0$, where $C_{\varepsilon}\geq 0$ is a constant that depends on $\varepsilon$. For the third term on the right-hand side of inequality \eqref{estimate_linear_growth}, we have by Assumption \ref{Assumption_f_sigma_lipschitz}(iv)
    \begin{equation}
        \left ( \int_t^s | \sigma(x_i(s'),\mu_{\mathbf{x}(s')}) |_{L_2(\Xi,H)}^2 \mathrm{d}s' \right )^{\frac{r}{2}} \leq C \left ( 1 + \left ( \int_t^s |x_i(s')|^2 \mathrm{d}s' \right )^{\frac{r}{2}} + \left ( \int_t^s \mathcal{M}^{\frac{2}{r}}_r( \mu_{\mathbf{x}(s')}) \mathrm{d}s' \right )^{\frac{r}{2}} \right ).
    \end{equation}
    Note that
    \begin{equation}
        \left ( \int_t^s |x_i(s')|^2 \mathrm{d}s' \right )^{\frac{r}{2}} \leq \sup_{s'\in [t,s]} |x_i(s') |^{\frac{r}{2}} \left ( \int_t^s |x_i(s')| \mathrm{d}s' \right )^{\frac{r}{2}}\leq \varepsilon \sup_{s'\in [t,s]} |x_i(s') |^r + C_{\varepsilon} \int_t^s |x_i(s')|^r \mathrm{d}s',
    \end{equation}
    for all $\varepsilon>0$, as well as
    \begin{equation}
    \begin{split}
        \left ( \int_t^s \mathcal{M}^{\frac{2}{r}}_r( \mu_{\mathbf{x}(s')}) \mathrm{d}s' \right )^{\frac{r}{2}} &= \left ( \int_t^s \left ( \frac{1}{n} \sum_{i=1}^n |x_i(s')|^r \right )^{\frac{2}{r}} \mathrm{d}s' \right )^{\frac{r}{2}} \\
        &\leq \sup_{s'\in [t,s]} \left ( \frac{1}{n} \sum_{i=1}^n |x_i(s')|^r \right )^{\frac{1}{2}} \left ( \int_t^s \left ( \frac{1}{n} \sum_{i=1}^n |x_i(s')|^r \right )^{\frac{1}{r}} \mathrm{d}s' \right )^{\frac{r}{2}}\\
        &\leq \frac{\varepsilon}{n} \sum_{i=1}^n \sup_{s'\in [t,s]} |x_i(s')|^r + C_{\varepsilon} \int_t^s \left ( \frac{1}{n} \sum_{i=1}^n |x_i(s')|^r \right ) \mathrm{d}s',
    \end{split}
    \end{equation}
    for all $\varepsilon>0$. Moreover, regarding the fourth term on the right-hand side of inequality \eqref{estimate_linear_growth}, by Burkholder--Davis--Gundy inequality, we have
    \begin{equation}
    \begin{split}
        &\mathbb{E} \left [ \sup_{s'\in [t,s]} \left | \int_t^{s'} \langle {\mathbf{1}}_{[t,\tau_m]}(t') x_i(t'), \sigma(x_i(t') , \mu_{\mathbf{x}(t')}) \mathrm{d}W(t') \rangle \right |^{\frac{r}{2}} \right ]\\
        &\leq C \mathbb{E} \left [ \left ( \int_t^s {{\mathbf{1}}_{[t,\tau_m]}(s')} |x_i(s')|^2 |\sigma(x_i(s'),\mu_{\mathbf{x}(s')}) |_{L_2(\Xi,H)}^2 \mathrm{d}s' \right )^{\frac{r}{4}} \right ]\\
        &\leq C \mathbb{E} \left [ \sup_{s'\in [t,s]} \left \{ {\mathbf{1}}_{[t,\tau_m]}(s') |x_i(s')|^{\frac{r}{2}} \right \} \left ( \int_t^s  \left ( 1 + |x_i(s')|^2 + \mathcal{M}_r^{\frac{2}{r}} (\mu_{\mathbf{x}(s')}) \right )  \mathrm{d}s' \right )^{\frac{r}{4}} \right ]\\
        &\leq \varepsilon \mathbb{E} \left [ \sup_{s'\in [t,s]} |x_i(s')|^r \right ] + C_{\varepsilon} \mathbb{E} \left [ \left ( \int_t^s  \left ( 1 + |x_i(s')|^2 + \mathcal{M}_r^{\frac{2}{r}} (\mu_{\mathbf{x}(s')}) \right )  \mathrm{d}s' \right )^{\frac{r}{2}} \right ].
    \end{split}
    \end{equation}
    The second term on the right-hand side of this inequality can be estimated using the same arguments as before. Thus, choosing $\varepsilon>0$ sufficiently small, taking the expectation in equation \eqref{estimate_linear_growth}, letting $m\to\infty$, and then summing over $i=1,\dots,n$ and dividing by $n$, we obtain
    \begin{equation}
    \begin{split}
        &\frac1n \sum_{i=1}^n \mathbb{E} \left [ \sup_{s'\in [t,s]} |x_i(s')|^r \right ]\\
        &\leq C \left ( 1 + | \mathbf{x} |_r^r + \int_t^s \left ( \frac{1}{n} \sum_{i=1}^n \mathbb{E} \left [ \sup_{t'\in [t,s']} | x_i(t')|^r \right ] \right ) \mathrm{d}s' + \int_t^s \mathbb{E} \left [ \frac{1}{n} \sum_{i=1}^n | a_i(s') |_{\Lambda}^r \right ] \mathrm{d}s' \right ).
    \end{split}
    \end{equation}
    Therefore, by Gr\"onwall's inequality, we have
    \begin{equation}\label{intermediate_result}
        \frac1n \sum_{i=1}^n \mathbb{E} \left [ \sup_{s'\in [t,s]} |x_i(s')|^r \right ] \leq C \left ( 1 + | \mathbf{x} |_r^r + \int_t^s \mathbb{E} \left [ \frac{1}{n} \sum_{i=1}^n | a_i(s') |_{\Lambda}^r \right ] \mathrm{d}s' \right ).
    \end{equation}
    Taking the power $1/r$ on both sides and noting that
    \begin{equation}\label{estimate_for_a}
        \left ( \int_t^s \mathbb{E} \left [ \frac{1}{n} \sum_{i=1}^n | a_i(s') |_{\Lambda}^r \right ] \mathrm{d}s' \right )^{\frac{1}{r}} \leq C \left ( \int_t^s \mathbb{E} \left [ \frac{1}{n} \sum_{i=1}^n | a_i(s') |_{\Lambda}^2 \right ] \mathrm{d}s' \right )^{\frac{1}{2}}
    \end{equation}
    concludes the proof.
    
    In the remaining proofs we will omit the technicalities involving the stopping times and assume that all the terms are well defined and have sufficient integrability to apply the necessary theorems.
    
    Proof of \eqref{eq:est_|xs-x|r}: Applying It\^o's formula \cite[Proposition 1.164]{fabbri_gozzi_swiech_2017}, and taking the supremum over $s'\in [t,s]$ we obtain
    \begin{equation}\label{estimate_time_regularity}
    \begin{split}
        &\sup_{s'\in [t,s]} |x_i(s') - x_i |_{-1}^2\\
        &\leq 2 \int_t^{s} \left | \langle A^*B(x_i(s')-x_i),x_i(s') \rangle + \langle B(x_i(s')-x_i), f(x_i(s'),\mu_{\mathbf{x}(s')},a_i(s')) \rangle \right | \mathrm{d}s'\\
        &\quad + \int_t^{s} | B^{\frac12} \sigma(x_i(s'),\mu_{\mathbf{x}(s')}) |_{L_2(\Xi,H)}^2 \mathrm{d}s' + 2 \sup_{s'\in [t,s]} \left | \int_t^{s'} \langle B (x_i(t')-x_i), \sigma(x_i(t') , \mu_{\mathbf{x}(t')}) \mathrm{d}W(t') \rangle \right |.
    \end{split}
    \end{equation}
    For the first term on the right-hand side of this inequality, by Assumption \ref{Assumption_weak_B_condition}, we have
    \begin{equation}
        \int_t^{s} \left | \langle A^*B(x_i(s')-x_i),x_i(s') \rangle \right | \mathrm{d}s' \leq C \left ( \sup_{s'\in [t,s]} | x_i(s') |^2 + | x_i|^2 \right ) (s-t).
    \end{equation}
    For the second term on the right-hand side of inequality \eqref{estimate_time_regularity}, we have
    \begin{equation}
    \begin{split}
        &\int_t^{s} \left | \langle B(x_i(s')-x_i), f(x_i(s'),\mu_{\mathbf{x}(s')},a_i(s')) \rangle \right | \mathrm{d}s' \\
        &\leq C \left ( \sup_{s'\in [t,s]} | x_i(s') |^2 + |x_i|^2 \right ) (s-t) + \int_t^{s} | f(x_i(s'),\mu_{\mathbf{x}(s')},a_i(s')) |^2 \mathrm{d}s'.
    \end{split}
    \end{equation}
    Note that by Assumption \ref{Assumption_f_sigma_lipschitz}(i)(ii)
    \begin{equation}
    \begin{split}
        &\int_t^{s} | f(x_i(s'),\mu_{\mathbf{x}(s')},a_i(s')) |^2 \mathrm{d}s'\\
        &\leq C \int_t^{s} \left ( 1 + | x_i(s') |^2 + \mathcal{M}_r^{\frac{2}{r}}( \mu_{\mathbf{x}(s')} ) + |a_i(s') |_{\Lambda}^2 \right ) \mathrm{d}s'\\
        &\leq C \left ( 1 + \sup_{s'\in [t,s]} |x_i(s') |^2 + \sup_{s'\in [t,s]} \mathcal{M}_r^{\frac{2}{r}}( \mu_{\mathbf{x}(s')} ) \right ) (s-t) + C \int_t^{s} |a_i(s') |_{\Lambda}^2 \mathrm{d}s'.
    \end{split}
    \end{equation}
    For the third term on the right-hand side of inequality \eqref{estimate_time_regularity}, we have by Assumption \ref{Assumption_f_sigma_lipschitz}(iv)
    \begin{equation}
    \begin{split}
        \int_t^{s} | B^{\frac12} \sigma(x_i(s'),\mu_{\mathbf{x}(s')}) |_{L_2(\Xi,H)}^2 \mathrm{d}s' &\leq C \int_t^{s} \left ( 1 + | x_i(s') |^2 + \mathcal{M}_r^{\frac{2}{r}}( \mu_{\mathbf{x}(s')} ) \right ) \mathrm{d}s'\\
        &\leq C \left ( 1 + \sup_{s'\in [t,s]} |x_i(s') |^2 + \sup_{s'\in [t,s]} \mathcal{M}_r^{\frac{2}{r}}( \mu_{\mathbf{x}(s')} ) \right ) (s-t).
    \end{split}
    \end{equation}
    For the stochastic integral in inequality \eqref{estimate_time_regularity}, we obtain using Burkholder--Davis--Gundy inequality and Assumption \ref{Assumption_f_sigma_lipschitz}(iv)
    \begin{equation}
    \begin{split}
        &\mathbb{E} \left [ \sup_{s'\in [t,s]} \left | \int_t^{s'} \langle B (x_i(t')-x_i), \sigma(x_i(t') , \mu_{\mathbf{x}(t')}) \mathrm{d}W(t') \rangle \right |^{\frac{r}{2}} \right ]\\
        &\leq C \mathbb{E} \left [ \left ( \int_t^s | \sigma(x_i(s'),\mu_{\mathbf{x}(s')}) |_{L_2(\Xi,H)}^2 | B(x_i(s')-x_i) |^2 \mathrm{d}s' \right )^{\frac{r}{4}} \right ]\\
        &\leq C \mathbb{E} \left [ \left ( \int_t^s \left ( 1 + |x_i(s')|^2 + \mathcal{M}_{r}^{\frac{2}{r}}(\mu_{\mathbf{x}(s')}) \right ) | x_i(s')-x_i |_{-1}^2 \mathrm{d}s' \right )^{\frac{r}{4}} \right ]\\
        &\leq \frac14 \mathbb{E}\left [ \sup_{s'\in [t,s]} | x_i(s') - x_i |_{-1}^r \right ] + C \mathbb{E} \left [ \left ( \int_t^s \left ( 1 + |x_i(s')|^2 + \mathcal{M}_{r}^{\frac{2}{r}}(\mu_{\mathbf{x}(s')}) \right ) \mathrm{d}s' \right )^{\frac{r}{2}} \right ]\\
        &\leq \frac14 \mathbb{E}\left [ \sup_{s'\in [t,s]} | x_i(s') - x_i |_{-1}^r \right ] + C \left ( 1 + \sup_{s'\in [t,s]} |x_i(s')|^r + \sup_{s'\in [t,s]} \mathcal{M}_{r}(\mu_{\mathbf{x}(s')}) \right ) (s-t)^{\frac{r}{2}}.
    \end{split}
    \end{equation}
    Finally, we note that
    $  \sup_{s'\in [t,s]} \mathcal{M}_{r}(\mu_{\mathbf{x}(s')}) \leq \frac{1}{n} \sum_{i=1}^n \sup_{s'\in [t,s]} |x_i(s')|^r.$
    Therefore, taking the power $r/2$ in inequality \eqref{estimate_time_regularity}, taking the expectation, the sum over $i=1,\dots, n$, and dividing by $n$, we obtain
    \begin{equation}
    \begin{split}
        &\frac{1}{n} \sum_{i=1}^n \mathbb{E} \left [ \sup_{s'\in [t,s]} | x_i(s') - x_i |_{-1}^r \right ]\\
        &\leq C \left ( 1 + |\mathbf{x}|_r^r + \frac{1}{n} \sum_{i=1}^n \mathbb{E} \left [ \sup_{s'\in [t,s]} |x_i(s') |^r \right ] \right ) (s-t)^{\frac{r}{2}} + \frac{C}{n} \int_t^s \mathbb{E} \left [ \sum_{i=1}^n | a_i(s') |_{\Lambda}^{r} \right ] \mathrm{d}s'.
    \end{split}
    \end{equation}
    Applying inequality \eqref{intermediate_result}, we obtain
    \begin{equation}
        \frac{1}{n} \sum_{i=1}^n \mathbb{E} \left [ \sup_{s'\in [t,s]} | x_i(s') - x_i |_{-1}^r \right ] \leq C \left ( 1 + |\mathbf{x}|_r^r \right ) (s-t)^{\frac{r}{2}} + \frac{C}{n} \int_t^s \mathbb{E} \left [ \sum_{i=1}^n | a_i(s') |_{\Lambda}^{r} \right ] \mathrm{d}s'.
    \end{equation}
    Taking the power $1/r$ on both sides, noting \eqref{estimate_for_a}, and using a similar estimate to \eqref{estimate_r_norm} concludes the proof.

    Proof of \eqref{eq:est_|x1-x0|r}: Applying It\^o's formula \cite[Proposition 1.164]{fabbri_gozzi_swiech_2017}, we obtain for every $s'\in [t,s]$
    \begin{equation}\label{ito_1}
    \begin{split}
        &|x^1_i(s')-x_i^0(s')|_{-1}^2\\
        &= |x_i^1 - x_i^0|_{-1}^2 + 2 \int_t^{s'} \langle A^*B (x^1_i(t')-x^0_i(t')), x^1_i(t')-x^0_i(t') \rangle \mathrm{d}t'\\
        &\quad + 2 \int_t^{s'} \langle f(x_i^1(t'),\mu_{\mathbf{x}^1(t')},a_i(t')) - f(x_i^0(t'),\mu_{\mathbf{x}^0(t')},a_i(t')), B( x^1_i(t')-x^0_i(t') ) \rangle \mathrm{d}t'\\
        &\quad + \int_t^{s'} | B^{\frac12} ( \sigma(x_i^1(t'),\mu_{\mathbf{x}^1(t')}) - \sigma(x_i^0(t'),\mu_{\mathbf{x}^0(t')}) ) |_{L_2(\Xi,H)}^2 \mathrm{d}t'\\
        &\quad + 2 \int_t^{s'} \langle B(x_i^1(t') - x_i^0(t')), ( \sigma(x_i^1(t'),\mu_{\mathbf{x}^1(t')}) - \sigma(x_i^0(t'),\mu_{\mathbf{x}^0(t')}) ) \mathrm{d}W(t') \rangle.
    \end{split}
    \end{equation}
    First, note that by Assumption \ref{Assumption_weak_B_condition}, we have
    \begin{equation}
        \int_t^{s'} \langle A^*B (x^1_i(t')-x^0_i(t')), x^1_i(t')-x^0_i(t') \rangle \mathrm{d}t' \leq c_0 \int_t^{s'} |x^1_i(t')-x^0_i(t')|_{-1}^2 \mathrm{d}t'.
    \end{equation}
    Moreover, by Assumption \ref{Assumption_f_sigma_lipschitz}(ii), we have
    \begin{equation}
    \begin{split}
        &\int_t^{s'} \langle f(x_i^1(t'),\mu_{\mathbf{x}^1(t')},a_i(t')) - f(x_i^0(t'),\mu_{\mathbf{x}^0(t')},a_i(t')), B( x^1_i(t')-x^0_i(t') ) \rangle \mathrm{d}t'\\
        &\leq C \int_t^{s'} \left ( |x_i^1(t')-x_i^0(t')|_{-1}^2 + d_{-1,r}^2(\mu_{\mathbf{x}^1(t')},\mu_{\mathbf{x}^0(t')}) \right ) \mathrm{d}t'.
    \end{split}
    \end{equation}
    
    Thus, taking the power $r/2$ on both sides of equation \eqref{ito_1} and taking the supremum over $s'\in [t,s]$, we obtain
    \begin{equation}\label{inequality_1}
    \begin{split}
        &\sup_{s'\in [t,s]} |x^1_i(s')-x_i^0(s')|_{-1}^r\\
        &\leq C |x_i^1 - x_i^0|_{-1}^r + C \left ( \int_t^{s} |x^1_i(s')-x^0_i(s')|_{-1}^2 \mathrm{d}s' \right )^{\frac{r}{2}} + C \left ( \int_t^{s} d_{-1,r}^2(\mu_{\mathbf{x}^1(s')},\mu_{\mathbf{x}^0(s')}) \mathrm{d}s' \right )^{\frac{r}{2}}\\
        &\quad + C \left ( \int_t^{s} | B^{\frac12} ( \sigma(x_i^1(s'),\mu_{\mathbf{x}^1(s')}) - \sigma(x_i^0(s'),\mu_{\mathbf{x}^0(s')}) ) |_{L_2(\Xi,H)}^2 \mathrm{d}s' \right )^{\frac{r}{2}}\\
        &\quad + C \sup_{s'\in [t,s]} \left | \int_t^{s'} \langle B(x_i^1(t') - x_i^0(t')), ( \sigma(x_i^1(t'),\mu_{\mathbf{x}^1(t')}) - \sigma(x_i^0(t'),\mu_{\mathbf{x}^0(t')}) ) \mathrm{d}W(t') \rangle \right |^{\frac{r}{2}}.
    \end{split}
    \end{equation}
    For the second term on the right-hand side of this inequality, we have
    \begin{equation}
    \begin{split}
        \left ( \int_t^{s} |x^1_i(s')-x^0_i(s')|_{-1}^2 \mathrm{d}s' \right )^{\frac{r}{2}} &\leq \sup_{s' \in [t,s]} |x^1_i(s')-x^0_i(s')|_{-1}^{\frac{r}{2}} \left ( \int_t^{s} |x^1_i(s')-x^0_i(s')|_{-1} \mathrm{d}s' \right )^{\frac{r}{2}}\\
        &\leq \varepsilon \sup_{s'\in [t,s]} |x^1_i(s')-x^0_i(s')|_{-1}^{r} + C_{\varepsilon} \int_t^{s} \sup_{t'\in [t,s']} |x^1_i(t')-x^0_i(t')|^r_{-1} \mathrm{d}s',
    \end{split}
    \end{equation}
    for all $\varepsilon>0$. For the third term on the right-hand side of inequality \eqref{inequality_1}, we note that
    \begin{equation}
    \begin{split}
        &\mathbb{E} \left [ \left ( \int_t^s d^2_{-1,r}(\mu_{\mathbf{x}^1(s')},\mu_{\mathbf{x}^0(s')}) \mathrm{d}s' \right )^{\frac{r}{2}} \right ]\\
        &\leq \mathbb{E} \left [ \left ( \int_t^s | \mathbf{x}^1(s') - \mathbf{x}^0(s') |_{-1,r}^2 \mathrm{d}s' \right )^{\frac{r}{2}} \right ]\\
        &\leq \mathbb{E} \left [ \sup_{s'\in [t,s]} | \mathbf{x}^1(s') - \mathbf{x}^0(s') |_{-1,r}^{\frac{r}{2}} \left ( \int_t^s | \mathbf{x}^1(s') - \mathbf{x}^0(s') |_{-1,r} \mathrm{d}s' \right )^{\frac{r}{2}} \right ]\\
        &\leq \varepsilon \mathbb{E} \left [ \sup_{s'\in [t,s]} | \mathbf{x}^1(s') - \mathbf{x}^0(s') |_{-1,r}^{r} \right ] + C_{\varepsilon} \mathbb{E} \left [ \int_t^s \sup_{t'\in [t,s']} | \mathbf{x}^1(t') - \mathbf{x}^0(t') |^r_{-1,r} \mathrm{d}s' \right ]\\
        &\leq \frac{\varepsilon}{n} \sum_{i=1}^n \mathbb{E} \left [ \sup_{s'\in [t,s]} | x_i^1(s') - x_i^0(s') |_{-1}^{r} \right ] + C_{\varepsilon} \int_t^s \frac{1}{n} \sum_{i=1}^n \mathbb{E} \left [ \sup_{t'\in [t,s']} | x_i^1(t') - x_i^0(t') |^r_{-1} \right ] \mathrm{d}s',
    \end{split}
    \end{equation}
    for all $\varepsilon>0$. By Assumption \ref{Assumption_f_sigma_lipschitz}(iv), we can estimate the fourth term on the right-hand side of inequality \eqref{inequality_1} by
    \begin{equation}
    \begin{split}
        &\left ( \int_t^{s} | B^{\frac12} ( \sigma(x_i^1(s'),\mu_{\mathbf{x}^1(s')}) - \sigma(x_i^0(s'),\mu_{\mathbf{x}^0(s')}) ) |_{L_2(\Xi,H)}^2 \mathrm{d}s' \right )^{\frac{r}{2}}\\
        &\leq C \left ( \int_t^{s} \left ( | x_i^1(s') - x_i^0(s') |_{-1}^2 + d^2_{-1,r}(\mu_{\mathbf{x}^1(s')},\mu_{\mathbf{x}^0(s')}) \right ) \mathrm{d}s' \right )^{\frac{r}{2}}.
    \end{split}
    \end{equation}
    Moreover, by Burkholder--Davis--Gundy inequality, we obtain for the stochastic integral in \eqref{inequality_1}
    \begin{equation}
    \begin{split}
        &\mathbb{E} \left [ \sup_{s'\in [t,s]} \left | \int_t^{s'} \langle B(x_i^1(t') - x_i^0(t')), ( \sigma(x_i^1(t'),\mu_{\mathbf{x}^1(t')}) - \sigma(x_i^0(t'),\mu_{\mathbf{x}^0(t')}) ) \mathrm{d}W(t') \rangle \right |^{\frac{r}{2}} \right ]\\
        &\leq C \mathbb{E} \left [ \left ( \int_t^s | x_i^1(s') - x_i^0(s') |^2_{-1} \left ( | x_i^1(s') - x_i^0(s') |^2_{-1} + d^2_{-1,r}(\mu_{\mathbf{x}^1(s')},\mu_{\mathbf{x}^0(s')}) \right ) \mathrm{d}s' \right )^{\frac{r}{4}} \right ]\\
        &\leq C \mathbb{E} \left [ \sup_{s'\in [t,s]} | x_i^1(s') - x_i^0(s') |^{\frac{r}{2}}_{-1} \left ( \int_t^s \left ( | x_i^1(s') - x_i^0(s') |^2_{-1} + d^2_{-1,r}(\mu_{\mathbf{x}^1(s')},\mu_{\mathbf{x}^0(s')}) \right ) \mathrm{d}s' \right )^{\frac{r}{4}} \right ]\\
        &\leq \varepsilon \mathbb{E} \left [ \sup_{s'\in [t,s]} | x_i^1(s') - x_i^0(s') |^r_{-1} \right ] + C_{\varepsilon} \mathbb{E} \left [ \left ( \int_t^s \left ( | x_i^1(s') - x_i^0(s') |^2_{-1} + d^2_{-1,r}(\mu_{\mathbf{x}^1(s')},\mu_{\mathbf{x}^0(s')}) \right ) \mathrm{d}s' \right )^{\frac{r}{2}} \right ],
    \end{split}
    \end{equation}
    for all $\varepsilon>0$. Now, we again use the same arguments as above to estimate the second term. Therefore, choosing $\varepsilon>0$ sufficiently small, taking the expectation in inequality \eqref{inequality_1}, summing over $i=1,\dots,n$, and dividing by $n$, we obtain
    \begin{equation}
    \begin{split}
        &\frac{1}{n} \sum_{i=1}^n \mathbb{E} \left [ \sup_{s'\in [t,s]} | x^1_i(s') - x^0_i(s') |_{-1}^r \right ]\leq \frac{C}{n} \sum_{i=1}^n | x^1_i - x^0_i|_{-1}^r + C \int_t^s \frac{1}{n} \sum_{i=1}^n \mathbb{E} \left [ \sup_{t'\in [t,s']} | x^1_i(t') - x^0_i(t') |_{-1}^r \right ] \mathrm{d}s'.
    \end{split}
    \end{equation}
    Applying Gr\"onwall's inequality, taking the power $1/r$ on both sides, and using a similar estimate to \eqref{estimate_r_norm} completes the proof.
\end{proof}

\begin{prop}\label{Lipschitz_Continuity_u_n}
	Let Assumptions \ref{Assumption_A_maximal_dissipative}, \ref{Assumption_weak_B_condition}, \ref{Assumption_f_sigma_lipschitz}(i)(ii)(iv), and \ref{Assumption_running_terminal_cost}(i)(ii)(iv) be satisfied. Then, there is a constant $C\geq 0$, independent of $n\in \mathbb{N}$, such that
    \begin{align}
        \label{u_n_linear_growth} |u_n(t,\mathbf{x})| &\leq C (1+ |\mathbf{x}|_{r} )\\
		\label{u_n_Lipschitz} |u_n(t,\mathbf{x}) - u_n(t,\mathbf{y})| &\leq C | \mathbf{x} - \mathbf{y} |_{-1,r}
	\end{align}
	for all $t\in [0,T]$ and $\mathbf{x},\mathbf{y}\in H^n$.
\end{prop}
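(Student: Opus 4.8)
The plan is to estimate $u_n$ directly from its definition \eqref{finite_dimensional_value_function} as an infimum of the cost functionals $J_n$, using the a priori bounds of Proposition \ref{Prop_Finite_A_Priori} together with the structural hypotheses in Assumptions \ref{Assumption_running_terminal_cost}(i)(ii)(iv) on the costs. All the elementary reductions rest on the following observations: for $\mathbf z\in H^n$ one has $\frac1n\sum_{i=1}^n|z_i|_{-1}\le|\mathbf z|_{-1,r}$ by Jensen's inequality, $\mathcal M_{-1,r}^{1/r}(\mu_{\mathbf z})=|\mathbf z|_{-1,r}=d_{-1,r}(\mu_{\mathbf z},\delta_0)$, $d_{-1,r}(\mu_{\mathbf x},\mu_{\mathbf y})\le|\mathbf x-\mathbf y|_{-1,r}$ (take the identity permutation in the formula for $d_{-1,r}(\mu_{\mathbf x},\mu_{\mathbf y})$ from Section \ref{sec:preliminaries}), and $|\cdot|_{-1}\le C|\cdot|$ since $B$ is bounded, hence $|\cdot|_{-1,r}\le C|\cdot|_r$.

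For the linear growth \eqref{u_n_linear_growth}, I treat the two bounds separately. For the upper bound I fix any $a_0\in\tilde\Lambda$ and use the constant control $\mathbf a(\cdot)\equiv(a_0,\dots,a_0)$, for which $\frac1{\sqrt n}\|\mathbf a(\cdot)\|_{M^2(t,T;\Lambda^n)}=\sqrt{T-t}\,|a_0|_\Lambda$; then \eqref{eq:est_|x|r} gives $\mathbb E[\sup_{s}|\mathbf x(s)|_r]\le C(1+|\mathbf x|_r)$, and bounding $l=l_1+l_2$ and $\mathcal U_T$ by Assumptions \ref{Assumption_running_terminal_cost}(i)(ii)(iv) yields $\frac1n\sum_i|l(x_i(s),\mu_{\mathbf x(s)},a_0)|+\frac1n\sum_i|\mathcal U_T(x_i(T),\mu_{\mathbf x(T)})|\le C(1+|\mathbf x(s)|_{-1,r}+|\mathbf x(T)|_{-1,r})$, so $u_n(t,\mathbf x)\le J_n(t,\mathbf x;\mathbf a(\cdot))\le C(1+|\mathbf x|_r)$. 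For the lower bound I use the coercivity in Assumption \ref{Assumption_running_terminal_cost}(ii): for an \emph{arbitrary} $\mathbf a(\cdot)\in\Lambda^n_t$, $\frac1n\sum_i l(x_i(s),\mu_{\mathbf x(s)},a_i(s))\ge-C(1+|\mathbf x(s)|_{-1,r})+\frac{C_2}{n}\sum_i|a_i(s)|_\Lambda^2$, and the terminal term is $\ge-C(1+|\mathbf x(T)|_{-1,r})$; combining this with \eqref{eq:est_|x|r} gives $J_n(t,\mathbf x;\mathbf a(\cdot))\ge-C(1+|\mathbf x|_r)-\tfrac{C}{\sqrt n}\|\mathbf a(\cdot)\|_{M^2(t,T;\Lambda^n)}+\tfrac{C_2}{n}\|\mathbf a(\cdot)\|_{M^2(t,T;\Lambda^n)}^2$, and a Young's inequality absorbs the middle term into the last one, so $J_n(t,\mathbf x;\mathbf a(\cdot))\ge-C(1+|\mathbf x|_r)$ for every control. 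Taking the infimum over controls gives the lower bound, and \eqref{u_n_linear_growth} follows.

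For the Lipschitz estimate \eqref{u_n_Lipschitz}, I fix $\mathbf x,\mathbf y\in H^n$ and $\varepsilon>0$, pick $\mathbf a(\cdot)\in\Lambda^n_t$ with $J_n(t,\mathbf y;\mathbf a(\cdot))\le u_n(t,\mathbf y)+\varepsilon$, use the same control for the initial state $\mathbf x$, and denote by $\mathbf x(\cdot),\mathbf y(\cdot)$ the corresponding solutions. Then $u_n(t,\mathbf x)-u_n(t,\mathbf y)\le J_n(t,\mathbf x;\mathbf a(\cdot))-J_n(t,\mathbf y;\mathbf a(\cdot))+\varepsilon$, and by the Lipschitz bounds in Assumptions \ref{Assumption_running_terminal_cost}(i)(iv) — which are uniform in the control variable — together with the reductions above, $|l(x_i(s),\mu_{\mathbf x(s)},a_i(s))-l(y_i(s),\mu_{\mathbf y(s)},a_i(s))|\le C(|x_i(s)-y_i(s)|_{-1}+|\mathbf x(s)-\mathbf y(s)|_{-1,r})$, hence $\frac1n\sum_i|l(x_i(s),\cdot)-l(y_i(s),\cdot)|\le C|\mathbf x(s)-\mathbf y(s)|_{-1,r}$ and likewise for the terminal term. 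Thus $|J_n(t,\mathbf x;\mathbf a(\cdot))-J_n(t,\mathbf y;\mathbf a(\cdot))|\le C\,\mathbb E[\sup_{s\in[t,T]}|\mathbf x(s)-\mathbf y(s)|_{-1,r}]$, and \eqref{eq:est_|x1-x0|r}, valid for all admissible controls with a constant independent of $n$ and of the control, bounds the right-hand side by $C|\mathbf x-\mathbf y|_{-1,r}$. Letting $\varepsilon\downarrow0$ and interchanging the roles of $\mathbf x$ and $\mathbf y$ gives \eqref{u_n_Lipschitz}.

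The main obstacle is the lower bound in the linear growth estimate: one has to ensure that the control-dependent term $\tfrac1{\sqrt n}\|\mathbf a(\cdot)\|_{M^2(t,T;\Lambda^n)}$ appearing on the right-hand side of the a priori bound \eqref{eq:est_|x|r} can be dominated, uniformly in $n$, by the positive quadratic term $\tfrac{C_2}{n}\|\mathbf a(\cdot)\|_{M^2(t,T;\Lambda^n)}^2$ produced by the coercivity of $l_2$; this compatibility is precisely why the $n$-scalings in Assumption \ref{Assumption_running_terminal_cost}(ii), in the norms $|\cdot|_r$, and in Proposition \ref{Prop_Finite_A_Priori} are chosen as they are. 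By comparison, the Lipschitz bound is an almost immediate consequence of estimate \eqref{eq:est_|x1-x0|r}, whose constant does not see the control.
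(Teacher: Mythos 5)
Your proposal is correct and follows essentially the same route as the paper: lower bound via the coercivity of $l_2$, estimate \eqref{eq:est_|x|r} and Young's inequality to absorb the $n^{-1/2}\|\mathbf a(\cdot)\|_{M^2}$ term, upper bound via a fixed constant control, and the Lipschitz bound via the $|\cdot|_{-1}$/$d_{-1,r}$-Lipschitz assumptions on $l,\mathcal U_T$ combined with \eqref{eq:est_|x1-x0|r}. The $\varepsilon$-optimal-control argument you spell out for \eqref{u_n_Lipschitz} is exactly what the paper leaves implicit.
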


\begin{proof}
    Let us start with inequality \eqref{u_n_linear_growth}. Using the growth assumption on $l$ and $\mathcal{U}_T$, see Assumption \ref{Assumption_running_terminal_cost}, as well as \eqref{eq:est_|x|r}, we obtain the lower bound
    \begin{equation}
    \begin{split}
        u_n(t,\mathbf{x}) &= \inf_{\mathbf{a}(\cdot) \in \Lambda^n_t} \mathbb{E} \left [ \int_t^T \frac{1}{n} \sum_{i=1}^n \left ( l_1(x_i(s) , \mu_{\mathbf{x}(s)}) + l_2(x_i(s),\mu_{\mathbf{x}(s)},a_i(s)) \right ) \mathrm{d}s + \frac{1}{n} \sum_{i=1}^n \mathcal{U}_T(x_i(T),\mu_{\mathbf{x}(T)}) \right ]\\
        &\geq \inf_{\mathbf{a}(\cdot) \in \Lambda^n_t} \Bigg \{ \mathbb{E} \left [ \int_t^T \frac{1}{n} \sum_{i=1}^n \left ( - C(1+|x_i(s)|_{-1} + \mathcal{M}_{-1,r}^{\frac{1}{r}}(\mu_{\mathbf{x}(s)}) ) - C_1 + C_2 | a_i(s) |_{\Lambda}^2 \right ) \mathrm{d}s \right ]\\
        &\qquad\qquad\qquad + \mathbb{E} \left [ \frac{1}{n} \sum_{i=1}^n \left ( - C( 1 + |x_i(T)|_{-1} + \mathcal{M}_{-1,r}^{\frac{1}{r}}(\mu_{\mathbf{x}(T)})) \right ) \right ] \Bigg \} \\
        &\geq \inf_{\mathbf{a}(\cdot)\in \Lambda^n_t} \left \{ - C - C \mathbb{E} \left [ \sup_{s\in[t,T]} |\mathbf{x}(s)|_r \right ] + C_2 \int_t^T \frac{1}{n} \sum_{i=1}^n | a_i(s) |_{\Lambda}^2 \mathrm{d}s \right \}\\
        &\geq \inf_{\mathbf{a}(\cdot)\in \Lambda^n_t} \left \{ - C ( 1+ |\mathbf{x}|_r ) - \frac{C}{\sqrt{n}} \| \mathbf{a}(\cdot) \|_{M^2(t,T;\Lambda^n)} + \frac{C_2}{n} \| \mathbf{a}(\cdot) \|_{M^2(t,T;\Lambda^n)}^2 \right \}\\
        &\geq - C(1+|\mathbf{x}|_r),
    \end{split}
    \end{equation}
    where we used Young's inequality in the last step. The upper bound for $u_n(t,\mathbf{x})$ follows from similar arguments if we use fixed controls $a_i(\cdot)=q_0, i=1,\dots,n$, for some fixed $q_0\in\tilde\Lambda$. Inequality \eqref{u_n_Lipschitz} follows from the Lipschitz assumptions on $l$ and $\mathcal{U}_T$, see again Assumption \ref{Assumption_running_terminal_cost}, as well as \eqref{eq:est_|x1-x0|r}.
\end{proof}
Define for $m>0$, $\Lambda_t^{n,m} := \{ \mathbf{a}(\cdot) \in \Lambda_t^n : \mathbf{a}(\cdot) \text{ has values in } B_{m}(0)\,\,\text{in}\,\,\Lambda^n \}$.

\begin{prop}\label{Lipschitz_Time_u_n}
	Let Assumptions \ref{Assumption_A_maximal_dissipative}, \ref{Assumption_weak_B_condition}, \ref{Assumption_f_sigma_lipschitz}(i)(ii)(iv), and \ref{Assumption_running_terminal_cost}(i)(ii)(iv) be satisfied. Then, there is a constant $C\geq 0$, independent of $n\in \mathbb{N}$, such that
	\begin{equation}\label{Lipschitz_Time_u_n_first_estimate}
		|u_n(s,\mathbf{x}) - u_n(t,\mathbf{x})| \leq C(1+|\mathbf{x}|_{r}) |s-t|^{\frac12}
	\end{equation}
	for all $s,t\in [0,T]$ and $\mathbf{x}\in H^n$. Moreover, there exists an absolute constant $K>0$ such that
	    \[
        u_n(t,\mathbf{x}) = \inf_{\mathbf{a}(\cdot) \in \Lambda_t^{n,K\sqrt{n}}} \mathbb{E}\left [ \int_t^T \frac{1}{n}\sum_{i=1}^n l(x_i(s),\mu_{{\bf x}(s)},a_i(s)) \mathrm{d}s + \frac{1}{n} \sum_{i=1}^n {\mathcal U}_T(x_i(T),\mu_{{\bf x}(T)})  \right ]
    \]
and $u_n$ is the unique $B$-continuous  solution of \eqref{finite_dimensional_hjb} in the class of functions $v$ which satisfy for some $C\geq 0$
	\[
	|v(t,{\mathbf x})-v(t,{\mathbf y})|\leq C|{\bf x}-{\bf y}|_{H^n}\quad\forall {\mathbf x},{\mathbf y}\in H^n,t\in[0,T].
	\]
	\end{prop}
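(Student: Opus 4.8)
The plan is to combine the dynamic programming principle (DPP) for $u_n$ with the a priori estimates of Proposition~\ref{Prop_Finite_A_Priori}, and then to apply the $B$-continuous viscosity solution theory of \cite[Chapter~3]{fabbri_gozzi_swiech_2017} to \eqref{finite_dimensional_hjb}, viewed as an equation on $H^n$ equipped with $\mathbf A=\mathrm{diag}(A,\dots,A)$ (maximal dissipative by Assumption~\ref{Assumption_A_maximal_dissipative}) and $\mathbf B=\mathrm{diag}(B,\dots,B)$ (compact and satisfying the weak $B$-condition for $\mathbf A$ by Assumptions~\ref{Assumption_weak_B_condition}--\ref{Assumption_B_compact}). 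I would first establish the time-regularity estimate \eqref{Lipschitz_Time_u_n_first_estimate}. The key preliminary observation is an a priori bound on near-optimal controls: combining the lower bound for $J_n(t,\mathbf x;\mathbf a(\cdot))$ derived inside the proof of Proposition~\ref{Lipschitz_Continuity_u_n} (which exploits the coercivity of Assumption~\ref{Assumption_running_terminal_cost}(ii)) with $u_n(t,\mathbf x)\le C(1+|\mathbf x|_r)$, one sees that any control that is $1$-optimal for $u_n(t,\mathbf x)$ satisfies $\tfrac1{\sqrt n}\|\mathbf a(\cdot)\|_{M^2(t,T;\Lambda^n)}\le C(1+|\mathbf x|_r)$, with $C$ independent of $n$.

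For $t<s$, the bound $u_n(t,\mathbf x)-u_n(s,\mathbf x)\le C(1+|\mathbf x|_r)|s-t|^{1/2}$ then follows by inserting into the DPP the control equal to a fixed $q_0\in\tilde\Lambda$ on $[t,s]$ and $\varepsilon$-optimal for $(s,\mathbf x(s))$ afterwards, using the growth of $l$, the Lipschitz bound \eqref{u_n_Lipschitz}, and the estimates \eqref{eq:est_|x|r}, \eqref{eq:est_|xs-x|r} (all control-dependent terms are controlled because the comparison control is bounded on $[t,s]$). For the reverse inequality I would, as in \cite{swiech_wessels_2024}, run the DPP on $[t,s]$ with an $\varepsilon$-optimal control for $u_n(t,\mathbf x)$ and use the coercive lower bound on the running cost on $[t,s]$, the a priori control bound, and the same estimates to absorb the control-dependent terms, all with $n$-independent constants. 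Together with Proposition~\ref{Lipschitz_Continuity_u_n}, this yields that $u_n$ is continuous on $[0,T]\times H^n$, $B$-continuous, and Lipschitz in $\mathbf x$ with respect to $|\cdot|_{-1,r}$ uniformly in $t$ and $n$, hence (via $|\cdot|_{-1,r}\le Cn^{-1/2}|\cdot|_{H^n}$) Lipschitz in $|\cdot|_{H^n}$ with constant $\le\tilde C/\sqrt n$ for an absolute $\tilde C$.

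Next I would show, by the standard DPP-based argument (see \cite[Chapter~3]{fabbri_gozzi_swiech_2017}), that $u_n$ is a $B$-continuous viscosity solution of \eqref{finite_dimensional_hjb}: the required structural hypotheses follow from Assumptions~\ref{Assumption_A_maximal_dissipative}--\ref{Assumption_f_sigma_lipschitz} and the continuity properties of $\mathcal H$ from Lemma~\ref{lemma:continuity_H}, the only delicate point being the quadratic growth of $\mathcal H$ in the momentum variable (cf.\ \eqref{hamiltonian_quadratic_in_p}), which is immaterial on the class of $|\cdot|_{H^n}$-Lipschitz functions, where the relevant momenta are a priori bounded. Uniqueness in that class then follows from the comparison principle of \cite[Chapter~3]{fabbri_gozzi_swiech_2017}, since on this class \eqref{finite_dimensional_hjb} may be treated as an equation with a Hamiltonian that, after truncation of the momentum, is globally Lipschitz. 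I expect the careful verification of these structural conditions in the present infinite-dimensional setting — together with tracking the $n$-independence of all constants — to be the main obstacle.

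Finally, for the representation with bounded controls, I would apply Lemma~\ref{lemma_hamiltonian_n} with the absolute constant $\tilde C$ above to obtain an absolute $K>0$ such that $\mathcal H_n(\mathbf x,\mu,\mathbf p)=\mathcal H_n^{K\sqrt n}(\mathbf x,\mu,\mathbf p)$ whenever $|\mathbf p|_{H^n}\le\tilde C/\sqrt n$. Since at a point where a smooth test function touches a function that is $(\tilde C/\sqrt n)$-Lipschitz in $|\cdot|_{H^n}$ the test function's gradient satisfies the same bound, \eqref{finite_dimensional_hjb} and the HJB equation with Hamiltonian $\mathcal H_n^{K\sqrt n}$ have exactly the same $B$-continuous viscosity sub- and supersolutions within this Lipschitz class. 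The value function $\tilde u_n(t,\mathbf x):=\inf_{\mathbf a(\cdot)\in\Lambda_t^{n,K\sqrt n}}J_n(t,\mathbf x;\mathbf a(\cdot))$ satisfies the same a priori estimates and time regularity as $u_n$ (the arguments above go through verbatim, the coercive lower bound being automatic for bounded controls) and, by its own DPP, is the $B$-continuous viscosity solution of the HJB equation with Hamiltonian $\mathcal H_n^{K\sqrt n}$ in the Lipschitz class; hence it also solves \eqref{finite_dimensional_hjb} there, and by the uniqueness just established $u_n=\tilde u_n$. This gives the claimed representation and, together with the previous steps, the characterization of $u_n$ as the unique $B$-continuous solution of \eqref{finite_dimensional_hjb} in the stated class.
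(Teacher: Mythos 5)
There is a genuine gap in the time-regularity step, and it sits exactly where the paper's proof does the most work. You prove the estimate \eqref{Lipschitz_Time_u_n_first_estimate} \emph{before} establishing the bounded-control representation, relying only on the a priori bound $\frac1{\sqrt n}\|\mathbf a(\cdot)\|_{M^2(t,T;\Lambda^n)}\le C(1+|\mathbf x|_r)$ for near-optimal controls. That global $L^2(t,T)$ bound does not prevent a near-optimal control from spending order-one energy on the short interval $[t,s]$. Concretely, in the hard direction, writing $Y:=\frac1n\mathbb{E}\int_t^s\sum_i|a_i^\varepsilon(s')|_\Lambda^2\,\mathrm ds'$, the DPP together with \eqref{eq:est_|xs-x|r}, the Lipschitz bound on $u_n(s,\cdot)$ and the coercivity of $l_2$ gives $u_n(s,\mathbf x)-u_n(t,\mathbf x)\le \varepsilon + C\sqrt Y - C_2 Y + C(1+|\mathbf x|_r)|s-t|^{1/2}$, and the best one can extract from the competition between $C\sqrt Y$ and $-C_2Y$ is the constant $C^2/(4C_2)$: the ``absorption'' leaves an additive $O(1)$ remainder, not an $O(|s-t|^{1/2})$ one. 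Near-optimality only forces $Y=O(1)$, not $Y\to 0$ as $s-t\to 0$, so the argument as described does not yield the claimed modulus. What is needed is a pointwise-in-time bound $\frac1n\sum_i|a_i(s')|_\Lambda^2\le K^2$ on the competing controls, i.e. precisely the representation $u_n=u_n^{K\sqrt n}$, which must therefore come \emph{first}. This is the paper's route: from Proposition \ref{Lipschitz_Continuity_u_n} one gets the $C/\sqrt n$ Lipschitz bound in $H^n$, Lemma \ref{lemma_hamiltonian_n} shows the Hamiltonians coincide on the relevant gradient range, the truncated equation \eqref{eq:HJBfdm} has a unique $B$-continuous viscosity solution $u_n^m$ by the standard theory, and the elementary identity $u_n=\lim_{m\to\infty}u_n^m$ forces $u_n=u_n^{K\sqrt n}$; only then is the DPP estimate run, where all control terms are trivially $O(s-t)$.

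A secondary issue is your step (i): proving directly ``by the standard DPP-based argument'' that $u_n$ is a $B$-continuous viscosity solution of \eqref{finite_dimensional_hjb} with the unbounded control set $\tilde\Lambda^n$ and quadratically coercive cost is not covered by the results you cite, which are formulated for the truncated problems; the paper deliberately never verifies the solution property for the untruncated problem, obtaining both the representation and the uniqueness statement through the family $u_n^m$ (any $|\cdot|_{H^n}$-Lipschitz solution of \eqref{finite_dimensional_hjb} is a solution of \eqref{eq:HJBfdm} for large $m$, where comparison holds). Your last two paragraphs contain essentially the right ingredients (truncation of the Hamiltonian, gradient bounds at touching points, comparison for the truncated equation); if you reorder the proof so that $u_n=u_n^{K\sqrt n}$ is established first by this mechanism, the time-regularity gap closes and the unnecessary direct verification for unbounded controls disappears.
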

\begin{proof}
    We are going to combine the  arguments from \cite[Proposition 3.1]{mayorga_swiech_2023}, \cite[Proposition 3.3]{swiech_wessels_2024} and \cite[Proposition 4.6]{defeo_swiech_wessels_2023}. Without loss of generality, let $t<s$.

    First, note that inequality \eqref{u_n_Lipschitz} implies that $u_n$ is Lipschitz continuous with respect to the $H_{-1}^n$-norm, and hence also with respect to the $H^n$-norm, with Lipschitz constant $C/\sqrt{n}$. Recall the definition of $\mathcal{H}_n$ and $\mathcal{H}^m_n$ from Section \ref{subsection_hamiltonian}. By Lemma \ref{lemma_hamiltonian_n}, there is a constant $K>0$ such that $\mathcal{H}_n(\mathbf{x},\mu_{\mathbf{x}},\mathbf{p}) = \mathcal{H}^{K\sqrt{n}}_n(\mathbf{x},\mu_{\mathbf{x}},\mathbf{p})$ for all $|\mathbf{p}|_{H^n} \leq C/\sqrt{n}$. 

    Now, for $m>0$, we introduce the auxiliary equation
    \begin{equation}\label{eq:HJBfdm}
    \begin{cases}
	   \partial_t u^m_n + \frac12 \text{Tr}(A_n(\mathbf{x},\mu_{\mathbf{x}}) D^2u^m_n) - \frac{1}{n} \sum_{i=1}^n \langle x_i,nA^*D_{x_i} u^m_n \rangle\\
	    \qquad\qquad+ \mathcal{H}^m_n(\mathbf{x},\mu_{\mathbf{x}} , D_{x_i} u^m_n ) = 0, \quad(t,\mathbf{x})\in (0,T)\times H^n\\
	    u^m_n(T,\mathbf{x}) = \frac{1}{n} \sum_{i=1}^n \mathcal{U}_T(x_i,\mu_{\mathbf{x}}), \quad \mathbf{x}\in H^n,
    \end{cases}
    \end{equation}
   It is well known, see \cite[Theorem 3.66]{fabbri_gozzi_swiech_2017}, that equation \eqref{eq:HJBfdm} has a unique $B$-continuous viscosity solution which is given by the value function of the associated stochastic control problem, i.e.,
    \begin{equation}
        u^m_n(t,\mathbf{x}) = \inf_{\mathbf{a}(\cdot) \in \Lambda_t^{n,m}} \mathbb{E}\left [ \int_t^T\frac{1}{n}\sum_{i=1}^n l(x_i(s),\mu_{{\bf x}(s)},a_i(s)) \mathrm{d}s+ \frac{1}{n} \sum_{i=1}^n {\mathcal U}_T(x_i(T),\mu_{{\bf x}(T)}) \right ].
    \end{equation}
The functions $u_n^m$ satisfy
    \eqref{u_n_Lipschitz} and thus are Lipschitz continuous in $H^n$ with Lipschitz constant $C/\sqrt{n}$. If $K\sqrt{n}\leq m_1<m_2$ then $u_n^{m_1}$ and $u_n^{m_2}$ are both $B$-continuous viscosity solutions of \eqref{eq:HJBfdm} with either $H_n^{m_1}$ or $H_n^{m_2}$ so by uniqueness we have $u_n^{m_1}=u_n^{m_2}$. However it is easy to see that $u_n=\lim_{m\to\infty} u_n^m$ so we obtain $u_n=u_n^{K\sqrt{n}}$.
    
    By the dynamic programming principle, we now have
    \begin{equation}
        u_n(t,\mathbf{x}) = \inf_{\mathbf{a}(\cdot) \in \Lambda_t^{n,K\sqrt{n}}} \mathbb{E}\left [ \int_t^s \frac{1}{n}\sum_{i=1}^n l(x_i(s'),\mu_{{\bf x}(s')},a_i(s')) \mathrm{d}s' + u_n(s,\mathbf{x}(s)) \right ].
    \end{equation}
    Thus,
    \begin{equation}\label{estimate_01}
    \begin{split}
        &| u_n(t,\mathbf{x}) - u_n(s,\mathbf{x}) |\\
        &\leq \sup_{\mathbf{a}(\cdot) \in \Lambda_t^{n,K\sqrt{n}}} \mathbb{E}\left [ \int_t^s \frac{1}{n}\sum_{i=1}^n | l(x_i(s'),\mu_{{\bf x}(s')},a_i(s')) | \mathrm{d}s' + | u_n(s,\mathbf{x}(s)) - u_n(s,\mathbf{x}) | \right ].
    \end{split}
    \end{equation}
    Due to Assumptions \ref{Assumption_running_terminal_cost}(i)(ii) and the fact that $\mathbf{a}(\cdot) \in \Lambda_t^{n,K\sqrt{n}}$, we have
    \begin{equation}
        \frac{1}{n}\sum_{i=1}^n | l(x_i(s'),\mu_{{\bf x}(s')},a_i(s')) | \! \leq \! \frac{C}{n} \sum_{i=1}^n \! \left ( 1 \! + |x_i(s') |_{-1} + \mathcal{M}^{\frac{1}{r}}_{-1,r}(\mu_{\mathbf{x}(s')}) + |a_i(s')|_{\Lambda}^2 \right )\leq C \! \left ( 1 + |\mathbf{x}(s')|_{-1,r} \right ).
    \end{equation}
    Moreover, by Proposition \ref{Lipschitz_Continuity_u_n}, we have
    \begin{equation}
        | u_n(s,\mathbf{x}(s)) - u_n(s,\mathbf{x}) | \leq C | \mathbf{x}(s) - \mathbf{x} |_{-1,r}.
    \end{equation}
    Altogether, we obtain from \eqref{estimate_01}
    \begin{equation}
    \begin{split}
        &| u_n(t,\mathbf{x}) - u_n(s,\mathbf{x}) |\\
        &\leq C \left ( 1 + \sup_{\mathbf{a}(\cdot) \in \Lambda_t^{n,K\sqrt{n}}} \mathbb{E} \left [ \sup_{s'\in [t,s]} | \mathbf{x}(s') |_{-1,r} \right ] \right ) |s-t| + \sup_{\mathbf{a}(\cdot) \in \Lambda_t^{n,K\sqrt{n}}} \mathbb{E} \left [ \left | \mathbf{x}(s) - \mathbf{x} \right |_{-1,r} \right ].
    \end{split}
    \end{equation}
    Applying \eqref{eq:est_|x|r}, \eqref{eq:est_|xs-x|r} concludes the proof. 
    
    Uniqueness of $B$-continuous viscosity solutions for our class of functions $v$ follows since each such function is a $B$-continuous viscosity solution of \eqref{eq:HJBfdm} for a sufficiently large $m$ and equation \eqref{eq:HJBfdm} has comparison principle, see \cite[Theorem 3.50]{fabbri_gozzi_swiech_2017} or \cite[Theorem 3.66]{fabbri_gozzi_swiech_2017}.
\end{proof}

\subsection{Convergence of \texorpdfstring{$u_n$}{un}}\label{sec:convergence_of_u_n}
In this section we assume that, in addition to the assumptions of Proposition \ref{Lipschitz_Time_u_n}, Assumption \ref{Assumption_B_compact} holds. First, we define $\mathcal{V}_n(t,\mu_{\mathbf{x}}) := u_n(t,\mathbf{x})$ on subsets $\mathcal{D}_n \subset \mathcal{P}_r(H_{-1})$ consisting of averages of $n$ Dirac point masses centered at $x_i \in H$, $i=1,\dots, n$. For $R>0$, let $\mathfrak{M}^{2}_R(H) := \{\mu \in \mathcal{P}_2(H) : \int_{H} |x|^{2} \mu(\mathrm{d}x) \leq R \}$. Note that $\mathfrak{M}^{2}_R(H)$ can be embedded into $\mathcal{P}_r(H_{-1})$ by extending a measure $\mu \in \mathfrak{M}^{2}_R(H)$ to a measure $\tilde{\mu} \in \mathcal{P}_2(H_{-1})$ defined by $\tilde{\mu}(\mathfrak{B}) = \mu(\mathfrak{B}\cap H)$, for all $\mathfrak{B}$ in the Borel $\sigma$-algebra of $H_{-1}$. Using this extension and using the same notation for the extension and the original measure, note that we have $\mu(H_{-1} \setminus H) = 0$.

For $m\in \mathbb{N}$, we extend $\mathcal{V}_n$ to functions $\mathcal{V}^m_n: [0,T]\times \mathcal{P}_r(H_{-1}) \to \mathbb{R}$ via
\begin{equation}
    \mathcal{V}_n^m(t,\mu) := \sup \left \{ \mathcal{V}_n(t,\beta) - C d_{-1,r}(\mu,\beta) : \beta = \frac{1}{n} \sum_{i=1}^n \delta_{x_i}, \; x_i\in H, \; i=1,\dots n, \; \beta \in \mathfrak{M}_{m+1}^2(H) \right \}.
\end{equation}
where the constant $C$ is the Lipschitz constant from Proposition \ref{Lipschitz_Continuity_u_n}. Note that $\mathcal{V}^m_n(t,\beta)$ coincides with $\mathcal{V}_n(t,\beta)$ for all $(t,\beta) \in [0,T]\times \mathfrak{M}_{m+1}^2(H)$ such that $\beta$ is an average of Dirac measures as in the supremum above. It is obvious from the definition of $\mathcal{V}_n^m$ and Proposition \ref{Lipschitz_Time_u_n} that
\begin{equation}\label{V_n^m_Lipschitz}
    |\mathcal{V}_n^m(t,\mu) - \mathcal{V}_n^m(t,\beta) | \leq C d_{-1,r}(\mu,\beta)
\end{equation}
for all $t\in [0,T]$ and $\mu,\beta \in \mathcal{P}_2(H_{-1})$, and that for every $m\in\mathbb{N}$ there is a constant $C_m>0$ such that
\begin{equation}\label{V_n^m_equicontinuous}
    |\mathcal{V}_n^m(t,\mu) - \mathcal{V}_n^m(s,\beta)| \leq C d_{-1,r}(\mu,\beta) + C_m |t-s|^{\frac12},
\end{equation}
for all $t,s\in [0,T]$ and $\mu,\beta \in \mathfrak{M}_{m+1}^2(H)$. In particular, for every $m\in\mathbb{N}$, the family $(\mathcal{V}^m_n)_{n\in\mathbb{N}}$ is equicontinuous and bounded on the sets $[0,T]\times \mathfrak{M}^2_{m+1}(H)$.


Next, we show that the sets $\mathfrak{M}_m^2(H)$ are relatively compact in $\mathcal{P}_r(H_{-1})$, $r\in [1,2)$. It follows from Chebyshev's inequality and the definition of $\mathfrak{M}^2_m(H)$ that $\mu( \{ |x| > R \} ) \leq \frac{1}{R^2} \int_{H} |x|^2 \mu(\mathrm{d}x) \leq \frac{m}{R^2}$. Since closed balls in $H$ are compact in $H_{-1}$ (recall that $B$ is compact), the sets $\mathfrak{M}^2_m(H)$ are tight, for every $m>0$. Since $r<2$, these sets are also $r$-uniformly integrable, see \cite[Equation (5.1.20)]{ambrosio_gigli_savare_2008}. By \cite[Proposition 7.1.5]{ambrosio_gigli_savare_2008}, this implies the relative compactness in $\mathcal{P}_r(H_{-1})$.

Therefore, we can apply the Arzel\`{a}--Ascoli theorem to extract uniformly convergent subsequences. More precisely, let $(n_k^1)_{k\in\mathbb{N}} \subset\mathbb{N}$ be a subsequence, such that $\mathcal{V}^m_{n_k^1} \to \mathcal{V}^1$, as $k \to \infty$, uniformly on $[0,T]\times \mathfrak{M}_1^2(H)$ for some function $\mathcal{V}^1 : [0,T]\times \mathfrak{M}_1^2(H) \to \mathbb{R}$. Next, let $(n_k^2)_{k\in\mathbb{N}}$ be a subsequence of $(n_k^1)_{k\in\mathbb{N}}$, such that $\mathcal{V}^2_{n_k^2} \to \mathcal{V}^2$, as $k \to \infty$, uniformly on $[0,T]\times \mathfrak{M}_2^2(H)$ for some function $\mathcal{V}^2 : [0,T]\times \mathfrak{M}_2^2(H) \to \mathbb{R}$. Continuing in the same fashion, we extract a subsequence $(n_k^m)_{k\in\mathbb{N}}$ of $(n_k^{m-1})_{k\in\mathbb{N}}$ such that $\mathcal{V}_{n_k^m}^m \to \mathcal{V}^m$, as $k\to\infty$, uniformly on $[0,T]\times \mathfrak{M}_m^2(H)$ for some function $\mathcal{V}^m : [0,T]\times \mathfrak{M}_m^2(H) \to \mathbb{R}$.

Now, let us show that $\mathcal{V}^m$ coincides with $\mathcal{V}^{m-1}$ on the set $[0,T]\times \mathfrak{M}^2_{m-1}(H)$. Indeed, for $\mu\in \mathfrak{M}^2_{m-1}(H)$, we have by Lemma \ref{lemma_approximation_of_measures}, for some subsequence $(n^m_{k_j})_{j\in\mathbb{N}}$ of $(n^m_k)_{k\in\mathbb{N}}$ and some points $\mathbf{x}_j = (x_1,\dots,x_{n_{k_j}^m})\in H^{n_{k_j}^m}$, $j\in\mathbb{N}$, that
\begin{equation}
    d_2\left ( \mu, \mu_{\mathbf{x}_j} \right ) \to 0, \quad \text{as }j\to\infty,
\end{equation}
where we recall that $\mu_{\mathbf{x}_j} = (1/n_{k_j}^m) \sum_{i=1}^{n^m_{k_j}} \delta_{x_i}$. Thus, for some $j_0 \in\mathbb{N}$ sufficiently large, $\mu_{\mathbf{x}_j} \in \mathfrak{M}^2_m(H)$, for all $j\geq j_0$, and therefore $\mathcal{V}_{n^m_{k_j}}$, $\mathcal{V}_{n^m_{k_j}}^{m-1}$, and $\mathcal{V}_{n^m_{k_j}}^{m}$ all coincide at these measures. Hence
\begin{equation}
\begin{split}
    |\mathcal{V}^m(t,\mu) - \mathcal{V}^{m-1}(t,\mu)|
   & \leq \left | \mathcal{V}^m(t,\mu) - \mathcal{V}^m_{n_{k_j}^m}\left ( t,\mu_{\mathbf{x}_j} \right ) \right | + \left |  \mathcal{V}^{m-1}_{n_{k_j}^m}\left ( t,\mu_{\mathbf{x}_j}\right ) - \mathcal{V}^{m-1}(t,\mu) \right |\\
    &\leq \left | \mathcal{V}^m(t,\mu) - \mathcal{V}^m_{n_{k_j}^m}\left ( t,\mu \right ) \right | + \left | \mathcal{V}^m_{n_{k_j}^m}\left ( t,\mu \right ) - \mathcal{V}^m_{n_{k_j}^m}\left ( t,\mu_{\mathbf{x}_j} \right ) \right |\\
    &\quad + \left |  \mathcal{V}^{m-1}_{n_{k_j}^m}\left ( t,\mu_{\mathbf{x}_j} \right ) - \mathcal{V}^{m-1}_{n_{k_j}^m}(t,\mu) \right | + \left |  \mathcal{V}^{m-1}_{n_{k_j}^m}(t,\mu) - \mathcal{V}^{m-1}(t,\mu) \right |.
\end{split}
\end{equation}
Thus, due to \eqref{V_n^m_equicontinuous}, the right-hand side converges to zero as $k\to\infty$.

Therefore, considering the diagonal sequence $(n_k^k)_{k\in\mathbb{N}}$ and the corresponding sequence of functions $\mathcal{V}_{n_k^k}^k$, $k\in\mathbb{N}$, we obtain a universal limit $\mathcal{V}:[0,T]\times \mathcal{P}_2(H) \to \mathbb{R}$, that is, we have $\mathcal{V}^k_{n_k^k} \to \mathcal{V}$ uniformly on $[0,T]\times \mathfrak{M}_m^2(H)$, for every $m\in\mathbb{N}$.

\begin{remark}
    Note that $\mathcal{V}$ can be extended continuously to $[0,T]\times \mathcal{P}_2(H_{-1})$. Indeed, due to \eqref{V_n^m_Lipschitz},  for each $t\in [0,T]$ we can extend $\mathcal{V}(t,\cdot)$ to $\mathcal{P}_2(H_{-1})$ and the extension is Lipschitz continuous in $d_{-1,2}$ metric with a Lipschitz constant that is independent of $t$. Now, let us show that $\mathcal{V}$ is continuous in both variables. To this end, it is enough to show that for every $(t,\mu)\in [0,T]\times \mathcal{P}_2(H_{-1})$ and every sequence $(t_n)$ in $[0,T]$ such that $t_n\to t$, we have $\mathcal{V}(t_n,\mu) \to \mathcal{V}(t,\mu)$. 
    
Using similar arguments as in the proof of Lemma \ref{lemma_approximation_of_measures} and using the density of $H$ in $H_{-1}$, there is a measure $\mu_{\varepsilon} = (1/N) \sum_{i=1}^N \delta_{x_i}$, where $x_i\in H$, $i=1,\dots,N$, such that $d_{-1,2}(\mu,\mu_{\varepsilon}) < \varepsilon$. Note that $\mu_{\varepsilon}\in \mathfrak{M}^2_m(H)$ for some $m=m(\varepsilon)\in\mathbb{N}$. Thus, we have
    \begin{equation}
    \begin{split}
        | \mathcal{V}(t_n,\mu) - \mathcal{V}(t,\mu) | &\leq | \mathcal{V}(t_n,\mu) - \mathcal{V}(t_n,\mu_{\varepsilon}) | + | \mathcal{V}(t_n,\mu_{\varepsilon}) - \mathcal{V}(t,\mu_{\varepsilon}) | + | \mathcal{V}(t,\mu_{\varepsilon}) - \mathcal{V}(t,\mu) |\\
        &\leq C d_{-1,2}(\mu,\mu_{\varepsilon}) + C_{\varepsilon} |t_n-t|^{\frac12} + C d_{-1,2}(\mu_{\varepsilon},\mu).
    \end{split}
    \end{equation}
    Choosing $\varepsilon>0$ sufficiently small, and then taking the limit $n\to \infty$, we see that the right-hand side can be made arbitrarily small, which proves the continuity of $\mathcal{V}$.
\end{remark}

Now, we consider the lift $V:[0,T]\times E \to \mathbb{R}$ of $\mathcal{V}$ given by
\begin{equation}\label{Definition_of_V}
    V(t,X) := \mathcal{V}(t,X_{\texttt{\#}}\mathcal{L}^1).
\end{equation}
We will show that the limit $\mathcal{V}$ is an $L$-viscosity solution of equation \eqref{intro:HJB_on_Wasserstein_space} which is unique. Here, $\mathcal{V}$ is an $L$-viscosity solution of equation \eqref{intro:HJB_on_Wasserstein_space} if its lift $V$ is a $\mathcal B$-continuous viscosity solution of equation \eqref{lifted_HJB_equation}. The notion of $B$-continuous viscosity solution is recalled in Appendix \ref{app:viscosity_hilbert}.

\begin{theorem}\label{theorem:convergence}
	Let Assumptions \ref{Assumption_A_maximal_dissipative}, \ref{Assumption_weak_B_condition}, \ref{Assumption_B_compact}, \ref{Assumption_f_sigma_lipschitz}(i)(ii)(iv), and \ref{Assumption_running_terminal_cost}(i)(ii)(iv) be satisfied. Then, for every set $\mathfrak{M}^2_m(H)$, $m\in\mathbb{N}$, in $\mathcal{P}_2(H)$, we have
	\begin{equation}\label{convergence_on_M^2_R}
    \begin{split}
		&\lim_{n\to \infty} \sup \Bigg \{ \left | u_n(t,x_1,\dots,x_n) - \mathcal{V}\left ( t, \frac{1}{n} \sum_{i=1}^n \delta_{x_i} \right ) \right |\\
        &\qquad\qquad\qquad : (t,x_1,\dots,x_n)\in (0,T]\times H^n, \frac{1}{n} \sum_{i=1}^n \delta_{x_i} \in \mathfrak{M}^2_m(H) \Bigg \} =0,
    \end{split}
	\end{equation}
	where $\mathcal{V}$ is the unique $L$-viscosity solution of equation \eqref{intro:HJB_on_Wasserstein_space} in the class of functions
	$\mathcal{W}$ whose lifts are uniformly continuous on bounded subsets of $[0,T]\times E$ and satisfy 
	\begin{equation}\label{eq:abab1}
	|W(t,X)-W(t,Y)| \leq C\|X-Y\| \quad\text{for all}\,\,t\in[0,T],\,\,X,Y\in E.
	\end{equation}
	In particular, $V=U$, where $U$ and $V$ are given by \eqref{Lifted_Value_Function} and \eqref{Definition_of_V}, respectively.
\end{theorem}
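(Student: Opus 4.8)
The plan is to reduce everything to three facts about the lifted HJB equation \eqref{lifted_HJB_equation} on $E$: that the lift $V$ of the subsequential limit $\mathcal V$ constructed before the theorem is a $\mathcal B$-continuous viscosity solution of \eqref{lifted_HJB_equation}; that \eqref{lifted_HJB_equation} satisfies a comparison principle in the class of functions whose restrictions to $[0,T]\times E$ lie in $\mathcal W$, so that such a solution is unique; and that the value function $U$ of the lifted control problem \eqref{Lifted_State_Equation}--\eqref{Lifted_Value_Function} is also a $\mathcal B$-continuous viscosity solution of \eqref{lifted_HJB_equation} belonging to $\mathcal W$. Granting these, uniqueness forces $V=U$, and since the argument applies to every subsequential limit, the full sequence $u_n$ converges; the uniform convergence \eqref{convergence_on_M^2_R} is then read off from the uniform convergence $\mathcal V^k_{n_k^k}\to\mathcal V$ on each $[0,T]\times\mathfrak M^2_m(H)$ together with the identity $\mathcal V^k_{n}(t,\mu_{\mathbf x})=u_n(t,\mathbf x)$ for empirical measures $\mu_{\mathbf x}\in\mathfrak M^2_m(H)$ and $n=n_k^k$ large. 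The conceptual point behind the first fact is that, under the lift $\mathbf x\mapsto X^{\mathbf x}_n$ of \eqref{E_n}, the finite-particle HJB \eqref{finite_dimensional_hjb} is precisely the restriction of \eqref{lifted_HJB_equation} to the subspace $E_n$: one checks that the Fr\'echet gradient on $E_n$ corresponds to $nD_{x_i}$, that the blocks $(A_n)_{ij}=\sigma(x_i,\mu_{\mathbf x})\sigma^{\top}(x_j,\mu_{\mathbf x})$ reassemble into $\Sigma(X^{\mathbf x}_n)\Sigma(X^{\mathbf x}_n)^{\ast}$, and that $\tfrac1n\sum_i\mathcal H(x_i,\mu_{\mathbf x},\cdot)=\tilde{\mathcal H}(X^{\mathbf x}_n,\cdot)$; hence, by Proposition \ref{Lipschitz_Time_u_n}, $u_n$ viewed on $E_n$ is a $\mathcal B$-continuous viscosity solution of \eqref{lifted_HJB_equation} restricted to $E_n$.

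For the viscosity property of $V$, fix an admissible test function $\varphi$ for the notion of $\mathcal B$-continuous viscosity solution (Definition \ref{def:viscosity_solution_hilbert}) and suppose $V-\varphi$ has a strict global maximum at $(t_0,X_0)$ with $t_0<T$; one must verify the subsolution inequality there (the supersolution case is analogous, and the terminal condition follows by passing to the limit in $u_n(T,\mathbf x)=\int_\Omega\mathcal U_T(X^{\mathbf x}_n(\omega),\mu_{\mathbf x})\,\mathrm d\omega=U_T(X^{\mathbf x}_n)$). Using Lemma \ref{lemma_approximation_of_measures} to approximate $(X_0)_{\texttt{\#}}\mathcal L^1$ by empirical measures along the diagonal subsequence, the uniform convergence $\mathcal V^k_{n_k^k}\to\mathcal V$, and a penalization of the form $-\eta\|X\|^2$ (or a suitable $\mathcal B$-perturbation) to localize, one produces points $(t_n,\mathbf x_n)$ at which $u_n(\cdot,\cdot)-\varphi(\cdot,X^{\cdot}_n)$ attains an approximate maximum on $[0,T]\times H^n$, with $t_n\to t_0$, $X^{\mathbf x_n}_n\to X_0$ in $E_{-1}$ and $\mu_{\mathbf x_n}\to(X_0)_{\texttt{\#}}\mathcal L^1$ in $\mathcal P_r(H)$; here Assumption \ref{Assumption_B_compact} keeps the $E$-bounded family $(X^{\mathbf x_n}_n)$ precompact in $E_{-1}$. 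Writing the $B$-continuous subsolution inequality for $u_n$ at $(t_n,\mathbf x_n)$ and letting $n\to\infty$ yields the inequality for $V$: $\partial_t\varphi$ is continuous; the trace term equals $\tfrac12\mathrm{Tr}(\Sigma(X^{\mathbf x_n}_n)\Sigma(X^{\mathbf x_n}_n)^{\ast}D^2\varphi(t_n,X^{\mathbf x_n}_n))$ and converges by continuity of $\Sigma$ in $\|\cdot\|_{-1}$ (Assumption \ref{Assumption_f_sigma_lipschitz}(iv)) and of $D^2\varphi$; the unbounded first-order term is handled within the $\mathcal B$-continuous framework, read via $\llangle X^{\mathbf x_n}_n,\mathcal A^{\ast}D\varphi\rrangle$; and the Hamiltonian term equals $\tilde{\mathcal H}(X^{\mathbf x_n}_n,D\varphi(t_n,X^{\mathbf x_n}_n))$ and converges to $\tilde{\mathcal H}(X_0,D\varphi(t_0,X_0))$ by the continuity estimates of Lemma \ref{lemma:continuity_H}, their lifted counterparts in Section \ref{subsec:properties_lifted_coeff}, and the representation of $\tilde{\mathcal H}$ proved above.

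For the comparison principle and the identification with $U$, I would invoke the theory of $\mathcal B$-continuous viscosity solutions for unbounded second-order HJB equations in Hilbert spaces. The structural hypotheses needed for comparison — the weak $B$-condition for $\mathcal A$ (Assumption \ref{Assumption_weak_B_condition} lifted to $\mathcal B$), the $\mathcal B$-monotonicity and the Lipschitz bounds for $F$ and $\Sigma$ inherited from Assumption \ref{Assumption_f_sigma_lipschitz}, and the continuity estimates for $\tilde{\mathcal H}$ collected in Section \ref{subsec:properties_lifted_coeff} — are all verified in Sections \ref{sec:assumptions}--\ref{sec:convergence}, so that \eqref{lifted_HJB_equation} has at most one $\mathcal B$-continuous viscosity solution in $\mathcal W$ (see \cite[Chapter 3]{fabbri_gozzi_swiech_2017}). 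The same theory, combined with the dynamic programming principle for \eqref{Lifted_State_Equation}--\eqref{Lifted_Value_Function} and a priori estimates for the lifted state equation analogous to Proposition \ref{Prop_Finite_A_Priori} — which give that $U$ is Lipschitz in $\|\cdot\|_{-1}$, hence in $\|\cdot\|$, and uniformly continuous on bounded subsets of $[0,T]\times E$, so $U\in\mathcal W$ — shows that $U$ is a $\mathcal B$-continuous viscosity solution of \eqref{lifted_HJB_equation}. Hence $V=U$, and the proof is complete.

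The main obstacle is the passage to the limit in the viscosity inequality of the second step: unlike the finite-dimensional case of \cite{swiech_wessels_2024}, the state space has no local compactness, so producing the approximate maximizers $(t_n,\mathbf x_n)$ and controlling the limits of the second-order trace term and of the unbounded drift term genuinely require the compactness of $B$ (Assumption \ref{Assumption_B_compact}) and the full $\mathcal B$-continuous viscosity solution machinery rather than a soft compactness argument; the careful bookkeeping identifying \eqref{finite_dimensional_hjb} with the restriction of \eqref{lifted_HJB_equation} to $E_n$ is the second point where the infinite-dimensionality of $H$ (so that $E_n$ is itself infinite-dimensional) must be handled with the Hilbert-space viscosity theory throughout.
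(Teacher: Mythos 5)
Your overall architecture coincides with the paper's: transfer the viscosity inequalities for $u_n$ through the lift $\mathbf x\mapsto X^{\mathbf x}_n$ to show that the lift $V$ of the subsequential limit is a $\mathcal B$-continuous viscosity solution of \eqref{lifted_HJB_equation}, identify $V$ with $U$ by a comparison/uniqueness argument, and upgrade subsequential to full convergence by the usual subsequence trick. However, three of your steps do not go through as written. First, you propose to write the viscosity subsolution inequality for $u_n$ at points where $u_n-\varphi(\cdot,X^{\cdot}_n)$ attains only an \emph{approximate} maximum; a viscosity inequality requires an actual local maximum against an admissible test function (which here must have the form $\varphi+h(t,|\cdot|)$, so the radial part of the lifted test function has to be carried along, as the paper does via $\psi_n(t,\mathbf x)=\varphi(t,X^{\mathbf x}_n)+h(t,\|X^{\mathbf x}_n\|)$). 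The paper gets genuine local maxima of $u_n-\psi_n$ on $\{X^{\mathbf x}_n\in B_1(X)\}$ because $u_n-\psi_n$ is weakly sequentially upper semicontinuous (from the $B$-continuity of $u_n$, the $B$-lower semicontinuity of $\psi$, and the compactness of $B$) and the set is bounded; otherwise one must perturb à la Ekeland--Lebourg as in the proof of Lemma \ref{proposition:V_n_geq_u_n}. You gesture at a ``$\mathcal B$-perturbation'' but never implement it, and without it the inequality you write at $(t_n,\mathbf x_n)$ is not justified.

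Second, and more substantively, your claimed identity
\begin{equation}
\frac1n\sum_{i=1}^n\mathcal H\bigl(x_i(n),\mu_{\mathbf x(n)},nD_{x_i}\psi_n(t_n,\mathbf x(n))\bigr)=\tilde{\mathcal H}\bigl(X^{\mathbf x(n)}_n,D\psi(t_n,X^{\mathbf x(n)}_n)\bigr)
\end{equation}
is false in general: by \eqref{derivative_varphi_n}--\eqref{derivative_psi_n}, $nD_{x_i}\psi_n$ is the block average $n\int_{A^n_i}D\psi(t_n,X^{\mathbf x(n)}_n)\,\mathrm d\omega$, and $D\psi(t_n,X^{\mathbf x(n)}_n)$ need not belong to $E_n$, so the finite-dimensional Hamiltonian term is $\tilde{\mathcal H}$ evaluated at a block-averaged (projected) gradient, not at $D\psi$ itself (by concavity of $\mathcal H$ in $p$ the two sides differ in general). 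The convergence of exactly this term is the delicate core of the stability argument: the paper approximates $X$ and $D\psi(t,X)$ by functions $X^{\varepsilon},Y^{\varepsilon}$ continuous on $[0,1]$ and runs the three-term estimate \eqref{proof_of_convergence} using Lemma \ref{lemma:continuity_H}; the lemma alone, invoked as you do, does not control the discrepancy between block averages and pointwise values. Third, the comparison step cannot simply ``invoke the theory'' of \cite[Chapter 3]{fabbri_gozzi_swiech_2017}: $\tilde{\mathcal H}(X,P)$ has quadratic growth in $P$ because the controls are unbounded, which violates the hypotheses of the standard comparison theorems. The paper first uses the Lipschitz bound \eqref{eq:abab1} together with Corollary \ref{lemma_hamiltonian} (lifted) to show that any solution in the class $\mathcal W$ is also a solution of the truncated equation \eqref{eq:eqHJBK1} with $\tilde{\mathcal H}^K$, and only then applies \cite[Theorem 3.66]{fabbri_gozzi_swiech_2017}; the same truncation, through Proposition \ref{Value_Function_Lipschitz}, is what lets one conclude $U=V$. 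All three fixes are available, but without them the proof as proposed does not close.
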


\begin{proof}
    We are first going to show that $\mathcal{V}$ constructed before is an $L$-viscosity solution of equation \eqref{intro:HJB_on_Wasserstein_space}, i.e., $V$ is a $\mathcal B$-continuous viscosity solution of equation \eqref{lifted_HJB_equation}. Let us only show that $V$ is a $\mathcal B$-continuous viscosity subsolution. The proof that $V$ is a $\mathcal B$-continuous viscosity supersolution is analogous. For $\varphi\in C^{1,2}((0,T)\times E)$ and $h\in C^{1,2}((0,T)\times \mathbb{R})$, let $\psi=\varphi +h(t,\|X\|)$ be a test function as in Definition \ref{def:test_functions_hilbert}, and let $V - \psi$ have a strict, global maximum at $(t,X)\in (0,T)\times E$. We need to show that
    \begin{equation}
	   \partial_t \psi(t,X) + \frac12 \text{Tr}(\Sigma(X)(\Sigma(X))^{\ast} D^2\psi(t,X)){+}  \llangle X,\mathcal{A}^*D\varphi(t,X) \rrangle {+} \tilde{\mathcal{H}}(X, D\psi(t,X)) \geq 0.
    \end{equation}
    Recall that $A^n_i = (\frac{i-1}{n},\frac{i}{n})$, $i=1,\dots,n$, and for $\mathbf{x} \in H^n$, we have defined $X^{\mathbf{x}}_n := \sum_{i=1}^n x_i \mathbf{1}_{A_i^n}$. Now, for $t\in (0,T)$, $\mathbf{x} \in H^n$ and $c \in \mathbb{R}$, we define $\varphi_n(t,\mathbf{x}) := \varphi(t,X^{\mathbf{x}}_n)$, $h_n(t,c) := h(t,c/\sqrt{n})$ and $\psi_n(t,\mathbf{x}) := \varphi_n(t,\mathbf{x}) + h_n(t,|\mathbf{x}|_{H^n}) = \varphi(t,X^{\mathbf{x}}_n)+h(t,\| X^{\mathbf{x}}_n \|)$. Here we used that $\|X^{\mathbf{x}}_n \| = |\mathbf{x}|_{H^n}/\sqrt{n}$.

    By \eqref{convergence_on_M^2_R}, we have $\sup \{ |u_n(t,\mathbf{x}) - V(t,X^{\mathbf{x}}_n)|:t\in [0,T], \mathbf{x}\in H^n \text{ such that }X^{\mathbf{x}}_n \in B_1(X) \}\to 0$ as $n\to \infty$. Thus, since the maximum of $V-{\psi}$ at $(t,X)$ is strict, there is a sequence of points $(t_n,\mathbf{x}(n))$ such that the functions $u_n-\psi_n$ have a local maximum over $\{(s,\mathbf{x}) \in [0,T]\times H^n: X^{\mathbf{x}}_n \in B_1(X) \}$ at these points and $t_n\to t$, $X_n^{\mathbf{x}(n)} \to X$. {Indeed, notice that the local maximum is achieved since $u_n-\psi_n$ is weakly sequentially upper semicontinuous, which is a consequence of the $B$-continuity of $u_n$ and the $B$-lower semicontinuity of $\psi$,  together with the compactness of $B$.} Since $u_n$ is a $ B$-continuous viscosity solution of equation \eqref{finite_dimensional_hjb}, we have
\begin{equation}\label{viscosity_solution_finite_dimensions}
    \begin{cases}
        \partial_t \psi_n(t_n,\mathbf{x}(n)) + \frac12 \text{Tr}(A_n(\mathbf{x}(n),\mu_{\mathbf{x}(n)}) D^2\psi_n(t_n,\mathbf{x}(n)))\\
        \quad {+} \frac{1}{n} \sum_{i=1}^n \left ( \langle x_i(n),n A^*D_{x_i} \varphi_n(t_n,\mathbf{x}(n)) \rangle + \mathcal{H}(x_i(n),\mu_{\mathbf{x}(n)},n D_{x_i}\psi_n(t_n,\mathbf{x}(n))) \right ) \geq 0.
    \end{cases}
    \end{equation}

    For the derivatives of $\varphi_n$, $h_n$, and $\psi_n$, we have
    \begin{align}
        \label{derivative_varphi_n} &D_{x_i} \varphi_n(t,\mathbf{x}) = \int_{\Omega} D\varphi(t,X^{\mathbf{x}}_n) \mathbf{1}_{A^n_i} \mathrm{d}\omega\\
        \label{derivative_h_n} &D_{x_i} h_n(t,|\mathbf{x}|_{H^n}) = \int_{\Omega} \partial_r h(t,\|X^{\mathbf{x}}_n\|) \frac{X^{\mathbf{x}}_n}{\|X^{\mathbf{x}}_n\|} \mathbf{1}_{A^n_i} \mathrm{d}\omega\\
        \label{derivative_psi_n}& D_{x_i} \psi_n(t,\mathbf{x}) = D_{x_i} \varphi_n(t,\mathbf{x}) + D_{x_i} h_n(t,|\mathbf{x}|)\\
        \label{derivative_psi_n_2}
        &\langle D_{x_j x_i}^2 \psi_n(t,\mathbf{x}) e_k, e_l \rangle = \int_{\Omega} \langle D^2 \psi(t,X^{\mathbf{x}}_n)(e_k \mathbf{1}_{A^n_i}) , e_l \mathbf{1}_{A^n_j} \rangle \mathrm{d}\omega,
    \end{align}
    for $i,j=1,\dots,n$ and $k,l\in \mathbb{N}$, where $(e_k)_{k\in \mathbb{N}}$ is an orthonormal basis of $H$. Indeed, let $f_i$ be the $i$-th standard basis vector in $\mathbb{R}^n$. Then, for \eqref{derivative_varphi_n} we have
    \begin{equation}
    \begin{split}
        \langle D_{x_i} \varphi_n(t,\mathbf{x}), e_k \rangle &= \lim_{\varepsilon \to 0} \frac{1}{\varepsilon} \left ( \varphi_n(t,\mathbf{x} + \varepsilon f_i e_k \rangle - \varphi_n(t,\mathbf{x}) \right )\\
        &= \lim_{\varepsilon\to 0} \frac{1}{\varepsilon} \left ( \varphi \left ( t, \sum_{j=1}^n x_j \mathbf{1}_{A^n_j} + \varepsilon e_k \mathbf{1}_{A^n_i} \right ) - \varphi_n \left (t,\sum_{j=1}^n x_j \mathbf{1}_{A^n_j} \right ) \right )\\
        &= \int_{\Omega} \langle D\varphi(t,X^{\mathbf{x}}_n) , e_k \rangle \mathbf{1}_{A^n_i} \mathrm{d}\omega= \left \langle \int_{\Omega} D\varphi(t,X^{\mathbf{x}}_n) \mathbf{1}_{A^n_i} \mathrm{d}\omega , e_k \right \rangle.
    \end{split}
    \end{equation}
    A similar calculation shows \eqref{derivative_h_n} and \eqref{derivative_psi_n}. For \eqref{derivative_psi_n_2}, we have
    \begin{equation}
    \begin{split}
        \langle D_{x_j x_i}^2 \psi_n(t,\mathbf{x}) e_k, e_l \rangle &= \langle D_{x_j} D_{x_i} \psi_n(t,\mathbf{x}) e_k,e_l \rangle = \langle D_{x_j} D\psi(t,X^{\mathbf{x}}_n) (e_k \mathbf{1}_{A^n_i}), e_l \rangle\\
        &= \lim_{\varepsilon\to 0} \frac{1}{\varepsilon} \left ( D \psi(t,X^{\mathbf{x}}_n + \varepsilon e_l f_j)(e_k \mathbf{1}_{A^n_i}) - D\psi(t,X^{\mathbf{x}}_n) (e_k \mathbf{1}_{A^n_i}) \right )\\
        &= D^2 \psi(t,X^{\mathbf{x}}_n) (e_k \mathbf{1}_{A^n_i})(e_l \mathbf{1}_{A^n_j})= \int_{\Omega} \langle D^2 \psi(t,X^{\mathbf{x}}_n) (e_k \mathbf{1}_{A^n_i}), e_l \mathbf{1}_{A^n_j} \rangle \mathrm{d}\omega.
    \end{split}
    \end{equation}
    
    Thus, a straightforward computation shows that
    \begin{align}
        \label{correspondence_second_order_term} &\text{Tr}(A_n(\mathbf{x},\mu_{\mathbf{x}}) D^2\psi_n) = \text{Tr}(\Sigma(X^{\mathbf{x}}_n)(\Sigma(X^{\mathbf{x}}_n))^{\ast} D^2\psi(t,X^{\mathbf{x}}_n))\\
    \label{correspondence_unbounded_order_term} & \frac{1}{n} \sum_{i=1}^n \langle x_i,nA^*D_{x_i} \varphi_n \rangle = \sum_{i=1}^n \left \langle x_i , \int_{A^n_i} \mathcal{A}^* D\varphi(t,X^{\mathbf{x}}_n) \mathrm{d}\omega \right \rangle\\ \label{correspondence_hamiltonian}
        &\frac{1}{n} \sum_{i=1}^n \mathcal{H}(x_i,\mu_{\mathbf{x}},n D_{x_i} \psi_n)\\
        &= \frac{1}{n} \sum_{i=1}^n \mathcal{H}\left (x_i,\mu_{\mathbf{x}},n \int_{A^n_i} D\varphi(t,X^{\mathbf{x}}_n) \mathrm{d}\omega + n \int_{A^n_i} \partial_r h(t,\|X^{\mathbf{x}}_n\|) \frac{X^{\mathbf{x}}_n}{\|X^{\mathbf{x}}_n\|} \mathrm{d}\omega \right ).
    \end{align}
    Indeed, regarding \eqref{correspondence_second_order_term}, first recall that $A_n(\mathbf{x},\mu_{\mathbf{x}})$ is an $n\times n$-matrix consisting of $n^2$ trace-class operators $(A_n)_{ij}(\mathbf{x},\mu_{\mathbf{x}}) =\sigma(x_i,\mu_{\mathbf{x}}) \sigma^{\top}(x_j,\mu_{\mathbf{x}})$, for $i,j=1,\dots,n$. Let $(\xi_m)_{m\in \mathbb{N}}$ denote an orthonormal basis of $\Xi$. Then, we have
    \begin{equation}
    \begin{split}
        \text{Tr}(A_n(\mathbf{x},\mu_{\mathbf{x}}) D^2\psi_n) &= \sum_{i,j=1}^n \text{Tr} \left ( \sigma(x_i,\mu_{\mathbf{x}}) \sigma^{\top}(x_j,\mu_{\mathbf{x}}) D^2_{x_jx_i} \psi_n(t,\mathbf{x}) \right )\\
        &= \sum_{m=1}^{\infty} \sum_{i,j=1}^n 
        \langle D^2_{x_jx_i} \psi_n(t,\mathbf{x})\sigma(x_i,\mu_{\mathbf{x}}) \xi_m , \sigma(x_j,\mu_{\mathbf{x}}) \xi_m \rangle\\
        &= \sum_{m=1}^{\infty}\sum_{i,j=1}^n \sum_{k=1}^{\infty} \langle D^2_{x_jx_i} \psi_n(t,\mathbf{x})\sigma(x_i,\mu_{\mathbf{x}}) \xi_m, e_k \rangle \langle \sigma(x_j,\mu_{\mathbf{x}}) \xi_m, e_k \rangle\\
        &= \sum_{m=1}^{\infty}\sum_{i,j=1}^n \sum_{k,l=1}^{\infty} \langle D^2_{x_jx_i} \psi_n(t,\mathbf{x}) e_l, e_k \rangle \langle \sigma(x_i,\mu_{\mathbf{x}}) \xi_m, e_l \rangle \langle \sigma(x_j,\mu_{\mathbf{x}}) \xi_m, e_k \rangle.
    \end{split}
    \end{equation}
    Thus, using the formula for the second derivative of $\psi_n$ \eqref{derivative_psi_n_2}, we obtain
    \begin{equation}
    \begin{split}
        & \text{Tr}(A_n(\mathbf{x},\mu_{\mathbf{x}}) D^2\psi_n)\\
        &= \sum_{m=1}^{\infty}\sum_{i,j=1}^n \sum_{k,l=1}^{\infty} \left ( \int_{\Omega} \langle D^2 \psi(t,X^{\mathbf{x}}_n) (e_l \mathbf{1}_{A^n_i}), e_k \mathbf{1}_{A^n_j} \rangle \mathrm{d}\omega \right ) \langle \sigma(x_i,\mu_{\mathbf{x}}) \xi_m, e_l \rangle \langle \sigma(x_j,\mu_{\mathbf{x}}) \xi_m, e_k \rangle\\
        &= \sum_{m=1}^{\infty}\sum_{i,j=1}^n \int_{\Omega} \left \langle D^2 \psi(t,X^{\mathbf{x}}_n) \left ( \sum_{l=1}^{\infty} \langle \sigma(x_i,\mu_{\mathbf{x}}) \xi_m, e_l \rangle e_l \mathbf{1}_{A^n_i} \right ) , \sum_{k=1}^{\infty} \langle \sigma(x_j,\mu_{\mathbf{x}}) \xi_m, e_k \rangle e_k \mathbf{1}_{A^n_j} \right \rangle \mathrm{d}\omega\\
        &= \sum_{m=1}^{\infty}\sum_{i,j=1}^n \int_{\Omega} \left \langle D^2 \psi(t,X^{\mathbf{x}}_n) \left ( \sigma(x_i,\mu_{\mathbf{x}}) \xi_m \mathbf{1}_{A^n_i} \right ) , \sigma(x_j,\mu_{\mathbf{x}}) \xi_m \mathbf{1}_{A^n_j} \right \rangle \mathrm{d}\omega\\
        &= \sum_{m=1}^{\infty} \int_{\Omega} \left \langle D^2 \psi(t,X^{\mathbf{x}}_n) \left ( \sigma(X^{\mathbf{x}}_n,\mu_{\mathbf{x}}) \xi_m \right )(\omega) , \sigma(X^{\mathbf{x}}_n(\omega),\mu_{\mathbf{x}}) \xi_m \right \rangle \mathrm{d}\omega\\
        &= \sum_{m=1}^{\infty} \llangle D^2 \psi(t,X^{\mathbf{x}}_n) \left ( \Sigma(X^{\mathbf{x}}_n) \xi_m \right ) , \Sigma(X^{\mathbf{x}}_n) \xi_m \rrangle= \text{Tr}(\Sigma(X^{\mathbf{x}}_n)(\Sigma(X^{\mathbf{x}}_n))^{\ast} D^2\psi(t,X^{\mathbf{x}}_n)).
    \end{split}
    \end{equation}
    For \eqref{correspondence_unbounded_order_term}, we have
    \begin{equation}
    \begin{split}
        &\frac{1}{n} \sum_{i=1}^n \langle x_i,nA^*D_{x_i} \varphi_n(t,\mathbf{x}) \rangle = \sum_{i=1}^n \langle x_i,A^* \int_{\Omega} D\varphi(t,X^{\mathbf{x}}_n) \mathbf{1}_{A^n_i} \mathrm{d}\omega \rangle\\
        &= \sum_{i=1}^n \langle x_i,A^* \int_{A^n_i} D\varphi(t,X^{\mathbf{x}}_n) \mathrm{d}\omega \rangle = \sum_{i=1}^n \langle x_i, \int_{A^n_i} \mathcal{A}^* D\varphi(t,X^{\mathbf{x}}_n) \mathrm{d}\omega \rangle \\
        &= \sum_{i=1}^n \int_{A^n_i} \langle x_i, (\mathcal{A}^* D\varphi(t,X^{\mathbf{x}}_n))(\omega) \rangle \mathrm{d}\omega = \sum_{i=1}^n \int_{A^n_i} \langle X^{\mathbf{x}}_n(\omega) , (\mathcal{A}^* D\varphi(t,X^{\mathbf{x}}_n))(\omega) \rangle \mathrm{d}\omega\\
        &= \int_{\Omega} \langle X^{\mathbf{x}}_n(\omega) , (\mathcal{A}^* D\varphi(t,X^{\mathbf{x}}_n))(\omega) \rangle \mathrm{d}\omega = \llangle X^{\mathbf{x}}_n, \mathcal{A}^* D\varphi(t,X^{\mathbf{x}}_n) \rrangle,
    \end{split}
    \end{equation}
    where we used \cite[Proposition 1.29]{fabbri_gozzi_swiech_2017} to interchange the integral and the unbounded operator. Finally, \eqref{correspondence_hamiltonian} directly follows from the derivative formulas \eqref{derivative_varphi_n}, \eqref{derivative_h_n}, and \eqref{derivative_psi_n}.

    Thus, we derive from \eqref{viscosity_solution_finite_dimensions}
    \begin{equation}\label{inequality_n}
    \begin{cases}
        \partial_t \psi(t,X^{\mathbf{x}(n)}_n) + \frac12 \text{Tr}(\Sigma(X^{\mathbf{x}(n)}_n)(\Sigma(X^{\mathbf{x}(n)}_n))^{\ast} D^2\psi(t,X^{\mathbf{x}(n)}_n)) {+} \llangle X^{\mathbf{x}(n)}_n , \mathcal{A}^* D\varphi(t,X^{\mathbf{x}(n)}_n) \rrangle\\
        \quad {+} \frac{1}{n} \sum_{i=1}^n \mathcal{H}\left (x_i(n),\mu_{\mathbf{x}(n)}, n \int_{A^n_i} D \psi(t,X^{\mathbf{x}(n)}_n) \mathrm{d}\omega \right ) \geq 0.
    \end{cases}
    \end{equation}
    Now, let $X^{\varepsilon}, Y^{\varepsilon} \in E$, $\varepsilon>0$, be continuous on $\bar{\Omega} = [0,1]$ such that $\|X^{\varepsilon} - X \| < \varepsilon$ and $\|Y^{\varepsilon} - D\psi(t,X) \| < \varepsilon$. We note that
    \begin{equation}\label{proof_of_convergence}
    \begin{split}
        &\left | \frac{1}{n} \sum_{i=1}^n \mathcal{H}\left (x_i(n),\mu_{\mathbf{x}(n)}, n \int_{A^n_i} D \psi(t,X^{\mathbf{x}(n)}_n) \mathrm{d}\tilde{\omega} \right ) - \int_{\Omega} \mathcal{H}(X^{\varepsilon}(\omega),X^{\varepsilon}_{\texttt{\#}}\mathcal{L}^1, Y^{\varepsilon}(\omega)) \mathrm{d}\omega \right |\\
        &\leq \left | \frac{1}{n} \sum_{i=1}^n \mathcal{H}\left (x_i(n),\mu_{\mathbf{x}(n)}, n \int_{A^n_i} D \psi(t,X^{\mathbf{x}(n)}_n) \mathrm{d}\tilde{\omega} \right ) - \frac{1}{n} \sum_{i=1}^n \mathcal{H}\left (x_i(n),\mu_{\mathbf{x}(n)}, n \int_{A^n_i} Y^{\varepsilon} \mathrm{d}\tilde{\omega} \right ) \right |\\
        &\quad + \left | \frac{1}{n} \sum_{i=1}^n \mathcal{H}\left (x_i(n),\mu_{\mathbf{x}(n)}, n \int_{A^n_i} Y^{\varepsilon} \mathrm{d}\tilde{\omega} \right ) - \sum_{i=1}^n \int_{A_i^n} \mathcal{H}\left (X^{\varepsilon}(\omega),X^{\varepsilon}_{\texttt{\#}}\mathcal{L}^1, n \int_{A^n_i} Y^{\varepsilon} \mathrm{d}\tilde{\omega} \right ) \mathrm{d}\omega \right |\\
        &\quad + \left | \sum_{i=1}^n \int_{A^n_i} \mathcal{H}\left (X^{\varepsilon}(\omega),X^{\varepsilon}_{\texttt{\#}}\mathcal{L}^1, n \int_{A^n_i} Y^{\varepsilon} \mathrm{d}\tilde{\omega} \right ) \mathrm{d}\omega - \int_{\Omega} \mathcal{H}(X^{\varepsilon}(\omega),X^{\varepsilon}_{\texttt{\#}}\mathcal{L}^1, Y^{\varepsilon}(\omega)) \mathrm{d}\omega \right |.
    \end{split}
    \end{equation}
    Next, we estimate the terms on the right-hand side one by one. For the first term, we have
    \begin{equation}\label{proof_first_estimate}
    \begin{split}
        &\left | \frac{1}{n} \sum_{i=1}^n \mathcal{H}\left (x_i(n),\mu_{\mathbf{x}(n)}, n \int_{A^n_i} D \psi(t,X^{\mathbf{x}(n)}_n) \mathrm{d}\tilde{\omega} \right ) - \frac{1}{n} \sum_{i=1}^n \mathcal{H}\left (x_i(n),\mu_{\mathbf{x}(n)}, n \int_{A^n_i} Y^{\varepsilon} \mathrm{d}\tilde{\omega} \right ) \right |\\
        &\leq C \left ( \frac{1}{n} \sum_{i=1}^n \left ( 1 + |x_i(n) |^2 + \mathcal{M}_r^{\frac{2}{r}} ( \mu_{\mathbf{x}(n)} ) + \left | n \int_{A^n_i} D\psi(t,X^{\mathbf{x}(n)}_n) \mathrm{d}\omega \right |^2 + \left | n \int_{A^n_i} P_{\varepsilon}(\omega) \mathrm{d}\omega \right |^2 \right ) \right )^{\frac12}\\
        &\quad \times \left ( \frac{1}{n} \sum_{i=1}^n \left | n \int_{A^n_i} \left ( D\psi(t,X^{\mathbf{x}(n)}_n) - Y^{\varepsilon} \right ) \mathrm{d}\omega \right |^2 \right )^{\frac12}.
    \end{split}
    \end{equation}
    Note that all the terms in the first factor can easily be bounded uniformly in $n$. Indeed, for the first and second term, we note that
    \begin{equation}
        \mathcal{M}_r^{\frac{2}{r}} ( \mu_{\mathbf{x}(n)} ) \leq \frac{1}{n} \sum_{i=1}^n | x_i(n) |^2 = \| X^{\mathbf{x}(n)}_n \|^2.
    \end{equation}
    For the third term, we have
    \begin{equation}
        \frac{1}{n} \sum_{i=1}^n \left | n \int_{A^n_i} D\psi(t,X^{\mathbf{x}(n)}_n) \mathrm{d}\omega \right |^2 \leq \frac{1}{n} \sum_{i=1}^n n \int_{A^n_i} | D\psi(t,X^{\mathbf{x}(n)}_n) |^2 \mathrm{d}\omega = \| D\psi(t,X^{\mathbf{x}(n)}_n) \|^2,
    \end{equation}
    which converges to $\| D\psi(t,X) \|^2$ as $n\to\infty$. Finally, for the fourth term, we have
    \begin{equation}
        \frac{1}{n} \sum_{i=1}^n \left | n \int_{A^n_i} P_{\varepsilon}(\omega) \mathrm{d}\omega \right |^2 \leq \frac{1}{n} \sum_{i=1}^n n \int_{A^n_i} | P_{\varepsilon}(\omega) |^2 \mathrm{d}\omega = \| P_{\varepsilon} \|^2.
    \end{equation}
    For the second factor on the right-hand side of \eqref{proof_first_estimate}, we have
    \begin{equation}
    \begin{split}
        \frac{1}{n} \sum_{i=1}^n \left | n \int_{A^n_i} \left ( D\psi(t,X^{\mathbf{x}(n)}_n) - Y^{\varepsilon} \right ) \mathrm{d}\omega \right |^2 &\leq \int_{\Omega} \left | D\psi(t,X^{\mathbf{x}(n)}_n) - Y^{\varepsilon} \right |^2 \mathrm{d}\omega \leq \| D\psi(t,X^{\mathbf{x}(n)}_n) - Y^{\varepsilon} \|^2,
    \end{split}
    \end{equation}
    which converges to $\| D\psi(t,X) - Y^{\varepsilon} \|^2$ as $n\to\infty$. For the second term on the right-hand side of \eqref{proof_of_convergence}, we have
    \begin{equation}
    \begin{split}
        &\left | \frac{1}{n} \sum_{i=1}^n \mathcal{H}\left (x_i(n),\mu_{\mathbf{x}(n)}, n \int_{A^n_i} Y^{\varepsilon} \mathrm{d}\tilde{\omega} \right ) - \sum_{i=1}^n \int_{A_i^n} \mathcal{H}\left (X^{\varepsilon}(\omega),X^{\varepsilon}_{\texttt{\#}}\mathcal{L}^1, n \int_{A^n_i} Y^{\varepsilon} \mathrm{d}\tilde{\omega} \right ) \mathrm{d}\omega \right |\\
        &\leq C \sum_{i=1}^n \int_{A^n_i} \left ( 1+ \left | n \int_{A^n_i} Y^{\varepsilon} \mathrm{d}\tilde{\omega} \right | \right ) \left ( |x_i(n) - X^{\varepsilon}(\omega) | + d_r(\mu_{\mathbf{x}(n)},X^{\varepsilon}_{\texttt{\#}} \mathcal{L}^1) \right ) \mathrm{d}\omega\\
        &\leq C \left ( \sum_{i=1}^n \int_{A^n_i} \left ( 1+ \left | n \int_{A^n_i} Y^{\varepsilon} \mathrm{d}\tilde{\omega} \right |^2 \right ) \mathrm{d}\omega \right )^\frac12 \left ( \sum_{i=1}^n \int_{A^n_i} \left ( |x_i(n) - X^{\varepsilon}(\omega) |^2 + d^2_r(\mu_{\mathbf{x}(n)},X^{\varepsilon}_{\texttt{\#}} \mathcal{L}^1) \right ) \mathrm{d}\omega \right )^{\frac12}.
    \end{split}
    \end{equation}
    Again, the first factor can easily be bounded uniformly in $n$. Indeed, we have
    \begin{equation}
        \sum_{i=1}^n \int_{A^n_i} \left | n \int_{A^n_i} Y^{\varepsilon} \mathrm{d}\tilde{\omega} \right |^2 \mathrm{d}\omega \leq \frac{1}{n} \sum_{i=1}^n n \int_{A^n_i} | Y^{\varepsilon}(\tilde{\omega}) |^2 \mathrm{d}\tilde{\omega} = \| Y^{\varepsilon} \|^2.
    \end{equation}
    For the terms in the second factor, we note that
    \begin{equation}
    \begin{split}
        \sum_{i=1}^n \int_{A^n_i} | x_i(n) - X^{\varepsilon}(\omega) |^2 \mathrm{d}\omega = \sum_{i=1}^n \int_{A^n_i} | X^{\mathbf{x}(n)}_n - X^{\varepsilon} |^2 \mathrm{d}\omega = \| X^{\mathbf{x}(n)}_n - X^{\varepsilon} \|^2,
    \end{split}
    \end{equation}
    as well as $d^2_r(\mu_{\mathbf{x}(n)}, X^{\varepsilon}_{\texttt{\#}} \mathcal{L}^1) \leq \| X^{\mathbf{x}(n)}_n - X^{\varepsilon} \|^2$. Moreover, $\| X^{\mathbf{x}(n)}_n - X^{\varepsilon} \|$ converges to $\| X - X^{\varepsilon} \|$. Finally, for the third term on the right-hand side of \eqref{proof_of_convergence}, we have
    \begin{equation}
    \begin{split}
        &\left | \sum_{i=1}^n \int_{A^n_i} \mathcal{H}\left (X^{\varepsilon}(\omega),X^{\varepsilon}_{\texttt{\#}}\mathcal{L}^1, n \int_{A^n_i} Y^{\varepsilon} \mathrm{d}\tilde{\omega} \right ) \mathrm{d}\omega - \int_{\Omega} \mathcal{H}(X^{\varepsilon}(\omega),X^{\varepsilon}_{\texttt{\#}}\mathcal{L}^1, Y^{\varepsilon}(\omega)) \mathrm{d}\omega \right |\\
        &\leq C \sum_{i=1}^n \int_{A^n_i} \left ( 1 + X^{\varepsilon}(\omega) + \mathcal{M}^{\frac{1}{r}}(X^{\varepsilon}_{\texttt{\#}} \mathcal{L}^1) + \left | n \int_{A^n_i} Y^{\varepsilon} \mathrm{d}\tilde{\omega} \right | + |Y^{\varepsilon}(\omega)| \right ) \left | n \int_{A^n_i} Y^{\varepsilon} \mathrm{d}\tilde{\omega} - Y^{\varepsilon}(\omega) \right | \mathrm{d}\omega\\
        &\leq C \left ( \sum_{i=1}^n \int_{A^n_i} \left ( 1 + |X^{\varepsilon}(\omega)|^2 + \mathcal{M}_r^{\frac{2}{r}}(X^{\varepsilon}_{\texttt{\#}} \mathcal{L}^1) + \left | n \int_{A^n_i} Y^{\varepsilon} \mathrm{d}\tilde{\omega} \right |^2 +|Y^{\varepsilon}(\omega)|^2 \right ) \mathrm{d}\omega \right )^{\frac12}\\
        &\quad \times \left ( \sum_{i=1}^n \int_{A^n_i} \left | n \int_{A^n_i} Y^{\varepsilon} \mathrm{d}\tilde{\omega} - Y^{\varepsilon}(\omega) \right |^2 \mathrm{d}\omega \right )^{\frac12}.
    \end{split}
    \end{equation}
    The terms in the first factor can be bounded uniformly in $n$ using the same arguments as before. The second factor tends to zero as $n\to\infty$ due to the uniform continuity of $Y^{\varepsilon}$.
    
    Altogether, taking the limit $n\to\infty$ in \eqref{inequality_n}, we obtain
    \begin{equation}
        \partial_t \psi(t,X) + \frac12 \text{Tr}(\Sigma(X)(\Sigma(X))^{\ast} D^2\psi(t,X)) {+} \llangle X , \mathcal{A}^* D\varphi(t,X) \rrangle {+} \tilde{\mathcal{H}} \left (X^{\varepsilon}, Y^{\varepsilon} \right ) \geq \rho(\varepsilon),
    \end{equation}
    where $\rho(\varepsilon) \to 0$ as $\varepsilon \to 0$. Thus, taking $\varepsilon\to 0$ concludes the proof that $\mathcal{V}$ is an $L$-viscosity solution of equation \eqref{intro:HJB_on_Wasserstein_space}. For further details, see also the proof of \cite[Theorem 3.4]{swiech_wessels_2024}.

Regarding uniqueness, we notice that if ${\mathcal W}$ is another $L$-viscosity solutions of \eqref{intro:HJB_on_Wasserstein_space} whose lift $W$ satisfies \eqref{eq:abab1}, then $V$ and $W$ are $\mathcal B$-continuous viscosity solutions of equation 
     \[
\begin{cases}\label{eq:eqHJBK1}
	\partial_t W + \frac12 \text{Tr}(\Sigma(X)(\Sigma(X))^{\ast} D^2W) {+} \llangle \mathcal{A}X,DW \rrangle  {+} \tilde{\mathcal{H}}^K(X, DW) =0,\quad (t,X)\in (0,T)\times E\\
	W(T,X) = U_T(X), \quad X\in E,
\end{cases}
\]
for some $K>0$, where
\[
	\tilde{\mathcal{H}}^K(X,P) := \inf_{Q\in{\mathcal E},\|Q\|_{\Lambda}\leq K}\left\{ \llangle F(X,Q) , P\rrangle + L(X,Q)\right\}.
\]
Thus we can use \cite[Theorem 3.66]{fabbri_gozzi_swiech_2017} to claim that $V=W$. We also notice that by Proposition \ref{Value_Function_Lipschitz}, $U$ is a $\mathcal B$-continuous viscosity solution of \eqref{eq:eqHJBK1} and hence $U=V$.

Finally, the convergence of the whole sequence $\mathcal{V}_n$ follows from the fact that the same arguments can be used to prove that we can choose a convergent subsequence from every subsequence of
$\mathcal{V}_n$ and show that the limit is an $L$-viscosity solutions of \eqref{intro:HJB_on_Wasserstein_space}, hence it is equal to $\mathcal{V}$.
\end{proof}

\section{\texorpdfstring{$C^{1,1}$}{C{1,1}}\label{sec:regularity} Regularity of the Value Function of the Lifted Control Problem}

\subsection{Estimates for the Lifted Control Problem}

\begin{lemma}\label{Infinite_Dimensional_A_Priori}
	Let Assumptions \ref{Assumption_A_maximal_dissipative}, \ref{Assumption_weak_B_condition} and \ref{Assumption_f_sigma_lipschitz}(i)(ii)(iv) be satisfied. Then, for every $X\in E,a(\cdot)\in \Lambda_t$, \eqref{Lifted_State_Equation} has a unique mild solution $X(\cdot) \in L^2([t,T]\times \Omega'; E)$ which is progressively measurable and has continuous trajectories. Let $X(\cdot),X_0(\cdot),X_1(\cdot)$ be the solutions of equation \eqref{Lifted_State_Equation} with initial conditions $X,X_0,X_1\in E$, and controls $a(\cdot),a_0(\cdot),a_1(\cdot)\in \Lambda_t$, respectively. Then, there is a constant $C\geq 0$ such that
    \begin{align}
        &\mathbb{E} \left [ \int_{\Omega} \sup_{s\in [t,T]} | X(s,\omega) |_{-1}^2 \mathrm{d}\omega \right ] \leq C \left ( 1 + \|X\|_{-1}^2 + \| a(\cdot) \|^2_{M^2(t,T;\mathcal{E})} \right ) \label{Apriori_0_Infinite_State}\\
        &\mathbb{E} \left [ \int_{\Omega} \sup_{s\in [t,T]} | X(s,\omega) |^2 \mathrm{d}\omega \right ] \leq C \left ( 1 + \|X\|^2 + \| a(\cdot) \|^2_{M^2(t,T;\mathcal{E})} \right ) \label{Apriori_0_Infinite_State_strong}\\
        &\mathbb{E} \left [ \int_{\Omega} \sup_{s'\in [t,s]} | X(s',\omega) - X(\omega) |^2_{-1} \mathrm{d}\omega \right ] \leq C \mathbb{E} \left [ \int_t^s \| a(s') \|_{\Lambda}^2 \mathrm{d}s' \right ] + C(1+\|X\|^2) (s-t) \label{Apriori_2_Infinite_State}\\
        &\mathbb{E} \left [ \int_{\Omega} \sup_{s\in [t,T]} | X_1(s,\omega) - X_0(s,\omega) |_{-1}^2 \mathrm{d}\omega \right ] \leq C \left ( \|X_1-X_0\|_{-1}^2 + \| a_1(\cdot) - a_0(\cdot) \|^2_{M^2(t,T;\mathcal{E})} \right ) \label{Apriori_1_Infinite_State}
    \end{align}
    for all $s\in [t,T]$, $X,X_0,X_1\in E$, and $a(\cdot),a_1(\cdot),a_2(\cdot)\in \Lambda_t$. Moreover, if $a(\cdot)$ is bounded, then $\mathbb{E} [ \sup_{s\in [t,T]} \| X(s) \|^k]<\infty$ for all $k\geq 1$.
    
\end{lemma}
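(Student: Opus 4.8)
The plan is to mirror the proof of Proposition \ref{Prop_Finite_A_Priori}, transferring every step from the finite particle system to the lifted equation \eqref{Lifted_State_Equation}, with the average $\frac1n\sum_{i=1}^n$ replaced throughout by the integral $\int_\Omega\cdot\,\mathrm{d}\omega$. For the existence and uniqueness of a progressively measurable mild solution $X(\cdot)\in L^2([t,T]\times\Omega';E)$ with continuous trajectories, note that $\mathcal A$ generates a $C_0$-semigroup of contractions on $E$ and that, by Assumption \ref{Assumption_f_sigma_lipschitz}(i)(ii)(iv) together with the elementary bounds $d_r(X_\#\mathcal L^1,Y_\#\mathcal L^1)\le\|X-Y\|_{L^r(\Omega;H)}\le\|X-Y\|$, $\mathcal M_r^{1/r}(X_\#\mathcal L^1)=\|X\|_{L^r(\Omega;H)}\le\|X\|$ (recall $r<2$, so $L^2\hookrightarrow L^r$ on $(0,1)$) and $|\cdot|_{-1}\le C|\cdot|$, the coefficients $F(\cdot,Q)\colon E\to E$ and $\Sigma\colon E\to L_2(\Xi,E)$ are Lipschitz continuous and of linear growth in the $\|\cdot\|$-norm, uniformly for $Q$ in bounded subsets of $\mathcal E$; hence the claim follows from the standard theory exactly as for the finite particle system, see \cite[Theorem 6.5, p.~162]{PLChow}, with continuity of paths coming from the continuity of the stochastic convolution of a contraction semigroup (\cite[Theorem 6.2, p.~159]{PLChow}, \cite[Theorem 1.112]{fabbri_gozzi_swiech_2017}) and the moment bound for bounded $a(\cdot)$ from \cite[Theorem 1.130]{fabbri_gozzi_swiech_2017}.

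The estimates \eqref{Apriori_0_Infinite_State}--\eqref{Apriori_1_Infinite_State} will be obtained fiberwise in $\omega$. By Lemma \ref{lemma:semigroups_coincide} one has $(e^{s\mathcal A}Y)(\omega)=e^{sA}(Y(\omega))$ for a.e.\ $\omega$; combining this with a stochastic Fubini argument for the convolution term, one checks that, writing $\mu^X_s:=X(s)_\#\mathcal L^1$, for a.e.\ $\omega\in\Omega$ the $H$-valued process $s\mapsto X(s,\omega)$ is the mild solution of the $H$-valued SDE obtained from \eqref{Lifted_State_Equation} by replacing $\mathcal A$, $F$, $\Sigma$ by $A$, $f(\cdot,\mu^X_s,\cdot)$, $\sigma(\cdot,\mu^X_s)$ and the initial datum by $X(\omega)$, where $(\mu^X_s)_{s\in[t,T]}$ is a single progressively measurable $\mathcal P_2(H)$-valued process common to all $\omega$. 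This places us precisely in the situation of the proof of Proposition \ref{Prop_Finite_A_Priori}, now with a single ``particle'' driven by the exogenous measure flow $\mu^X$.

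Then I would apply It\^o's formula (\cite[Proposition 1.166]{fabbri_gozzi_swiech_2017} for \eqref{Apriori_0_Infinite_State}, \eqref{Apriori_0_Infinite_State_strong}, and \cite[Proposition 1.164]{fabbri_gozzi_swiech_2017} for \eqref{Apriori_2_Infinite_State}, \eqref{Apriori_1_Infinite_State}; see Remark \ref{rem:Itoformulas}) fiberwise to $|X(s,\omega)|^2$, to $|X(s,\omega)-X(\omega)|_{-1}^2$, and to $|X_1(s,\omega)-X_0(s,\omega)|_{-1}^2$, following line by line the computations in the proof of Proposition \ref{Prop_Finite_A_Priori}: Assumption \ref{Assumption_weak_B_condition} handles the term $\langle A^*B\cdot,\cdot\rangle$, Assumption \ref{Assumption_f_sigma_lipschitz}(i)(ii)(iv) the growth and Lipschitz bounds, and the Burkholder--Davis--Gundy and Young inequalities the stochastic integrals, all with the usual localizing stopping times $\tau_m$ which one then drops as in that proof. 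After taking $\sup_{s'\in[t,s]}$, raising to a suitable power, taking expectations and integrating $\mathrm{d}\omega$, the role played in Proposition \ref{Prop_Finite_A_Priori} by the bound $\sup_{s'}\mathcal M_r(\mu_{\mathbf x(s')})\le\frac1n\sum_i\sup_{s'}|x_i(s')|^r$ is taken here by the identity $\mathcal M_r^{2/r}(\mu^X_{s'})=\|X(s')\|_{L^r(\Omega;H)}^2\le\|X(s')\|^2$, so that the corresponding terms are controlled by $\mathbb E\big[\int_\Omega\sup_{r'\le s'}|X(r',\omega)|^2\,\mathrm{d}\omega\big]$ and Gr\"onwall's inequality closes, yielding \eqref{Apriori_0_Infinite_State_strong}, and hence also \eqref{Apriori_0_Infinite_State} and \eqref{Apriori_2_Infinite_State}; estimate \eqref{Apriori_1_Infinite_State} comes from the same fiberwise It\^o computation for the difference $X_1-X_0$, using $d_{-1,r}^2(\mu^{X_1}_{s'},\mu^{X_0}_{s'})\le\|X_1(s')-X_0(s')\|_{-1}^2$ and Gr\"onwall.

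\textbf{Main obstacle.} The technical heart is the fiberwise representation of the second step, specifically showing that the $\omega$-slice of the $E$-valued stochastic convolution $\int_t^s e^{(s-r)\mathcal A}\Sigma(X(r))\,\mathrm{d}W(r)$ agrees, for a.e.\ $\omega$, with the $H$-valued stochastic convolution $\int_t^s e^{(s-r)A}\sigma(X(r,\omega),\mu^X_r)\,\mathrm{d}W(r)$; this requires a stochastic Fubini theorem, Lemma \ref{lemma:semigroups_coincide}, and some care about joint measurability of the chosen versions. One cannot obtain the stated (stronger) form by applying It\^o directly in $E$ to $\|X(s)\|^2$, $\|X(s)-X\|_{-1}^2$, since that only produces the weaker bounds $\mathbb E[\sup_s\|X(s)\|^2]\le\cdots$ (as $\sup_s\int_\Omega\le\int_\Omega\sup_s$ goes the wrong way), whereas in \eqref{Apriori_0_Infinite_State}--\eqref{Apriori_1_Infinite_State} the time-supremum is taken fiberwise, inside the $\mathrm{d}\omega$-integral.
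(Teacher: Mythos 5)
Your proposal follows essentially the same route as the paper: existence, path continuity and moments from the cited references, then the fiberwise representation of $X(\cdot,\omega)$ as a mild solution of an $H$-valued SDE driven by the exogenous measure flow $X(s)_{\texttt{\#}}\mathcal{L}^1$ (the paper justifies the slicing of the stochastic convolution simply by Lemma \ref{lemma:semigroups_coincide} and the fact that $W$ does not depend on $\omega$), followed by fiberwise It\^o, the bound $\mathcal{M}_{-1,r}^{2/r}(X(s)_{\texttt{\#}}\mathcal{L}^1)\le\int_\Omega|X(s,\omega)|_{-1}^2\,\mathrm{d}\omega$, and Gr\"onwall after integrating in $\omega$ — exactly the paper's argument, with the sup taken inside the $\mathrm{d}\omega$-integral for the same reason you point out.

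One caveat: \eqref{Apriori_0_Infinite_State} does \emph{not} follow from \eqref{Apriori_0_Infinite_State_strong}, since the right-hand side of \eqref{Apriori_0_Infinite_State} involves $\|X\|_{-1}^2$ rather than $\|X\|^2$; deducing it from the strong estimate via $|\cdot|_{-1}\le C|\cdot|$ only yields the weaker bound with $\|X\|^2$. To get the stated estimate you must run the same fiberwise It\^o computation directly on $|X(s,\omega)|_{-1}^2$ (via \cite[Proposition 1.164]{fabbri_gozzi_swiech_2017}, not 1.166), using the weak $B$-condition for the unbounded term and the $|\cdot|_{-1}$-growth bound on $f$ from Assumption \ref{Assumption_f_sigma_lipschitz}(ii) together with the $|\cdot|_{-1}$-Lipschitz bound on $\sigma$ from (iv); this is precisely the computation the paper carries out in detail, and all the ingredients are already in your plan, so the fix is routine.
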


\begin{proof}
As in the proof of Proposition \ref{Prop_Finite_A_Priori}, the existence of a unique mild solution follows e.g. from \cite[Theorem 6.5, page 162]{PLChow}, the continuity of paths follows from the continuity of paths of the stochastic convolution, see \cite[Theorem 6.2, page 159]{PLChow} or \cite[Theorem 1.112]{fabbri_gozzi_swiech_2017}, and the moment estimates when $\mathbf{a}(\cdot)$ follow for instance from \cite[Theorem 1.130]{fabbri_gozzi_swiech_2017}.
 Similarly to the proof of Proposition \ref{Prop_Finite_A_Priori}, in the proof we will assume that all the terms are well defined and have sufficient integrability to apply the necessary theorems.

Since $X(\cdot)$ is a solution of equation \eqref{Lifted_State_Equation}, we have
    \begin{equation}
        X(s') = e^{(s'-t)\mathcal{A}} X + \int_t^{s'} e^{(s'-t')\mathcal{A}} F(X(t'),a(t')) \mathrm{d}t' + \int_t^{s'} e^{(s'-t')\mathcal{A}} \Sigma(X(t')) \mathrm{d}W(t').
    \end{equation}
   We note that, since $E$ and $L^2(\Omega;\Lambda)$ are separable, for a.e. $\omega$, the processes $X(\cdot,\omega)$ and $a(\cdot,\omega)$ are progressively measurable. We have by \eqref{semigroups_coincide}
    \begin{equation}
    \begin{split}
        X(s',\omega) &= e^{(s'-t)A} X(\omega) + \int_t^{s'} e^{(s'-t')A} f(X(t',\omega), X(t')_{\texttt{\#}} \mathcal{L}^1,a(t',\omega)) \mathrm{d}t'\\
        &\quad + \int_t^{s'} e^{(s'-t')A} \sigma(X(t',\omega),X(t')_{\texttt{\#}} \mathcal{L}^1) \mathrm{d}W(t')
    \end{split}
    \end{equation}
    for almost every $\omega \in \Omega$. Note that we can pull the $\omega$ inside the stochastic integral since the Wiener process $(W(s))_{s\in [t,T]}$ does not depend on $\omega \in \Omega$. Therefore, for almost every $\omega\in \Omega$, $X(\cdot,\omega)$ is a mild solution of the SDE
    \begin{equation}
    \begin{cases}
        \mathrm{d}X(s,\omega) = [AX(s,\omega) + f(X(s,\omega),X(s)_{\texttt{\#}} \mathcal{L}^1,a(s,\omega)) ] \mathrm{d}s + \sigma(X(s,\omega),X(s)_{\texttt{\#}} \mathcal{L}^1) \mathrm{d}W(s)\\
        X(t,\omega) = X(\omega).
    \end{cases}
    \end{equation}
    Here, the law $X(s)_{\texttt{\#}} \mathcal{L}^1$ is the law of the solution of equation \eqref{Lifted_State_Equation}. Thus, by \cite[Proposition 1.164]{fabbri_gozzi_swiech_2017} (recall Remark \ref{rem:Itoformulas}) and Assumption \ref{Assumption_weak_B_condition}, we obtain
    \begin{equation}\label{itos_formula}
    \begin{split}
        &|X(s',\omega)|_{-1}^2\\
        &\leq |X(\omega)|_{-1}^2 + 2 c_0 \int_t^{s'} | X(t',\omega) |_{-1}^2 \mathrm{d}t'  + 2 \int_t^{s'} \langle X(t',\omega), f(X(t',\omega),X(t')_{\texttt{\#}} \mathcal{L}^1,a(t',\omega)) \rangle_{-1} \mathrm{d}t'\\
        &\quad + \int_t^{s'} \left | \sigma(X(t',\omega),X(t')_{\texttt{\#}} \mathcal{L}^1) \right |_{L_2(\Xi,H_{-1})}^2 \mathrm{d}t' + 2 \int_t^{s'} \langle X(t',\omega), \sigma(X(t',\omega),X(t')_{\texttt{\#}} \mathcal{L}^1) \mathrm{d}W(t') \rangle_{-1}.
    \end{split}
    \end{equation}
    For the term involving $f$, using Assumption \ref{Assumption_f_sigma_lipschitz}(ii), we obtain
    \begin{equation}
    \begin{split}
        &\int_t^{s'} \langle X(t',\omega), f(X(t',\omega),X(t')_{\texttt{\#}} \mathcal{L}^1,a(t',\omega)) \rangle_{-1} \mathrm{d}t'\\
        &\leq C \int_t^{s'} | X(t',\omega) |_{-1} \left ( 1 + |X(t',\omega) |_{-1} + \mathcal{M}_{-1,r}^{\frac{1}{r}}(X(t')_{\texttt{\#}} \mathcal{L}^1) + |a(t',\omega)|_{\Lambda} \right ) \mathrm{d}t'\\
        &\leq C \left ( 1 + \int_t^{s'} | X(t',\omega) |_{-1}^2 \mathrm{d}t' + \int_t^{s'} \mathcal{M}_{-1,r}^{\frac{2}{r}}(X(t')_{\texttt{\#}} \mathcal{L}^1) \mathrm{d}t' + \int_t^{s'} |a(t',\omega)|^2_{\Lambda} \mathrm{d}t' \right ).
    \end{split}
    \end{equation}
    Moreover, by Assumption \ref{Assumption_f_sigma_lipschitz}(iv), we have
    \begin{equation}\label{estimate_sigma}
    \begin{split}
        &\int_t^{s'} \left | \sigma(X(t',\omega),X(t')_{\texttt{\#}} \mathcal{L}^1) \right |_{L_2(\Xi,H_{-1})}^2 \mathrm{d}t'\\
        &\leq C \left ( 1 + \int_t^{s'} |X(t',\omega)|_{-1}^2 \mathrm{d}t' + \int_t^{s'} \mathcal{M}_{-1,r}^{\frac{2}{r}}(X(t')_{\texttt{\#}} \mathcal{L}^1) \mathrm{d}t' \right ).
    \end{split}
    \end{equation}
    For the stochastic integral in \eqref{itos_formula}, we obtain using Burkholder--Davis--Gundy inequality
    \begin{equation}
    \begin{split}
        &\mathbb{E} \left [ \sup_{s'\in [t,s]} \left | \int_t^{s'} \langle X(t',\omega), \sigma(X(t',\omega),X(t')_{\texttt{\#}} \mathcal{L}^1) \mathrm{d}W(t') \rangle_{-1} \right | \right ]\\
        &\leq \mathbb{E} \left [ \left ( \int_t^s | X(t',\omega) |^2_{-1} | \sigma(X(t',\omega),X(t')_{\texttt{\#}} \mathcal{L}^1) |_{L_2(\Xi,H_{-1})}^2 \mathrm{d}t' \right )^{\frac12} \right ]\\
        &\leq \frac14 \mathbb{E} \left [ \sup_{t'\in [t,s]} | X(t',\omega) |_{-1}^2 \right ] + C \mathbb{E} \left [ \int_t^s | \sigma(X(t',\omega),X(t')_{\texttt{\#}} \mathcal{L}^1) |_{L_2(\Xi,H_{-1})}^2 \mathrm{d}t' \right ],
    \end{split}
    \end{equation}
    For the last term in the previous inequality, we use the same estimate as in \eqref{estimate_sigma}. Moreover, we note that
    \begin{equation}
        \mathcal{M}_{-1,r}^{\frac{2}{r}} (X(t')_{\texttt{\#}}\mathcal{L}^1) = \left ( \int_{H_{-1}} |x|_{-1}^r (X(t')_{\texttt{\#}} \mathcal{L}^1)(\mathrm{d}x) \right )^{\frac{2}{r}}  = \left ( \int_{\Omega} | X(t',\omega) |_{-1}^r \mathrm{d}\omega \right )^{\frac{2}{r}} \leq \int_{\Omega} | X(t',\omega)|_{-1}^2 \mathrm{d}\omega.
    \end{equation}
    Thus, taking the supremum over $s'\in [t,s]$ in equation \eqref{itos_formula}, taking the expectation, and integrating over $\omega\in\Omega$, we obtain
    \begin{equation}
    \begin{split}
        &\mathbb{E} \left [ \int_{\Omega} \sup_{s'\in [t,s]} |X(s',\omega) |_{-1}^2 \mathrm{d}\omega \right ]\\
        &\leq C \left ( 1 + \| X \|_{-1}^2 + \int_t^s \mathbb{E} \left [ \int_{\Omega} \sup_{t'\in [t,s']} |X(t',\omega)|_{-1}^2 \mathrm{d}\omega \right ] \mathrm{d}s' + \mathbb{E} \left [ \int_t^s \|a(t')\|_{\Lambda}^2 \mathrm{d}t' \right ] \right ).
    \end{split}
    \end{equation}
    Now, \eqref{Apriori_0_Infinite_State} follows from Gr\"onwall's inequality.

    The proofs of \eqref{Apriori_0_Infinite_State_strong}, \eqref{Apriori_2_Infinite_State}, \eqref{Apriori_1_Infinite_State} follow along the same lines as the proofs of \eqref{Apriori_0_Infinite_State} and Proposition \ref{Prop_Finite_A_Priori}.

\end{proof}

For the following lemma, we are going to use the same notation as in the previous lemma. Moreover, for $\lambda\in [0,1]$, $s\in [t,T]$, we set
\begin{align}
	&a_{\lambda}(s) := \lambda a_1(s) + (1-\lambda) a_0(s), && X_{\lambda} := \lambda X_1 + (1-\lambda)X_0\\
	&X^{\lambda}(s) := \lambda X_1(s) + (1-\lambda)X_0(s), && X_{\lambda}(s) := X(s;X_{\lambda},a_{\lambda}(\cdot)),
\end{align}
where $X(s;X_{\lambda},a_{\lambda}(\cdot))$ denotes the solution of equation \eqref{Lifted_State_Equation} at time $s$ with initial condition $X_{\lambda}$ and control $a_{\lambda}(\cdot)$.

\begin{lemma}\label{lem:Xlambda}
	Let Assumptions \ref{Assumption_A_maximal_dissipative} and \ref{Assumption_weak_B_condition} be satisfied. Moreover, let Assumption \ref{Assumption_f_sigma_lipschitz} be satisfied with $r=1$. Then, there is a constant $C\geq 0$, such that
	\begin{equation}
		\int_{\Omega} \mathbb{E} \left [ \sup_{s\in [t,T]} |X^{\lambda}(s,\omega) - X_{\lambda}(s,\omega) |_{-1} \right ] \mathrm{d}\omega \leq C \lambda (1-\lambda) \left ( \|X_1-X_0\|_{-1}^2 + \| a_0(\cdot) - a_1(\cdot) \|_{M^2(t,T;\mathcal{E})}^2 \right )
	\end{equation}
    for all $\lambda\in [0,1]$, $X_0,X_1\in E$, and $a_0(\cdot),a_1(\cdot)\in \Lambda_t$.
\end{lemma}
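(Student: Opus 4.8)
The plan is to adapt the argument of \cite[Proposition 3.1]{mayorga_swiech_2023}, \cite[Proposition 3.3]{swiech_wessels_2024} and \cite[Proposition 4.6]{defeo_swiech_wessels_2023} to the present weak--norm Hilbert space setting, using the $B$-weighted It\^o--Gr\"onwall machinery already exploited in the proof of Lemma \ref{Infinite_Dimensional_A_Priori}. First I would write down the equation satisfied by the ``convexity defect of the flow'' $Z^{\lambda}(s):=X^{\lambda}(s)-X_{\lambda}(s)$. Since $X^{\lambda}(t)=\lambda X_1+(1-\lambda)X_0=X_{\lambda}=X_{\lambda}(t)$, we have $Z^{\lambda}(t)=0$; and by linearity of the mild--solution formula, $X^{\lambda}(\cdot)$ is the mild solution of a \eqref{Lifted_State_Equation}-type equation with drift $\lambda F(X_1(s),a_1(s))+(1-\lambda)F(X_0(s),a_0(s))$ and diffusion $\lambda\Sigma(X_1(s))+(1-\lambda)\Sigma(X_0(s))$. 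Subtracting the equation for $X_{\lambda}(\cdot)$, $Z^{\lambda}$ is the mild solution of
\[
\mathrm{d}Z^{\lambda}(s)=\bigl[\mathcal{A}Z^{\lambda}(s)+R_F(s)+D_F(s)\bigr]\,\mathrm{d}s+\bigl[R_\Sigma(s)+D_\Sigma(s)\bigr]\,\mathrm{d}W(s),\qquad Z^{\lambda}(t)=0,
\]
where $D_F(s):=F(X^{\lambda}(s),a_{\lambda}(s))-F(X_{\lambda}(s),a_{\lambda}(s))$ and $D_\Sigma(s):=\Sigma(X^{\lambda}(s))-\Sigma(X_{\lambda}(s))$ are the ``Lipschitz/monotone'' parts, while $R_F(s):=\lambda F(X_1(s),a_1(s))+(1-\lambda)F(X_0(s),a_0(s))-F(X^{\lambda}(s),a_{\lambda}(s))$ and $R_\Sigma(s):=\lambda\Sigma(X_1(s))+(1-\lambda)\Sigma(X_0(s))-\Sigma(X^{\lambda}(s))$ are the convexity defects of $F$ and $\Sigma$ along the segment joining $(X_1(s),a_1(s))$ and $(X_0(s),a_0(s))$.

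Second, I would estimate the convexity defects pointwise in $(s,\omega)$. Writing $v_j=(X_j(s,\omega),X_j(s),a_j(s,\omega))$ for $j=0,1$ and $v_{\lambda}=\lambda v_1+(1-\lambda)v_0$, one has $R_F(s)(\omega)=\lambda\tilde f(v_1)+(1-\lambda)\tilde f(v_0)-\tilde f(v_{\lambda})$ (recall $F(X,Q)(\omega)=\tilde f(X(\omega),X,Q(\omega))$ and $X^{\lambda}(s)(\omega)=\lambda X_1(s,\omega)+(1-\lambda)X_0(s,\omega)$, $a_{\lambda}(s)(\omega)=\lambda a_1(s,\omega)+(1-\lambda)a_0(s,\omega)$). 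Integrating $\theta\mapsto D\tilde f(\theta v_1+(1-\theta)v_0)(v_1-v_0)$ and using Assumption \ref{Assumption_f_sigma_lipschitz}(iii) gives the standard second-order bound
\[
|R_F(s)(\omega)|_{-1}\le C\lambda(1-\lambda)\bigl(|X_1(s,\omega)-X_0(s,\omega)|_{-1}^2+\|X_1(s)-X_0(s)\|_{-1}^2+|a_1(s,\omega)-a_0(s,\omega)|_{\Lambda}^2\bigr),
\]
and analogously, using Assumption \ref{Assumption_f_sigma_lipschitz}(v) (note $\sigma$ does not depend on the control),
\[
|R_\Sigma(s)(\omega)|_{L_2(\Xi,H_{-1})}\le C\lambda(1-\lambda)\bigl(|X_1(s,\omega)-X_0(s,\omega)|_{-1}^2+\|X_1(s)-X_0(s)\|_{-1}^2\bigr).
\]

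Third, for a.e.\ $\omega$ the process $Z^{\lambda}(\cdot,\omega)$ solves an $H$-valued mild SDE, and I would apply It\^o's formula to $|Z^{\lambda}(s,\omega)|_{-1}^2$ as in Lemma \ref{Infinite_Dimensional_A_Priori}: the unbounded term is bounded by $c_0|Z^{\lambda}(s,\omega)|_{-1}^2$ (weak $B$-condition for $\mathcal{A}$); the $D_F$ contribution by $\langle D_F(s)(\omega),B Z^{\lambda}(s,\omega)\rangle\le C\bigl(|Z^{\lambda}(s,\omega)|_{-1}^2+\|Z^{\lambda}(s)\|_{-1}^2\bigr)$, using the monotonicity estimate in Assumption \ref{Assumption_f_sigma_lipschitz}(i) together with $d_{-1,1}(X^{\lambda}(s)_{\texttt{\#}}\mathcal{L}^1,X_{\lambda}(s)_{\texttt{\#}}\mathcal{L}^1)\le\|Z^{\lambda}(s)\|_{-1}$ (this is where $r=1$ enters); the $D_\Sigma$ contribution by $C\bigl(|Z^{\lambda}(s,\omega)|_{-1}+\|Z^{\lambda}(s)\|_{-1}\bigr)$; the $R_F,R_\Sigma$ terms by Step~2; and the stochastic integral by Burkholder--Davis--Gundy, absorbing a fraction of $\sup_s|Z^{\lambda}(s,\omega)|_{-1}^2$. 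Taking expectations, integrating over $\omega$ and applying Gr\"onwall's inequality yields
\[
\mathbb{E}\!\left[\int_{\Omega}\sup_{s\in[t,T]}|Z^{\lambda}(s,\omega)|_{-1}^2\,\mathrm{d}\omega\right]\le C\lambda^2(1-\lambda)^2\,\mathbb{E}\!\left[\int_t^T\!\!\int_{\Omega}\bigl(|X_1(s,\omega)-X_0(s,\omega)|_{-1}^2+\|X_1(s)-X_0(s)\|_{-1}^2+|a_1(s,\omega)-a_0(s,\omega)|_{\Lambda}^2\bigr)^2\mathrm{d}\omega\,\mathrm{d}s\right],
\]
and the right-hand side is controlled, via the a priori estimate \eqref{Apriori_1_Infinite_State} of Lemma \ref{Infinite_Dimensional_A_Priori} applied to $X_1(\cdot)-X_0(\cdot)$ (together with the higher-moment bounds of that lemma), by $C\lambda^2(1-\lambda)^2\bigl(\|X_1-X_0\|_{-1}^2+\|a_1-a_0\|_{M^2(t,T;\mathcal{E})}^2\bigr)^2$. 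Finally, since $|\Omega|=1$, Cauchy--Schwarz in $\omega$ and Jensen's inequality give
\[
\int_{\Omega}\mathbb{E}\Bigl[\sup_{s\in[t,T]}|Z^{\lambda}(s,\omega)|_{-1}\Bigr]\mathrm{d}\omega\le\left(\int_{\Omega}\mathbb{E}\Bigl[\sup_{s\in[t,T]}|Z^{\lambda}(s,\omega)|_{-1}^2\Bigr]\mathrm{d}\omega\right)^{1/2}\le C\lambda(1-\lambda)\bigl(\|X_1-X_0\|_{-1}^2+\|a_1-a_0\|_{M^2(t,T;\mathcal{E})}^2\bigr),
\]
which is the claim.

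The main obstacle is the bookkeeping in the displays of Step~3: one needs the second-order estimates of Assumption \ref{Assumption_f_sigma_lipschitz}(iii),(v) to be sharp enough that the convexity defects are genuinely quadratic in the increments and carry the full $\lambda(1-\lambda)$ prefactor, and one must simultaneously dominate the squared source terms $\|R_F\|^2$, $\|R_\Sigma\|^2$ by $\bigl(\|X_1-X_0\|_{-1}^2+\|a_1-a_0\|_{M^2}^2\bigr)^2$, which forces a careful treatment of the higher moments of the path increments $X_1(\cdot)-X_0(\cdot)$ (and of the control increment, the reason the estimate is invoked with bounded controls) and of the measure-dependence of $F$ and $\Sigma$ — precisely where the choice $r=1$ and the monotonicity/Lipschitz estimates collected in Subsection \ref{subsec:properties_lifted_coeff} are essential. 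Once this is handled, the remainder is a routine, if lengthy, repetition of the It\^o--Gr\"onwall computations already carried out in the proofs of Proposition \ref{Prop_Finite_A_Priori} and Lemma \ref{Infinite_Dimensional_A_Priori}.
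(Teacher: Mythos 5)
Your Steps 1--2 coincide with the paper's proof: the same decomposition of $X^{\lambda}-X_{\lambda}$ into a convexity defect of $(\tilde f,\tilde\sigma)$ along the segment plus a Lipschitz part, and the same pointwise second-order bounds (the paper's \eqref{X_lambda_Estimate_B}--\eqref{X_lambda_Estimate_Sigma}) coming from Assumption \ref{Assumption_f_sigma_lipschitz}(iii),(v). The gap is in Step 3. You close the Gr\"onwall argument at the level of the second moment $\int_{\Omega}\mathbb{E}[\sup_{s}|X^{\lambda}(s,\omega)-X_{\lambda}(s,\omega)|_{-1}^{2}]\,\mathrm{d}\omega$, so the source terms coming from $\|R_F\|^2,\|R_\Sigma\|^2$ are quartic: they involve $\mathbb{E}\int_{\Omega}\sup_{s}|X_1(s,\omega)-X_0(s,\omega)|_{-1}^{4}\,\mathrm{d}\omega$ and $\mathbb{E}\int_{\Omega}\bigl(\int_t^T|a_1(s,\omega)-a_0(s,\omega)|_{\Lambda}^{2}\,\mathrm{d}s\bigr)^{2}\mathrm{d}\omega$. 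Neither is controlled by $\bigl(\|X_1-X_0\|_{-1}^{2}+\|a_1-a_0\|_{M^2}^{2}\bigr)^{2}$: Lemma \ref{Infinite_Dimensional_A_Priori} only provides second-moment stability (its higher-moment statement is a qualitative finiteness result for a single solution with a bounded control, not a quantitative fourth-moment bound on the difference $X_1-X_0$), and for the control increment the required inequality is simply false for general $a_0(\cdot),a_1(\cdot)\in\Lambda_t$, which are only square integrable -- Jensen's inequality goes the wrong way. Your parenthetical remark that ``the estimate is invoked with bounded controls'' does not repair this, since the lemma is asserted for all admissible controls and the constant must not depend on any $L^4$-type norm of the controls.

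The paper avoids this by never squaring the Gr\"onwall quantity: It\^o's formula is applied to $|X^{\lambda}-X_{\lambda}|_{-1}^{2}$, but the Burkholder--Davis--Gundy estimates are performed on the stochastic integrals raised to the power $\tfrac12$, and one takes the square root of the resulting pathwise inequality before taking $\sup$, $\mathbb{E}$, $\int_{\Omega}$ and Gr\"onwall. The unknown then enters at the first power, $\int_{\Omega}\mathbb{E}[\sup_{s}|X^{\lambda}-X_{\lambda}|_{-1}]\,\mathrm{d}\omega$, while the quartic sources $\lambda^{2}(1-\lambda)^{2}(\cdots)^{4}$ become, after the square root, $\lambda(1-\lambda)$ times genuinely quadratic quantities, which are then bounded using \eqref{Apriori_1_Infinite_State} and $\|a_1(\cdot)-a_0(\cdot)\|_{M^2(t,T;\mathcal{E})}^{2}$ directly. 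If you replace your second-moment Gr\"onwall step by this square-root (first-moment) Gr\"onwall, the rest of your argument goes through and coincides with the paper's proof.
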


\begin{proof}
We remind that as before, in the proof we skip some technical details and assume that all the terms are well defined and have sufficient integrability to apply the necessary theorems.
    Let
	\begin{align}
		&\bar{X}_0(\theta) = X^{\lambda}(s) + \theta \lambda (X_0(s) - X_1(s)), && \bar{a}_0(\theta) = a_{\lambda}(s) + \theta \lambda (a_0(s) - a_1(s))\\
		&\bar{X}_1(\theta) = X^{\lambda}(s) + \theta (1-\lambda) (X_1(s) - X_0(s)), && \bar{a}_1(\theta) = a_{\lambda}(s) + \theta (1-\lambda) (a_1(s) - a_0(s)).
	\end{align}
    By Assumption \ref{Assumption_f_sigma_lipschitz}(iii), for every $s\in [t,T]$ and $\omega\in \Omega$, we have
    \begingroup\makeatletter\def\f@size{11}\check@mathfonts
	\begin{equation}\label{X_lambda_Estimate_B}
	\begin{split}
		&| \lambda \tilde{f}(X_1(s,\omega),X_1(s),a_1(s,\omega)) + (1-\lambda) \tilde{f}(X_0(s,\omega),X_0(s),a_0(s,\omega)) - \tilde{f}(X^{\lambda}(s,\omega),X^{\lambda}(s),a_{\lambda}(s,\omega)) |_{-1} \\
		&\leq \lambda(1-\lambda) \int_0^1 \left | ( D_x \tilde{f}(\bar{X}_1(\theta,\omega), \bar{X}_1(\theta),\bar{a}_1(\theta,\omega)) - D_x \tilde{f}(\bar{X}_0(\theta,\omega), \bar{X}_0(\theta),\bar{a}_0(\theta,\omega))) ( X_1(s,\omega) - X_0(s,\omega) ) \right |_{-1} \mathrm{d}\theta\\
		&\quad + \lambda(1-\lambda) \int_0^1 \left | ( D_X \tilde{f}(\bar{X}_1(\theta,\omega), \bar{X}_1(\theta),\bar{a}_1(\theta,\omega)) - D_X \tilde{f}(\bar{X}_0(\theta,\omega), \bar{X}_0(\theta),\bar{a}_0(\theta,\omega) ) ) (X_1(s) - X_0(s)) \right |_{-1} \mathrm{d}\theta\\
        &\quad + \lambda(1-\lambda) \int_0^1 \left | ( D_a \tilde{f}(\bar{X}_1(\theta,\omega), \bar{X}_1(\theta),\bar{a}_1(\theta,\omega)) - D_a \tilde{f}(\bar{X}_0(\theta,\omega), \bar{X}_0(\theta),\bar{a}_0(\theta,\omega) ) ) (a_1(s,\omega) - a_0(s,\omega)) \right |_{-1} \mathrm{d}\theta\\
		&\leq C \lambda (1-\lambda) \left ( |X_1(s,\omega) - X_0(s,\omega)|_{-1}^2 + \| X_1(s) - X_0(s) \|_{-1}^2 + |a_1(s,\omega)-a_0(s,\omega)|_{\Lambda}^2 \right )
	\end{split}
	\end{equation}\endgroup
	and similarly by Assumption \ref{Assumption_f_sigma_lipschitz}(v)
	\begin{equation}\label{X_lambda_Estimate_Sigma}
		\begin{split}
			&| \lambda \tilde{\sigma}(X_1(s,\omega),X_1(s)) + (1-\lambda) \tilde{\sigma}(X_0(s,\omega),X_0(s)) - \tilde{\sigma}(X^{\lambda}(s,\omega),X^{\lambda}(s)) |_{L_2(\Xi,H_{-1})} \\
			&\leq C \lambda (1-\lambda) \left ( |X_1(s,\omega) - X_0(s,\omega)|_{-1}^2 + \| X_1(s) - X_0(s) \|_{-1}^2 \right ).
		\end{split}
	\end{equation}
    As in the proof of Lemma \ref{Infinite_Dimensional_A_Priori}, we see that for almost every $\omega\in\Omega$, $(X_{\lambda}(s,\omega))_{s\in[t,T]}$ is a mild solution of
    \begin{equation}
    \begin{cases}
        \mathrm{d}X_{\lambda}(s,\omega) = [ A X_{\lambda}(s,\omega) + \tilde{f}(X_{\lambda}(s,\omega),X_{\lambda}(s),a_{\lambda}(s,\omega)) ] \mathrm{d}s + \tilde{\sigma}(X_{\lambda}(s,\omega),X_{\lambda}(s)) \mathrm{d}W(s)\\
        X_{\lambda}(t,\omega) = X_{\lambda}(\omega)
    \end{cases}
    \end{equation}
    and $(X^{\lambda}(s,\omega))_{s\in[t,T]}$ is a mild solution of
    \begin{equation}
    \begin{cases}
        \mathrm{d}X^{\lambda}(s,\omega) = [ AX^{\lambda}(s,\omega) + \lambda \tilde{f}(X_1(s,\omega),X_1(s), a_1(s,\omega) + (1-\lambda) \tilde{f}(X_0(s,\omega), X_0(s), a_0(s,\omega)) ] \mathrm{d}s\\
        \qquad\qquad\qquad + [ \lambda \tilde{\sigma}(X_1(s,\omega),X_1(s)) + (1-\lambda) \tilde{\sigma}(X_0(s,\omega), X_0(s)) ] \mathrm{d}W(s)\\
        X^{\lambda}(t,\omega) = X_{\lambda}(\omega).
    \end{cases}
    \end{equation}
    Thus, applying \cite[Proposition 1.164]{fabbri_gozzi_swiech_2017} (recall Remark \ref{rem:Itoformulas}) and Assumption \ref{Assumption_weak_B_condition} yields
    \begingroup\makeatletter\def\f@size{11}\check@mathfonts
    \begin{equation}\label{ito_X_lambda}
    \begin{split}
        &| X^{\lambda}(s',\omega) - X_{\lambda}(s',\omega) |_{-1}^2 \leq 2 c_0 \int_t^{s'} | X^{\lambda}(t',\omega) - X_{\lambda}(t',\omega) |_{-1}^2 \mathrm{d}t'\\
        &+ 2 \int_t^{s'} \big \langle \lambda \tilde{f}(X_1(t',\omega), X_1(t'),a_1(t')) + (1-\lambda) \tilde{f}(X_0(t',\omega), X_0(t'),a_0(t'))\\
        &\qquad\qquad - \tilde{f}(X^{\lambda}(t',\omega),X^{\lambda}(t'),a_{\lambda}(t')),  X^{\lambda}(t',\omega) - X_{\lambda}(t',\omega) \big \rangle_{-1} \mathrm{d}t'\\
        &+ 2 \int_t^{s'} \langle \tilde{f}(X^{\lambda}(t',\omega),X^{\lambda}(t'),a_{\lambda}(t')) - \tilde{f}(X_{\lambda}(t',\omega),X_{\lambda}(t'),a_{\lambda}(t')), X^{\lambda}(t',\omega) - X_{\lambda}(t',\omega) \rangle_{-1} \mathrm{d}t'\\
        &+ \int_t^{s'} | \lambda \tilde{\sigma}(X_1(t',\omega),X_1(t')) + (1-\lambda) \tilde{\sigma}(X_0(t',\omega),X_0(t')) - \tilde{\sigma}( X^{\lambda}(t',\omega),X^{\lambda}(t')) |_{L_2(\Xi,H_{-1})}^2 \mathrm{d}t'\\
        &+ \int_t^{s'} | \tilde{\sigma}(X^{\lambda}(t',\omega),X^{\lambda}(t')) - \tilde{\sigma}(X_{\lambda}(t',\omega),X_{\lambda}(t')) |_{L_2(\Xi,H_{-1})}^2 \mathrm{d}t'\\
        &+ 2 \int_t^{s'} \langle X^{\lambda}(t',\omega) - X_{\lambda}(t',\omega), \big ( \lambda \tilde{\sigma}(X_1(t',\omega),X_1(t'))\\
        &\qquad\qquad + (1-\lambda) \tilde{\sigma}(X_0(t',\omega),X_0(t')) - \tilde{\sigma}(X^{\lambda}(t',\omega),X^{\lambda}(t')) \big ) \mathrm{d}W(t') \rangle_{-1}\\
        &+ 2 \int_t^{s'} \langle X^{\lambda}(t',\omega) - X_{\lambda}(t',\omega), (\tilde{\sigma}(X^{\lambda}(t',\omega),X^{\lambda}(t')) - \tilde{\sigma}(X_{\lambda}(t',\omega),X_{\lambda}(t'))) \mathrm{d}W(t') \rangle_{-1}.
    \end{split}
    \end{equation}\endgroup
    For the first term on the right-hand side of this inequality, we have 
    \[
    \begin{split}
   &\sup_{s'\in [t,s]}   \int_t^{s'} | X^{\lambda}(t',\omega) - X_{\lambda}(t',\omega) |_{-1}^2 \mathrm{d}t'\\
   &\leq   \varepsilon \sup_{t'\in [t,s]}  | X^{\lambda}(t',\omega) - X_{\lambda}(t',\omega) |_{-1}^2 +C_\varepsilon \left(\int_t^{s} \sup_{t'\in [t,s]} | X^{\lambda}(t',\omega) - X_{\lambda}(t',\omega) |_{-1} \mathrm{d}s \right)^2
   \end{split}
   \]
     for all $\varepsilon>0$ and some constant $C_{\varepsilon}\geq 0$ that depends on $\varepsilon$. For the second term, we note that
    \begin{equation}
    \begin{split}
        &\sup_{s'\in [t,s]} \bigg | \int_t^{s'} \big \langle \lambda \tilde{f}(X_1(t',\omega), X_1(t'),a_1(t')) + (1-\lambda) \tilde{f}(X_0(t',\omega), X_0(t'),a_0(t'))\\
        &\qquad\qquad\qquad - \tilde{f}(X^{\lambda}(t',\omega),X^{\lambda}(t'),a_{\lambda}(t')), X^{\lambda}(t',\omega) - X_{\lambda}(t',\omega) \big \rangle_{-1} \mathrm{d}t' \bigg |\\
        &\leq \varepsilon \sup_{t'\in [t,s]} \big | X^{\lambda}(t',\omega) - X_{\lambda}(t',\omega) \big |_{-1}^2\\
        &\quad + C_{\varepsilon} \sup_{t'\in [t,s]} \big | \lambda \tilde{f}(X_1(t',\omega), X_1(t'),a_1(t')) + (1-\lambda) \tilde{f}(X_0(t',\omega), X_0(t'),a_0(t'))\\
        &\qquad\qquad\qquad - \tilde{f}(X^{\lambda}(t',\omega),X^{\lambda}(t'),a_{\lambda}(t')) \big |_{-1}^2.
    \end{split}
    \end{equation}
  Thus, using \eqref{X_lambda_Estimate_B}, we obtain
    \begin{equation}
    \begin{split}
        &\sup_{s'\in [t,s]} \bigg | \int_t^{s'} \big \langle \lambda \tilde{f}(X_1(t',\omega), X_1(t'),a_1(t')) + (1-\lambda) \tilde{f}(X_0(t',\omega), X_0(t'),a_0(t'))\\
        &\qquad\qquad\qquad - \tilde{f}(X^{\lambda}(t',\omega),X^{\lambda}(t'),a_{\lambda}(t')), X^{\lambda}(t',\omega) - X_{\lambda}(t',\omega) \big \rangle_{-1} \mathrm{d}t' \bigg |\\
        &\leq \varepsilon \sup_{t'\in [t,s]} \big | X^{\lambda}(t',\omega) - X_{\lambda}(t',\omega) \big |_{-1}^2\\
        &\quad + C_{\varepsilon}^2 \lambda^2 (1-\lambda)^2 \sup_{t'\in [t,s]} \left ( |X_1(t',\omega) - X_0(t',\omega)|_{-1}^4 + \| X_1(t') - X_0(t') \|_{-1}^4 \right )\\
        &\quad + C_{\varepsilon}^2 \lambda^2 (1-\lambda)^2 \left ( \int_t^s |a_1(t',\omega)-a_0(t',\omega)|_{\Lambda}^2 \mathrm{d}t' \right )^2.
    \end{split}
    \end{equation}
    Using Assumption \ref{Assumption_f_sigma_lipschitz}(i), and then taking the absolute value and the supremum, we obtain for the third term on the right-hand side of inequality \eqref{ito_X_lambda}
    \begin{equation}
    \begin{split}
        &\sup_{s'\in [t,s]} \left | \int_t^{s'} \langle \tilde{f}(X^{\lambda}(t',\omega),X^{\lambda}(t'),a_{\lambda}(t')) - \tilde{f}(X_{\lambda}(t',\omega),X_{\lambda}(t'),a_{\lambda}(t')), X^{\lambda}(t',\omega) - X_{\lambda}(t',\omega) \rangle_{-1} \mathrm{d}t' \right |\\
        &\leq \varepsilon \sup_{s'\in [t,s]} | X^{\lambda}(t',\omega) - X_{\lambda}(t',\omega) |_{-1}^2\\
        &\quad + C_{\varepsilon} \left ( \int_t^s \left ( |X^{\lambda}(t',\omega) - X_{\lambda}(t',\omega) |_{-1} + d_{-1,1}(X^{\lambda}(t')_{\texttt{\#}} \mathcal{L}^1,X_{\lambda}(t')_{\texttt{\#}} \mathcal{L}^1 ) \right ) \mathrm{d}t' \right )^2,
    \end{split}
    \end{equation}
    for all $\varepsilon>0$. For the fourth term on the right-hand side of inequality \eqref{ito_X_lambda}, using \eqref{X_lambda_Estimate_Sigma}, we obtain
    \begin{equation}\label{estimate_sigma_tilde}
    \begin{split}
        &\int_t^{s'} | \lambda \tilde{\sigma}(X_1(t',\omega),X_1(t')) + (1-\lambda) \tilde{\sigma}(X_0(t',\omega),X_0(t')) - \tilde{\sigma}( X^{\lambda}(t',\omega),X^{\lambda}(t')) |_{L_2(\Xi,H_{-1})}^2 \mathrm{d}t'\\
        &\leq C^2 \lambda^2 (1-\lambda)^2 \sup_{t'\in [t,s]} \left ( |X_1(t',\omega) - X_0(t',\omega)|_{-1}^4 + \| X_1(t') - X_0(t') \|_{-1}^4 \right )
    \end{split}
    \end{equation}
    Moreover, by Assumption \ref{Assumption_f_sigma_lipschitz}(iv), we have for the fifth term on the right-hand side of inequality \eqref{ito_X_lambda}
    \begin{equation}
    \begin{split}
        &\sup_{s'\in [t,s]} \int_t^{s'} | \tilde{\sigma}(X^{\lambda}(t',\omega),X^{\lambda}(t')) - \tilde{\sigma}(X_{\lambda}(t',\omega),X_{\lambda}(t')) |_{L_2(\Xi,H_{-1})}^2 \mathrm{d}t'\\
        &\leq \int_t^s \left ( | X^{\lambda}(t',\omega) - X_{\lambda}(t',\omega) |_{-1}^2 + d^2_{-1,1}(X^{\lambda}(t')_{\texttt{\#}} \mathcal{L}^1, X_{\lambda}(t')_{\texttt{\#}} \mathcal{L}^1) \right ) \mathrm{d}t'.
    \end{split}
    \end{equation}
    Next, for the sixth term on the right-hand side of inequality \eqref{ito_X_lambda}, we obtain using Burkholder--Davis--Gundy inequality
    \begin{equation}
    \begin{split}
        &\mathbb{E} \bigg [ \sup_{s'\in [t,s]} \bigg | \int_t^{s'} \langle X^{\lambda}(t',\omega) - X_{\lambda}(t',\omega),\\
        &\quad ( \lambda \tilde{\sigma}(X_1(t',\omega),X_1(t')) + (1-\lambda) \tilde{\sigma}(X_0(t',\omega),X_0(t')) - \tilde{\sigma}(X^{\lambda}(t',\omega),X^{\lambda}(t')) ) \mathrm{d}W(t') \rangle_{-1} \bigg |^{\frac12} \bigg ]\\
        &\leq C \mathbb{E} \bigg [ \bigg ( \int_t^s | X^{\lambda}(t',\omega) - X_{\lambda}(t',\omega) |_{-1}^2\\
        &\quad | \lambda \tilde{\sigma}(X_1(t',\omega),X_1(t')) + (1-\lambda) \tilde{\sigma}(X_0(t',\omega),X_0(t')) - \tilde{\sigma}(X^{\lambda}(t',\omega),X^{\lambda}(t')) |_{L_2(\Xi,H_{-1})}^2 \mathrm{d}t' \bigg )^{\frac14} \bigg ]\\
        &\leq \varepsilon \mathbb{E} \left [ \sup_{t'\in [t,s]} | X^{\lambda}(t',\omega) - X_{\lambda}(t',\omega) |_{-1} \right ] + C_{\varepsilon} \mathbb{E} \Bigg [ \Bigg ( \int_t^s | \lambda \tilde{\sigma}(X_1(t',\omega),X_1(t'))\\
        &\qquad\qquad + (1-\lambda) \tilde{\sigma}(X_0(t',\omega),X_0(t')) - \tilde{\sigma}(X^{\lambda}(t',\omega),X^{\lambda}(t')) |_{L_2(\Xi,H_{-1})}^2 \mathrm{d}t' \Bigg )^{\frac12} \Bigg ],
    \end{split}
    \end{equation}
    for all $\varepsilon>0$. The second term on the right-hand side of this inequality can be estimated as in \eqref{estimate_sigma_tilde}. Finally, for the last term in inequality \eqref{ito_X_lambda}, we obtain again using Burkholder--Davis--Gundy inequality
    \begin{equation}
    \begin{split}
        &\mathbb{E} \left [ \sup_{s'\in [t,s]} \left | \int_t^{s'} \langle X^{\lambda}(t',\omega) - X_{\lambda}(t',\omega), (\tilde{\sigma}(X^{\lambda}(t',\omega),X^{\lambda}(t')) - \tilde{\sigma}(X_{\lambda}(t',\omega),X_{\lambda}(t'))) \mathrm{d}W(t') \rangle_{-1} \right |^{\frac12} \right ]\\
        &\leq \varepsilon \mathbb{E} \left [ \sup_{t'\in [t,s]} | X^{\lambda}(t',\omega) - X_{\lambda}(t',\omega) |_{-1} \right ]\\
        &\quad + C_{\varepsilon} \mathbb{E} \left [ \left ( \int_t^s \left ( | X^{\lambda}(t',\omega) - X_{\lambda}(t',\omega) |_{-1}^2 + d^2_{-1,1}(X^{\lambda}(t')_{\texttt{\#}} \mathcal{L}^1, X_{\lambda}(t')_{\texttt{\#}} \mathcal{L}^1) \right ) \mathrm{d}t' \right )^{\frac12} \right ],
    \end{split}
    \end{equation}
    for all $\varepsilon>0$. We note that
    \begin{equation}
    \begin{split}
        &\int_t^s d^2_{-1,1}(X^{\lambda}(t')_{\texttt{\#}} \mathcal{L}^1,X_{\lambda}(t')_{\texttt{\#}} \mathcal{L}^1 ) \mathrm{d}t' \leq \int_t^s \int_{\Omega} | X^{\lambda}(t',\omega) - X_{\lambda}(t',\omega) |^2_{-1} \mathrm{d}\omega \mathrm{d}t' \\
        &\leq \varepsilon \left ( \int_{\Omega} \sup_{t'\in [t,s]} | X^{\lambda}(t',\omega) - X_{\lambda}(t',\omega) |_{-1} \mathrm{d}\omega \right )^2 + C_{\varepsilon} \left ( \int_{\Omega} \int_t^s | X^{\lambda}(t',\omega) - X_{\lambda}(t',\omega) |_{-1} \mathrm{d}t' \mathrm{d}\omega \right )^2,
    \end{split}
    \end{equation}
    for all $\varepsilon>0$. Thus, choosing $\varepsilon>0$ sufficiently small, taking the square root, the supremum over $s'\in [t,s]$, the expectation, and integrating over $\omega\in \Omega$ in \eqref{ito_X_lambda} we obtain
    \begin{equation}
    \begin{split}
        & \int_{\Omega} \mathbb{E} \left [ \sup_{s'\in [t,s]} | X^{\lambda}(s',\omega) - X_{\lambda}(s',\omega) |_{-1} \right ] \mathrm{d}\omega \\
        &\leq C_{\varepsilon} \int_t^s \int_{\Omega} \mathbb{E} \left [ \sup_{t'\in [t,s']} | X^{\lambda}(t',\omega) - X_{\lambda}(t',\omega) |_{-1} \mathrm{d}s' \right ] \mathrm{d}\omega \\
        &\quad + C_{\varepsilon} \lambda (1-\lambda) \int_{\Omega} \mathbb{E} \left [ \sup_{t'\in [t,s]} \left ( |X_1(t',\omega) - X_0(t',\omega)|_{-1}^2 + \| X_1(t') - X_0(t') \|_{-1}^2 \right ) \right ] \mathrm{d}\omega\\
        &\quad + C_{\varepsilon} \lambda (1-\lambda) \int_{\Omega} \mathbb{E} \left [ \int_t^s |a_1(t',\omega)-a_0(t',\omega)|_{\Lambda}^2 \mathrm{d}t' \right ] \mathrm{d}\omega.
    \end{split}
    \end{equation}
    Noting that
    \begin{equation}
        \mathbb{E} \left [ \sup_{t'\in [t,s]} \| X_1(t') - X_0(t') \|_{-1}^2 \right ] \leq \int_{\Omega} \mathbb{E} \left [ \sup_{t'\in [t,s]} |X_1(t',\omega) - X_0(t',\omega)|_{-1}^2 \right ] \mathrm{d}\omega
    \end{equation}
    and applying Gr\"onwall's inequality and Lemma \ref{Infinite_Dimensional_A_Priori} concludes the proof.
\end{proof}

\subsection{\texorpdfstring{$C^{1,1}$}{C{1,1}} Regularity of the Value Function}
\label{subsection:C11_regularity}

\begin{prop}\label{Value_Function_Lipschitz}
	Let Assumptions \ref{Assumption_A_maximal_dissipative}, \ref{Assumption_weak_B_condition}, \ref{Assumption_f_sigma_lipschitz}(i)(ii)(iv) and \ref{Assumption_running_terminal_cost}(i)(ii)(iv) be satisfied. Then, there is a constant $C\geq 0$, depending only on the Lipschitz constants of $b$, $\sigma$, $l$ and $\mathcal{U}_T$ as well as on $T$, such that
	\begin{equation}\label{Lipschitz_U}
		| U(t,X) - U(t,Y) | \leq C \|X-Y\|_{-1}
	\end{equation}
	for all $t\in [0,T]$ and $X,Y\in E$. Moreover, there is a constant $K>0$ such that for all $t\in [0,T], X\in E$,
	\[
	U(t,X) = \inf_{a(\cdot)\in \Lambda_t^K} J(t,X;a(\cdot)),
    \]
    where $\Lambda_t^K=\{a(\cdot)\in \Lambda_t: a(\cdot)\,\,\mbox{has values in}\,\, B_K(0)\,\,\mbox{in}\,\,{\mathcal E}\}$, $U$ is uniformly continuous on bounded subsets of $[0,T]\times E$, and $U$ is the unique $\mathcal B$-continuous viscosity solution of equation \eqref{lifted_HJB_equation} as well as equation \eqref{eq:eqHJBK1}. Here, uniqueness holds in the class of functions $W$ which satisfy for some $C\geq 0$,
    \begin{equation}
        |W(t,X) - W(t,Y)| \leq C \| X-Y\|,\quad \text{for all } t\in [0,T], \, X,Y\in E.
    \end{equation}
\end{prop}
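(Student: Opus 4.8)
The plan is to follow the strategy of Propositions~\ref{Lipschitz_Continuity_u_n} and~\ref{Lipschitz_Time_u_n}, transported to the lifted problem on $E$, with the a priori estimates of Lemma~\ref{Infinite_Dimensional_A_Priori} playing the role of those of Proposition~\ref{Prop_Finite_A_Priori}. \emph{For \eqref{Lipschitz_U}:} fix $a(\cdot)\in\Lambda_t$ and let $X(\cdot),Y(\cdot)$ be the mild solutions of \eqref{Lifted_State_Equation} started at $X,Y$ with the same control; by \eqref{Apriori_1_Infinite_State}, $\mathbb{E}[\int_\Omega\sup_{s}|X(s,\omega)-Y(s,\omega)|_{-1}^2\,\mathrm{d}\omega]\leq C\|X-Y\|_{-1}^2$. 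Combining this with the Lipschitz continuity of $l$ and $\mathcal U_T$ in the weak norm (Assumptions~\ref{Assumption_f_sigma_lipschitz}(i), \ref{Assumption_running_terminal_cost}(i)), the bound $d_{-1,r}(X(s)_{\texttt{\#}}\mathcal L^1,Y(s)_{\texttt{\#}}\mathcal L^1)\leq(\int_\Omega|X(s,\omega)-Y(s,\omega)|_{-1}^2\,\mathrm{d}\omega)^{1/2}$, and Jensen's inequality, one gets $|J(t,X;a(\cdot))-J(t,Y;a(\cdot))|\leq C\|X-Y\|_{-1}$ with $C$ depending only on $T$ and the Lipschitz constants of $f,\sigma,l,\mathcal U_T$; taking the infimum over $a(\cdot)$ yields \eqref{Lipschitz_U}, and the same computation on $\Lambda_t^K$ shows the truncated value functions below are Lipschitz in $\|\cdot\|_{-1}$ with the same constant, uniformly in $K$. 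Using in addition the coercivity $l_2\geq -C_1+C_2|q|_\Lambda^2$, the bound $|f_2|\leq C(1+|q|_\Lambda)$, the linear growth of $l_1,\mathcal U_T$ in $|\cdot|_{-1}$ and \eqref{Apriori_0_Infinite_State}, one obtains $J(t,X;a(\cdot))\geq -C(1+\|X\|_{-1})+\tfrac{C_2}{2}\|a(\cdot)\|_{M^2(t,T;\mathcal E)}^2$, so $U(t,X)\geq -C(1+\|X\|_{-1})$, and, testing with the constant control $q_0$ together with \eqref{Apriori_0_Infinite_State_strong}, $|U(t,X)|\leq C(1+\|X\|)$; in particular every $1$-optimal control at $(t,X)$ satisfies $\|a(\cdot)\|_{M^2(t,T;\mathcal E)}^2\leq C(1+\|X\|)$.

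\emph{The truncated problems and the Hamiltonian coincidence.} For $K>0$ set $U^K(t,X):=\inf_{a(\cdot)\in\Lambda_t^K}J(t,X;a(\cdot))$. Since the control set in $\Lambda_t^K$ is bounded, Assumptions~\ref{Assumption_A_maximal_dissipative}--\ref{Assumption_running_terminal_cost} and the properties of $\mathcal A,e^{t\mathcal A},\mathcal B$ and of $F,\Sigma,L,U_T$ from Sections~\ref{subsection_operator_A}--\ref{subsec:properties_lifted_coeff} place us in the setting of \cite[Theorem 3.66]{fabbri_gozzi_swiech_2017}: $U^K$ is the unique $\mathcal B$-continuous viscosity solution of \eqref{eq:eqHJBK1} in the class of functions Lipschitz in $\|\cdot\|$, it is uniformly continuous on bounded subsets of $[0,T]\times E$ (time regularity from the dynamic programming principle and \eqref{Apriori_2_Infinite_State}), and, by the previous paragraph, Lipschitz in $\|\cdot\|_{-1}$ with constant $C$ independent of $K$. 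Next, arguing as in Corollary~\ref{lemma_hamiltonian} with the $\mathcal E$-norm in place of $|\cdot|$: for $\|P\|\leq C$,
\[
\llangle F(X,Q),P\rrangle+L(X,Q)\geq -C(1+\|X\|)-C\|Q\|_\Lambda\|P\|-C_1+C_2\|Q\|_\Lambda^2\geq -C(1+\|X\|)+\tfrac{C_2}{2}\|Q\|_\Lambda^2,
\]
while $\tilde{\mathcal H}(X,P)\leq\llangle F(X,q_0),P\rrangle+L(X,q_0)\leq C(1+\|X\|)$; hence there is $K_0>0$, independent of $X$ and of $P$ with $\|P\|\leq C$, with $\tilde{\mathcal H}^K(X,P)=\tilde{\mathcal H}(X,P)$ for all $K\geq K_0$.

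\emph{Independence of $K$, identification with $U$, and uniqueness.} For $K_1,K_2\geq K_0$, $U^{K_1}$ is Lipschitz in $\|\cdot\|$ with constant $C$, so wherever a test function $\psi=\varphi+h(t,\|X\|)$ touches $U^{K_1}$ one has $\|D\psi\|\leq C$; by the coincidence above, $\tilde{\mathcal H}^{K_1}(X,D\psi)=\tilde{\mathcal H}^{K_2}(X,D\psi)=\tilde{\mathcal H}(X,D\psi)$, so $U^{K_1}$ is also a $\mathcal B$-continuous viscosity solution of \eqref{eq:eqHJBK1} with $\tilde{\mathcal H}^{K_2}$ and of \eqref{lifted_HJB_equation}; the comparison principle \cite[Theorems 3.50, 3.66]{fabbri_gozzi_swiech_2017} gives $U^{K_1}=U^{K_2}=:\tilde U$. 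Clearly $U\leq\tilde U$. Conversely, for $a(\cdot)\in\Lambda_t$ set $a_K(s):=a(s)\mathbf 1_{\{\|a(s)\|_\Lambda\leq K\}}\in\Lambda_t^K$; then $a_K\to a$ in $M^2(t,T;\mathcal E)$, by \eqref{Apriori_1_Infinite_State} the states converge in the sense of that estimate, and using the continuity of $l,\mathcal U_T$, the uniform integrability provided by the quadratic growth of $l_2$ and the uniform $M^2$-bound on $a_K$, together with the linear growth of $l_1,\mathcal U_T$ and the uniform moment bounds \eqref{Apriori_0_Infinite_State_strong} on the states, one passes to the limit along a subsequence to obtain $J(t,X;a_K)\to J(t,X;a)$; thus $\tilde U(t,X)\leq J(t,X;a)$ and, taking the infimum, $\tilde U=U$. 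Hence $U=U^{K_0}$ has all the stated properties, and uniqueness in the class of functions $W$ Lipschitz in $\|\cdot\|$ follows since any such $W$ solving \eqref{lifted_HJB_equation} also solves \eqref{eq:eqHJBK1} for $K$ large (depending on the Lipschitz constant of $W$), where comparison holds.

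\emph{Main obstacle.} Steps~1 and~2 are routine once Lemma~\ref{Infinite_Dimensional_A_Priori} is available. The delicate point is the reduction to bounded controls: the coercivity of $l_2$ only delivers an $M^2$-bound on near-optimal controls, not a pointwise bound, so one cannot simply truncate, and must instead go through the auxiliary equations \eqref{eq:eqHJBK1}, the Hamiltonian coincidence on the relevant gradient range, and the comparison principle to identify all $U^K$ with $K\geq K_0$ and then with $U$. The other delicate point is the convergence $J(t,X;a_K)\to J(t,X;a)$, which is not a plain dominated-convergence argument because the state depends on the control, and must be handled via uniform integrability coming from the quadratic growth of the cost and the uniform state moment estimates. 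The remaining work is the bookkeeping of checking that the abstract hypotheses of \cite[Theorems 3.50, 3.66]{fabbri_gozzi_swiech_2017} — the weak $\mathcal B$-condition for $\mathcal A$ and the $\|\cdot\|$- and $\|\cdot\|_{-1}$-estimates on $F,\Sigma,L,U_T$ — hold in the lifted setting, which is provided by Sections~\ref{subsection_operator_A}--\ref{subsec:properties_lifted_coeff}.
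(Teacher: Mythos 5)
Your proposal is correct and follows essentially the same route as the paper: the paper's proof is precisely "repeat the argument of Proposition \ref{Lipschitz_Time_u_n} with the estimates of Lemma \ref{Infinite_Dimensional_A_Priori}" (Lipschitz bound in $\|\cdot\|_{-1}$ from \eqref{Apriori_1_Infinite_State}, reduction to the truncated problems $U^K$ solving \eqref{eq:eqHJBK1} via \cite[Theorem 3.66]{fabbri_gozzi_swiech_2017}, Hamiltonian coincidence on the gradient range, comparison, and $U=\lim_K U^K$), with uniqueness as in the proof of Theorem \ref{theorem:convergence}; your only addition is to make the step $U=\lim_K U^K$ explicit by truncating controls. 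Two small repairs: in the coincidence estimate, as written your lower and upper bounds both carry $C(1+\|X\|)$, which would make $K_0$ depend on $\|X\|$ — you should, as in Lemma \ref{lemma_hamiltonian_n}/Corollary \ref{lemma_hamiltonian}, split off the $f_1,l_1$ terms (common to both infima, hence cancelling) and compare only the $f_2,l_2$ parts, which yields $K_0$ depending only on the gradient bound and the structural constants; and the truncation $a_K(s)=a(s)\mathbf 1_{\{\|a(s)\|_\Lambda\le K\}}$ tacitly assumes $0\in\tilde\Lambda$, so on the exceptional set one should instead substitute a fixed admissible value $q_0\in\tilde\Lambda$, after which the convergence $J(t,X;a_K)\to J(t,X;a)$ goes through exactly as you argue.
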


\begin{proof}
The proof follows using the same arguments as the proof of Proposition \ref{Lipschitz_Time_u_n} if we use estimates of Lemma \ref{Infinite_Dimensional_A_Priori}. Uniqueness of $\mathcal B$-continuous viscosity solutions is showed in the proof of Theorem \ref{theorem:convergence}.
\end{proof}

\begin{prop}[Semiconcavity: Case 1]\label{Value_Function_Semiconcave_1}
Let Assumptions \ref{Assumption_A_maximal_dissipative} and \ref{Assumption_weak_B_condition} be satisfied. Moreover, let Assumptions \ref{Assumption_f_sigma_lipschitz} and \ref{Assumption_running_terminal_cost} be satisfied with $r=1$. Then, for every $t\in [0,T]$, $U(t,\cdot)$ is semiconcave with respect to the $E_{-1}$-norm, i.e., there is a constant $C\geq 0$ such that
	\begin{equation}
		\lambda U(t,X) + (1-\lambda) U(t,Y) - U(t,\lambda X + (1-\lambda) Y) \leq C \lambda (1-\lambda) \| X- Y \|_{-1}^2
	\end{equation}
	for all $\lambda \in [0,1]$ and $X,Y\in E$. Moreover, the semiconcavity constant $C$ is independent of $t\in [0,T]$.
\end{prop}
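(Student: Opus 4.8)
The plan is to run the classical semiconcavity-of-the-value-function argument in the lifted space $E$, feeding in the trajectory estimate of Lemma~\ref{lem:Xlambda} and exploiting the hypothesis $r=1$. Fix $\lambda\in[0,1]$, $X_0,X_1\in E$, $\varepsilon>0$, and set $X_\lambda:=\lambda X_1+(1-\lambda)X_0$. Using Proposition~\ref{Value_Function_Lipschitz}, first choose a \emph{bounded} $\varepsilon$-optimal control $a_\lambda(\cdot)\in\Lambda_t^K$ for $U(t,X_\lambda)$, i.e. $J(t,X_\lambda;a_\lambda(\cdot))\le U(t,X_\lambda)+\varepsilon$. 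The decisive structural step is to reuse this single control on \emph{both} endpoint problems: set $a_0(\cdot)=a_1(\cdot)=a_\lambda(\cdot)$ and let $X_i(\cdot)$, $X_\lambda(\cdot)$, $X^\lambda(s):=\lambda X_1(s)+(1-\lambda)X_0(s)$ be defined exactly as in Lemma~\ref{lem:Xlambda}. Since $U(t,\cdot)$ is an infimum, $U(t,X_i)\le J(t,X_i;a_\lambda(\cdot))$ for $i=0,1$, whence
\begin{equation*}
\lambda U(t,X_1)+(1-\lambda)U(t,X_0)-U(t,X_\lambda)\le\lambda J(t,X_1;a_\lambda(\cdot))+(1-\lambda)J(t,X_0;a_\lambda(\cdot))-J(t,X_\lambda;a_\lambda(\cdot))+\varepsilon .
\end{equation*}
It remains to bound the difference of cost functionals by $C\lambda(1-\lambda)\|X_1-X_0\|_{-1}^2$.

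To do this I would insert the interpolated trajectory $X^\lambda(s)$ and split each increment, at every time $s$ and at time $T$, into a \emph{convexity-defect} term, e.g. $\lambda L(X_1(s),a_\lambda(s))+(1-\lambda)L(X_0(s),a_\lambda(s))-L(X^\lambda(s),a_\lambda(s))$, and a \emph{trajectory-error} term $L(X^\lambda(s),a_\lambda(s))-L(X_\lambda(s),a_\lambda(s))$, and analogously for $U_T$ at time $T$. For the convexity-defect terms, the $C^{1,1}$-type bounds on $\tilde l$ and $\tilde{\mathcal U}_T$ in Assumptions~\ref{Assumption_running_terminal_cost}(iii),(v) --- which are stated precisely as estimates on the difference of Fr\'echet derivatives paired with the displacement direction --- yield, by the usual Taylor-remainder computation along the two segments joining $X^\lambda(s)$ to $X_1(s)$ and to $X_0(s)$, the pointwise bound $\lambda L(X_1(s),Q)+(1-\lambda)L(X_0(s),Q)-L(X^\lambda(s),Q)\le C\lambda(1-\lambda)\|X_1(s)-X_0(s)\|_{-1}^2$ uniformly in $Q\in\mathcal E$ (and the analogous bound for $U_T$); integrating $\mathbb E\int_t^T(\cdot)\,\mathrm ds$ and invoking the a priori estimate \eqref{Apriori_1_Infinite_State} of Lemma~\ref{Infinite_Dimensional_A_Priori} applied to $X_1(\cdot),X_0(\cdot)$ with the \emph{same} control $a_\lambda(\cdot)$ (so the control-difference term vanishes) bounds their total contribution by $C\lambda(1-\lambda)\|X_1-X_0\|_{-1}^2$. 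For the trajectory-error terms I would use only the Lipschitz bounds of Assumptions~\ref{Assumption_running_terminal_cost}(i),(iv): writing $L(X^\lambda(s),a_\lambda(s))-L(X_\lambda(s),a_\lambda(s))=\int_\Omega[\,l(X^\lambda(s,\omega),X^\lambda(s)_{\texttt{\#}}\mathcal L^1,a_\lambda(s,\omega))-l(X_\lambda(s,\omega),X_\lambda(s)_{\texttt{\#}}\mathcal L^1,a_\lambda(s,\omega))\,]\,\mathrm d\omega$ and using $d_{-1,1}(X^\lambda(s)_{\texttt{\#}}\mathcal L^1,X_\lambda(s)_{\texttt{\#}}\mathcal L^1)\le\int_\Omega|X^\lambda(s,\omega)-X_\lambda(s,\omega)|_{-1}\,\mathrm d\omega$ (this is where $r=1$ is used), one gets $|L(X^\lambda(s),a_\lambda(s))-L(X_\lambda(s),a_\lambda(s))|\le C\int_\Omega|X^\lambda(s,\omega)-X_\lambda(s,\omega)|_{-1}\,\mathrm d\omega$, and similarly for the terminal increment; taking expectation, the supremum over $s$, and applying Lemma~\ref{lem:Xlambda} with $a_0(\cdot)=a_1(\cdot)$ --- so that $\|a_0(\cdot)-a_1(\cdot)\|_{M^2(t,T;\mathcal E)}=0$ --- again bounds the total contribution by $C\lambda(1-\lambda)\|X_1-X_0\|_{-1}^2$. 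Adding the two contributions and letting $\varepsilon\to0$ gives the claimed inequality; the constant $C$ is independent of $t$ because every constant used (in Proposition~\ref{Value_Function_Lipschitz}, Lemmas~\ref{Infinite_Dimensional_A_Priori} and \ref{lem:Xlambda}, and the $C^{1,1}$-bounds) depends only on $T$ and the data, not on the starting time.

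The main obstacle is the trajectory-error term: one must know that the gap $X^\lambda(s)-X_\lambda(s)$ between the linear interpolation of the two controlled trajectories and the trajectory started from the interpolated datum is genuinely of order $\lambda(1-\lambda)$, not merely $O(1)$. This is exactly what Lemma~\ref{lem:Xlambda} provides, but only with a \emph{first} power of the weak norm and of $\lambda(1-\lambda)$ on the right-hand side; matching that scaling is the reason one must estimate the cost increments in the $L^1(\Omega;H_{-1})$-sense rather than the $E_{-1}$-sense, and hence the reason the hypothesis $r=1$ enters (so that the Wasserstein distance $d_{-1,1}$ between the pushforward laws is dominated by the $L^1(\Omega;H_{-1})$-distance). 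Once Lemma~\ref{lem:Xlambda} and the $C^{1,1}$-bounds are available, what is left is routine bookkeeping.
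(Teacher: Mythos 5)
Your argument is correct and is essentially the proof the paper intends: the paper's one-line proof defers to the semiconcavity arguments of the cited references, which consist precisely of your steps — fix an $\varepsilon$-optimal (bounded) control for the interpolated initial datum, reuse it for both endpoints, split the cost difference into a convexity-defect part handled by the $C^{1,1}$-type bounds in Assumptions \ref{Assumption_running_terminal_cost}(iii),(v) together with \eqref{Apriori_1_Infinite_State}, and a trajectory-gap part handled by Lemma \ref{lem:Xlambda} with equal controls, with $r=1$ entering exactly as you say to dominate $d_{-1,1}$ by the $L^1(\Omega;H_{-1})$ distance. No gaps; your identification of why the first-power estimate of Lemma \ref{lem:Xlambda} forces the $r=1$ hypothesis matches the paper's reasoning.
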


\begin{proof}
The proof uses the same techniques as the proof of semiconcavity in \cite[Proposition 6.1(ii)]{mayorga_swiech_2023} and \cite[Theorems 3.11, 5.10]{defeo_swiech_wessels_2023}, using Lemmas \ref{Infinite_Dimensional_A_Priori} and \ref{lem:Xlambda}.
\end{proof}

\begin{prop}[Semiconcavity: Case 2]\label{Value_Function_Semiconcave_2}
Let Assumptions \ref{Assumption_A_maximal_dissipative}, \ref{Assumption_weak_B_condition}, \ref{Assumption_running_terminal_cost} and \ref{Assumption_Linear_State_Equation} be satisfied. Then, for every $t\in [0,T]$, $U(t,\cdot)$ is semiconcave with respect to the $E_{-1}$-norm with the semiconcavity constant independent of $t\in [0,T]$.
\end{prop}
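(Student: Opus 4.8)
The plan is to take advantage of the affine-linear structure of the lifted state equation assumed in Assumption~\ref{Assumption_Linear_State_Equation}(i)(ii): under it the perturbation term that appears in Lemma~\ref{lem:Xlambda} and complicates the proof of Proposition~\ref{Value_Function_Semiconcave_1} disappears, and the argument reduces to combining an exact superposition principle for the controlled trajectories with the (uniform) $C^{1,1}$-in-the-weak-norms property of $\tilde l$ and $\tilde{\mathcal U}_T$ encoded in Assumption~\ref{Assumption_running_terminal_cost}(iii)(v). First I would establish the superposition principle: for every $a(\cdot)\in\Lambda_t$, every $X_0,X_1\in E$ and $\lambda\in[0,1]$, the mild solutions of \eqref{Lifted_State_Equation} satisfy
\begin{equation*}
X\big(\cdot\,;\lambda X_1+(1-\lambda)X_0,\,a(\cdot)\big)=\lambda\,X(\cdot\,;X_1,a(\cdot))+(1-\lambda)\,X(\cdot\,;X_0,a(\cdot))\qquad\text{a.s.}
\end{equation*}
This follows by plugging the right-hand side into the variation-of-constants formula and using that $F$ is affine linear in $(X,Q)$ and $\Sigma$ affine linear in $X$ (so that, since $a(s)=\lambda a(s)+(1-\lambda)a(s)$, both coefficients split along the convex combination), together with uniqueness of mild solutions. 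As a byproduct, applied with a common control $a^*(\cdot)$, the difference $X_1(\cdot)-X_0(\cdot)$ solves a linear, control-free equation whose coefficients obey the Lipschitz bounds of Assumption~\ref{Assumption_Linear_State_Equation}, so the same energy estimate (with the operator $\mathcal{B}$) as in the proof of Lemma~\ref{Infinite_Dimensional_A_Priori}, cf.\ \eqref{Apriori_1_Infinite_State}, yields $\mathbb{E}\big[\sup_{s\in[t,T]}\|X_1(s)-X_0(s)\|_{-1}^2\big]\le C\|X_1-X_0\|_{-1}^2$ with $C$ independent of $t$.

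Next I would record that, with the control variable frozen, the Lipschitz-type increment bounds on $D\tilde l$ and $D\tilde{\mathcal U}_T$ from Assumption~\ref{Assumption_running_terminal_cost}(iii)(v) make the maps $(x,X)\mapsto\tilde l(x,X,q)$ and $(x,X)\mapsto\tilde{\mathcal U}_T(x,X)$ $C^{1,1}$ along segments with respect to $|\cdot|_{-1}\times\|\cdot\|_{-1}$, uniformly; restricting the bound to a segment and integrating twice in the segment parameter gives the semiconcavity inequality
\begin{equation*}
\lambda\tilde l(x_1,X_1,q)+(1-\lambda)\tilde l(x_0,X_0,q)-\tilde l(\lambda x_1+(1-\lambda)x_0,\lambda X_1+(1-\lambda)X_0,q)\le C\lambda(1-\lambda)\big(|x_1-x_0|_{-1}^2+\|X_1-X_0\|_{-1}^2\big),
\end{equation*}
and likewise for $\tilde{\mathcal U}_T$. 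Integrating over $\omega\in\Omega$ (recall $|\Omega|=1$) this turns into, for all $X_0,X_1\in E$ and $Q\in\mathcal{E}$,
\begin{equation*}
\lambda L(X_1,Q)+(1-\lambda)L(X_0,Q)-L(\lambda X_1+(1-\lambda)X_0,Q)\le C\lambda(1-\lambda)\|X_1-X_0\|_{-1}^2,
\end{equation*}
and the analogous bound for $U_T$.

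Then I would run the usual semiconcavity scheme. Fixing $t\in[0,T]$, $X_0,X_1\in E$, $\lambda\in[0,1]$, writing $X_\lambda:=\lambda X_1+(1-\lambda)X_0$ and taking, via Proposition~\ref{Value_Function_Lipschitz}, a \emph{bounded} control $a^*(\cdot)\in\Lambda_t^K$ that is $\varepsilon$-optimal for $X_\lambda$, I would use $a^*(\cdot)$ as an admissible control also for $X_0$ and $X_1$. By the superposition principle, the trajectory started at $X_\lambda$ under $a^*(\cdot)$ equals $\lambda X_1(\cdot)+(1-\lambda)X_0(\cdot)$, where $X_i(\cdot):=X(\cdot\,;X_i,a^*(\cdot))$; boundedness of $a^*(\cdot)$ gives the integrability needed to evaluate the costs. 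Hence
\begin{align*}
&\lambda U(t,X_1)+(1-\lambda)U(t,X_0)-U(t,X_\lambda)\\
&\le\varepsilon+\mathbb{E}\Big[\int_t^T\big(\lambda L(X_1(s),a^*(s))+(1-\lambda)L(X_0(s),a^*(s))-L(\lambda X_1(s)+(1-\lambda)X_0(s),a^*(s))\big)\,\mathrm{d}s\Big]\\
&\quad+\mathbb{E}\big[\lambda U_T(X_1(T))+(1-\lambda)U_T(X_0(T))-U_T(\lambda X_1(T)+(1-\lambda)X_0(T))\big],
\end{align*}
and the last two terms are, by the previous step, at most $C\lambda(1-\lambda)\,\mathbb{E}\big[\int_t^T\|X_1(s)-X_0(s)\|_{-1}^2\,\mathrm{d}s+\|X_1(T)-X_0(T)\|_{-1}^2\big]\le C\lambda(1-\lambda)\|X_1-X_0\|_{-1}^2$, using the estimate of the first step. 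Letting $\varepsilon\downarrow0$ gives semiconcavity of $U(t,\cdot)$ in $E_{-1}$ with $C$ independent of $t$.

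The argument is essentially routine once the superposition principle is in place, so I do not expect a serious obstacle; the two points that need care are (a) checking rigorously — handling measurability and the fact that the solutions are only mild — that the affine-linear structure of $F,\Sigma$ transfers to the solution map (a specialization of the reasoning behind Lemma~\ref{lem:Xlambda}, now with no error term), and (b) making sure the cost comparison is carried out with a bounded control, which is precisely why the reduction $U(t,\cdot)=\inf_{a(\cdot)\in\Lambda_t^K}J(t,\cdot\,;a(\cdot))$ from Proposition~\ref{Value_Function_Lipschitz} is invoked. Let me also point out that the convexity of $\tilde l,\tilde{\mathcal U}_T$ from Assumption~\ref{Assumption_Linear_State_Equation}(iii) plays no role in the above estimate; it enters only when this proposition is combined with the (trivial) semiconvexity of $U(t,\cdot)$ coming from convexity of the control problem, in order to conclude $C^{1,1}(E_{-1})$-regularity of $U$.
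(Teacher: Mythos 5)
Your proposal is correct and follows essentially the paper's route: the paper's proof (deferring to the technique of \cite{mayorga_swiech_2023}) rests precisely on the observation that under Assumption \ref{Assumption_Linear_State_Equation} one has $X_\lambda=X^\lambda$ (your superposition principle), so that semiconcavity of $U(t,\cdot)$ reduces to the weak-norm semiconcavity of $\tilde l$, $\tilde{\mathcal U}_T$ from Assumption \ref{Assumption_running_terminal_cost}(iii)(v) combined with the $\|\cdot\|_{-1}$-stability of the state under a common control. One minor remark: your appeal to Proposition \ref{Value_Function_Lipschitz} for a bounded $\varepsilon$-optimal control is not strictly covered by the hypotheses of this proposition (Assumption \ref{Assumption_f_sigma_lipschitz} is not assumed here) and is also unnecessary, since finiteness of $J(t,\cdot\,;a(\cdot))$ for any admissible $a(\cdot)$ (which follows from the a priori estimates available under Assumption \ref{Assumption_Linear_State_Equation}) already suffices for the $\varepsilon$-optimality argument.
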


\begin{proof}
	The proof uses again the same techniques as the corresponding result in \cite[Proposition 6.2]{mayorga_swiech_2023}. We note that now we have $X_\lambda=X^\lambda$, so Lemma \ref{lem:Xlambda} is not needed and we do not need $r=1$.
\end{proof}

\begin{prop}[Semiconvexity: Case 1]\label{Value_Function_Semiconvex_1}
	Let Assumptions \ref{Assumption_A_maximal_dissipative}, \ref{Assumption_weak_B_condition} and \ref{Assumption_running_cost_nu} be satisfied. Moreover, let Assumptions \ref{Assumption_f_sigma_lipschitz} and \ref{Assumption_running_terminal_cost} be satisfied with $r=1$. There is a constant $\nu_0\geq 0$ such that if $\nu$ in Assumption \ref{Assumption_running_cost_nu} satisfies $\nu \geq \nu_0$, then, for every $t\in [0,T]$, $U(t,\cdot)$ is semiconvex with respect to the $E_{-1}$-norm, i.e., there is a constant $C\geq 0$ such that
    \begin{equation}
        \lambda U(t,X) + (1-\lambda) U(t,Y) - U(t,\lambda X + (1-\lambda) Y) \geq - C \lambda (1-\lambda) \| X- Y \|_{-1}^2
    \end{equation}
    for all $\lambda \in [0,1]$ and $X,Y\in E$. Moreover, the semiconvexity constant $C$ is independent of $t\in [0,T]$. 
\end{prop}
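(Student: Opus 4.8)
The plan is to prove the semiconvexity estimate directly, by building an almost optimal competitor control for the midpoint out of almost optimal controls for the two endpoints; I will use the notation introduced just before Lemma~\ref{lem:Xlambda}, writing the two states as $X_1,X_0$, so that the claim reads $U(t,\lambda X_1+(1-\lambda)X_0)\le\lambda U(t,X_1)+(1-\lambda)U(t,X_0)+C\lambda(1-\lambda)\|X_1-X_0\|_{-1}^2$. Fix $t\in[0,T]$, $\lambda\in(0,1)$ and $\varepsilon>0$; first I would choose $a_1(\cdot),a_0(\cdot)\in\Lambda_t$ with $J(t,X_i;a_i(\cdot))\le U(t,X_i)+\varepsilon$ and set $a_\lambda(\cdot)=\lambda a_1(\cdot)+(1-\lambda)a_0(\cdot)$, which lies in $\Lambda_t$ because $\tilde\Lambda$ is convex, so that $U(t,X_\lambda)\le J(t,X_\lambda;a_\lambda(\cdot))$. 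Writing $X_i(\cdot)=X(\cdot;X_i,a_i(\cdot))$, $X_\lambda(\cdot)=X(\cdot;X_\lambda,a_\lambda(\cdot))$ and $X^\lambda(\cdot)=\lambda X_1(\cdot)+(1-\lambda)X_0(\cdot)$ as in Lemma~\ref{lem:Xlambda}, I would decompose $J(t,X_\lambda;a_\lambda(\cdot))$ as $\mathbb E[\int_t^T L(X^\lambda(s),a_\lambda(s))\,\mathrm{d}s+U_T(X^\lambda(T))]$ plus a remainder $\mathcal R$ that collects the differences of the integrands evaluated at the true trajectory $X_\lambda(\cdot)$ and at the convex combination $X^\lambda(\cdot)$.

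The next step is to bound $\mathcal R$. Since the Lipschitz estimates of Assumptions~\ref{Assumption_running_terminal_cost}(i),(iv) for $l$ and $\mathcal U_T$ are uniform in the control, one has $|L(Z,Q)-L(Z',Q)|+|U_T(Z)-U_T(Z')|\le C\int_\Omega|Z(\omega)-Z'(\omega)|_{-1}\,\mathrm{d}\omega$ (also using $d_{-1,1}(Z_{\texttt{\#}}\mathcal L^1,Z'_{\texttt{\#}}\mathcal L^1)\le\int_\Omega|Z(\omega)-Z'(\omega)|_{-1}\,\mathrm{d}\omega$), hence $|\mathcal R|\le C\,\mathbb E[\int_\Omega\sup_{s\in[t,T]}|X_\lambda(s,\omega)-X^\lambda(s,\omega)|_{-1}\,\mathrm{d}\omega]\le C\lambda(1-\lambda)(\|X_1-X_0\|_{-1}^2+\|a_1(\cdot)-a_0(\cdot)\|_{M^2(t,T;\mathcal E)}^2)$ by Lemma~\ref{lem:Xlambda} (this is exactly where $r=1$ is used). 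For the main term I would apply the convexity of the map in Assumption~\ref{Assumption_running_cost_nu} pointwise in $(s,\omega)$, at the convex combination of the triples $(X_1(s,\omega),X_1(s),a_1(s,\omega))$ and $(X_0(s,\omega),X_0(s),a_0(s,\omega))$; the parallelogram identity for the quadratic forms $|\cdot|_{-1}^2$, $\|\cdot\|_{-1}^2$ and $|\cdot|_\Lambda^2$ then gives, after integrating in $\omega$ and in $s$ and taking expectations,
\[
\mathbb E\!\int_t^T\! L(X^\lambda(s),a_\lambda(s))\,\mathrm{d}s\le\lambda\,\mathbb E\!\int_t^T\! L(X_1(s),a_1(s))\,\mathrm{d}s+(1-\lambda)\,\mathbb E\!\int_t^T\! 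L(X_0(s),a_0(s))\,\mathrm{d}s-\nu\lambda(1-\lambda)\|a_1-a_0\|_{M^2(t,T;\mathcal E)}^2+C\lambda(1-\lambda)\,\mathbb E\!\int_t^T\!\|X_1(s)-X_0(s)\|_{-1}^2\,\mathrm{d}s,
\]
and the last term is controlled by $C\lambda(1-\lambda)(\|X_1-X_0\|_{-1}^2+\|a_1-a_0\|_{M^2(t,T;\mathcal E)}^2)$ through \eqref{Apriori_1_Infinite_State} of Lemma~\ref{Infinite_Dimensional_A_Priori}. For the terminal cost, the $C^{1,1}$-type bound of Assumption~\ref{Assumption_running_terminal_cost}(v) makes $\tilde{\mathcal{U}}_T$ semiconvex with respect to $(|\cdot|_{-1},\|\cdot\|_{-1})$, which yields $\mathbb E\,U_T(X^\lambda(T))\le\lambda\,\mathbb E\,U_T(X_1(T))+(1-\lambda)\,\mathbb E\,U_T(X_0(T))+C\lambda(1-\lambda)\,\mathbb E\|X_1(T)-X_0(T)\|_{-1}^2$, again bounded via \eqref{Apriori_1_Infinite_State}.

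Collecting the three estimates, I expect to reach
\[
J(t,X_\lambda;a_\lambda(\cdot))\le\lambda J(t,X_1;a_1(\cdot))+(1-\lambda)J(t,X_0;a_0(\cdot))+\lambda(1-\lambda)\big[(C_\ast-\nu)\|a_1-a_0\|_{M^2(t,T;\mathcal E)}^2+C_\ast\|X_1-X_0\|_{-1}^2\big],
\]
where $C_\ast$ depends only on the Lipschitz constants of $f,\sigma,l,\mathcal U_T$, on the constants $C_1,C_2$ appearing in Assumption~\ref{Assumption_running_cost_nu} and on $T$. Setting $\nu_0:=C_\ast$, whenever $\nu\ge\nu_0$ the control term is nonpositive, and then using $J(t,X_i;a_i(\cdot))\le U(t,X_i)+\varepsilon$ together with $U(t,X_\lambda)\le J(t,X_\lambda;a_\lambda(\cdot))$ and letting $\varepsilon\downarrow0$ gives the claim, with a semiconvexity constant independent of $t$. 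The hard part is precisely this final bookkeeping: each ``bad'' contribution proportional to $\lambda(1-\lambda)\|a_1-a_0\|_{M^2(t,T;\mathcal E)}^2$ — one from Lemma~\ref{lem:Xlambda}, one from the state-Lipschitz estimates of $L$ and $U_T$, and one from the a priori bound \eqref{Apriori_1_Infinite_State} — must be made explicit and absorbed into the single gain $-\nu\lambda(1-\lambda)\|a_1-a_0\|_{M^2(t,T;\mathcal E)}^2$ provided by the strong concavity in the control in Assumption~\ref{Assumption_running_cost_nu}; this is also the reason one cannot restrict in advance to bounded competitor controls and must rely on the control-uniform Lipschitz estimates for $l$ and $\mathcal U_T$.
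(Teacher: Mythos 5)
Your proposal is correct and matches the paper's intended argument: the paper proves this proposition simply by referring to the analogous proofs in \cite{mayorga_swiech_2023} and \cite{defeo_swiech_wessels_2023}, ``now using Lemmas \ref{Infinite_Dimensional_A_Priori} and \ref{lem:Xlambda}'', and your write-up is exactly that standard scheme spelled out — $\varepsilon$-optimal endpoint controls, the convex-combination control, the comparison of $X_\lambda$ with $X^\lambda$ via Lemma \ref{lem:Xlambda} (where $r=1$ enters), the parallelogram identity with Assumption \ref{Assumption_running_cost_nu}, the semiconvexity-type bound for $U_T$, and the absorption of all $\|a_1-a_0\|_{M^2}^2$ terms by choosing $\nu_0$ large. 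No gaps to report.
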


\begin{proof}
	The proof follows along the same lines as the proofs of the corresponding results \cite[Proposition 6.1(iii)]{mayorga_swiech_2023} and \cite[Theorems 3.13, 5.12]{defeo_swiech_wessels_2023}, now using Lemmas \ref{Infinite_Dimensional_A_Priori} and \ref{lem:Xlambda}.
\end{proof}

\begin{prop}[Semiconvexity: Case 2]\label{Value_Function_Semiconvex_2}
	Let Assumptions \ref{Assumption_A_maximal_dissipative}, \ref{Assumption_weak_B_condition}, \ref{Assumption_running_terminal_cost} and \ref{Assumption_Linear_State_Equation} be satisfied. Then, for every $t\in [0,T]$, $U(t,\cdot)$ is convex.
\end{prop}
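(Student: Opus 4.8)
The plan is to exploit the affine structure of the control problem \eqref{Lifted_State_Equation}--\eqref{Lifted_Value_Function} granted by Assumption \ref{Assumption_Linear_State_Equation}, arguing as in \cite[Proposition 6.2]{mayorga_swiech_2023} and in the proof of Proposition \ref{Value_Function_Semiconcave_2}. Fix $t\in[0,T]$, $X_0,X_1\in E$, $a_0(\cdot),a_1(\cdot)\in\Lambda_t$ and $\lambda\in[0,1]$, and adopt the notation $a_\lambda(\cdot),X_\lambda,X^\lambda(\cdot),X_\lambda(\cdot)$ introduced before Lemma \ref{lem:Xlambda}; note that $a_\lambda(\cdot)\in\Lambda_t$ since $\tilde\Lambda$, hence $\mathcal E=L^2(\Omega;\tilde\Lambda)$, is convex. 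Writing the affine lifts from Assumption \ref{Assumption_Linear_State_Equation}(i)(ii) as $F=F_0+F_1$ and $\Sigma=\Sigma_0+\Sigma_1$ with $F_1,\Sigma_1$ linear, a direct computation on the mild formulation (using that the unbounded operator $\mathcal A$ and the semigroup $e^{s\mathcal A}$ are the same linear objects in each copy, so $\lambda F(X_1,a_1)+(1-\lambda)F(X_0,a_0)=F(X^\lambda,a_\lambda)$ and similarly for $\Sigma$) shows that $X^\lambda(\cdot)$ is a mild solution of \eqref{Lifted_State_Equation} with initial condition $X_\lambda$ and control $a_\lambda(\cdot)$. By uniqueness of mild solutions — which holds since $F,\Sigma$ are Lipschitz on $E$ by Assumption \ref{Assumption_Linear_State_Equation}(i)(ii), cf. the proof of Lemma \ref{Infinite_Dimensional_A_Priori} — this yields the pathwise identity $X^\lambda(\cdot)=X_\lambda(\cdot)$, i.e. the flow of \eqref{Lifted_State_Equation} is affine linear in the pair (initial datum, control). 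In particular Lemma \ref{lem:Xlambda} is not needed here, and the assumption $r=1$ is not required.

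The next step is to record the joint convexity of the cost. By Assumption \ref{Assumption_Linear_State_Equation}(iii), the functions $(x,X,q)\mapsto\tilde l(x,X,q)$ and $(x,X)\mapsto\tilde{\mathcal U}_T(x,X)$ are convex; composing with the linear maps $(X,Q)\mapsto(X(\omega),X,Q(\omega))$, resp. $X\mapsto(X(\omega),X)$, and integrating in $\omega\in\Omega$ shows that $L:E\times\mathcal E\to\mathbb R$ and $U_T:E\to\mathbb R$ are convex. Combining this with the affine linearity of the flow together with $X^\lambda(s)=\lambda X_1(s)+(1-\lambda)X_0(s)$ and $a_\lambda(s)=\lambda a_1(s)+(1-\lambda)a_0(s)$, we obtain
\[
J(t,X_\lambda;a_\lambda(\cdot))=\mathbb E\!\left[\int_t^T L(X^\lambda(s),a_\lambda(s))\,\mathrm ds+U_T(X^\lambda(T))\right]\le\lambda J(t,X_1;a_1(\cdot))+(1-\lambda)J(t,X_0;a_0(\cdot)).
\]

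To conclude, I would pass to the infimum in the standard way: given $\varepsilon>0$, choose $a_i(\cdot)\in\Lambda_t$ with $J(t,X_i;a_i(\cdot))\le U(t,X_i)+\varepsilon$ for $i=0,1$; since $a_\lambda(\cdot)\in\Lambda_t$, the previous inequality gives
\[
U(t,\lambda X_1+(1-\lambda)X_0)\le J(t,X_\lambda;a_\lambda(\cdot))\le\lambda U(t,X_1)+(1-\lambda)U(t,X_0)+\varepsilon,
\]
and letting $\varepsilon\downarrow0$ yields convexity of $U(t,\cdot)$ — this is semiconvexity with constant zero, hence trivially uniform in $t$. The only points requiring some care are the pathwise identity $X^\lambda(\cdot)=X_\lambda(\cdot)$ (which rests on uniqueness of mild solutions, and on the fact that affine linearity of $F$ and $\Sigma$ is an assumption rather than something to be derived from $f,\sigma$) and the well-posedness of the control problem under Assumption \ref{Assumption_Linear_State_Equation}, finiteness of the infimum following from the quadratic lower bound in Assumption \ref{Assumption_running_terminal_cost}(ii) as in Proposition \ref{Value_Function_Lipschitz}; neither is a genuine obstacle, so I expect the argument to be short.
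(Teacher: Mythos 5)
Your argument is correct and follows essentially the same route as the paper, which simply invokes the corresponding result of \cite[Proposition 6.2]{mayorga_swiech_2023} after noting that the affine structure of Assumption \ref{Assumption_Linear_State_Equation} forces $X^\lambda(\cdot)=X_\lambda(\cdot)$, so that Lemma \ref{lem:Xlambda} and the restriction $r=1$ are unnecessary. Your write-up just makes explicit the steps (affine flow via uniqueness of mild solutions, convexity of $L$ and $U_T$ from Assumption \ref{Assumption_Linear_State_Equation}(iii), and passage to $\varepsilon$-optimal controls) that the cited proof contains.
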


\begin{proof}
	The proof again repeats the steps of the proof of the corresponding result in \cite[Proposition 6.2]{mayorga_swiech_2023}. We note again that we have $X_\lambda=X^\lambda$, so Lemma \ref{lem:Xlambda} is not needed and we do not need $r=1$.
\end{proof}

We remark that, since for fixed $t\in [0,T]$, the value function $U(t,\cdot)$ is uniformly Lipschitz, semiconcave and semiconvex with respect to the $E_{-1}$-norm, it extends to a function on $E_{-1}$ which is in $C^{1,1}(E_{-1})$, see \cite{lasry_lions_1986}. Thus we have the following theorem.
\begin{theorem}\label{th:C11}
    If either the assumptions of Propositions \ref{Value_Function_Semiconcave_1} and \ref{Value_Function_Semiconvex_1} are satisfied, or the assumptions of Propositions \ref{Value_Function_Semiconcave_2} and \ref{Value_Function_Semiconvex_2} are satisfied, then $U(t,\cdot)\in C^{1,1}(E_{-1})$, for all $t\in [0,T]$ and the Lipschitz constant of ${D_{-1}U}(t,\cdot)$ is independent of $t$. 
\end{theorem}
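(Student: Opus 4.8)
The plan is to obtain the theorem as an immediate consequence of the three qualitative properties of $U$ established above---Lipschitz continuity, semiconcavity and semiconvexity, all with constants independent of $t$---together with the classical Lasry--Lions regularization argument in Hilbert space.

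First I would fix $t\in[0,T]$ and recall from Proposition \ref{Value_Function_Lipschitz} that $X\mapsto U(t,X)$ is Lipschitz continuous on $E$ with respect to the $\|\cdot\|_{-1}$ norm, with a Lipschitz constant that does not depend on $t$. Since $E$ is dense in $E_{-1}$, this map extends uniquely to a Lipschitz function on all of $E_{-1}$, which I continue to denote by $U(t,\cdot)$. I would then observe that the convexity-type inequalities proved earlier are closed conditions: by the $\|\cdot\|_{-1}$-continuity of $U(t,\cdot)$ and the density of $E$ in $E_{-1}$, the semiconcavity and semiconvexity (respectively convexity) bounds, originally stated for $X,Y\in E$, pass to the extension on $E_{-1}$ with the same constants.

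Next I would split according to the two hypothesis sets. Under the assumptions of Propositions \ref{Value_Function_Semiconcave_1} and \ref{Value_Function_Semiconvex_1} (in particular with $r=1$ and $\nu\ge\nu_0$), $U(t,\cdot)$ is semiconcave with some constant $C_1$ and semiconvex with some constant $C_2$, both independent of $t$. Under the assumptions of Propositions \ref{Value_Function_Semiconcave_2} and \ref{Value_Function_Semiconvex_2}, $U(t,\cdot)$ is semiconcave with a $t$-independent constant $C_1$ and convex, i.e.\ semiconvex with constant $0$. In either case, for each $t$ the function $U(t,\cdot)$ is simultaneously semiconvex and semiconcave on the Hilbert space $E_{-1}$, with constants bounded uniformly in $t$.

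Finally I would invoke the result of \cite{lasry_lions_1986}: a real-valued function on a Hilbert space that is both semiconvex and semiconcave is continuously Fr\'echet differentiable with Lipschitz gradient, and the Lipschitz constant of the gradient is controlled by (the maximum of) the semiconvexity and semiconcavity constants. Applying this to $U(t,\cdot)$ on $E_{-1}$ for each $t\in[0,T]$ gives $U(t,\cdot)\in C^{1,1}(E_{-1})$, and the uniformity in $t$ of the two constants yields that the Lipschitz constant of $D_{-1}U(t,\cdot)$ is independent of $t$, as claimed. The only point requiring any care is the transfer of the semiconcavity/semiconvexity inequalities from $E$ to its completion $E_{-1}$ before the Hilbert-space regularization theorem can be applied---this is precisely the content of the remark preceding the statement---so beyond assembling the already-proven ingredients there is essentially nothing further to do.
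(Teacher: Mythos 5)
Your proposal is correct and follows essentially the same route as the paper: the paper simply remarks that since $U(t,\cdot)$ is uniformly Lipschitz, semiconcave and semiconvex with respect to the $E_{-1}$-norm, it extends to a function on $E_{-1}$ belonging to $C^{1,1}(E_{-1})$ by the Lasry--Lions regularization result \cite{lasry_lions_1986}, with constants independent of $t$. Your write-up merely makes explicit the density/extension step and the two hypothesis cases, which the paper leaves implicit.
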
 

\section{Projection of \texorpdfstring{$V$}{V} onto \texorpdfstring{$u_n$}{$u_n$}}\label{sec:projection} In this section, generalizing the finite-dimensional result \cite[Theorem 5.4]{swiech_wessels_2024} (i.e. $H=\mathbb R^d$) to the Hilbert space case, we  prove in Theorem \ref{theorem:projection_C11} that, under suitable regularity of the value function $U\equiv V$ of the lifted limit control problem, then it projects onto the value function $u_n$ of the particle system control problem.  The proof of the result will require a careful adaptation of the arguments in \cite{swiech_wessels_2024} to the infinite-dimensional case on $H$.

For $n\in\mathbb{N}$, $i \in \{1,\dots,n\}$, recall that $A_i^n = \left (\frac{i-1}{n},\frac{i}{n} \right ) \subset \Omega$,  $X^{\mathbf{x}}_n = \sum_{i=1}^n x_i \mathbf{1}_{A^n_i}$, and define 
\begin{equation}\label{Projection_V_n}
	V_n : [0,T]\times H^n \to \mathbb{R},\quad V_n(t,\mathbf{x}) := V\left ( t, X^{\mathbf{x}}_n \right )=V\left ( t, \sum_{i=1}^n x_i \mathbf{1}_{A^n_i} \right ).
\end{equation}
\begin{theorem}\label{theorem:projection_C11}
 Let Assumptions \ref{Assumption_A_maximal_dissipative},  \ref{Assumption_weak_B_condition}, \ref{Assumption_B_compact}, \ref{Assumption_f_sigma_lipschitz}(i)(ii)(iv), and \ref{Assumption_running_terminal_cost}(i)(ii)(iv) be satisfied. Moreover, assume that  $D_{-1} V(t,\cdot):  E_{-1}\to E_{-1}$ is uniformly continuous, uniformly in $t \in [0,T]$.  Then, for all $(t,\mathbf{x}) \in [0,T] \times H^n$, it holds
$V_n(t,\mathbf{x}) = u_n(t,\mathbf{x}).$
\end{theorem}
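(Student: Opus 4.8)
The plan is to show that $V_n(t,\mathbf{x}) := V(t,X^{\mathbf{x}}_n)$ is the unique $B$-continuous viscosity solution of the finite-particle HJB equation \eqref{finite_dimensional_hjb} in the class of functions that are Lipschitz with respect to the $H^n$-norm, and then invoke the uniqueness statement from Proposition \ref{Lipschitz_Time_u_n} to conclude $V_n = u_n$. First I would record the regularity of $V_n$: since $V = U$ is Lipschitz in $E_{-1}$ (Proposition \ref{Value_Function_Lipschitz}, combined with Theorem \ref{theorem:convergence}) and $\|X^{\mathbf{x}}_n - X^{\mathbf{y}}_n\|_{-1} = |\mathbf{x}-\mathbf{y}|_{H^n_{-1}}/\sqrt n$, the function $V_n(t,\cdot)$ is Lipschitz with constant $C/\sqrt n$ in the $H^n_{-1}$-norm, hence in the $H^n$-norm, so it lies in the correct uniqueness class; similarly the joint (uniform on bounded sets) continuity of $V$ transfers to $V_n$, and $B$-continuity of $V_n$ follows from $\mathcal B$-continuity of $V$ together with the fact that $X^{\mathbf{x}}_n \in E_n$ depends on $\mathbf{x}$ through $B$-continuous operations. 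The terminal condition $V_n(T,\mathbf{x}) = \frac1n\sum_{i=1}^n \mathcal U_T(x_i,\mu_{\mathbf x})$ is immediate from $V(T,X) = U_T(X) = \int_\Omega \mathcal U_T(X(\omega),X_{\texttt{\#}}\mathcal L^1)\,\mathrm d\omega$ and $(X^{\mathbf{x}}_n)_{\texttt{\#}}\mathcal L^1 = \mu_{\mathbf x}$.

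The heart of the argument is to pass test functions between the two equations. Given a test function $\psi_n = \varphi_n + h_n(t,|\mathbf{x}|_{H^n})$ touching $V_n - \psi_n$ from above (resp. below) at $(t_0,\mathbf{x}_0) \in (0,T)\times H^n$, one must construct a test function $\psi$ on $(0,T)\times E$ touching $V - \psi$ at $(t_0, X^{\mathbf{x}_0}_n)$ such that the inequality at $(t_0, X^{\mathbf{x}_0}_n)$ coming from $V$ being a $\mathcal B$-viscosity solution of \eqref{lifted_HJB_equation} reduces, via the correspondence identities \eqref{correspondence_second_order_term}–\eqref{correspondence_hamiltonian} already established in the proof of Theorem \ref{theorem:convergence} (read in the reverse direction), exactly to the inequality required for $V_n$ to be a viscosity sub/supersolution of \eqref{finite_dimensional_hjb}. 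The difficulty is that $\psi$ must be radial in the same way and, more importantly, the subspace $E_n \subset E$ must be ``invariant'' enough that a max of $V_n - \psi_n$ over $H^n$ corresponds to a max of $V - \psi$ over all of $E$. This is where the uniform continuity of $D_{-1}V$ is crucial: because $V(t,\cdot)$ is then $C^1$ in $E_{-1}$ with uniformly continuous derivative, one can use $D_{-1}V$ itself to build the perturbation that confines the maximum to $E_n$ — i.e. add to any touching test function a quadratic penalization in the $E_{-1}$-distance to $E_n$ (using the conditional-expectation projection $P_n: E \to E_n$, which is a $B$-contraction) plus a linear correction involving $D_{-1}V(t_0, \cdot)$, and show that the perturbed maximum is still attained at a point of $E_n$, with the extra terms contributing nothing to the PDE in the limit because their derivatives vanish or are controlled by the modulus of continuity of $D_{-1}V$.

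The key steps, in order, would be: (1) verify $V_n$ has the stated regularity, $B$-continuity, growth and terminal condition; (2) prove $V_n$ is a viscosity subsolution of \eqref{finite_dimensional_hjb}: take $\psi_n$ touching $V_n - \psi_n$ from above at $(t_0,\mathbf{x}_0)$, lift it to $\psi$ on $E$ via the recipe $\varphi(t,X) := \varphi_n(t, (\text{coordinates of }P_nX)) + $ penalization, use that $D_{-1}V$ uniformly continuous forces the perturbed maximum of $V - \psi$ onto $E_n$ near $X^{\mathbf{x}_0}_n$, apply the definition of $\mathcal B$-viscosity subsolution to get the inequality for $\psi$ at $(t_0, X^{\mathbf{x}_0}_n)$, and translate it back using \eqref{correspondence_second_order_term}, \eqref{correspondence_unbounded_order_term}, \eqref{correspondence_hamiltonian}; (3) do the symmetric argument for supersolutions; (4) conclude by the uniqueness statement in Proposition \ref{Lipschitz_Time_u_n} that $V_n = u_n$. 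I expect step (2)/(3) — specifically, showing that the penalized maximum genuinely localizes on the finite-dimensional subspace $E_n$ while keeping all test-function derivatives under control, so that the correspondence identities can be applied cleanly — to be the main obstacle; handling the unbounded term $\llangle \mathcal A X, D\varphi\rrangle$ in this localization (one needs $D\varphi(t_0,X^{\mathbf{x}_0}_n) \in \mathcal D(\mathcal A^*)$ and the penalization not to spoil this) and the radial term $h$ requires the same care as in \cite[Theorem 5.4]{swiech_wessels_2024} but is now genuinely infinite-dimensional in the $H$-direction, so the $B$-continuity machinery of \cite[Chapter 3]{fabbri_gozzi_swiech_2017} must be used throughout rather than compactness.
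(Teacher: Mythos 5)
Your overall skeleton (show that $V_n$ solves the $n$-particle HJB in the Lipschitz class and conclude by the uniqueness in Proposition \ref{Lipschitz_Time_u_n}) matches the harder half of the paper's argument, but note first that the paper proves the inequality $V_n\leq u_n$ without any PDE machinery at all: every particle control $\mathbf a(\cdot)$ lifts to the piecewise constant control $a^n(\cdot)=\sum_i a_i(\cdot)\mathbf 1_{A_i^n}$ with \emph{equal} cost (Lemma \ref{lemma_subsolution}), so $u_n\geq V_n$ by taking infima (Lemma \ref{lemma:V_n_leq_u_n}); no differentiability of $V$ is needed there. Only the reverse inequality is proved by showing $V_n$ is a viscosity \emph{super}solution of \eqref{finite_dimensional_hjb} and comparing with $u_n$ (Lemma \ref{proposition:V_n_geq_u_n}). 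Your plan to run a symmetric subsolution argument instead is viable in principle (the weak $B$-condition and the estimate $P_n^\perp\Sigma(P_nX)=0$ give the needed one-sided bounds with either sign), but it is extra work for a statement that follows in three lines.

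The genuine gaps are in the step you yourself flag as the main obstacle. First, the mechanism that confines the penalized extremum near $E_n$ is not a ``linear correction involving $D_{-1}V(t_0,\cdot)$'': the paper uses the structural identity \eqref{eq:DV=DmuV}, $DV(t,X)(\omega)=\partial_\mu\mathcal V(t,X_{\texttt{\#}}\mathcal L^1)(X(\omega))$, which forces $DV(t,X_0)\in E_n$ for $X_0\in E_n$, so that $\llangle DV(t,X_0),P_n^\perp X\rrangle=0$ and the mean value theorem plus uniform continuity of $D_{-1}V$ give $|V(t,X)-V(t,P_nX)|\leq\rho(2\delta)\delta^2$ on $K_\delta$; this is what beats the boundary penalization $\varepsilon\delta^2$. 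If instead you add a genuinely nonzero linear term built from $D_{-1}V(t_0,X_0)$ to the test function, its contributions to the unbounded term and to the Hamiltonian do not vanish as $\varepsilon\to0$ and would not match anything in \eqref{finite_dimensional_hjb}; so without the $E_n$-invariance of $DV$ your localization either fails or contaminates the limit equation. Second, the lifted radial part $h_n(t,|\mathbf x|_{H^n})=h(t,\|P_nX\|)$ is \emph{not} an admissible test function in the standard $\mathcal B$-viscosity framework (it is not radial in $\|X\|$); one must enlarge the test class (Definition \ref{generalized_test_functions}) and re-prove that $V$ is a viscosity solution for it, which requires the new It\^o-type inequality of Lemma \ref{Ito_test_functions} (exploiting $P_n\mathcal A_{(k)}=\mathcal A_{(k)}P_n$ and dissipativity to drop the unbounded term) and Proposition \ref{prop:U_is_viscosity_with_enlarged_test_functions}; your sketch assumes this for free. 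Third, attainment of the penalized extremum is itself nontrivial: $K_\delta$ is bounded only in $E_{-1}$, so the paper adds a coercive radial correction $\tilde\chi$, applies the Ekeland--Lebourg perturbation $at+\llangle\mathcal BZ,X\rrangle$, and then uses weak convergence together with compactness of $\mathcal B$ and the $\tilde\chi$-term to get $X_\delta\to X_0$ strongly before passing to the limit; none of this appears in your proposal, and without it the phrase ``show that the perturbed maximum is still attained at a point of $E_n$'' has no proof.
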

Notice that if the assumptions of Theorem \ref{theorem:convergence} (which are included in Theorem \ref{theorem:projection_C11}) and the assumptions of Theorem \ref{th:C11} are satisfied, $D_{-1}V(=D_{-1}U)$ satisfies the above continuity requirement.
\subsection{$V_n\leq u_n$}
In this subsection, we are going to show the first inequality.

In order to prove it, we first show  the following result.
\begin{lemma}\label{lemma_subsolution}
	Let Assumptions \ref{Assumption_A_maximal_dissipative}, \ref{Assumption_f_sigma_lipschitz}(i)(ii)(iv), and \ref{Assumption_running_terminal_cost}(i)(iv), be satisfied. Then:
    \begin{enumerate}
        \item  $X^{n}(s,\omega) = X^{\mathbf{x}(s)}_n(\omega),$  for a.e. $\omega\in \Omega$, where $X^{n}(\cdot)$ is the solution of the lifted state equation \eqref{Lifted_State_Equation} with initial condition $X^{\mathbf{x}}_n$ and control $a^n(\cdot) := \sum_{i=1}^n a_i(\cdot) \mathbf{1}_{A^n_i}$ and $X^{\mathbf{x}(s)}_n:= \sum_{i=1}^n x_i(s) \mathbf{1}_{A^n_i}$ is the lift of $\mathbf{x}(s)$. 
        \item  $J_n(t,\mathbf{x};\mathbf{a}(\cdot)) = J(t,X^{\mathbf{x}}_n;a^n(\cdot))$, for every control $\mathbf{a}(\cdot)\in \mathcal{A}^n_t$.
    \end{enumerate}
\end{lemma}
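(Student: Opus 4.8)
The plan is to verify the two claims of Lemma~\ref{lemma_subsolution} by exploiting the block-diagonal structure of the lifted problem restricted to the subspace $E_n\subset E$, reducing everything to the finite particle system.

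\textbf{Step 1: Identification of the lifted state trajectory.} For a control $\mathbf a(\cdot)=(a_1(\cdot),\dots,a_n(\cdot))\in\Lambda^n_t$, set $a^n(\cdot):=\sum_{i=1}^n a_i(\cdot)\mathbf 1_{A^n_i}$, which lies in $\Lambda_t$ (it is progressively measurable, takes values in $\mathcal E$, and the $M^2$-norm condition is inherited from that of $\mathbf a(\cdot)$). Let $X^n(\cdot)$ be the mild solution of \eqref{Lifted_State_Equation} with initial datum $X^{\mathbf x}_n$ and control $a^n(\cdot)$, and let $\mathbf x(\cdot)=(x_1(\cdot),\dots,x_n(\cdot))$ be the solution of \eqref{state_equation} with datum $\mathbf x$ and control $\mathbf a(\cdot)$, whose lift is $X^{\mathbf x(\cdot)}_n=\sum_{i=1}^n x_i(\cdot)\mathbf 1_{A^n_i}$. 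First I would observe that $X^{\mathbf x}_n$ is constant on each $A^n_i$; by the definition of the coefficients, $F(X^{\mathbf x}_n,a^n)(\omega)=f(x_i,\mu_{\mathbf x},a_i)$ and $\Sigma(X^{\mathbf x}_n)(\omega)=\sigma(x_i,\mu_{\mathbf x})$ for $\omega\in A^n_i$, since $(X^{\mathbf x}_n)_{\texttt{\#}}\mathcal L^1=\mu_{\mathbf x}$ by \eqref{eq:Xnxpushforw=mux}; here I use that the empirical measure of the $n$ particles equals the pushforward of the step function. Thus $F(X^{\mathbf x}_n,a^n)$ and $\Sigma(X^{\mathbf x}_n)$ are again step functions constant on the $A^n_i$. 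Using Lemma~\ref{lemma:semigroups_coincide} (so that $e^{s\mathcal A}$ acts $\omega$-wise as $e^{sA}$), the mild formulation $X^n(s)=e^{(s-t)\mathcal A}X^{\mathbf x}_n+\int_t^s e^{(s-\tau)\mathcal A}F(X^n(\tau),a^n(\tau))\,\mathrm d\tau+\int_t^s e^{(s-\tau)\mathcal A}\Sigma(X^n(\tau))\,\mathrm dW(\tau)$ evaluated at $\omega\in A^n_i$ becomes exactly the mild formulation of \eqref{state_equation} for $x_i(\cdot)$ — the stochastic integral may be evaluated $\omega$-wise because $W$ does not depend on $\omega$ (this is precisely the argument used in the proof of Lemma~\ref{Infinite_Dimensional_A_Priori}). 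Hence $Y(s):=X^{\mathbf x(s)}_n$ solves the same mild equation as $X^n(s)$ with the same datum and control; by uniqueness of mild solutions (Lemma~\ref{Infinite_Dimensional_A_Priori}), $X^n(s,\omega)=X^{\mathbf x(s)}_n(\omega)$ for a.e.\ $\omega$, which is claim (1). A small point to check: the solution of \eqref{Lifted_State_Equation} remains in $E_n$ for all $s$, i.e.\ $E_n$ is invariant — this follows from the Picard iteration, each iterate staying a step function constant on the $A^n_i$, and closedness of $E_n$ in $E$.

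\textbf{Step 2: Equality of the cost functionals.} Given claim (1), I would plug $X^n(s,\omega)=x_i(s)$ for $\omega\in A^n_i$ into the definitions of $L$ and $U_T$: since $X^n(s)_{\texttt{\#}}\mathcal L^1=\mu_{\mathbf x(s)}$ and $a^n(s,\omega)=a_i(s)$ on $A^n_i$, we get
\begin{align*}
L(X^n(s),a^n(s))&=\int_\Omega l(X^n(s,\omega),\mu_{\mathbf x(s)},a^n(s,\omega))\,\mathrm d\omega=\sum_{i=1}^n\frac1n\,l(x_i(s),\mu_{\mathbf x(s)},a_i(s)),\\
U_T(X^n(T))&=\int_\Omega\mathcal U_T(X^n(T,\omega),\mu_{\mathbf x(T)})\,\mathrm d\omega=\sum_{i=1}^n\frac1n\,\mathcal U_T(x_i(T),\mu_{\mathbf x(T)}),
\end{align*}
using $|A^n_i|=1/n$. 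Taking expectations and integrating in time gives $J(t,X^{\mathbf x}_n;a^n(\cdot))=J_n(t,\mathbf x;\mathbf a(\cdot))$, which is claim (2). For the integrability needed to interchange expectation, time integral and the finite sum, I would invoke the a priori moment bounds of Proposition~\ref{Prop_Finite_A_Priori}/Lemma~\ref{Infinite_Dimensional_A_Priori} together with the growth conditions in Assumptions~\ref{Assumption_f_sigma_lipschitz}(ii) and \ref{Assumption_running_terminal_cost}, exactly as in the well-posedness discussion of the cost functionals.

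\textbf{Main obstacle.} The computations are essentially bookkeeping; the only genuinely delicate point is the uniqueness/invariance argument in Step~1 — namely justifying that one may pass from the $E$-valued mild equation to the $\omega$-wise $H$-valued mild equation, handling the stochastic convolution carefully (measurability in $\omega$, the interchange of $\omega$-evaluation with the Itô integral) and confirming that both $X^n(\cdot)$ and the lift $X^{\mathbf x(\cdot)}_n$ have the $L^2([t,T]\times\Omega';E)$ regularity required for the uniqueness statement of Lemma~\ref{Infinite_Dimensional_A_Priori} to apply. This is the same mechanism already used in the proof of Lemma~\ref{Infinite_Dimensional_A_Priori}, so I would simply refer to it, noting that here the coefficients are, in addition, piecewise constant in $\omega$, which only makes the argument easier.
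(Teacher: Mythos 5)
Your proof is correct and follows essentially the same route as the paper's: identify the lifted trajectory by evaluating the mild formulation $\omega$-wise (via Lemma \ref{lemma:semigroups_coincide} and the fact that $(X^{\mathbf{x}(s)}_n)_{\texttt{\#}}\mathcal{L}^1=\mu_{\mathbf{x}(s)}$), conclude by uniqueness of mild solutions of \eqref{Lifted_State_Equation}, and then compute the costs directly using $|A^n_i|=1/n$. The extra Picard-iteration argument for invariance of $E_n$ is superfluous, since that invariance follows a posteriori from the uniqueness step, but it does no harm.
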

 \begin{proof}
(1) Note that $\mathbb{P}$-a.s., for every $s\in [t,T]$,
\begin{equation}
	X^{n}(s) =e^{\mathcal A (s-t)} \sum_{i=1}^n x_i \mathbf{1}_{A^n_i}  +\int_t^s e^{\mathcal A (s-t-r)} F\left (X^{n}(r),\sum_{i=1}^n a_i(r) \mathbf{1}_{A^n_i} \right) \mathrm{d}r + \int_t^s e^{\mathcal A (s-t-r)} \Sigma(X^{n}(r)) \mathrm{d}W(r);
\end{equation}
therefore, for almost every $\omega\in A^n_i = (\frac{i-1}{n},\frac{i}{n})$, using \eqref{semigroups_coincide}, we have
\begin{small}
\begin{equation}
	X^{n}(s,\omega) = e^{A (s-t)} x_i + \int_t^s e^{A (s-t-r)} f(X^{n}(r,\omega),X^{n}(r)_{\texttt{\#}}\mathcal{L}^1,a_i(r)) \mathrm{d}r + \int_t^s e^{A (s-t-r)}  \sigma(X^{n}(r,\omega),X^{n}(r)_{\texttt{\#}}\mathcal{L}^1) \mathrm{d}W(r).
\end{equation}
\end{small}
On the other hand, as in \eqref{eq:Xnxpushforw=mux}, we have $\mu_{\mathbf{x}(s)} = (X_n^{\mathbf{x}(s)})_{\texttt{\#}}\mathcal{L}^1$, so that $X^{\mathbf{x}(s)}_n(\omega)$, $\omega \in A_i^n$, also satisfies the previous equation. Therefore, by uniqueness of mild solutions, we have the claim.

(2) Using  (1),  we have
\begin{equation}
\begin{split}
	J(t,X^{\mathbf{x}}_n;a^n(\cdot)) &= \mathbb{E} \left [ \int_t^T  L(X^{n}(s), a^n(s)) \mathrm{d}s + U_T(X^{n}(T)) \right ]\\
	&= \mathbb{E} \left [ \int_t^T  L(X^{\mathbf{x}(s)}_n, a^n(s)) \mathrm{d}s + U_T(X^{\mathbf{x}(T)}_n) \right ]\\
	&= \mathbb{E} \left [ \int_t^T \int_{\Omega} l(X_n^{\mathbf{x}(s)}(\omega),(X_n^{\mathbf{x}(s)})_{\texttt{\#}}\mathcal{L}^1,a^n(s,\omega)) \mathrm{d}\omega  \mathrm{d}s + U_T(X^{\mathbf{x}(T)}_n) \right ]\\
	&= \mathbb{E} \left [ \int_t^T  \frac{1}{n} \sum_{i=1}^n l(x_i(s),\mu_{\mathbf{x}(s)},a_i(s))   \mathrm{d}s + \frac{1}{n} \sum_{i=1}^n {\mathcal U}_T(x_i(T),\mu_{{\bf x}(T)}) \right ]= J_n(t,\mathbf{x};\mathbf{a}(\cdot)),
\end{split}
\end{equation}
which concludes the proof.
\end{proof}

 \begin{lemma}\label{lemma:V_n_leq_u_n}
	Let Assumptions \ref{Assumption_A_maximal_dissipative}, \ref{Assumption_weak_B_condition}, \ref{Assumption_B_compact}, \ref{Assumption_f_sigma_lipschitz}(i)(ii)(iv), and \ref{Assumption_running_terminal_cost}(i)(ii)(iv), be satisfied. Then, for all $(t,\mathbf{x}) \in [0,T] \times H^n$, it holds
$V_n(t,\mathbf{x}) \leq u_n(t,\mathbf{x})$.
\end{lemma}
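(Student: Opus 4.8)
The plan is to show that $V_n(t,\mathbf{x}) \leq u_n(t,\mathbf{x})$ by exhibiting, for an arbitrary admissible control $\mathbf{a}(\cdot) \in \Lambda^n_t$ of the finite particle system, an admissible control of the lifted problem started at $X^{\mathbf{x}}_n$ whose cost equals $J_n(t,\mathbf{x};\mathbf{a}(\cdot))$. The natural candidate is the ``step-function lift'' $a^n(\cdot) := \sum_{i=1}^n a_i(\cdot)\mathbf{1}_{A^n_i}$, which is exactly the control appearing in Lemma \ref{lemma_subsolution}. First I would verify that $a^n(\cdot) \in \Lambda_t$ (it is $\mathcal{F}^t_s$-progressively measurable since each $a_i(\cdot)$ is, it takes values in $\mathcal{E} = L^2(\Omega;\tilde\Lambda)$ since each $a_i(s,\omega') \in \tilde\Lambda$, and $\|a^n(\cdot)\|^2_{M^2(t,T;\mathcal{E})} = \frac{1}{n}\|\mathbf{a}(\cdot)\|^2_{M^2(t,T;\Lambda^n)} < \infty$). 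Then, by Lemma \ref{lemma_subsolution}(2), $J(t,X^{\mathbf{x}}_n;a^n(\cdot)) = J_n(t,\mathbf{x};\mathbf{a}(\cdot))$.

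The key chain of inequalities is then
\begin{equation*}
V_n(t,\mathbf{x}) = V(t,X^{\mathbf{x}}_n) = U(t,X^{\mathbf{x}}_n) \leq J(t,X^{\mathbf{x}}_n;a^n(\cdot)) = J_n(t,\mathbf{x};\mathbf{a}(\cdot)),
\end{equation*}
where the second equality $V = U$ is the identification established in Theorem \ref{theorem:convergence} (whose hypotheses are exactly those assumed here), the inequality is the definition of $U$ as an infimum of the cost functional over $\Lambda_t$ (using that $a^n(\cdot) \in \Lambda_t$), and the last equality is Lemma \ref{lemma_subsolution}(2). Taking the infimum over all $\mathbf{a}(\cdot) \in \Lambda^n_t$ on the right-hand side then gives $V_n(t,\mathbf{x}) \leq u_n(t,\mathbf{x})$, which is the claim.

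The only real content is in the already-established Lemma \ref{lemma_subsolution}; the present lemma is essentially a corollary obtained by combining it with the identification $V = U$ from Theorem \ref{theorem:convergence}. The main (and only minor) obstacle is bookkeeping: one must ensure that the assumptions invoked suffice to apply both Theorem \ref{theorem:convergence} (needing Assumptions \ref{Assumption_A_maximal_dissipative}, \ref{Assumption_weak_B_condition}, \ref{Assumption_B_compact}, \ref{Assumption_f_sigma_lipschitz}(i)(ii)(iv), \ref{Assumption_running_terminal_cost}(i)(ii)(iv), all present) and Lemma \ref{lemma_subsolution} (needing a subset of these), and that $X^{\mathbf{x}}_n \in E_n \subset E$ so that the lifted value function and cost functional are genuinely evaluated at an admissible point. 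No compactness, viscosity-solution, or PDE arguments are needed for this direction --- those are reserved for the reverse inequality $V_n \geq u_n$, which is the genuinely hard part of Theorem \ref{theorem:projection_C11} and relies on the $C^{1,1}$-regularity and the verification-type argument via the lifted HJB equation.
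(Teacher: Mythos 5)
Your proof is correct and follows essentially the same route as the paper: the paper also combines Lemma \ref{lemma_subsolution}(2) with the identification $V=U$ from Theorem \ref{theorem:convergence}, noting that the lifted controls $a^n(\cdot)$ form an admissible subclass of $\Lambda_t$, and takes the infimum over $\mathbf{a}(\cdot)\in\Lambda^n_t$. Your additional check that $a^n(\cdot)\in\Lambda_t$ (measurability and $\|a^n(\cdot)\|^2_{M^2(t,T;\mathcal{E})}=\frac1n\|\mathbf{a}(\cdot)\|^2_{M^2(t,T;\Lambda^n)}$) is a harmless explicit verification of a point the paper leaves implicit.
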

\begin{proof}
We take $\inf_{\mathbf{a}(\cdot)\in \Lambda^n_t}$ in Lemma \ref{lemma_subsolution}, to obtain
\[
u_n(t,\mathbf{x})=\inf_{\mathbf{a}(\cdot)\in \Lambda^n_t} J_n(t,\mathbf{x};\mathbf{a}(\cdot))=\inf_{\mathbf{a}(\cdot)\in \Lambda^n_t} J(t,X^{\mathbf{x}}_n;a^n(\cdot)) \geq \inf_{a(\cdot)\in \Lambda_t} J(t,X^{\mathbf{x}}_n,a(\cdot))= V(t,X^{\mathbf{x}}_n)=V_n(t,\mathbf{x}).
\]
\end{proof}
We remark that the proof of the inequality $V_n\leq u_n$ does not require any differentiability of $V$.

\subsection{$V_n \geq u_n$}

Next, let us show that $U$ is a $\mathcal B$-continuous viscosity solution of equation \eqref{lifted_HJB_equation} even when using a larger class of test functions than what is usually used in  standard theory \cite[Chapter 3]{fabbri_gozzi_swiech_2017} (see also Appendix \ref{app:viscosity_hilbert}). This larger class of test functions will be needed in the proof of Lemma \ref{proposition:V_n_geq_u_n}. In order to define this class of test functions, let us introduce some notation. 
\begin{notation}\label{not:notations_Pn}
Let $f_i:= \sqrt{n} \mathbf{1}_{A^n_i}$, $i=1,\dots,n$, and extend this orthonormal set in $L^2(\Omega)$ to an orthonormal basis denoted by $(f_i)_{i\in \mathbb{N}}$. Let $F_n \subset L^2(\Omega)$ be the subspace spanned by $f_i$, $i=1,\dots n$, and let $E_n = F_n \otimes H \subset E$. Let $E_n^{\perp}$ denote its orthogonal complement, i.e., $E=E_n\oplus E_n^{\perp}$. Moreover, let $P_n$ denote the orthogonal projection onto $E_n$ and let $P_n^{\perp} := I - P_n$. 

Notice that, each element $X\in E$ can be written as
    $X = P_n X + P_n^{\perp} X = \sum_{i=1}^n y_i f_i + \sum_{i=n+1}^{\infty} y_i f_i = \sum_{i=1}^n \sum_{k=1}^\infty y_i^k f_i e_k + \sum_{i=n+1}^{\infty} \sum_{k=1}^\infty y_i^k f_i e_k,$
where $y_i = (y_i^1, y_i^2, \dots) \in H$, $i\geq 1$ and   $(e_k)_{k\in \mathbb N}$ is an orthonormal basis of $H$. Let us denote $\mathbf{y}=(y_1,\dots,y_n) \in H^n$ and $\bar{\mathbf{y}} = (y_i)_{i\geq n+1}$.  In particular, for $\mathbf{x} \in H^n$, we have $E_n \ni X^{\mathbf{x}}_n = \sum_{i=1}^n x_i \mathbf{1}_{A^n_i}=\sum_{i=1}^n \frac 1 {\sqrt{n}}  x_i f_i$, so the coefficients $\mathbf{y}$ in the basis representation of $X^{\mathbf{x}}_n$  are given by $\mathbf{y}= \mathbf{x}/\sqrt{n}$ and $\bar{\mathbf{y}}=\mathbf{0}$. 
\end{notation}
\begin{remark}
   We note that if $L\in L(H)$, and we define $\mathcal L\in L(E)$, $(\mathcal LX)(\omega):=LX(\omega)$, we have
    \begin{align}
        &P_n \mathcal LX=\mathcal LP_nX=P_n \mathcal LP_nX\label{eq:PnL}\\
        &\mathcal LP_n^\perp X=\mathcal LP_n^\perp X=P_n^\perp \mathcal LP_n^\perp X.\label{eq:PnperpL}
    \end{align}
We only show \eqref{eq:PnL} as the proof of \eqref{eq:PnperpL} is similar.
    \begin{equation}
    \begin{split}
        &(P_n \mathcal LX)(\omega) = \left (P_n \left ( \sum_{i=1}^n  L y_i f_i + \sum_{i=n+1}^{\infty}L y_i f_i \right ) \right )(\omega)= \left ( \sum_{i=1}^n \mathcal L y_i f_i \right ) (\omega) = \mathcal L ((P_n X)(\omega)) = (\mathcal L P_n X)(\omega).
    \end{split}
    \end{equation}
  where in the first equality we used the definition of $\mathcal L$ to have
    \begin{equation}
    \begin{split}
        (\mathcal LX)(\omega) &= \left ( \mathcal L \left ( \sum_{i=1}^n y_i f_i + \sum_{i=n+1}^{\infty} y_i f_i \right ) \right )(\omega) = L \left ( \sum_{i=1}^n y_i f_i(\omega) + \sum_{i=n+1}^{\infty} y_i f_i(\omega) \right )\\
        &= \sum_{i=1}^n L y_i f_i(\omega) + \sum_{i=n+1}^{\infty} L y_i f_i(\omega) = \left ( \sum_{i=1}^n L y_i f_i + \sum_{i=n+1}^{\infty} L y_i f_i \right )(\omega).
    \end{split}
    \end{equation}
Now obviously $\mathcal LP_nX=\mathcal LP_n P_n X=P_n\mathcal LP_nX$.
\end{remark}


We will work with test functions of the form $g(t,\|P_n X\|)$; however, these are not radial functions of $\|X\|$ and, therefore, they  do not fit into the usual class of test functions used in  \cite[Chapter 3]{fabbri_gozzi_swiech_2017} (recalled in Definition \ref{def:test_functions_hilbert}). Hence, we need to enlarge the set of the test functions considered there in the following way.
\begin{definition}\label{generalized_test_functions}
    A function $\psi$ is a generalized test function if $\psi = \varphi + h(t,\|\cdot \|) + g(t,\|P_n \cdot\|)$, where $\varphi \in C^{1,2}((0,T)\times E)$ is locally bounded and is such that $\varphi$ is $B$-lower semicontinuous,  $\partial_t \varphi$, $A^* D\varphi$, $D\varphi$, $D^2\varphi$ are uniformly continuous on $(0,T)\times E$; $h,g\in C^{1,2}((0,T)\times \mathbb{R})$ are radial functions.
\end{definition}
Next, we prove a version of It\^o's inequality for these types of functions, which will allow us to drop the term $\llangle \mathcal{A} X, Dg(s,\|P_n X\|) \rrangle$ in the definition of $\mathcal B$-continuous viscosity solution, in a similar way to what is done for radial functions $h$ in \cite[Chapter 3]{fabbri_gozzi_swiech_2017}.
\begin{lemma}\label{Ito_test_functions}
    Let Assumptions \ref{Assumption_A_maximal_dissipative} and \ref{Assumption_f_sigma_lipschitz}(i)(ii)(iv) be satisfied. Let $G \in C^{1,2}([t,T] \times E)$ be of the form $G(s,X) = g(s,\| P_n X \|)$, for some $g(s,c)\in C^{1,2}([t,T]\times \mathbb{R})$, where $g(s,\cdot)$ is even and non-decreasing on $[0,\infty)$. Moreover, suppose that there are constants $C\geq 0$ and $N\geq 0$ such that
    \begin{equation}
        |G(s,X)| + \| DG(s,X) \| + |G_t(s,X)| + \| D^2 G(s,X) \|_{L(E)} \leq C( 1+\|X\|)^N
    \end{equation}
    for all $X\in E$ and $s\in [t,T]$. Let $X(\cdot)$ be the solution of equation \eqref{Lifted_State_Equation} with initial condition $(t,X)\in [0,T)\times E$ and bounded $a(\cdot)$, and let $\tau$ be an $\mathcal{F}^t_s$-stopping time. Then, $\mathbb{P}$-almost surely,
    \begin{equation}\label{generalized_Ito}
    \begin{split}
        G(s \wedge \tau, X(s \wedge \tau)) &\leq G(t,X) + \int_t^{s\wedge \tau} \bigg [ G_t(s',X(s')) + \llangle F(X(s'),a(s')), DG(s',X(s')) \rrangle\\
        &\quad +\frac12 \text{Tr} \left [ \left ( \Sigma(X(s'),a(s')) \right ) \left ( \Sigma(X(s'),a(s')) \right )^* D^2 G(s',X(s')) \right ] \bigg ] \mathrm{d}s'\\
        &\quad + \int_t^{s\wedge \tau} \llangle DG(s',X(s')),\Sigma(X(s'),a(s')) \mathrm{d}W(s') \rrangle.
    \end{split}
    \end{equation}
\end{lemma}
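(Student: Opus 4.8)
The plan is to mimic the treatment of radial test functions $h(t,\|\cdot\|)$ in \cite[Chapter 3]{fabbri_gozzi_swiech_2017}. Two structural facts make the argument work: (a) $\mathcal{A}$, and its Yosida approximation, act pointwise in $\omega$, so by the computation behind Lemma \ref{lemma:semigroups_coincide} they are operators of the form $\mathcal{L}$ considered in the Remark following Notation \ref{not:notations_Pn}, hence commute with $P_n$ in the sense of \eqref{eq:PnL}; and (b) $DG(s,X) = g_c(s,\|P_nX\|)\,P_nX/\|P_nX\|$ with $g_c(s,\cdot)\ge 0$ on $[0,\infty)$, since $g(s,\cdot)$ is even and non-decreasing there (here $g_c$ is the partial derivative of $g(s,c)$ in $c$; note $g_c(s,0)=0$, so $DG=0$ whenever $P_nX=0$, and $G\in C^{1,2}$ by assumption, so there is no genuine singularity). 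Because $\mathcal{A}$ is dissipative, (a) and (b) force the term carrying the unbounded operator to be non-positive, so it can be dropped to yield the one-sided inequality \eqref{generalized_Ito}.

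First I would introduce the Yosida approximation $\mathcal{A}_{(k)} = k\mathcal{A}(k\mathcal{I}-\mathcal{A})^{-1}\in L(E)$, which is bounded and dissipative, and let $X_k(\cdot)$ be the solution of \eqref{Lifted_State_Equation} with $\mathcal{A}$ replaced by $\mathcal{A}_{(k)}$, with the same initial datum $(t,X)$ and the same bounded control $a(\cdot)$. Since $\mathcal{A}_{(k)}$ is bounded, the equation for $X_k(\cdot)$ is a standard Hilbert-space SDE and It\^o's formula (see Remark \ref{rem:Itoformulas}) applies to $G(\cdot,X_k(\cdot))$:
\begin{equation*}
\begin{split}
G(s\wedge\tau, X_k(s\wedge\tau)) &= G(t,X) + \int_t^{s\wedge\tau} \Big[ G_t(s',X_k(s')) + \llangle \mathcal{A}_{(k)}X_k(s'), DG(s',X_k(s')) \rrangle \\
&\quad + \llangle F(X_k(s'),a(s')), DG(s',X_k(s')) \rrangle + \tfrac12 \text{Tr}\big[ \Sigma(X_k(s'))\Sigma(X_k(s'))^{*} D^2G(s',X_k(s')) \big] \Big] \mathrm{d}s' \\
&\quad + \int_t^{s\wedge\tau} \llangle DG(s',X_k(s')), \Sigma(X_k(s'))\,\mathrm{d}W(s') \rrangle.
\end{split}
\end{equation*}
By (b) and \eqref{eq:PnL}, the term with $\mathcal{A}_{(k)}$ equals $\dfrac{g_c(s',\|P_nX_k(s')\|)}{\|P_nX_k(s')\|}\,\llangle \mathcal{A}_{(k)}P_nX_k(s'), P_nX_k(s')\rrangle$, which is $\le 0$ by the dissipativity of $\mathcal{A}_{(k)}$ and $g_c\ge 0$. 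Dropping it gives \eqref{generalized_Ito} with $X_k$ in place of $X$.

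Finally I would let $k\to\infty$. One has $\mathbb{E}[\sup_{s\in[t,T]}\|X_k(s)-X(s)\|^2]\to 0$ — this is standard for Hilbert-space SDEs with (here, globally Lipschitz) coefficients and bounded control, using that $e^{\cdot\mathcal{A}_{(k)}}\to e^{\cdot\mathcal{A}}$ strongly and uniformly on compact time intervals together with the Burkholder--Davis--Gundy and Gr\"onwall inequalities; moreover $\sup_k\mathbb{E}[\sup_{s\in[t,T]}\|X_k(s)\|^m]<\infty$ for all $m\ge 1$, and likewise for $X$, by Lemma \ref{Infinite_Dimensional_A_Priori} since $a(\cdot)$ is bounded. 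Combining this with the assumed polynomial growth of $G,DG,G_t,D^2G$ and uniform integrability, one passes to the limit: along a subsequence $G(s\wedge\tau,X_k(s\wedge\tau))\to G(s\wedge\tau,X(s\wedge\tau))$ a.s.\ and in $L^1(\Omega')$; the three drift integrals converge by dominated convergence and the continuity of $G_t,DG,D^2G$; and the stochastic integral converges in $L^2(\Omega')$, using the It\^o isometry together with the Lipschitz continuity $\|\Sigma(X_k)-\Sigma(X)\|_{L_2(\Xi,E)}\le C\|X_k-X\|$ (Section \ref{subsec:properties_lifted_coeff}). This establishes \eqref{generalized_Ito}. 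The main obstacle is exactly this limit passage: one must secure uniform-in-$k$ integrability of every term — which the polynomial growth of $G$ and the $k$-uniform moment bounds of Lemma \ref{Infinite_Dimensional_A_Priori} supply — and verify that the argument runs entirely parallel to the radial case in \cite[Chapter 3]{fabbri_gozzi_swiech_2017}.
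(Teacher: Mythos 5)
Your proposal is correct and follows essentially the same route as the paper: compute $DG(s,X)=g_c(s,\|P_nX\|)\,P_nX/\|P_nX\|$ with $g_c\ge 0$, use the commutation \eqref{eq:PnL} of the Yosida approximation $\mathcal{A}_{(k)}$ with $P_n$ together with dissipativity to see that the unbounded-operator term is nonpositive, apply It\^o's formula to the $\mathcal{A}_{(k)}$-approximated equation, drop that term, and pass to the limit $k\to\infty$. The only difference is presentational: the paper delegates the limit passage to the argument of \cite[Proposition 1.166]{fabbri_gozzi_swiech_2017}, while you spell out the convergence of $X_k$ and the uniform integrability, which is exactly the content of that cited proof.
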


\begin{proof}
    Inequality \eqref{generalized_Ito} can formally be derived by applying It\^o's formula and dropping the term involving the unbounded operator. Thus, we have to justify why we can drop that term. To this end, first we observe that $DG(s,X) = \frac{\partial g}{\partial c}(s,\|P_n X\|) \frac{P_n X}{\|P_n X\|}$ and, since $g(s,\cdot)$ is non-decreasing on $[0,+\infty)$, $\frac{\partial g}{\partial c}(s,c) \geq 0$. Let $\mathcal{A}_{(k)}$ be the Yosida approximations of $\mathcal{A}$, as introduced in the proof of Lemma \ref{lemma:semigroups_coincide}. Note that for every $s\in [t,T]$ and $X\in E$, we have
    \begin{align}
        \llangle \mathcal{A}_{(k)} X, DG(s,X) \rrangle &= \frac{\partial g}{\partial c}(s,\|P_n X\|) \frac{1}{\|P_n X\|} \llangle \mathcal{A}_{(k)} X, P_n X \rrangle =\frac{\partial g}{\partial c}(s,\|P_n X\|) \frac{1}{\|P_n X\|} \llangle P_n \mathcal{A}_{(k)} X, P_n X \rrangle\\
        &=\frac{\partial g}{\partial c}(s,\|P_n X\|) \frac{1}{\|P_n X\|} \llangle  \mathcal{A}_{(k)} P_nX, P_n X \rrangle \leq 0,
    \end{align}
   where we used \eqref{eq:PnL} (with $\mathcal L=\mathcal{A}_{(k)}$) and the dissipativity of $\mathcal{A}_{(k)}$ (which follows since $\mathcal{A}$ is dissipative). 
    
    Hence, we can proceed as in the proof of \cite[Proposition 1.166]{fabbri_gozzi_swiech_2017}, by dropping this term when applying It\^o's formula to the approximated equation, in which the unbounded operator $\mathcal{A}$ is replaced by its Yosida approximation $\mathcal{A}_{(k)}$, and then passing to the limit for $k\to\infty$.
\end{proof}
Thanks to the previous lemma, we can consider the following stronger definition of viscosity solution in place of the standard one on Hilbert spaces \cite[Definition 3.35]{fabbri_gozzi_swiech_2017} (recalled in Definition \ref{def:viscosity_solution_hilbert}), where we use the enlarged  class of test functions in Definition \ref{generalized_test_functions} (in place of the standard ones in \cite[Definition 3.32]{fabbri_gozzi_swiech_2017}, recalled in Definition \ref{def:test_functions_hilbert}).
\begin{definition}\label{def:modified_viscosity}
\begin{enumerate}[label=(\roman*)]
\item A locally bounded $\mathcal B$-upper semicontinuous function $v:(0,T]\times E\to\mathbb{R}$ is a $\mathcal B$-continuous viscosity subsolution of \eqref{lifted_HJB_equation} if  $v(T,X)\leq U_T(X)$ for $X\in E$ and, whenever $v-\psi$ has a local maximum at $(t,X) \in (0,T)\times E$ for a test function $\psi$ as in Definition \ref{generalized_test_functions}, then 
    \begin{align}
	   \partial_t \psi(t,X)& + \frac12 \text{Tr}(\Sigma(X)(\Sigma(X))^{\ast} D^2\psi(t,X)) +  \llangle X,\mathcal{A}^*D\varphi(t,X) \rrangle + \tilde{\mathcal{H}}(X, D\psi(t,X)) \geq 0.
    \end{align}
\item A locally bounded $\mathcal B$-lower semicontinuous function $v:(0,T]\times H\to\mathbb{R}$ is a $\mathcal B$-continuous viscosity subsolution of \eqref{lifted_HJB_equation} if $v(T,X)\geq U_T(X)$ for $X\in E$ and, whenever $v+\psi$ has a local minimum at $(t,X) \in (0,T)\times E$ for a test function $\psi$ as in Definition \ref{generalized_test_functions}, then 
  \begin{align}
	   -\partial_t \psi(t,X)& - \frac12 \text{Tr}(\Sigma(X)(\Sigma(X))^{\ast} D^2\psi(t,X) - \llangle X,\mathcal{A}^*D\varphi(t,X) \rrangle + \tilde{\mathcal{H}}(X, -D\psi(t,X)) \leq 0.
    \end{align}
\item 
A function $v:(0,T]\times E\to\mathbb{R}$ is a $\mathcal B$-continuous viscosity solution of \eqref{lifted_HJB_equation} if it is both a $\mathcal B$-continuous viscosity subsolution of \eqref{lifted_HJB_equation}  and a $\mathcal B$-continuous viscosity supersolution of \eqref{lifted_HJB_equation}.
\end{enumerate}
\end{definition}
\begin{prop}\label{prop:U_is_viscosity_with_enlarged_test_functions}
    Let Assumptions \ref{Assumption_A_maximal_dissipative}, \ref{Assumption_weak_B_condition}, \ref{Assumption_B_compact}, \ref{Assumption_f_sigma_lipschitz}(i)(ii)(iv), and \ref{Assumption_running_terminal_cost}(i)(ii)(iv) be satisfied. Then, the value function $U(=V)$ defined in \eqref{Lifted_Value_Function} is a $\mathcal B$-continuous viscosity solution of  \eqref{lifted_HJB_equation} in the sense of Definition \ref{def:modified_viscosity}.
\end{prop}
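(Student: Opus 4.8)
The plan is to adapt the classical argument showing that a value function is a $\mathcal B$-continuous viscosity solution of its Hamilton--Jacobi--Bellman equation — exactly the scheme underlying Proposition \ref{Value_Function_Lipschitz} and \cite[Theorem 3.66]{fabbri_gozzi_swiech_2017} — the only genuinely new ingredient being the treatment of the extra term $g(t,\|P_n\cdot\|)$ allowed by Definition \ref{generalized_test_functions}. Throughout I would work with $U$ written as the infimum over the bounded controls $\Lambda_t^K$ (Proposition \ref{Value_Function_Lipschitz}), use the dynamic programming principle for the lifted problem, and recall that $U$ is continuous, that $F,\Sigma,L,\tilde{\mathcal H}$ satisfy the continuity and growth estimates of Section \ref{subsec:properties_lifted_coeff}, and that the solutions of \eqref{Lifted_State_Equation} obey the a priori bounds of Lemma \ref{Infinite_Dimensional_A_Priori} uniformly over $\Lambda_t^K$.

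For the subsolution property I would argue as follows. Suppose $U-\psi$ has a strict local maximum at $(t,X)\in(0,T)\times E$ with $\psi=\varphi+h(t,\|\cdot\|)+g(t,\|P_n\cdot\|)$ and $U(t,X)=\psi(t,X)$. Freezing a time--constant control $a(s)\equiv Q$ for an arbitrary $Q\in\mathcal E$ with $\|Q\|_\Lambda\le K$, the dynamic programming inequality together with local maximality gives $\psi(t,X)\le\mathbb E[\int_t^{t+\delta}L(X(s),Q)\,\mathrm ds+\psi(t+\delta,X(t+\delta))]$ for small $\delta>0$. I would then apply It\^o's formula \cite[Proposition 1.164]{fabbri_gozzi_swiech_2017} to $\varphi$ (rewriting $\llangle\mathcal AX,D\varphi\rrangle=\llangle X,\mathcal A^*D\varphi\rrangle$) and the It\^o \emph{inequality} to the two radial pieces — \cite[Proposition 1.166]{fabbri_gozzi_swiech_2017} for $h(t,\|\cdot\|)$ and, crucially, Lemma \ref{Ito_test_functions} for $g(t,\|P_n\cdot\|)$ — which discards the nonpositive contributions $\llangle\mathcal AX,Dh\rrangle$ and $\llangle\mathcal AX,Dg\rrangle$. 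Dividing by $\delta$, letting $\delta\to0$ (so that $\mathbb E[\sup_{s\in[t,t+\delta]}\|X(s)-X\|]\to0$ uniformly in $Q$ by Lemma \ref{Infinite_Dimensional_A_Priori}), taking the infimum over such $Q$, and invoking the representation \eqref{eq:aaa1} of the lifted Hamiltonian (together with Corollary \ref{lemma_hamiltonian} to remove the restriction $\|Q\|_\Lambda\le K$ once $K$ dominates $\|D\psi(t,X)\|$) yields the required subsolution inequality.

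For the supersolution property the argument is dual. If $U+\psi$ has a strict local minimum at $(t,X)$, for each $\varepsilon>0$ I would pick an $\varepsilon$-optimal control $a^\varepsilon(\cdot)\in\Lambda_t^K$, so that $\varepsilon\delta\ge\mathbb E[\int_t^{t+\delta}L(X^\varepsilon(s),a^\varepsilon(s))\,\mathrm ds]+\mathbb E[\psi(t,X)-\psi(t+\delta,X^\varepsilon(t+\delta))]$; here one uses the \emph{reversed} It\^o inequality for $-h(t,\|\cdot\|)$ and $-g(t,\|P_n\cdot\|)$, obtained by negating \cite[Proposition 1.166]{fabbri_gozzi_swiech_2017} and Lemma \ref{Ito_test_functions} (legitimate precisely because $\llangle\mathcal AX,Dh\rrangle\le0$ and $\llangle\mathcal AX,Dg\rrangle\le0$, so that now the discarded terms are nonnegative), together with the pointwise bound $L(Y,Q)+\llangle F(Y,Q),-D\psi\rrangle\ge\tilde{\mathcal H}(Y,-D\psi)$. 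Letting $\delta\to0$ — again using the uniform estimates of Lemma \ref{Infinite_Dimensional_A_Priori} and the uniform continuity near $X$ of $\tilde{\mathcal H}$, $\partial_t\psi$, $\mathcal A^*D\varphi$, $\Sigma$ and $D^2\psi$ — and then $\varepsilon\to0$ gives the supersolution inequality of Definition \ref{def:modified_viscosity}(ii); the terminal condition $U(T,\cdot)=U_T$ is immediate from the definition of $U$.

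The main obstacle is the passage to the limit $\delta\to0$ in the supersolution step: the $\varepsilon$-optimal controls $a^\varepsilon(\cdot)$ vary with $\delta$, so one must genuinely exploit that the a priori bounds of Lemma \ref{Infinite_Dimensional_A_Priori} and the moduli of continuity of the data are \emph{uniform} over $\Lambda_t^K$. Additional care is required in the sign bookkeeping when dropping $\llangle\mathcal AX,Dh\rrangle$ and $\llangle\mathcal AX,Dg\rrangle$ (nonpositive when testing $U-\psi$, nonnegative when testing $U+\psi$, hence producing an upper bound in the subsolution case and a lower bound in the supersolution case), and in checking that Lemma \ref{Ito_test_functions} is applicable on the relevant bounded sets with controls in $\Lambda_t^K$. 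Once Lemma \ref{Ito_test_functions} is available, however, the whole argument is a routine repetition of the standard one — which is exactly why that lemma was proved.
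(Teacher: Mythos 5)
Your proposal is correct and follows essentially the same route as the paper, whose proof simply declares the result a straightforward modification of \cite[Theorem 3.66]{fabbri_gozzi_swiech_2017} — i.e.\ the dynamic programming argument with controls restricted to $\Lambda_t^K$ (so the state has all moments) — using Lemma \ref{Ito_test_functions} in place of \cite[Proposition 1.166]{fabbri_gozzi_swiech_2017} to discard the unbounded term coming from $g(t,\|P_n\cdot\|)$; your spelled-out sub- and supersolution steps are exactly this modification. (Note only that your ``reversed'' It\^o inequality for $-h$, $-g$ is literally equivalent to the original one-sided inequality for $h$, $g$ — the convention $U+\psi$ in Definition \ref{def:modified_viscosity} is designed so that the same upper bound on increments of $\psi$ serves in both cases — so no separate reversed statement is needed.)
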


\begin{proof}
    The proof is a straightforward modification of the one of \cite[Theorem 3.66]{fabbri_gozzi_swiech_2017} using Lemma \ref{Ito_test_functions} in place  of \cite[Proposition 1.166]{fabbri_gozzi_swiech_2017}. We remark that in the proof we can assume that the controls $a(\cdot)$ are bounded {(recall Proposition \ref{Value_Function_Lipschitz})}
    and hence the solutions of the state equation have moments of any order.
\end{proof}

\begin{lemma}\label{proposition:V_n_geq_u_n}
	 Let Assumptions \ref{Assumption_A_maximal_dissipative}, \ref{Assumption_weak_B_condition}, \ref{Assumption_B_compact}, \ref{Assumption_f_sigma_lipschitz}(i)(ii)(iv), and \ref{Assumption_running_terminal_cost}(i)(ii)(iv) be satisfied. Moreover, assume that  $D_{-1} V(t,\cdot):  E_{-1}\to E_{-1}$ is uniformly continuous, uniformly in $t \in [0,T]$.
     Then, for all $(t,\mathbf{x}) \in [0,T] \times H^n$, it holds
$V_n(t,\mathbf{x}) \geq u_n(t,\mathbf{x}).$
\end{lemma}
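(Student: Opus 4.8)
The plan is to prove the reverse inequality $V_n(t,\mathbf{x})\ge u_n(t,\mathbf{x})$ by showing that the projected function $V_n$ is a $B$-continuous viscosity subsolution of the finite particle HJB equation \eqref{finite_dimensional_hjb}, and then invoking the comparison principle (Proposition \ref{Lipschitz_Time_u_n} and \cite[Theorem 3.66]{fabbri_gozzi_swiech_2017}) against $u_n$, which is the unique such solution in the Lipschitz class. Since $V_n$ inherits the Lipschitz bound in the $H^n_{-1}$-norm (hence the $H^n$-norm) from the Lipschitz continuity of $V$ in $E_{-1}$ established in Proposition \ref{Value_Function_Lipschitz}, and $V_n(T,\mathbf{x}) = V(T,X^{\mathbf{x}}_n)=U_T(X^{\mathbf{x}}_n)=\frac1n\sum_i\mathcal{U}_T(x_i,\mu_{\mathbf{x}})$ by Lemma \ref{lemma_subsolution}, the heart of the matter is the subsolution inequality at interior points.

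First I would take a test function $\phi$ for $V_n$ at a point $(t_0,\mathbf{x}_0)\in(0,T)\times H^n$, where $V_n-\phi$ has a (say strict, local) maximum; by the standard structure on $H^n$ one can take $\phi$ of the form $\phi=\varphi_0+h_0(t,|\cdot|_{H^n})$. The idea is to lift this to a test function for $V$ on $E$: one wants to build $\psi$ on $(0,T)\times E$ such that $V-\psi$ has a maximum at $(t_0,X^{\mathbf{x}_0}_n)$, matching $\phi$ on the subspace $E_n$. The natural candidate combines $\varphi$ (a smooth extension of $\varphi_0$ composed with $P_n$), the radial term $h(t,\|\cdot\|)$, and crucially a term $g(t,\|P_n^\perp\cdot\|^2)$ (or $g(t,\|P_n\cdot\|)$) that penalizes the off-subspace directions and thereby localizes the maximum of $V-\psi$ to $E_n$. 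This is precisely why the enlarged class of generalized test functions of Definition \ref{generalized_test_functions} and the Itô inequality of Lemma \ref{Ito_test_functions} were set up: the term involving $P_n$ does not fit the standard radial framework, but Lemma \ref{Ito_test_functions} shows the unbounded drift term $\llangle\mathcal{A}X,Dg\rrangle$ can still be dropped (it has the right sign), so by Proposition \ref{prop:U_is_viscosity_with_enlarged_test_functions} the viscosity inequality for $V=U$ holds against such $\psi$. Then I would translate the resulting inequality for $\psi$ at $(t_0,X^{\mathbf{x}_0}_n)$ back into an inequality for $\phi$ at $(t_0,\mathbf{x}_0)$, using exactly the correspondence identities \eqref{correspondence_second_order_term}, \eqref{correspondence_unbounded_order_term}, \eqref{correspondence_hamiltonian} already derived in the proof of Theorem \ref{theorem:convergence} — now read in the reverse direction (from $E$ to $H^n$) — together with the representation $\tilde{\mathcal H}(X,P)=\int_\Omega\mathcal H(X(\omega),X_\#\mathcal L^1,P(\omega))\,\mathrm{d}\omega$ which for $X=X^{\mathbf{x}_0}_n$ reduces to $\frac1n\sum_i\mathcal H(x_i^0,\mu_{\mathbf{x}_0},\cdot)$.

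The main obstacle, and where I would expect the uniform continuity hypothesis on $D_{-1}V$ to be used, is the gradient term: a test function for $V_n$ on $H^n$ only controls $V$ in the directions of $E_n$, so the lifted test function's gradient $D\psi(t_0,X^{\mathbf{x}_0}_n)$ need not equal $D_{-1}V$ there in the $E_n^\perp$ directions — indeed $V_n-\phi$ having a max on $E_n$ says nothing about the behaviour of $V$ transversally. The penalization term $g(t,\|P_n^\perp\cdot\|)$ forces the lifted maximum to be attained on $E_n$ only if we know that $V$ cannot increase too fast off $E_n$; the uniform continuity (in fact, near-differentiability) of $D_{-1}V$ guarantees, via a second-order Taylor-type estimate with the $\mathcal B$-norm, that a small quadratic penalty suffices, so that the super/sub-differential of $V$ at $X^{\mathbf{x}_0}_n$ computed on all of $E$ coincides (up to vanishing error) with $D_{-1}V(t_0,X^{\mathbf{x}_0}_n)$, whose restriction to $E_n$ is precisely what the $H^n$-gradient of $\phi$ sees. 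I would make this rigorous by working with the sup-convolution or with an explicit doubling-of-variables argument along the lines of \cite{swiech_wessels_2024}, carefully handling the $\mathcal B$-continuity and the compactness of $\mathcal B$ (Assumption \ref{Assumption_B_compact}) to extract convergent maximizing sequences. Once the subsolution property of $V_n$ is established, comparison gives $V_n\le u_n$ trivially and the already-proven Lemma \ref{lemma:V_n_leq_u_n} combined with this yields $V_n\ge u_n$ — more precisely, since $V_n$ is a subsolution of the same equation for which $u_n$ is the unique solution in the Lipschitz class and $V_n$ is Lipschitz, the comparison principle forces $V_n\le u_n$ is not what we want; rather we show $V_n$ is a \emph{supersolution} (the sign of the penalization is chosen accordingly) so that comparison gives $V_n\ge u_n$, completing the proof together with Lemma \ref{lemma:V_n_leq_u_n}.
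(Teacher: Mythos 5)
Your overall architecture is the paper's: prove that $V_n$ is a $B$-continuous viscosity supersolution of \eqref{finite_dimensional_hjb} by lifting the finite-dimensional test function through $P_n$ (this is exactly what the generalized test functions of Definition \ref{generalized_test_functions}, the It\^o inequality of Lemma \ref{Ito_test_functions} and Proposition \ref{prop:U_is_viscosity_with_enlarged_test_functions} were built for), penalize the $E_n^{\perp}$ directions so that the extremum of the perturbed lifted functional is forced near $E_n$, pass to the limit using the correspondence identities, and conclude by comparison with $u_n$; the sub/supersolution sign slip you correct yourself at the end, so I set it aside.

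The genuine gap is in the localization step. You assert that the uniform continuity of $D_{-1}V(t,\cdot)$ alone, ``via a second-order Taylor-type estimate,'' makes a small quadratic penalty in $\|P_n^{\perp}(X-X_0)\|_{-1}$ suffice. It does not. On the localization set $\{\|P_n^{\perp}X\|_{-1}\le \delta^2\}$ the transversal increment satisfies only
\begin{equation}
|V(t,X)-V(t,P_nX)| = \big|\llangle D_{-1}V(t,X^{\theta}),P_n^{\perp}X\rrangle_{-1}\big| \le \Big(\sup_t\|D_{-1}V(t,\cdot)\|_{-1}\Big)\,\delta^2,
\end{equation}
i.e.\ it is $O(\delta^2)$ with a constant that is \emph{not} small, which is exactly the order of the penalty margin $\varepsilon\delta^2$ available on the boundary; so the perturbed minimum need not be interior, and the extremum need not land on $E_n$. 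Nor can you repair this by cranking the penalty coefficient up to $K/\delta^2$ with $K$ large: in the viscosity inequality the second-order contribution of the penalty is controlled by $\frac{K}{\delta^2}\,\|\Sigma(X_{\delta})-\Sigma(P_nX_{\delta})\|_{L_2(\Xi,E_{-1})}\le CK$, which then no longer vanishes in the limit. What closes the estimate in the paper is a structural fact, not a regularity fact: $V$ is law-invariant, so by \eqref{eq:DV=DmuV} one has $DV(t,X_0)(\omega)=\partial_{\mu}\mathcal{V}(t,\mu_{\mathbf{x}_0})(X_0(\omega))$, which is constant on each $A_i^n$; hence $DV(t,X_0)\in E_n$ and $\llangle DV(t,X_0),P_n^{\perp}X\rrangle=0$. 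Only after subtracting this term does the uniform continuity of $D_{-1}V$ enter, upgrading the bound to $\rho(2\delta)\delta^2=o(\delta^2)$ (see \eqref{eq:V-VPnX}), which beats $\varepsilon\delta^2$ for $\delta=\delta_{\varepsilon}$ small. This orthogonality of the gradient to $E_n^{\perp}$ at points of $E_n$ is the missing idea in your sketch; once it is inserted, your plan becomes the paper's proof, and combined with Lemma \ref{lemma:V_n_leq_u_n} it yields $V_n=u_n$.
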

\begin{proof}We claim that $V_n$ is a $B$-continuous viscosity supersolution of equation \eqref{finite_dimensional_hjb}. Once the claim is proved, taking into account that $u_n$ is a $B$-continuous viscosity (sub-)solution of \eqref{finite_dimensional_hjb}, the result  follows from the comparison theorem \cite[Theorem 3.50]{fabbri_gozzi_swiech_2017}.

\textbf{Preliminaries.}  For $\varphi_n\in C^{1,2}((0,T)\times H^n)$, $h_n\in C^{1,2}((0,T)\times \mathbb{R})$, let $\psi_n(t,\mathbf x):=\varphi_n(t,\mathbf x) +h_n(t,|\mathbf{x}|)$ be a test function in the sense of \cite[Definition 3.32]{fabbri_gozzi_swiech_2017} and  and let $V_n+\psi_n$ have a global minimum at $(t_0,\mathbf{x}_0) \in (0,T) \times H^n$. Assume without loss of generality that  $(V_n+\psi_n)(t_0,\mathbf{x}_0)=0$ and that $\psi_n$ is bounded from below. We will denote $X_0:=X_n^{\mathbf{x}_0} \in E_n$ (so $P_n^\perp X_0=0$).

Using Notation \ref{not:notations_Pn}, we have
$V(t,P_nX) = V\left ( t, \sum_{i=1}^n y_i f_i \right ) = V \left ( t, \sum_{i=1}^n \sqrt{n} y_i \mathbf{1}_{A^n_i} \right ) = V_n(t, \sqrt{n} \mathbf{y})$, for $(t,X) \in [0,T]\times E$.
Note also that, by \cite{stannat_vogler}, we have  
\begin{equation}\label{eq:DV=DmuV}
    DV(t,X)(\omega)=\partial_\mu \mathcal{V}(t,X_{\texttt{\#}} \mathcal{L}^1)(X(\omega)), \quad \textit{a.e.};
\end{equation}
thus if  $X\in E_n$, we have  $DV(t,X)\in E_n$, for all $t \in [0,T]$.

Still using Notation \ref{not:notations_Pn}, we define the test functions in the sense of Definition \ref{generalized_test_functions} $\varphi:(0,T)\times E \to \mathbb{R}$,
$\varphi(t,X) := \varphi_n(t,\sqrt{n}\mathbf{y}),$ $g:(0,T)\times \mathbb{R} \to \mathbb{R}$, $g(t,c) := h_n(t,c),$ $\psi:(0,T)\times E \to \mathbb{R}$, $\psi(t,X) :=
\varphi(t,X)+g(t,\| P_n X \| )\equiv\varphi_n(t,\sqrt{n}\mathbf{y})+h_n(t,\sqrt{n}|  \mathbf{y} | )\equiv\psi_n(t,\sqrt{n}\mathbf{y})$.
In this way, for $\mathbf x \in H^n$, we have $\psi(t,X^{\mathbf{x}}_n)=\psi_n(t,\mathbf x)$.

\textbf{Step 1.} Fix $\varepsilon>0$. Let $0<\delta<\min(t_0, T-t_0,1)$ 
and consider the function, for  $(t,X) \in (0,T)\times E$,
\begin{equation}
\begin{aligned}
    	&\Psi^{\delta}(t,X):= V(t,X) + \psi(t,X) + \varepsilon((t-t_0)^2+\|P_n (X-X_0)\|_{-1}^2) + \frac{\varepsilon}{\delta^2} \|P_n^{\perp}(X-X_0)\|_{-1}^2+\tilde \chi(X)\\
   & \equiv V_n(t,\sqrt{n}\mathbf{y}) +\psi_n(t,\sqrt{n}\mathbf{y}) + \varepsilon((t-t_0)^2+\|P_n (X-X_0)\|_{-1}^2) + \frac{\varepsilon}{\delta^2} \|P^{\perp}_n X\|_{-1}^2 + V(t,X) - V(t,P_n X)+\tilde \chi(X),
\end{aligned}
\end{equation}
where $\tilde \chi(X):=\chi(\|X\|)$ for a radial function $\chi \in C^2( \mathbb R;[0,\infty))$ such that $\chi(r)=0$ for $|r| \leq \| X_0\|$; $\chi(r)>0$ for $|r| > \| X_0\|$, and $\lim_{|r| \to \infty} \chi(r)/r^2=1$ (notice that $\chi'=\chi''=0$ for $|r| \leq \| X_0\|,$ so that $\tilde \chi(X)=0, D\tilde \chi(X)=0, D^2\tilde \chi(X)=0$, for all $\|X\| \leq \|X_0\|$).

{It can be easily seen using the corollary of the Ekeland--Lebourg Theorem, \cite[Theorem 3.26]{fabbri_gozzi_swiech_2017}, that for any $\eta>0$ there are $a\in \mathbb{R}$, $Z\in E$ with $\|(a,Z)\|_{\mathbb{R}\times E}<\eta$ such that the function}
$$(t,X) \mapsto \Psi^{\delta}(t,X) + at + \llangle {\mathcal B}Z,X \rrangle$$
attains a minimum over $K_{\delta}:= \{ (t,X) \in \mathbb{R} \times E : \|(t-t_0,P_n (X-X_0)\|_{\mathbb{R}\times E_{-1}} \leq \delta, \|P_n^{\perp} X \|_{-1} \leq \delta^2 \}$  at some point $(t_{\delta},X_{\delta}) \in K_\delta$; since  $\Psi^{\delta} \xrightarrow{\|X\|\to \infty} \infty$, uniformly in $\delta$,  we have  $\|X_\delta\| \leq C$  for some $C\geq 0$ independent of $\delta$ {and $\eta$}.

Since we will use the viscosity solution property of $V$, we check that $(t_{\delta},X_{\delta})\not \in \partial K_\delta$. Indeed, for $(t,X)\in \partial K_\delta$ (i.e., either such that $\|(t-t_0,P_n (X-X_0))\|_{\mathbb{R} \times E_{-1}} = \delta$, $\|P_n^{\perp}X\|_{-1} \leq \delta^2$ or such that $\|(t-t_0,P_n (X-X_0))\|_{\mathbb{R}\times E_{-1}} \leq \delta$, $\|P_n^{\perp} X \|_{-1}=\delta^2$), we have
$$\Psi^{\delta}(t,X)
	\geq \varepsilon \delta^2 + V(t,X) - V(t,P_n X),$$
where we also used that $V_n + \psi_n \geq 0$ (which follows since $V_n + \psi_n$ has a global minimum equal to zero at $(t_0,\mathbf x_0)=(t_0,\sqrt n \mathbf y)$).
Using the (uniform) continuity of   $D_{-1} V(t,\cdot):E_{-1}\to E_{-1}$, we have that   $DV(t,\cdot)\equiv \mathcal B D_{-1}V(t,\cdot):E \to E$ (this equality is standard) is (uniformly) continuous; hence, we use the mean-value theorem, i.e., denoting $X^\theta:=\theta X+(1-\theta)P_n X$ for some $\theta \in [0,1]$,  we have
\begin{equation}\label{eq:V-VPnX}
    \begin{aligned}
    &| V(t,X) - V(t,P_n X) | =| \llangle DV(t,X^\theta), P_n^{\perp}X\rrangle|=| \llangle DV(t,X^\theta) -  DV(t,X_0) , P_n^{\perp}X \rrangle|\\
     &= | \llangle    D_{-1}V(t,X^\theta) -   D_{-1}V(t,X_0) ,    P_n^{\perp}X \rrangle_{-1}|\leq \rho(\|X-X_0\|_{{-1}}) \|P_n^{\perp} X \|_{-1}  \leq  \rho(2\delta) \delta^2,
\end{aligned}
\end{equation}
where we have used the fact that $\llangle  DV(t,X_0) ,P_n^{\perp}X \rrangle=0$ (which follows from $DV(t,X_0)\in E_n$ as $X_0 \in E_n$, recall \eqref{eq:DV=DmuV}), $\rho$ is the modulus of continuity of $D_{-1} V(t,\cdot)$ (uniform in $t$), and we have noticed that, by definition of $K_\delta$, we have $\|X-X_0\|_{{-1}}\leq \|P_n(X-X_0)\|_{{-1}}+\|P_n^\perp X\|_{{-1}}\leq 2\delta$.
Thus, we have $$\Psi^{\delta}(t,X) \geq \varepsilon \delta^2 - \rho(2\delta) \delta^2=\frac{\varepsilon\delta_\varepsilon^2}{2}=:\gamma_\varepsilon,$$
where in the  equality above, for all $0<\varepsilon<1$, we have chosen a sufficiently small
$0<\delta:=\delta_\varepsilon < \varepsilon$. Next, we notice that if $\eta=\eta_\varepsilon$ is sufficiently small, denoting $a=a_\varepsilon,Z=Z_\varepsilon$, we have
	$|a_\varepsilon t + \llangle {\mathcal B}Z_\varepsilon,X \rrangle|  < \frac{\gamma_\varepsilon}{2}$, for all $(t,X)\in K_{\delta}$. Then
\begin{equation}
	\Psi^{\delta}(t,X) + a_\varepsilon t + \llangle {\mathcal B}Z_\varepsilon,X \rrangle >  \frac{\gamma_\varepsilon}{2}, \quad \forall (t,X) \in \partial K_{\delta}.
\end{equation}
 Taking into account that 
 \begin{equation}\label{eq:Psi(X_0)}
     \Psi^\delta(t_0,X_0)=(V + \psi)(t_0,X_0)+ \tilde \chi(X_0) =V_n(t_0,\mathbf x_0)+\psi_n(t_0,\mathbf x_0) = 0,
 \end{equation}
 it follows that $(t_{\delta},X_{\delta})$ must be in the interior of $K_{\delta}$. 

 \textbf{Step 2.} Recall that in the proof of Step 1, for all $\varepsilon>0$, we have chosen a suitable $\delta=\delta_\varepsilon$ (which we keep denoting $\delta$ to simplify the notation). Here, we prove that, as $\varepsilon\to 0$, we have $t_\delta \to t$ and  (extracting a sequence and  then a subsequence, if necessary, still denoted by $X_\delta$) $ X_\delta \to X_0$.
 
Indeed, first observe  that by definition of $K_\delta$, 
 \begin{equation}\label{eq:convergence_t,Xdelta_-1}
     |t_\delta-t_0|\to 0, \quad \|X_\delta-X_0\|_{-1} \to 0.
 \end{equation}
By Step 1, $X_\delta$ is bounded so that (extracting a sequence and then a subsequence, still denoted by $X_\delta$),  we have $X_\delta \rightharpoonup Y \in E$, so that $ \mathcal B^{1/2} X_\delta \rightharpoonup \mathcal B^{1/2} Y $; this, together with $ \mathcal B^{1/2}X_\delta  \to \mathcal B^{1/2}X_0$, implies $\mathcal B^{1/2}(Y-X_0)=0$; in turn, due to the strict positivity of $\mathcal B^{1/2}$, this implies $Y=X_0$, i.e., $$X_\delta \rightharpoonup X_0.$$
 
Next, we conclude the proof of Step 2 by proving   $$\|X_\delta\|\to \|X_0\|.$$ Indeed, since $X_\delta \rightharpoonup X_0$, we  have $\left\|X_0\right\| \leq \liminf _{\delta \rightarrow 0}\left\|X_\delta\right\|$. We are left to prove $\limsup_{\delta \to 0}\|X_\delta\| \leq \|X_0\|$. Indeed, by definitions of $\tilde \chi$, $\Psi^\delta$, taking into account that $V_n+\psi_n$ has a  global minimum at $(t_0,\mathbf{x}_0)$ equal to zero, and using \eqref{eq:Psi(X_0)}, we have 
$\tilde \chi(X_\delta)+V(t_\delta,X_\delta)-V(t_\delta,P_n X_\delta)\leq \Psi^\delta(t_\delta,X_\delta)\leq \Psi^\delta(t_0,X_0)=0.$
Since $|V(t_\delta,X_\delta)-V(t_\delta,P_nX_\delta)|  \xrightarrow{\varepsilon \to 0} 0$ (recall \eqref{eq:V-VPnX}), we have  $\limsup_{\varepsilon\to 0} \tilde \chi(X_\delta)\leq 0$; by definition of $\tilde \chi$, we have the claim.

 
\textbf{Step 3.} We prove the claim using the viscosity supersolution property of $V$. 

Since $V$ is a $\mathcal B$-continuous viscosity supersolution of \eqref{lifted_HJB_equation} (recall Proposition \ref{prop:U_is_viscosity_with_enlarged_test_functions}), by Step 1 we have (here $\{e_m' \}_{m \in\mathbb N}$ is an orthonormal basis of $\Xi$) 
\begin{equation}\label{supersolution_inequality}
\begin{split}
	&-\partial_t \psi(t_{\delta},X_{\delta}) - 2\varepsilon (t_{\delta}-t_0) - a_\varepsilon- \llangle X_\delta, \mathcal A^*[D\varphi(t_{\delta},X_{\delta}) + 2 \varepsilon  \mathcal B P_n (X_{\delta}-X_0) + 2 \frac{\varepsilon}{\delta^2}   \mathcal B P_n^{\perp} X_{\delta}- \mathcal B Z_\varepsilon]\rrangle \\
    &\qquad - \frac12 \sum_{m=1}^{\infty} \llangle ( D^2 \psi(t_{\delta},X_{\delta}) + 2 \varepsilon P_n  \mathcal B P_n + 2\frac{\varepsilon}{\delta^2} P_n^\perp \mathcal B P_n^{\perp}+D^2 \tilde \chi(X_\delta )) \Sigma(X_{\delta}) e^{\prime}_m, \Sigma(X_{\delta}) e^{\prime}_m \rrangle\\
	&\qquad + \tilde{\mathcal{H}}(X_{\delta}, -D\psi(t_{\delta},X_{\delta}) - 2 \varepsilon  \mathcal B P_n {(X_{\delta}-X_0)} - 2 \frac{\varepsilon}{\delta^2}  \mathcal B P_n^{\perp} X_{\delta} -\mathcal B Z-D \tilde \chi(X_\delta )) \leq 0.
\end{split}
\end{equation}
Applying \eqref{eq:PnperpL} with $\mathcal L=\mathcal A^*\mathcal B \in L(E)$ (recall Section \ref{subsection_operator_A}), the weak $B$-condition (Assumption \ref{Assumption_weak_B_condition}), and using that $X_\delta \in K_\delta$, we have 
$-   \frac{\varepsilon}{\delta^2} \llangle  X_\delta,\mathcal A^* \mathcal B P_n^{\perp}X_{\delta}\rrangle= - \frac{\varepsilon}{\delta^2} \llangle  P_n^{\perp} X_\delta,\mathcal A^* \mathcal BP_n^{\perp} X_{\delta}\rrangle \geq -c_0\frac{\varepsilon}{\delta^2} \|  P_n^{\perp} X_{\delta}\|_{-1}^2 \geq -c_0 \eps \delta^2.$
Moreover,
for $X \in E$,  
$\Sigma(P_n X)(\omega) = \sigma \left (\sum_{i=1}^n y_i f_i(\omega),(P_n X)_{\texttt{\#}} \mathcal{L}^1 \right ) = \sum_{i=1}^n \sigma \left (\sqrt{n} y_i,(P_n X)_{\texttt{\#}} \mathcal{L}^1 \right ) \mathbf{1}_{A^n_i}(\omega),$
so that $\Sigma(P_n X) e^{\prime}_m \in E_n$, and, in turn, $P_n^\perp \Sigma(P_n X) e^{\prime}_m=0$. Then 
\begin{equation}
\begin{split}
    | \llangle \frac{\varepsilon}{\delta^2}  P_n^\perp \mathcal B P_n^{\perp} \Sigma(X_{\delta})e^{\prime}_m, &\Sigma(X_{\delta})e^{\prime}_m \rrangle | =| \llangle \frac{\varepsilon}{\delta^2} P_n^{\perp} \Sigma(X_{\delta})e^{\prime}_m, P_n^{\perp} \Sigma(X_{\delta})e^{\prime}_m \rrangle_{-1} |\\
    &= | \llangle \frac{\varepsilon}{\delta^2} P_n^{\perp} (\Sigma(X_{\delta})- \Sigma(P_n X_{\delta})) e^{\prime}_m, P_n^{\perp} \Sigma(X_{\delta})e^{\prime}_m\rrangle_{-1} |\\
    &\leq \frac{C\varepsilon}{\delta^2} \| \Sigma(X_{\delta}) - \Sigma(P_n X_{\delta}) \|_{L_2(\Xi,E_{-1})} \leq C \frac{\varepsilon}{\delta^2} \| P^{\perp}_n X_{\delta} \|_{-1} \leq C\varepsilon.
\end{split}
\end{equation}
Therefore, due to  Steps 1,2, we let $\varepsilon\to 0$,  so that using the continuity of all terms (recall Subsection \ref{subsec:properties_lifted_coeff}), we have
\begin{align}
	-\partial_t \psi(t_0,X_0)& - \frac12 \sum_{m=1}^{\infty} \llangle D^2 \psi(t_0,X_0) \Sigma(X_0) e^{\prime}_m, \Sigma(X_0) e^{\prime}_m \rrangle - \llangle X_0, \mathcal A^*D\varphi(t_{0},X_{0})\rrangle + \tilde{\mathcal{H}}(X_0,-D\psi(t_0,X_0)) \leq 0.
\end{align}
Recalling the definitions of the test functions and Notation \ref{not:notations_Pn}, for every $(t,X)\in (0,T]\times E$, we have
$
D\psi(t,X) = \sum_{i=1}^n \sum_{k=1}^\infty \sqrt{n} D_{x_i^k} \psi_n( t, \sqrt{n} \mathbf{y} ) f_i \otimes e_k$
and
$D^2 \psi(t,X) = \sum_{i,j=1}^n \sum_{k,l=1}^\infty n D_{x_i^k x_j^l} \psi_n(t,\sqrt{n} \mathbf{y}) \llangle f_i\otimes e_k, \cdot \rrangle \llangle f_j\otimes e_l ,\cdot \rrangle$. By definition of $\tilde{\mathcal{H}}$, we have
$\tilde{\mathcal{H}}\left(X_0,- D \psi\left(t_0, X_0\right)\right)=\frac{1}{n} \sum_{i=1}^n \mathcal{H}\left(x_{0,i}, \mu_{\mathbf x_0},- n D_{x_i} \psi_n\left(t_0, \mathbf{x}_0\right)\right)$, where $x_{0,i}$ denotes the $i$-th component of $\mathbf x_0$ (and similarly for the term in $\mathcal A^*$).
For the second order term, using similar techniques as in the proof of Theorem \ref{theorem:convergence} and \cite[Proof of Proposition 5.2]{swiech_wessels_2024}, we have 
$\sum_{m=1}^{\infty} \llangle D^2 \psi(t_0,X_0) \Sigma(X_0) e^{\prime}_m, \Sigma(X_0) e^{\prime}_m \rrangle= \text{Tr} ( D^2 \psi_n(t_0,\mathbf{x}_0)\sigma (\mathbf{x}_0,\mu_{\mathbf{x}_0})\sigma^T(\mathbf{x}_0,\mu_{\mathbf{x}_0})).$ Thus, we obtain 
\begin{align}
	-\partial_t \psi_n(t_0,\mathbf{x}_0) &- \frac12 \text{Tr} ( D^2 \psi_n(t_0,\mathbf{x}_0)\sigma (\mathbf{x}_0,\mu_{\mathbf{x}_0})\sigma^T(\mathbf{x}_0,\mu_{\mathbf{x}_0}))\\
    &+ \frac{1}{n} \sum_{i=1}^n \left(- \langle x_{0,i},nA^*D_{x_i} \psi_n(t_0,x_{0,i}) \rangle +\mathcal{H}(x_{0,i},\mu_{\mathbf{x}_0},-n D_{x_i} \psi_n\left(t_0, \mathbf{x}_0\right))\right) \leq 0,
\end{align}
which proves the claim.
\end{proof}

\section{Lifting and Projection of Optimal Controls}\label{sec:lifting_proj}
In this section, we are going to show that if {$D_{-1}V(t,\cdot)$ is uniformly continuous} (resp. $V(t,\cdot)\in C^{1,1}(E_{-1})$ for the case of optimal feedback controls), then optimal (resp. optimal feedback) controls of the particle system correspond to optimal (resp. optimal feedback) controls of the lifted infinite dimensional control problem started at the corresponding initial condition. Conversely, we will show that piecewise constant optimal (resp. optimal feedback) controls of the infinite dimensional control problem project onto optimal (resp. optimal feedback) controls of the particle system.

Throughout this section, we assume that the assumptions of Theorem \ref{theorem:projection_C11} are satisfied.

\subsection{Lifting of Optimal Controls}\label{sub:lifting_optimal}

Let $n\geq 1$ and let $\mathbf{a}^{\ast}(\cdot) = (a_1^{\ast}(\cdot),\dots,a_n^{\ast}(\cdot))$ be an optimal control of the finite dimensional problem, i.e.,
\begin{equation}
    u_n(t,\mathbf{x}) = J_n(t,\mathbf{x};\mathbf{a}^{\ast}(\cdot)) := \mathbb{E} \left [ \int_t^T \left ( \frac1n \sum_{i=1}^n  l(x^{\ast}_i(s),\mu_{\mathbf{x}^{\ast}(s)},a^{\ast}_i(s)) \right ) \mathrm{d}s +  \frac{1}{n} \sum_{i=1}^n \mathcal{U}_T(x^*_i(T),\mu_{\mathbf{x}^{\ast}(T)})  \right ],
\end{equation}
where $\mathbf{x}^{\ast}(s) = (x_1^{\ast}(s),\dots,x_n^{\ast}(s))$ is the mild solution of the system of SDEs
\begin{equation}\label{eq:closed_loop}
		\mathrm{d}x^*_i(s) = [A x^*_i(s) +f(x^*_i(s),\mu_{{\bf x}^*(s)},a^*_i(s))]\mathrm{d}s + \sigma(x^*_i(s),\mu_{{\bf x}^*(s)})\mathrm{d}W(s),  \quad x_i^*(t) = x_i \in H,
    \end{equation}
$i=1,\dots,n$. By Lemma \ref{lemma_subsolution} and Theorem \ref{theorem:projection_C11}, we have
\begin{equation}
    J(t,X^{\mathbf{x}}_n ; a^{\ast,n}(\cdot))=J_n(t,\mathbf{x};\mathbf{a}^{\ast}(\cdot))  = u_n(t,\mathbf{x}) = V_n(t,\mathbf{x}) = V\left (t, \sum_{i=1}^n x_i \mathbf{1}_{A^n_i} \right ),
\end{equation}
where $a^{\ast,n}(\cdot) = \sum_{i=1}^n a_i(\cdot) \mathbf{1}_{A^n_i}$, i.e., the optimal control $\mathbf{a}^{\ast}(\cdot)$ lifts to an optimal control $a^{\ast,n}(\cdot)$ of the infinite dimensional problem started at $(t,\sum_{i=1}^n x_i \mathbf{1}_{A^n_i})$.

\subsection{Lifting of Optimal Feedback Controls}\label{sub:lifting_optimal_feedbacks}
Next, we show that an optimal feedback control of the finite dimensional problem lifts to an optimal feedback control of the corresponding lifted control problem.

Assume that $\tilde{\Lambda}=\Lambda=H$ and that $f,l$ are of the form $f(x,\mu,q):=f_1 (x,\mu)+q$, $l(x,\mu,q):=l_1(x,\mu)+l_2(q)$. Moreover, in addition to the assumptions of Theorem \ref{theorem:projection_C11}, let Assumption \ref{Assumption_running_cost_nu} be satisfied with $\nu >0$, $V(t,\cdot)\in C^{1,1}(E_{-1})$ for every $t\in [0,T]$, and the semiconcavity and semiconvexity constants of $V(t,\cdot)$ be independent of $t\in [0,T]$. Notice that under this assumption, for all $t\geq 0$ we have $V(t,\cdot)\in C^{1,1}(E)$, $V_n(t,\cdot) \in C^{1,1}(H^n)$, {$DV,D_{x_i} V_n$ are continuous on $[0,T]\times E$ and 
$[0,T]\times H^n$ respectively,} and there is a constant $C\geq 0$ such that $\|D V(t, X)\| \leq C,\left\|D V(t, X)-D V\left(t, Y \right)\right\| \leq C\left\|X-Y\right\|_{-1}$, $|D_{x_i} V_n(t, \mathbf x)| \leq C,\left|D_{x_i} V_n(t, \mathbf x)-D_{x_i} V_n\left(t, \mathbf y\right)\right| \leq C\left|\mathbf x-\mathbf y\right|_{-1,2}$, for all $i=1,\dots,n$.
Then we can proceed in a similar way to \cite[Example 4.10]{defeo_swiech_wessels_2023}  to have that $Dl_2$ is invertible with Lipschitz inverse $(Dl_2)^{-1}$ and then apply \cite[Theorem 5.18]{defeo_swiech_wessels_2023} to the finite particle system problem with value function $u_n=V_n$ to get that
  $\mathbf{a}^{\ast}(\cdot) =  (a^{\ast}_1(\cdot),\dots,a^{\ast}_n(\cdot)),$ $a_i^{\ast}(s):=(Dl_2)^{-1}(-nD_{x_i}V_n(s,{\mathbf{x}^{\ast}(s)}))$
    is an optimal feedback control for the finite particle system control problem\footnote{Recall that the factor $n$ enters together with $D_{x_i}u_n$ into the HJB equation \eqref{finite_dimensional_hjb}.}. Here $\mathbf{x}^{\ast}(s)=(x_1^{\ast}(s),\dots,x_n^{\ast}(s))$ is the solution of the closed loop system, for $i=1,\dots,n$,
    \begin{equation}
		\mathrm{d}x^{\ast}_i(s) = [Ax_i^*(s)+(Dl_2)^{-1}(-nD_{x_i}V_n(s,{\mathbf{x}^{\ast}(s)})) + f_1(x_i^{\ast}(s),\mu_{\mathbf{x}^{\ast}(s)})] \mathrm{d}s + \sigma(x^{\ast}_i(s),\mu_{\mathbf{x}^{\ast}(s)}) \mathrm{d}W(s), \quad x^{\ast}_i(t) = x_i.
    \end{equation}
 Now, consider the lifted control
        $a^{\ast,n}(s) = \sum_{i=1}^n a^{\ast}_i(s) \mathbf{1}_{A^n_i}.$
    By Lemma \ref{lemma_subsolution},  the solution $X^{\ast,n}(\cdot)$ of the lifted state equation with initial condition $X^{\mathbf{x}}_n$ and control $a^{\ast,n}(\cdot)$ coincides with the lift $X_n^{\mathbf x^*(\cdot)}$ of  $\mathbf{x}^{\ast}(\cdot)$. In particular, $X^{\ast,n}(\cdot)(\omega)$ is  constant for $\omega \in A_i^n$ . Therefore, by the same formula for \eqref{derivative_varphi_n} and using \eqref{eq:DV=DmuV},
    \begin{equation}
    \begin{split}
        a^{\ast,n}(s) &=\sum_{i=1}^n(Dl_2)^{-1}(-nD_{x_i}V_n(s,{\mathbf{x}^{\ast}(s)}))\mathbf{1}_{A^n_i}= \sum_{i=1}^n(Dl_2)^{-1}\left(-n\int_{A^n_i}DV(s, X^{\ast,n}(s))(\omega) \mathrm{d}\omega\right)\mathbf{1}_{A^n_i}\\
        &=\sum_{i=1}^n (Dl_2)^{-1} \left(-n\int_{A^n_i}\partial_\mu \mathcal{V}(s, X^{\ast,n}_{\texttt{\#}} \mathcal{L}^1)( X^{\ast,n}(s)(\omega))
        \mathrm{d}\omega \right)\mathbf{1}_{A^n_i}\\
        & =\sum_{i=1}^n(Dl_2)^{-1}\left(-\partial _\mu\mathcal{V}(s, X^{\ast,n}_{\texttt{\#}} \mathcal{L}^1)( X^{\ast,n}(s)) \right)\mathbf{1}_{A^n_i} =(Dl_2)^{-1}\left(-\partial _\mu\mathcal{V}(s, X^{\ast,n}_{\texttt{\#}} \mathcal{L}^1)( X^{\ast,n}(s)) \right)\\
        &=(DL_2)^{-1}(-DV(s, X^{\ast,n}(s))),
\end{split}
\end{equation}
where in  the last equality we have used the fact that $(DL_2)^{-1}(Y)(\omega)=(Dl_2)^{-1}(Y(\omega))$, for $Y\in E$, with 	$L_2(Q) := \int_{\Omega} l_2(Q(\omega)) \mathrm{d}\omega$. Then, applying the results of \cite[Theorem 5.18]{defeo_swiech_wessels_2023}  to the lifted control problem, we have that $a^{\ast,n}(s)$ is an optimal feedback control of the lifted problem with initial data $(t,\sum_{i=1}^n x_i \mathbf{1}_{A^n_i})$. 

\subsection{Projection of Optimal Controls}\label{sub:projection_optimal}
Let $a^{\ast,n}(\cdot)$ be a piecewise constant optimal control of the infinite dimensional problem started at $(t,X^{\mathbf{x}}_n)$, $\mathbf{x}\in H^n$, i.e., $a^{\ast,n}(s) = \sum_{i=1}^n a_i^{\ast}(s) \mathbf{1}_{A^n_i}$ for some $a_i^{\ast}(\cdot)$ taking values in $H$, $i=1,\dots,n$, and
\begin{equation}
    V(t,X^{\mathbf{x}}_n) = J(t,X^{\mathbf{x}}_n;a^{\ast,n}(\cdot)) = \mathbb{E} \left [ \int_t^T  L(X^{\ast}(s),a^{\ast,n}(s))  \mathrm{d}s + U_T(X^{\ast}(T)) \right ],
\end{equation}
where $X^*(s)$ is the mild solution of
\begin{equation}
		\mathrm{d}X^*(s) = [\mathcal{A}X^*(s) + F(X^*(s),a^*(s))] \mathrm{d}s + \Sigma(X^*(s)) \mathrm{d}W(s),\quad X^*(t) = X^{\mathbf{x}}_n \in E_n.
\end{equation}
Note that  $X^{\ast}(s) \in E_n$ for all $s\in [t,T]$ (see \eqref{E_n} for the definition of $E_n$). Let $\mathbf{a}^{\ast}(\cdot) = (a_1^{\ast}(\cdot),\dots,a_n^{\ast}(\cdot))$ be the projection of $a^{\ast,n}(\cdot)$. Then, by Lemma \ref{lemma_subsolution} and Theorem \ref{theorem:projection_C11}, we have
\begin{equation}
J_n(t,\mathbf{x};\mathbf{a}^{\ast}(\cdot))=J(t,X^{\mathbf{x}}_n;a^{\ast,n}(\cdot))=V(t,X^{\mathbf{x}}_n) = V_n(t,\mathbf{x}) = u_n(t,\mathbf{x}),
\end{equation}
i.e., the piecewise constant optimal control $a^{\ast,n}(\cdot)$ of the infinite dimensional control problem started at $(t,X^{\mathbf{x}}_n)$ projects to an optimal control of the finite dimensional control problem started at $(t,\mathbf{x})$.

\subsection{Projection of Optimal Feedback Controls}\label{sub:projection_optimal_feedbacks}
We work under the assumptions of Subsection \ref{sub:lifting_optimal_feedbacks}. We know that an optimal feedback control for the infinite dimensional control problem started at $(t,X^{\mathbf{x}}_n)$, $\mathbf{x}\in H^n$, is given by
\begin{equation}
    a^{\ast}(s)=(DL_2)^{-1}(-DV(s,X^{\ast}(s))),
\end{equation}
where $X^{\ast}(\cdot)$ is the solution of the infinite dimensional state equation with control $a^{\ast}(\cdot)$. Using Lemma \ref{lemma_subsolution} we have that $X^{\ast}(s)$ is   constant for $\omega \in A_i^n$, so that, using \eqref{eq:DV=DmuV}, we have that $DV(s,X^{\ast}(s))$ is constant for $\omega \in A_i^n$, and then $a^{\ast}(\cdot)$ is constant for $\omega \in A_i^n$. Therefore, in this case the same calculation (in reverse) as in Subsection \ref{sub:lifting_optimal_feedbacks} shows that the optimal feedback control projects onto an optimal feedback control of the finite dimensional control problem.

\section{Applications to problems in economics}\label{sec:applications}
In this section, we apply the theory developed to problems arising in economics in the context of optimization for large companies: a (path-dependent) stochastic optimal control problem with delays arising in the context of optimal advertising  and a stochastic optimal control problem with vintage capital, where the state equation is a stochastic partial differential equation.
{\begin{notation}
    Throughout this section, for $X\in E$, we denote $\mathbf E[X]:=\int_\Omega X(\omega)d\omega$.
\end{notation}}
\subsection{Stochastic optimal advertising with delays for large companies }\label{subsec1}
Inspired by  mean-field optimal advertising problems with delays in \cite{gozzi_masiero_rosestolato_2024}, in this subsection we introduce a particle system type control problem for optimal advertising strategies of a large company. We also refer to  \cite{ricciardi_rosestolato_2024} for a mean-field game framework with delays and to
\cite{deFeo_2024,deFeo_2024,defeo_federico_swiech_2024,defeo_swiech,gozzi_marinelli_2004}, and the references therein, for classical stochastic optimal advertising models with delays.

Consider a large retail company with $n$ stores, where local advertising policies interact. {The company plans an advertising campaign to promote the sale of a particular product. 
As typically done in this kind of problems (see the seminal paper \cite{NerloveArrow62} or the survey \cite{FHSMS1994}), the state variable for each store $i$ is the so-called ``goodwill'' $y_i$ of the product which is an index capturing  the brand awareness, the product image, and the reputation that the average consumers have about the product itself.} {Concerning the dynamics of the goodwill stock, as in the cited papers, we assume that the goodwill increases because of the advertising campaign and decreases because people forget about the product and this process is distributed over time.
Hence the} dynamics of the stock of the advertising goodwill $y_i(s)$ of the product sold by store $i =1, \ldots, n$  is given by  the following stochastic delay differential equation (SDDE)
\begin{equation}
\begin{cases}
\mathrm{d}y_i(s) = \left[ b^0 y_i(s)+c^0 \overline{\mathbf{y}}_n(s)+ \int_{-d}^0 [\eta^1(\xi)y_i(s+\xi)+\chi^1(\xi)\overline{\mathbf{y}}_n(s+\xi)] \,\mathrm d\xi +e^0 a_i(s) \right]
         \mathrm{d}s   + \sigma^0 \, \mathrm{d}W(s),\\
y_i(t)=x_i^0, \quad y_i(t+\xi)=x_i^1(\xi), \quad  \xi\in[-d,0),
\end{cases}
\end{equation}
where $s \in [t,T]$, $d>0$, $\overline{\textbf{y}}_n(s)=\frac 1 n \sum_{i=1}^n y_i(s)=\int_{\mathbb R} z \mu_{{\bf y}(s)}(\mathrm d z)$  is the sample mean, with $\mu_{{\bf y}(s)}$ being the empirical measure for $\mathbf {y}(s)=(y_1(s),\dots,y_n(s))$; the control process $a_i(s)$ with values in $\tilde{\Lambda}= [0,\infty) \subset \Lambda =\mathbb R$ models the rate of advertising spending, $W$ is a real-valued Brownian motion acting as a common noise;   $b^0, c^0 \leq 0$ are constant factors of {instantaneous goodwill} deterioration in absence of advertising;  $e^0 \geq 0$ is a constant advertising effectiveness factor; $\eta^1,\chi^1 \leq 0$ are given deterministic functions in the Sobolev space $W^{1,2}([-d,0])$, such that $\eta^1(-d)=\chi^1(-d)=0$, {representing the distribution of the past forgetting times\footnote{{Notice that such regularity assumption on $\eta^i,\chi^i$ needed to apply our results, is not really restrictive as any function in $L^2$ can be approximated by maps of this type.}}}; $\sigma^0>0$ represents  the intensity of the uncertainty in the model, $x^0_i \in \mathbb R$ is the level of goodwill at the beginning of the advertising campaign;  $x^1_i \in L^2([-d,0])$ is the history of the goodwill level.

{Again as usual in this kind of problems (see again \cite{NerloveArrow62} or \cite{FHSMS1994}) the company aims at maximizing the present value of its total profit. Hence 
the goal is to minimize}
\begin{equation}
	\mathbb{E}\left [ \int_t^T\frac{1}{n}\sum_{i=1}^n l^0(y_i(s),\overline{\mathbf{y}}_n(s),a_i(s)) \mathrm{d}s+ \frac{1}{n} \sum_{i=1}^n {\mathcal U}_T^0(y_i(T),\overline{\mathbf{y}}_n(T)) \right ],
\end{equation}
where $l^0(y,z,q)=g^0(q)-h^0(y,z)$ for a convex, non decreasing, cost function $g^0 :[0,\infty)\to[0,\infty)$ such that $-C_1+C_2|q|^2 \leq g^0(q) \leq C_1+C_3|q|^2$ for some $C_1,C_2,C_3>0,$  and a concave Lipschitz revenue function $h^0  :\mathbb R^2 \to \mathbb R$; $\mathcal U_T^0 :\mathbb R^2 \to \mathbb R$ is a convex Lipschitz function  (to be interpreted as the negative of a concave terminal revenue function). {To get some of the results, namely the ones concerning the regularity of the value function (Subsection \ref{subsection:C11_regularity}, in particular Theorem \ref{th:C11}) and the projections (Sections \ref{sec:projection}-\ref{sec:lifting_proj}, in particular Theorem \ref{theorem:projection_C11}) we will also  assume the following regularity:
$g^0 \in C^{1,1}([0,\infty);[0,\infty))$, $h^0  \in C^{1,1}(\mathbb R^2)$, $\mathcal U_T^0  \in C^{1,1}(\mathbb R^2)$. Finally, to get the results on the lifting of optimal feedback controls (Subsections \ref{sub:lifting_optimal_feedbacks}
and \ref{sub:projection_optimal_feedbacks})
we will need to add that, for some $\nu>0$ the map $g^0(q)-\nu |q|^2$ is convex. We notice that all the above assumptions, including the last two are typically verified in the above cited literature on optimal advertising.}

The system of SDDEs is rewritten (see e.g. \cite{defeo_federico_swiech_2024}) as a system without delays  over the Hilbert space $H^n$ with $H:=\mathbb R \times L^2([-d,0])$ (endowed with the inner product $\langle x,y \rangle:=x^0\cdot y^0 + \langle x^1,y^1 \rangle_{L^2}$ with $\langle x^1,y^1 \rangle_{L^2}$ denoting the standard inner product in $L^2([-d,0])$) for the extended state $$x_i(s)=(x_i^0(s),x_i^1(s)):=(y_i(s),y_i(s+\cdot)),\quad i =1, \ldots, n,$$ where we use the notation $x:=(x^0,x^1)  \in H$ and $\mathbf x=(\mathbf x^0, \mathbf x^1) \in H^n$, i.e.,
\begin{equation}
\mathrm{d}x_i(s) = \left[A x_i(s)+ b x_i(s)+c \overline{\mathbf{x}}_n(s)+  \eta x_i(s)+\chi \overline{\mathbf{x}}_n(s) +e a_i(s) \right] \mathrm{d}s   + \sigma \, \mathrm{d}W(s),\quad x_i(t)=x_i,
\end{equation}
where $\overline{\mathbf{x}}_n(s):=(\frac 1 n \sum_{i=1}^n x_i^0(s),\frac 1 n \sum_{i=1}^n x_i^1(s))=( \int_{H} z^0 \mu_{{\bf x}(s)}(\mathrm d z),\int_{H} z^1 \mu_{{\bf x}(s)}(\mathrm d z))$,  $ A x:= (-x^0,\frac{\mathrm{d}}{\mathrm{d} \xi} x^1),$ $ D(A):= \left\{ x=(x^0,x^1) \in H: x^1 \in W^{1,2}([-d,0]), \ x^1(0)=x^0\right\}$ is a maximally dissipative operator (so Assumption \ref{Assumption_A_maximal_dissipative} holds), $b \in L(H)$, $bx:=((b^0+1)x^0 ,0)$, $c \in L(H)$, $cx:=(c^0x^0 ,0)$, $\eta \in L(H)$, $\eta x := (\langle \eta^1,x^1\rangle_{L^2},0)$, $\chi \in L(H)$, $\chi x := (\langle \chi^1,x^1\rangle_{L^2},0)$, $e \in L(\Lambda;H)$, $e q :=(e^0 q,0 )$, $\sigma \in L_2(\mathbb R;H)$, $\sigma w :=(\sigma^0 w,0 )$. Then,  defining $ f \colon H \times \mathcal P(H) \times \tilde{\Lambda} \to H$, $\sigma\in L_2(\mathbb{R},H)$, $l \colon H \times \mathcal P(H) \times \tilde{\Lambda} \to \mathbb R$, $\mathcal U_T \colon H \times \mathcal P(H)  \to \mathbb R$, respectively, by  
\begin{align}
&f(x,\mu,q) :=
\left ( (b^0+1) x^0 + c^0 \int_H z^0 \mu(\mathrm d z)  +\langle \eta^1,x^1\rangle_{L^2} + \int_{H} \langle \chi^1,z^1\rangle_{L^2} \mu(\mathrm d z) +e^0 q,
0\right),\\
&\sigma w:=(\sigma^0 w,0),\quad l\left (x,\mu,q\right ):=l^0\left (x^0,\int_{H} z^0 \mu(\mathrm d z),q\right ), \quad \mathcal U_T(x,\mu):=\mathcal U_T^0\left (x^0,\int_{H} z^0 \mu(\mathrm d z)\right ),
\end{align}
for all $x=(x^0,x^1) \in H, \mu \in \mathcal P_2(H) \ q \in \tilde{\Lambda},  w \in \mathbb{R}$, we have the setup of Subsection \ref{subsec:finite_particle-system}. Here, we have  $\mathcal{H}(x,\mu,p) :=  \inf_{q\in \tilde{\Lambda}} ( \langle f(x,\mu,q) , p\rangle + l(x,\mu,q))$ and 
the HJB equation for the  particle system is
\begin{equation}
\begin{cases}
	\partial_t u_n + \frac12 \text{Tr}(\sigma \sigma^\top D^2u_n)+ \frac{1}{n} \sum_{i=1}^n \left ( \langle Ax_i,nD_{x_i} u_n \rangle + \mathcal{H}(x_i,\mu_{\mathbf{x}},nD_{x_i}u_n) \right ) = 0, \quad(t,\mathbf{x})\in (0,T)\times H^n\\
	u_n(T,\mathbf{x}) = \frac{1}{n} \sum_{i=1}^n \mathcal{U}_T(x_i,\mu_{\mathbf{x}}), \quad \mathbf{x}\in H^n.
\end{cases}
\end{equation}

We check now that the assumptions of the previous sections are satisfied (we use the setup and notations introduced there) so that  the results of our theory can be applied. Indeed, by \cite[Section 3]{defeo_federico_swiech_2024}, the linear operator $B:=(A^{-1})^*A^{-1}$ satisfies the weak $B$-condition Assumption \ref{Assumption_weak_B_condition} with $c_0=0$ and it is compact, so it satisfies Assumption \ref{Assumption_B_compact}. Moreover, it was shown in  \cite[Sections 3 and 4]{defeo_federico_swiech_2024} there that there is a constant $C\geq 0$ such that
\begin{align}\label{eq:inequalities-wrt--1}
    &|x^0|\leq  |x|_{-1}, \quad   \left|\langle \Gamma^1,x^1\rangle_{L^2} \right| \leq C |x|_{-1}  &  \Gamma^1=\eta^1,\chi^1, \quad  \forall x=(x^0,x^1) \in H.
\end{align}
Next, for all $ \mu,\beta \in \mathcal P_2(H)$,  $r \in [1,2]$, we claim
\begin{align}\label{eq:inequalities-wrt-W-1}
    \left |\int_H z^0 (\mu-\beta)(\mathrm dz) \right| \leq d_{-1,r}(\mu,\beta), \quad \left |  \int_{H} \langle \chi^1, z^1 \rangle_{L^2} (\mu-\beta)(\mathrm d z)  \right | \leq C d_{-1,r}(\mu,\beta) .
\end{align}
Indeed, let $X=(X^0,X^1), Y=(Y^0,Y^1) \in E$ be such that $X_{\texttt{\#}} \mathcal{L}^1 = \mu, \; Y_{\texttt{\#}} \mathcal{L}^1 = \beta $. Then, using  \eqref{eq:inequalities-wrt--1}, we have
\begin{align}
   & \left |\int_H z^0 (\mu-\beta)(\mathrm dz) \right| =|\mathbf{E}[X^0-Y^0]|\leq \mathbf{E}[|X^0-Y^0|]\leq \mathbf{E}[|X-Y|_{-1}]\leq \mathbf{E}[|X-Y|_{-1}^r]^{1/r},
   \\
   &\left |\int_{H} \langle \chi^1, z^1 \rangle_{L^2} (\mu-\beta)(\mathrm dz)\right |=\left | \langle \chi^1,\mathbf{E}[X^1-Y^1] \rangle_{L^2} \right | \leq C | \mathbf{E}[X-Y]|_{-1}\leq C \mathbf{E}[|X-Y|_{-1}^r]^{1/r}
\end{align}
{and thus \eqref{eq:inequalities-wrt-W-1} follows from \eqref{definition_wasserstein_distance}.}

Next, notice that $f$ satisfies Assumption \ref{Assumption_f_sigma_lipschitz} (while $\sigma$ trivially satisfies it); indeed, the first inequality in (i) follows from \eqref{eq:inequalities-wrt--1}, \eqref{eq:inequalities-wrt-W-1}, while the second is trivial; the first inequality in (ii) follows by choosing $\beta=\delta_0$ in \eqref{eq:inequalities-wrt-W-1} and then using the equality $d^r_{-1,r}(\mu,\delta_0) = \mathcal{M}_{-1,r}(\mu)$, while the second there is trivial;
for (iii)  notice that 
$D_x \tilde f(x,\mu,q)y=((b^0+1)y^0+  \langle \eta^1, y^1 \rangle_{L^2},
   0)$, $D_X \tilde f (x,X,q)Y=(c^0\mathbf E [Y^0 ]+ \mathbf E [\langle \chi^1, Y^1 \rangle_{L^2}],
   0)$, $D_q \tilde f(x,\mu,q)a= (
   e^0a,
   0)$;
then, we have (iii) and so Assumption \ref{Assumption_f_sigma_lipschitz} is satisfied. With {completely} similar techniques, we prove that $l, \mathcal U_T$ satisfy Assumption \ref{Assumption_running_terminal_cost} (i), (ii), (iv). {If we add, as said above, that} $g^0 \in C^{1,1}([0,\infty);[0,\infty))$, $h^0  \in C^{1,1}(\mathbb R^2)$, $\mathcal U_T^0  \in C^{1,1}(\mathbb R^2)$, then Assumption \ref{Assumption_running_terminal_cost} (iii), (v) is satisfied; indeed, note that  $D_x \tilde l(x,X,q)z=-\partial_{y} h^0(x^0,\mathbf E[X^0])z^0, D_X \tilde l(x,X,q)Y=-\partial_{z} h^0(x^0,\mathbf E[X^0])\mathbf E [Y^0], D_q \tilde l(x,X,q)a=\partial_{q} g^0(q)a$ and the claim follows thanks to \eqref{eq:inequalities-wrt--1}, \eqref{eq:inequalities-wrt-W-1}.  
{
Next, Assumption
\ref{Assumption_running_cost_nu} is satisfied with $C_1=C_2=\nu =0$ because of the convexity  of $l^0$;
indeed, 
by \eqref{eq:inequalities-wrt--1}, \eqref{eq:inequalities-wrt-W-1} (with $r=2$ there), for all $x,y \in H, X,Y \in E, a,q \in \tilde{\Lambda}$, we have} 
{
\begin{equation}\label{eq:verifyHP4.6}
\begin{aligned}
& \lambda \tilde l(x,X ,a)+(1-\lambda) \tilde  l\left(y, Y,q\right)-\tilde  l\left(\lambda x+(1-\lambda) y, \lambda X+(1-\lambda) Y,\lambda a+(1-\lambda) q\right) \\
& =\lambda l^0\left(x^0,\mathbf{E} [X^0],  a\right)+(1-\lambda) l^0\left(y^0,\mathbf{E} [Y^0] ,q\right)\\
&\quad -l^0\left(\lambda x^0+(1-\lambda) y^0,\lambda \mathbf{E} [X^0]+(1-\lambda) \mathbf{E} [Y^0], \lambda a+(1-\lambda) q\right)\geq 0.
\end{aligned}
\end{equation}
On the other hand, if we add that for some $\nu>0$ the map $g^0(q)-\nu |q|^2$ is convex, then, arguing as in \eqref{eq:verifyHP4.6}, Assumption
\ref{Assumption_running_cost_nu} is satisfied with such $\nu$ and $C_1=C_2=0$.
}
Finally, by \eqref{eq:inequalities-wrt--1}, \eqref{eq:inequalities-wrt-W-1}, we  see that $f$ satisfies Assumption \ref{Assumption_Linear_State_Equation} (i); Assumption \ref{Assumption_Linear_State_Equation} (ii) is trivially satisfied, while (iii) follows from the convexity of $l^0, \mathcal U_T^0$. 

{
Given the above
we can apply (without the additional regularity and convexity assumptions) Theorem \ref{theorem:convergence} to obtain convergence of the value functions $u_n$ for the particle control problems to the function ${\mathcal V}$ whose lift $V$ is the value function of the lifted problem.
Adding the $C^{1,1}$ regularity of $g^0,h^0,\mathcal{U}_T^0$
we can apply the results of Subsection \ref{subsection:C11_regularity} to get $C^{1,1}$-regularity of $V$,  Theorem \ref{theorem:projection_C11} to prove that ${\mathcal V}=u_n$ on averages of point masses, as well as lifting and projection of optimal controls, i.e., Subsections \ref{sub:lifting_optimal}, \ref{sub:projection_optimal}.
}

What remains is to adapt the results of lifting and projection of optimal feedback controls of Subsections \ref{sub:lifting_optimal_feedbacks}, \ref{sub:projection_optimal_feedbacks}, as we are not exactly in the setting there since we do not have $\tilde{\Lambda} =\Lambda=H$.
In virtue of the results of Subsection \ref{subsection:C11_regularity},  we can assume that $V(t,\cdot)\in C^{1,1}(E_{-1})$ for every $t\in [0,T]$, and the semiconcavity and semiconvexity constants of $V(t,\cdot)$ are independent of $t\in [0,T]$.  
{Assume here for simplicity
that $g^0(q)=\frac 1 2 q^2$.} 
Define
    $$\mathbf{a}^{\ast}(\cdot) =  (a^{\ast}_1(\cdot),\dots,a^{\ast}_n(\cdot)),\quad a_i^{\ast}(s):=\gamma^\ast(D_{x_i}V_n(s,{\mathbf{x}^{\ast}(s)}))^0), $$ 
where $\gamma^{\ast}(p):=\argmin_{a \in [0,\infty)}[  n e^0  p^0 a+ \frac 1 2 a^2]=
    ne^0 (p^0)^- , $ for $p=(p^0,p^1) \in H$, with $z^-=\max(-z,0)$  denoting the negative part of $z \in \mathbb R$, and  $\mathbf{x}^{\ast}(s)=(x_1^{\ast}(s),\dots,x_n^{\ast}(s))$ is the unique mild solution of the closed loop system 
    \begin{equation}
\mathrm{d}x_i^*(s) = \left[A x_i^*(s)+ b x_i^*(s)+c \overline{\mathbf{x}}_n^*(s)+  \eta x_i^*(s)+\chi \overline{\mathbf{x}}_n^*(s) +e \gamma^\ast(D_{x_i}V_n(s,{\mathbf{x}^{\ast}(s)}))^0) \right] \mathrm{d}s + \sigma \, \mathrm{d}W(s), 
\end{equation}
with $x_i^*(t)=x_i$, $i \leq n$.
    Then $\gamma^*$  defines an optimal advertising policy as an optimal feedback control $\mathbf{a}^{\ast}(\cdot)$
for the finite particle system  problem. We remark that only the $x^0$-component of $D_{x_i}V_n(s,{\mathbf{x}^{\ast}(s)}) \in H$, for $i=1, \ldots n,$ is used in the optimal feedback map. Then, we can proceed in a similar way to Subsections \ref{sub:lifting_optimal_feedbacks}, \ref{sub:projection_optimal_feedbacks}, to prove lifting and projection of the optimal feedback control. Here, the optimal feedback control for the lifted control problem is given by $$a^{\ast}(s)(\omega)=e^0\left [(DV(s,X^{\ast}(s))(\omega))^0\right]^-, \quad \omega \in (0,1).$$ Again we remark that only the $x^0$-component of $DV(s,X^{\ast}(s))(\omega)$ is used in the optimal feedback map.
\subsection{Optimal investment with vintage capital for large companies (SPDEs)}Inspired by  mean-field optimal investment problems with vintage capital as introduced in  \cite{cosso_gozzi_kharroubi_pham_rosestolato}, in this subsection we introduce a particle system type control problem for optimal investment strategies with vintage capital under uncertainty for a large company. We  refer to the recent review \cite{fabbri_faggian_federico_gozzi_2025} {and the references therein} for  the classical  deterministic setting.

Consider a large, innovation-driven firm composed of $n$ interconnected subsidiaries. Each subsidiary $i=1, \ldots, n$ produces technologically advanced goods by investing in capital goods that embody both new (disruptive) and older (traditional) technologies. This is modeled through a vintage capital structure, where capital goods are indexed by their age $\theta \in[0, \Theta]$, where $\Theta>0$ represents the maximum age. We assume that  the amount of capital goods $y_i(s ,\theta)$ of age $\theta$
accumulated at time $s \in [t,T]$ evolves according to the controlled SPDE
\begin{equation}
\begin{cases}
&\frac{\partial y_i(s ,\theta)}{\partial s }+\frac{\partial y_i(s ,\theta)}{
\partial \theta}=-\delta y_i(s ,\theta)-\bar \delta \,\overline{\mathbf{y}}_n(s ,\theta))+u_i(s ,\theta)+\sigma  \frac{\partial }{\partial s}W(s)(\theta)
,\quad (s ,\theta)\in (t,T)
\times (0,\Theta),\\
& y_i(s ,0)=0,\quad s \in (t,T ), \quad
y_i(t,\theta)=x_i(\theta),\quad \theta \in \lbrack 0,\Theta],
\end{cases}
\end{equation}
where  $x_i\in L^{2}([0,\Theta])$ is the initial value; $\overline{\mathbf{y}}_n(s,\theta)=\frac 1 n \sum_{i=1}^n y_i(s,\theta)=\int_{\mathbb R} z \mu_{{\bf y}(s,\theta)}(\mathrm d z)$, with $\mathbf {y}(s,\theta)=(y_1(s,\theta),\dots,y_n(s,\theta))$, is the sample mean and $ \mu_{{\bf y}(s,\theta)}$ being the empirical measure;
$\delta>0$ is a capital depreciation factor, $\bar \delta 
>0$ is a capital interaction depreciation factor modeling how capital becomes obsolete faster when average technological standards improve across subsidiaries; the control process $u_i(s,\theta)$ with values in $\tilde{\Lambda}=\Lambda:=L^2([0,\Theta];[0,\infty))$ is  the rate of investment at time $s $ in
capital goods of age $\theta$ undertaken by {the subsidiary $i$}; $W$ is a cylindrical Wiener process with values in $L^{2}([0,\Theta])$ acting as a common noise; $\sigma \in L_2(L^{2}([0,\Theta]))$.

We now introduce the objective functional.  The rate of investment costs at time $s$ undertaken by {the subsidiary $i$ is} $\int_{0}^{\Theta} c(\theta,u_i({s ,\theta}))\mathrm{d} \theta$ for a function  $ c\colon [0,\Theta] \times  [0,\infty) \to [0,\infty)$, such that there exist $C_1,C_2,C_3>0$, such that  $-C_1+C_2|q|^2 \leq c(\theta,q) \leq C_1+C_3|q|^2$;  the output rate at time $s$ is $\int_0^\Theta \eta(\theta) y_i(s,\theta)\mathrm{d}\theta$ for a suitable $\eta \colon [0,\Theta] \to [0,\infty)$.
The goal of the company is then to minimize an aggregate cost functional of the form
\begin{equation}
	\mathbb{E}\left [ \int_t^T\frac{1}{n}\sum_{i=1}^n\left\{\int_{0}^{\Theta} c(\theta,u_i({s ,\theta}))\mathrm{d} \theta  -h^0\left(\int_0^\Theta \eta(\theta) y_i(s,\theta)\mathrm{d}\theta\right) \right\} \mathrm{d}s+ \frac{1}{n} \sum_{i=1}^n {\mathcal U}_T^0\left (\int_0^\Theta \eta(\theta) y_i(T,\theta)\mathrm{d}\theta \right ) \right ],
\end{equation}
where  $h^0  :\mathbb R \to \mathbb R$ is a concave Lipschitz revenue function; $\mathcal U_T^0 :\mathbb R \to \mathbb R$ is a convex Lipschitz function  (to be interpreted as the negative of a concave terminal revenue function). {As in the previous subsection  more regularity assumptions will be added to apply the results of Sections \ref{sec:regularity}-\ref{sec:projection}-\ref{sec:lifting_proj}, see below.}
 
 The system is rewritten in the setup of Subsection \ref{subsec:finite_particle-system} over $H^n$ with  $H:=L^2([0,\Theta])$, where $ A x:= -\frac{\mathrm{d}}{\mathrm{d} \theta} x-\delta x,$ $ D(A):= \left\{ x \in W^{1,2}([0,\Theta]), \ x(0)=0\right\}$ is a maximally dissipative operator (so Assumption \ref{Assumption_A_maximal_dissipative} holds),
 $ f \colon  \mathcal P(H) \times \tilde{\Lambda} \to H$,  $f(\mu,q) :=-\bar \delta \int_{H} z \mu(\mathrm d z)+q$, $\sigma\in L_2(H)$, 
 $l \colon  H \times \mathcal P(H) \times \tilde{\Lambda} \to \mathbb R$, $l\left (x,q\right ):=\int_{0}^{\Theta} c(\theta,q({\theta}))\mathrm{d} \theta-h^0\left( \int_0^\Theta \eta(\theta) x(\theta)\mathrm{d}\theta\right)$, 
 $\mathcal U_T \colon H \times \mathcal P(H)  \to \mathbb R$, $ \mathcal U_T(x):=\mathcal U_T^0\left (\int_0^\Theta \eta(\theta) x(\theta)\mathrm{d}\theta\right )$,
for all $x \in H, \mu \in \mathcal P_2(H) \ q \in \tilde{\Lambda}.$ Here, we have  $\mathcal{H}(x,\mu,p) :=  \inf_{q\in \tilde{\Lambda}} ( \langle f(x,\mu,q) , p\rangle + l(x,q))$ and 
the HJB equation for the  particle system is
\begin{equation}
\begin{cases}
	\partial_t u_n + \frac12 \text{Tr}(\sigma \sigma^* D^2u_n)+ \frac{1}{n} \sum_{i=1}^n \left ( \langle Ax_i,nD_{x_i} u_n \rangle + \mathcal{H}(x_i,\mu_{\mathbf{x}},nD_{x_i}u_n) \right ) = 0, \quad(t,\mathbf{x})\in (0,T)\times H^n\\
	u_n(T,\mathbf{x}) = \frac{1}{n} \sum_{i=1}^n \mathcal{U}_T(x_i,\mu_{\mathbf{x}}), \quad \mathbf{x}\in H^n.
\end{cases}
\end{equation}

We check now that the assumptions of the previous sections are satisfied (we use the setup and notations introduced there) so that  the results of our theory can be applied. Indeed, similarly to \cite{defeo_federico_swiech_2024,fabbri_2008}, the  operator $B:=(A^{-1})^*A^{-1}$ satisfies the weak $B$-condition Assumption \ref{Assumption_weak_B_condition} with $c_0=0$ and it is compact, so it satisfies Assumption \ref{Assumption_B_compact}.  Moreover, notice that the adjoint operator of $A$ is $ A^* x:= \frac{\mathrm{d}}{\mathrm{d} \theta} x-\delta x,$ $ D(A^*):= \left\{ x \in W^{1,2}([0,\Theta]), \ x(\Theta)=0\right\}$ and assume from now on that $\eta \in D(A^*)$; then
\begin{align}\label{eq:inequalities-wrt--1_vintage}
\left|\int_0^\Theta \eta(\theta) x(\theta)\mathrm{d}\theta\right|=\left|\langle \eta,x\rangle_{L^2} \right|=\left|\langle A^*\eta,A^{-1}x\rangle_{L^2} \right| \leq C |x|_{-1} \quad  \forall x\in H,
\end{align}
where $C=|A^*\eta|$. {The inequality
\begin{align}\label{eq:inequalities-wrt-W-1_vintage}
    \left |\int_H z (\mu-\beta)(\mathrm dz) \right|_{-1} \leq d_{-1,r}(\mu,\beta)
\end{align}
is obtained in the same way as \eqref{eq:inequalities-wrt-W-1} in Section \ref{subsec1}.}

 Next, notice that $f$ satisfies Assumption \ref{Assumption_f_sigma_lipschitz} (while $\sigma$ trivially satisfies it); indeed,  the first inequality in (i) follows from  \eqref{eq:inequalities-wrt-W-1_vintage}, while the second is trivial; the first inequality in (ii) follows by choosing $\beta=\delta_0$ in \eqref{eq:inequalities-wrt-W-1_vintage} and then using the equality $d^r_{-1,r}(\mu,\delta_0) = \mathcal{M}_{-1,r}(\mu)$, while the second there is trivial;
for (iii)  notice that $D_X \tilde f (X,q)Y=-\bar \delta {\mathbf E} [Y ]$, $D_q \tilde f(x,\mu,q)a=a$;
then, we have (iii) and so Assumption \ref{Assumption_f_sigma_lipschitz} is satisfied. Next, by straightforward computations we have that $l, \mathcal U_T$ satisfy Assumption \ref{Assumption_running_terminal_cost} (i), (ii),  (iv), if we use \eqref{eq:inequalities-wrt--1_vintage}. If  $c(\theta,\cdot) \in C^{1,1}(\tilde{\Lambda};[0,\infty))$ with Lipschitz constant of $\partial_q c(\theta,q)$ independent of $\theta$, $h^0  \in C^{1,1}(\mathbb R)$, $\mathcal U_T^0  \in C^{1,1}(\mathbb R)$, then Assumption \ref{Assumption_running_terminal_cost} (iii), (v) is satisfied; indeed, note that $D_x \tilde l(x,q)z=-\partial_{y} h^0(\int_0^\Theta \eta(\theta) x(\theta)\mathrm{d}\theta) \int_0^\Theta \eta(\theta) z(\theta)\mathrm{d}\theta,$ $ D_q \tilde l(x,q)a=\int_{0}^{\Theta}\partial_q  c(\theta,q({\theta}))a({\theta})\mathrm{d} \theta$ and the claim follows thanks to the Lipschitzianity of $\partial_{y} h^0, \partial_{q} g^0,  \partial_u  c(\theta,\cdot)$, and to \eqref{eq:inequalities-wrt--1_vintage}.   Next, Assumption
\ref{Assumption_running_cost_nu} is satisfied for $C_1=C_2=\nu=0$ by the above concavity assumptions, while it is satisfied with $C_1=C_2=0$ and $\nu>0$ if there exist $\nu\geq 0$ such that the map $q \mapsto c(\theta,q)-\nu|q|^2$ is convex ($\nu$ independent of $\theta$). 

As in \eqref{eq:inequalities-wrt-W-1_vintage}, we  see that $f$ satisfies Assumption \ref{Assumption_Linear_State_Equation} (i); Assumption \ref{Assumption_Linear_State_Equation} (ii) is trivially satisfied, while (iii) follows from the convexity of $-h^0, \mathcal U_T^0$.    

Therefore, as for the problem in Subsection \ref{subsec1}, we can apply the results of our theory.



\appendix

\section{Viscosity solutions of PDEs in Hilbert spaces} 
\label{app:viscosity_hilbert}

We recall here the definition of a viscosity solution for a terminal value degenerate parabolic PDE on a Hilbert space with an unbounded operator from \cite[Section 3.3]{fabbri_gozzi_swiech_2017}.

Throughout this section $(Y,\langle\cdot,\cdot\rangle)$ is a real separable Hilbert space. We denote by $S(Y)$ the space of self-adjoint operators in $L(Y)$.  
\subsection{$B$-continuity}
Let $B \in S(Y)$ be a strictly positive operator in $Y$.
\begin{definition}[$B$-continuity]  Let  $u: [0,T] \times Y \to \mathbb{R}$. We say that $u$ is $B$-upper semicontinuous (respectively, $B$-lower semicontinuous) if, for any sequences $\left(t_n\right)\subset [0,T]$ and $\left(x_n\right)\subset Y$ such that $t_n \rightarrow t \in [0,T],$ $ {x_n \rightharpoonup x }\in Y$, $B x_n \rightarrow B x$ as $n \rightarrow \infty$, we have
$
\limsup _{n \rightarrow \infty} u\left(t_n, x_n\right) \leq u(t, x)$(respectively,  $\liminf _{n \rightarrow \infty} u\left(t_n, x_n\right) \geq u(t, x)$).

We say that $u$ is $B$-continuous if it is both $B$-upper semicontinuous and $B$-lower semicontinuous.
\end{definition}

\subsection{Viscosity solutions.}Consider the following terminal value PDE in the Hilbert space $Y$
\begin{equation}\label{eq:PDE_app}
    \left\{\begin{array}{l}\partial_t u+\langle A x, D u\rangle+\mathcal F\left(x, D u, D^2 u\right)=0, \quad (t,x)\in(0, T) \times Y\\
    u(T, x)=g(x), \quad  x \in Y,\end{array}\right.
\end{equation}
where $A:\mathcal D(A)\subset Y\to Y$ is a linear densely defined maximal dissipative operator,   $\mathcal F: Y  \times Y \times S(Y) \rightarrow \mathbb{R}, g: Y \rightarrow \mathbb{R}$ are continuous. {We also assume that
$\mathcal F\left(x, p, X\right)\leq \mathcal F\left(x, p, Z\right)$
for all $x,p\in Y$ and $X,Z\in S(Y), X\leq Z$.} Let $B \in S(Y)$ be strictly positive and such that $A^*B\in L(Y)$.
\begin{definition}\label{def:test_functions_hilbert}
A function $\psi \colon (0,T)\times Y\to \mathbb R$ is a test function if $\psi=\varphi+h(t,|x|)$, where $\varphi \in C^{1,2}((0, T) \times Y)$ is locally bounded and is such that $\varphi$ is $B$-lower semicontinuous,  $\partial_t \varphi, A^* D \varphi, D \varphi, D^2 \varphi$ are uniformly continuous on $(0, T) \times Y$; $h \in C^{1,2}((0, T) \times \mathbb{R})$ is such that for every $t \in(0, T), h(t, \cdot)$ is even and $h(t, \cdot)$ is non-decreasing on $[0,+\infty)$.
\end{definition}
\begin{definition}\label{def:viscosity_solution_hilbert}
A locally bounded $B$-upper semicontinuous function $u:(0, T] \times Y$ is a $B$-continuous viscosity subsolution of \eqref{eq:PDE_app} if $u(T, y) \leq g(y)$ for all $y \in Y$ and whenever $u-\psi$ has a local maximum at a point $(t, x) \in(0, T) \times Y$ for a test function $\psi(s, y)=\varphi(s, y)+h(s,|y|)$ then
$$
\partial_t \psi(t, x)+\left\langle x, A^* D \varphi(t, x)\right\rangle+\mathcal F\left(x, D \psi(t, x), D^2 \psi(t, x)\right) \geq 0.
$$
A locally bounded $B$-lower semicontinuous function $u$ on $(0, T] \times Y$ is a $ B$-continuous viscosity supersolution of \eqref{eq:PDE_app} if $u(T, y) \geq g(y)$ for $y \in Y$ and whenever $u+\psi$ has a local minimum at a point $(t, x) \in(0, T) \times Y$ for a test function $\psi(s, y)=\varphi(s, y)+h(s,|y|)$ then
$$
-\partial_t \psi(t, x)-\left\langle x, A^* D \varphi(t, x)\right\rangle+\mathcal F\left(x,-D \psi(t, x),-D^2 \psi(t, x)\right) \leq 0.
$$
A $B$-continuous viscosity solution of \eqref{eq:PDE_app} is a function which is both a $B$-continuous viscosity subsolution and a $ B$-continuous viscosity supersolution of \eqref{eq:PDE_app}.
\end{definition}

{We remind the reader that here a function is called locally bounded if it is bounded on bounded sets. Moreover, without loss of generality the maxima and minima in Definition \ref{def:viscosity_solution_hilbert} can be assumed to be global and strict, see \cite[Lemma 3.37]{fabbri_gozzi_swiech_2017}. A maximum/minimum of a function $f$ at $x$ is called strict if whenever $x_n\to x$ and $f(x_n)\to f(x)$ then $x_n\to x$.}

\section{Approximation of Measures in the Wasserstein Space}\label{app:approximation_of_measures}

In this appendix, we prove an approximation property for measures in the Wasserstein space $\mathcal{P}_2(X)$ over some Banach space $X$. This lemma is needed construct a limit for the sequence of finite dimensional value functions in Subsection \ref{sec:convergence_of_u_n}.

\begin{lemma}\label{lemma_approximation_of_measures}
    Let $(n_k)_{k\in\mathbb{N}}\subset \mathbb{N}$ be a sequence of natural numbers such that $\lim_{k\to\infty} n_k = \infty$. Then, for all $\mu\in \mathcal{P}_2(X)$ there is a subsequence $(n_{k_j})_{j\in\mathbb{N}} \subset \mathbb{N}$ of $(n_k)_{k\in\mathbb{N}}$, and there are points $x_i\in X$, $i=1,\dots,n_{k_j}$, $j\in\mathbb{N}$, such that
    \begin{equation}
        \lim_{j\to\infty} d_2(\mu_{n_{k_j}},\mu) = 0,
    \end{equation}
    where $\mu_{n_{k_j}} = \frac{1}{n_{k_j}} \sum_{i=1}^{n_{k_j}} \delta_{x_i}$.
\end{lemma}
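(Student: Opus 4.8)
The plan is to prove the statement by a two-step deterministic approximation: first replace $\mu$ by a measure with finite support, and then match such a measure by an empirical measure of the prescribed cardinality, exploiting that the cardinality can be taken arbitrarily large along the given sequence $(n_k)$. For the first step, fix $\varepsilon>0$. Since $\int_X|x|^2\,\mu(\mathrm dx)<\infty$, there is $R>0$ with $\int_{\{|x|>R\}}|x|^2\,\mu(\mathrm dx)<\varepsilon$. As $X$ is separable, the closed ball $\bar B_R$ can be written as a countable disjoint union of Borel sets $(D_j)_{j\geq1}$ with $\mathrm{diam}(D_j)<\varepsilon$. Because $\sum_{j\geq1}\int_{D_j}(1+|x|^2)\,\mu(\mathrm dx)\leq 1+\int_{\bar B_R}|x|^2\,\mu(\mathrm dx)<\infty$, we may pick $m\in\mathbb N$ with $\sum_{j>m}\int_{D_j}(1+|x|^2)\,\mu(\mathrm dx)<\varepsilon$; choosing $z_j\in D_j$ for $j\leq m$ and setting $\nu:=\sum_{j=1}^m\mu(D_j)\,\delta_{z_j}+\bigl(1-\sum_{j=1}^m\mu(D_j)\bigr)\delta_{0}$, the obvious coupling (transport each $D_j$, $j\leq m$, onto $z_j$, and the remaining mass onto $0$) gives $d_2^2(\mu,\nu)\leq\varepsilon^2+2\varepsilon$. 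Hence finitely supported probability measures are $d_2$-dense in $\mathcal P_2(X)$.

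For the second step, let $\nu=\sum_{j=1}^m p_j\,\delta_{z_j}$ with $p_j>0$, $\sum_j p_j=1$, and put $D:=\max_{j,j'}|z_j-z_{j'}|$. Given $N\in\mathbb N$, set $N_j:=\lfloor Np_j\rfloor$ for $j<m$ and $N_m:=N-\sum_{j<m}N_j$, so that $\sum_j N_j=N$ and $\sum_{j=1}^m|N_j/N-p_j|\leq 2m/N$. Listing each $z_j$ with multiplicity $N_j$ yields points $x_1,\dots,x_N\in X$ with $\frac1N\sum_{i=1}^N\delta_{x_i}=\sum_{j=1}^m\frac{N_j}{N}\delta_{z_j}=:\nu_N$, and transporting the excess/deficit of mass among the $z_j$ gives $d_2^2(\nu_N,\nu)\leq D^2\sum_{j=1}^m|N_j/N-p_j|\leq 2mD^2/N$. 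Thus, for any finitely supported $\nu$ and any $\eta>0$ there is $N_0(\nu,\eta)$ such that for every $N\geq N_0(\nu,\eta)$ there exist $x_1,\dots,x_N$ with $d_2\bigl(\tfrac1N\sum_{i=1}^N\delta_{x_i},\nu\bigr)<\eta$.

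To conclude, for each $j\in\mathbb N$ pick, by the first step, a finitely supported $\nu_j$ with $d_2(\mu,\nu_j)<1/(2j)$, and let $N_0(\nu_j,1/(2j))$ be as in the second step. Since $n_k\to\infty$, choose indices $k_1<k_2<\cdots$ with $n_{k_j}\geq N_0(\nu_j,1/(2j))$; the second step then provides points $x_1,\dots,x_{n_{k_j}}\in X$ such that, with $\mu_{n_{k_j}}:=\frac1{n_{k_j}}\sum_{i=1}^{n_{k_j}}\delta_{x_i}$, one has $d_2(\mu_{n_{k_j}},\nu_j)<1/(2j)$ and therefore $d_2(\mu_{n_{k_j}},\mu)<1/j\to0$.

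There is no substantial obstacle here; the only point requiring a little care is the second step, namely that a finitely supported measure is matched by an $N$-point empirical measure for \emph{every} sufficiently large $N$ rather than merely along some special subsequence, which is precisely what lets the approximating cardinalities be selected inside the prescribed sequence $(n_k)$. Alternatively, one may argue probabilistically: if $(\xi_i)_{i\geq1}$ are i.i.d.\ with law $\mu$, then by the law of large numbers $\frac1N\sum_{i=1}^N\delta_{\xi_i}\to\mu$ weakly almost surely and $\frac1N\sum_{i=1}^N|\xi_i|^2\to\int|x|^2\,\mu(\mathrm dx)$ almost surely, and these two facts together force convergence in $d_2$ almost surely; fixing one such sample path gives the claim, in fact along the full sequence $(n_k)$.
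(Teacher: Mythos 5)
Your proof is correct, and it follows the same overall strategy as the paper --- truncate $\mu$ to a ball $\bar B_R$, approximate by a finitely supported measure, then match that measure with an empirical measure of the prescribed cardinality --- but both intermediate steps are executed differently. The paper obtains the finitely supported approximation by invoking (the proof of) Villani's Theorem 6.18, which yields a convex combination of Diracs with \emph{rational} weights; it then rewrites this with uniform weights $1/M_\varepsilon$ via repetition, and handles the mismatch between $n_k$ and multiples of $M_\varepsilon$ by replicating atoms so that both measures are represented with $n_k N M_\varepsilon$ atoms and counting how many coincide, which gives the bound $\bigl(1-n_k/(NM_\varepsilon)\bigr)4R^2$. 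You instead construct the finitely supported measure by hand from a countable partition of $\bar B_R$ into sets of diameter $<\varepsilon$ (this uses separability of $X$, which is also implicit in the paper through the citation of Villani), and you match an arbitrary finitely supported $\nu=\sum_{j=1}^m p_j\delta_{z_j}$ by rounding $Np_j$ down and assigning the remainder to the last atom, bounding the transport cost of the weight discrepancy by $2mD^2/N$. Your rounding argument is more elementary, needs neither rational nor uniform weights, and produces the matching for \emph{every} sufficiently large $N$ rather than only for cardinalities adapted to $M_\varepsilon$, which is exactly what makes the final diagonal selection inside $(n_k)$ immediate; the paper's replication-and-counting trick buys essentially the same conclusion but is tied to the uniform-weight representation. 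Your probabilistic alternative (Varadarajan's theorem for almost sure weak convergence of empirical measures, combined with the characterization of $W_2$-convergence as weak convergence plus convergence of second moments) is also valid and even gives convergence along the full sequence, at the price of invoking those two external results.
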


\begin{proof}
    For $\varepsilon>0$, let $R=R(\varepsilon)>0$ be such that $\int_{\{|x|>R\}} |x|^2 \mu(\mathrm{d}x) < \varepsilon^2$. Denote
    \begin{equation}
        \tilde{\mu}_{\varepsilon}(\cdot) = \mu(\cdot \cap \bar{B}_R) + \mu(X\setminus \bar{B}_R) \delta_0.
    \end{equation}
    Then, $d_2(\tilde{\mu}_{\varepsilon},\mu) < \varepsilon$, and the support of $\tilde{\mu}_{\varepsilon}$ is a subset of $\bar{B}_R$. Therefore, see e.g. the proof of \cite[Theorem 6.18]{villani_2009}, there is a measure $\mu_{\varepsilon}$ which is a convex combination of Dirac masses of points in $\bar{B}_R$ with rational coefficients such that
    \begin{equation}
        d_2(\tilde{\mu}_{\varepsilon},\mu_{\varepsilon}) <\varepsilon.
    \end{equation}
    Allowing repetitions, we can find an $M_{\varepsilon}\in\mathbb{N}$ and $z_m \in X$, $|z_m|\leq R$, $m=1,\dots,M_{\varepsilon}$, such that
    \begin{equation}
        \mu_{\varepsilon} = \frac{1}{M_{\varepsilon}} \sum_{m=1}^{M_{\varepsilon}} \delta_{z_m}.
    \end{equation}
    Now, we choose $k\in\mathbb{N}$ sufficiently large such that there is an $N\in\mathbb{N}$ such that
    \begin{equation}
        4\left ( 1-\frac{n_k}{N M_{\varepsilon}} \right ) R^2 < \varepsilon^2,\quad n_k\leq N M_{\varepsilon}.
    \end{equation}
    Let the points $\tilde{x}_l\in H$, $l=1,\dots,NM_{\varepsilon}$ be given by the points $z_m$, $m=1,\dots, M_{\varepsilon}$, each repeated $N$-times. Set
    \begin{equation}
        \mu_{n_k} = \frac{1}{n_k} \sum_{l=1}^{n_k} \delta_{\tilde{x}_l}.
    \end{equation}
    The points $\tilde{x}_l$, $l=n_k+1,\dots,N M_{\varepsilon}$, are not present.
    
    Now, we rewrite the measures $\mu_{\varepsilon}$ and $\mu_{n_k}$. For $\mu_{\varepsilon}$, we use the points $y_i\in X$, $i=1,\dots,n_kNM_{\varepsilon}$, consisting of the points $z_m$, $m=1,\dots,M_{\varepsilon}$, each repeated $n_kN$ times, i.e.,
    \begin{equation}
        \mu_{\varepsilon} = \frac{1}{n_k N M_{\varepsilon}} \sum_{i=1}^{n_kNM_{\varepsilon}} \delta_{y_i}.
    \end{equation}
    Similarly, for $\mu_{n_k}$, we use the points $x_i\in X$, $i=1,\dots,n_kNM_{\varepsilon}$, consisting of the points $\tilde{x}_l$, $l=1,\dots,n_k$, each repeated $NM_{\varepsilon}$ times, i.e.,
    \begin{equation}
        \mu_{n_k} = \frac{1}{n_k N M_{\varepsilon}} \sum_{i=1}^{n_kNM_{\varepsilon}} \delta_{x_i}.
    \end{equation}
    Note that in the measures $\mu_{\varepsilon}$ and $\mu_{n_k}$, at least $n_k^2$ points are the same, and thus at most $n_k(NM_{\varepsilon} - n_k)$ points are different.

    In order to estimate the Wasserstein distance between $\mu_{\varepsilon}$ an $\mu_{n_k}$, let us consider the following transport map: We leave the $n_k^2$ points that coincide for both measures at their places and we assign the remaining points arbitrarily in such a way that the pushforward measure of $\mu_{\varepsilon}$ is $\mu_{n_k}$. Then, recalling that all points are in $\bar{B}_R$, we obtain the estimate
    \begin{equation}
        d_2^2(\mu_{\varepsilon},\mu_{n_k}) \leq \frac{1}{n_k N M_{\varepsilon}} n_k(NM_{\varepsilon} - n_k)(2R)^2 = \left ( 1 - \frac{n_k}{N M_{\varepsilon}} \right ) 4R^2 < \varepsilon^2.
    \end{equation}
    Taking $\varepsilon = 1/j$ we construct our sequence $n_{k_j}$.
\end{proof}
\small{\paragraph{\textbf{Acknowledgments.}} Filippo de Feo acknowledges funding by the Deutsche Forschungsgemeinschaft (DFG, German Research Foundation) – CRC/TRR 388 "Rough Analysis, Stochastic Dynamics and Related Fields" – Project ID 516748464, by INdAM (Instituto Nazionale di Alta Matematica F. Severi) - GNAMPA (Gruppo Nazionale per l'Analisi Matematica, la Probabilità e le loro Applicazioni). Filippo de Feo and Fasuto Gozzi acknowledge funding  by the Italian Ministry of University and Research (MUR) in the framework of PRIN project 20223PNJ8K (Impact of the Human Activities on the Environment and Economic Decision Making in a Heterogeneous Setting: Mathematical Models and Policy Implications).}
\bibliographystyle{amsplain}

\end{document}